\numberwithin{figure}{section}
\def\R{{\mathbb R}}
\def\C{{\mathbb C}}
\def\T{{\mathbb T}}
\def\Z{{\mathbb Z}}
\def\N{{\mathbb N}}
\def\e{\varepsilon}
\def\build#1_#2^#3{\mathrel{
\mathop{\kern 0pt#1}\limits_{#2}^{#3}}}
\def\td_#1,#2{\mathrel{\mathop{\build\longrightarrow_{#1\rightarrow #2}^{}}}}
\DeclareFontFamily{U}{MnSymbolC}{}
\DeclareSymbolFont{MnSyC}{U}{MnSymbolC}{m}{n}
\DeclareFontShape{U}{MnSymbolC}{m}{n}{
    <-6>  MnSymbolC5
   <6-7>  MnSymbolC6
   <7-8>  MnSymbolC7
   <8-9>  MnSymbolC8
   <9-10> MnSymbolC9
  <10-12> MnSymbolC10
  <12->   MnSymbolC12}{}
\DeclareMathSymbol{\intprod}{\mathbin}{MnSyC}{'270}
\newtheorem{theorem}{Theorem}
\newtheorem{corollary}{Corollary}[section]
\newtheorem{proposition}{Proposition}[section]
\newtheorem{lemma}{Lemma}[section]
\newtheorem{remark}{Remark}[section]
\newtheorem{definition}{Definition}[section]
\numberwithin{equation}{section}
\begin{document}
\title[On the integrability of the BO equation]{On the integrability of the Benjamin--Ono equation on the torus}
\author[P. G\'erard]{Patrick G\'erard}
\address{Laboratoire de Math\'ematiques d'Orsay, Univ. Paris-Sud, CNRS, Universit\'e Paris--Saclay, 91405 Orsay, France} \email{{\tt patrick.gerard@math.u-psud.fr}}
\author[T. Kappeler]{Thomas Kappeler}
\address{Institut f\"ur Mathematik, Universit\"at Z\"urich, Winterthurerstrasse 190, 8057 Zurich, Switzerland} \email{{\tt thomas.kappeler@math.uzh.ch}}

\subjclass[2010]{ 37K15 primary, 47B35 secondary}

\date{October 4, 2019}

\begin{abstract}
In this paper we prove that the Benjamin-Ono equation, when considered on the torus,
is an integrable (pseudo)differential equation in the strongest possible sense: it admits 
global Birkhoff coordinates on the space $L^2(\T)$. These are coordinates which allow 
to integrate it by quadrature and hence are also referred to as nonlinear Fourier 
coefficients. As a consequence, all the $L^2(\T)$ solutions of the Benjamin--Ono equation are almost periodic functions of the time variable. The construction of such coordinates relies on the spectral study of the Lax operator in the Lax pair formulation of the Benjamin--Ono equation and on the use of a generating functional, which encodes the entire Benjamin--Ono hierarchy.
\end{abstract}

\keywords{Benjamin--Ono equation, Lax pair,  Hardy space, Toeplitz operators}

\thanks{This paper was started while both authors were visiting the Pauli Institute for a workshop organised by J.C. Saut, whom we warmly thank. The last part  was written while the first author was visiting the Institut Mittag--Leffler. We are grateful to these institutions for providing favourable working conditions.}

\maketitle

\medskip

\tableofcontents

\medskip

\section{Introduction}\label{Introduction}
In this paper we consider the Benjamin-Ono (BO) equation on the torus,
\begin{equation}\label{BO}
\partial_t u = H\partial^2_x u - \partial_x (u^2)\,, \qquad x \in \T:= \R/2\pi\Z\,, \,\,\, t \in \R,
\end{equation}
where $u\equiv u(t, x)$ is real valued and $H$ denotes the Hilbert transform, defined for $f = \sum_{n \in \mathbb Z} \widehat f(n) e^{ i n x} $,
$\widehat f(n) = \frac{1}{2\pi}\int_0^{2\pi} f(x) e^{-  i n x} dx$,  by
$$
H f(x) := \sum_{n \in \Z} -i \ \text{sign}(n) \widehat f(n) \ e^{inx}
$$
with $\text{sign}(\pm n):= \pm 1$ for any $n \ge 1$, whereas $\text{sign}(0) := 0$. This pseudodifferential equation ($\Psi$DE)
in one space dimension has been introduced by Benjamin \cite{Benj} and Ono \cite{Ono} to model long, one-way internal gravity waves 
in a two-layer fluid.
It has been extensively studied, in particular the wellposedness problem on the torus as well as on the real line. 
On appropriate Sobolev spaces, the BO equation \eqref{BO} can be written in Hamiltonian form  
$$
\partial_t u = \partial_x (\nabla \mathcal H (u))\,, \qquad  \mathcal H (u):= \frac{1}{2\pi}\int_0^{2 \pi} \left [\frac{1}{2} (|\partial_x|^{1/2} u)^2 - \frac{1}{3} u^3 \right ]dx
$$
where $|\partial_x|^{1/2}$ is the square root of the Fourier multiplier operator $|\partial_x|$ 
given by
$$
|\partial_x| f(x) = \sum_{n \in \Z} |n| \widehat f(n) e^{inx}\,.
$$
Note that the $L^2-$gradient $\nabla \mathcal H$ of $\mathcal H$ can be computed to be $|\partial_x| u - u^2$ and that $\partial_x \nabla \mathcal H$ is the
Hamiltonian vector field
corresponding to the Gardner bracket, defined for any two functionals $F, G : L^2 \to \C$ with sufficiently regular $L^2-$gradients by
$$
 \{F, G \} := \frac{1}{2 \pi} \int_0^{2\pi} (\partial_x \nabla F) \nabla G dx\ .
$$
The main result of this paper says that the BO equation \eqref{BO}
is an integrable $\Psi$DE. In fact, we show that it admits global Birkhoff coordinates
and hence is an integrable $\Psi$DE in the strongest possible sense. 
To state our results more precisely, we first need to introduce some notation.
Denote by $L^2$ the $\C-$Hilbert space $L^2(\T, \C)$ of $L^2-$integrable, complex valued functions with the
standard inner product 
\begin{equation}\label{L2 inner product}
\langle f | g \rangle :=\frac{1}{2\pi}\int_0^{2\pi}f(x)\overline{g(x)}\, dx\ 
\end{equation}
and the corresponding norm $\| f\| := \langle f | f \rangle ^{1/2}$.
Furthermore, we denote by $L^2_r$ the $\R-$Hilbert space $L^2(\T, \R)$, consisting of elements $u \in L^2$ which are real valued
and let
$$
L^2_{r, 0} := \{  u \in L^2_r\, | \, \langle u | 1 \rangle = 0 \}  \  .
$$
For any subset $J \subset \mathbb Z_{\ge 0}$ and any $s \in \mathbb R$, 
$h^s(J) \equiv h^s (J, \mathbb C)$ denotes the weighted $\ell^2-$sequence space
$$h^s(J) = \{ (z_n)_{n \in J} \subset \mathbb C \, : \, \| (z_n)_{ n \in J} \|_s < \infty  \}
$$
where
$$
\| (z_n)_{ n \in J} \|_s : = \big( \sum_{n \in J} \langle n \rangle^{2s} |z_n|^2 \big)^{1/2} \ , \quad  
\langle n \rangle := \text{ max} \{ 1, |n| \} \, .
$$
In case where $J = \mathbb N := \{ n \in \mathbb Z \, : \, n \ge 1 \}$ we write
$h^s_+$ instead of $h^s(\mathbb N)$.
In the sequel, we view $h^s_+$ as the $\R-$Hilbert space $h^s(\N, \R) \oplus h^s(\N, \R)$
by identifying a sequence $(z_n)_{n \in \N} \in h^s_+$ with the pair of sequences
$\big( ({\rm Re} \, z_n)_{n \in \N}, ({\rm Im} \, z_n)_{n \in \N} \big)$ in $h^s(\N, \R) \oplus h^s(\N, \R)$.
\bigskip

The main result of this paper then reads as follows.

\begin{theorem}\label{main result} 
There exists a homeomorphism 
$$
\Phi : L^2_{r,0}\to h^{1/2}_+ \,, \, u \mapsto (\zeta_n(u))_{n \ge 1}
$$
so that the following holds:\\
(B1) For any $n \ge 1$, $\zeta_n:L^2_{r,0}\to \C $ is real analytic.\\
(B2) The Poisson brackets between the coordinate functions $\zeta_n$ are well defined and for any $n, k \ge 1,$
$$
\{\zeta_n , \overline{\zeta_k} \} = - i \delta_{nk}\,, \qquad  \{\zeta_n , \zeta_k \} = 0\,.
$$ 
(B3) On its domain of definition, $\mathcal H \circ \Phi^{-1}$ is a (real analytic) function, which only depends on the actions $|\zeta_n|^2,$ $n \ge 1$.\\
The coordinates $\zeta_n$ are referred to as complex Birkhoff coordinates.
\end{theorem}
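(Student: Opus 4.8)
The plan is to build the coordinates $\zeta_n$ out of the spectral data of the Lax operator in the Lax pair formulation of \eqref{BO}. On the Hardy space $L^2_+$ of $L^2$ functions with only nonnegative Fourier modes, the relevant operator is $L_u := D - T_u$, where $D = -i\pa_x$ and $T_u f := \Pi(uf)$ with $\Pi$ the Szeg\H{o} projector. The first step is the spectral analysis of $L_u$ for $u \in L^2_{r,0}$: one shows that $L_u$ is a self-adjoint operator with form domain $H^{1/2}_+$, bounded from below and with compact resolvent, whose spectrum consists of simple eigenvalues $\lambda_0(u) < \lambda_1(u) < \cdots$ with $\lambda_n(0) = n$ and the interlacing bound $\lambda_n - \lambda_{n-1} \ge 1$. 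This allows one to define the nonnegative \emph{gaps} $\gamma_n(u) := \lambda_n(u) - \lambda_{n-1}(u) - 1$ for $n \ge 1$ and, after fixing the phase ambiguity of the eigenfunctions by an inductive normalization (anchored at $\langle 1 \mid f_0 \rangle > 0$ and propagated through the shift operator on $L^2_+$), a distinguished orthonormal family $f_n$ depending real-analytically on $u$.

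The coordinates are then defined by $\zeta_n(u) := \kappa_n(u)^{-1/2} \langle 1 \mid f_n(u) \rangle$ for a suitable positive normalizing factor $\kappa_n(u)$ built from the eigenvalues. The main tool for studying them is the generating functional $\mathcal H_\lambda(u) := \langle (L_u + \lambda)^{-1} 1 \mid 1 \rangle = \sum_{n \ge 0} |\langle 1 \mid f_n \rangle|^2 / (\lambda_n + \lambda)$, defined for $\lambda$ outside $-\,\mathrm{spec}(L_u)$: its asymptotic expansion in powers of $1/\lambda$ produces the conserved quantities of the BO hierarchy, among them $\mathcal H$ itself (the first nontrivial coefficient beyond $\langle u \mid 1 \rangle$ and $\|u\|^2$), and a Hadamard-type factorization expresses $\mathcal H_\lambda$ — and in particular each residue $|\langle 1 \mid f_n \rangle|^2$ — purely in terms of the spectrum $(\lambda_k)$. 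Together with the $L^2$- and $H^{1/2}$-trace formulas, which relate $\|u\|^2$ and the $H^{1/2}$-norm of $u$ to weighted sums of the $\gamma_n$, this yields statement (B1) — each $\zeta_n$ is real analytic on $L^2_{r,0}$ and extends analytically across $\{\gamma_n = 0\}$ — the fact that $|\zeta_n|^2$ is a spectral invariant comparable to $\gamma_n$, and the two-sided bound $\|u\| \lesssim \|\Phi(u)\|_{1/2} \lesssim \|u\|$, so that $\Phi$ maps $L^2_{r,0}$ into $h^{1/2}_+$ continuously.

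For (B2) one exploits the Lax structure. Every Hamiltonian flow generated by $\mathcal H_\lambda$ (equivalently, by any member of the hierarchy) conjugates $L_u$ by a unitary, so the eigenvalues $\lambda_n$ — hence all $\gamma_n$ and all $|\zeta_n|^2$ — Poisson-commute pairwise; computing the induced evolution of $\langle 1 \mid f_n \rangle$ along these flows gives closed expressions for $\{\mathcal H_\lambda, \zeta_n\}$, and since the functionals $\mathcal H_\lambda$ (for large $\lambda$) together with the $\gamma_k$ are rich enough to determine all Poisson brackets of the $\zeta_n$, one reads off $\{\zeta_n, \overline{\zeta_k}\} = -i\delta_{nk}$ and $\{\zeta_n, \zeta_k\} = 0$. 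Where convenient these identities are first checked on the finite-gap manifolds $\{u : \gamma_n = 0 \text{ for } n > N\}$, where the computation is finite dimensional, and then propagated by analyticity and density. Statement (B3) follows at once: $\mathcal H$ is a spectral invariant, and so is every action $|\zeta_n|^2$ by the factorization above; since by (B2) a spectral invariant Poisson-commutes with every $|\zeta_k|^2$, it is invariant under all the rotations $\zeta_k \mapsto e^{i\theta_k}\zeta_k$, hence a function of $(|\zeta_n|^2)_{n \ge 1}$ alone, and therefore $\mathcal H \circ \Phi^{-1}$ has the asserted form.

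It remains to prove that $\Phi$ is a homeomorphism onto $h^{1/2}_+$. Here I would first treat the finite-gap manifolds $U_N := \{u : \gamma_n = 0 \text{ for } n > N\}$: using the explicit spectral formulas one shows each $U_N$ is a $2N$-dimensional real-analytic submanifold of $L^2_{r,0}$ on which $u \mapsto (\zeta_1(u), \dots, \zeta_N(u))$ is a diffeomorphism onto the set of sequences in $h^{1/2}_+$ supported in $\{1, \dots, N\}$. Since $\bigcup_N U_N$ is dense in $L^2_{r,0}$ and the finitely supported sequences are dense in $h^{1/2}_+$, the uniform two-sided trace bound of the second step lets one pass to the limit: $\Phi$ is defined on all of $L^2_{r,0}$, its differential $d_u\Phi$ is everywhere a bounded linear isomorphism — injectivity because by (B2) $\Phi$ is symplectic, surjectivity from the trace bound — and a properness argument based on the trace formulas then forces $\Phi$ to be onto $h^{1/2}_+$ with continuous inverse. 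The principal obstacle is precisely this inverse-spectral part — the sharp two-sided trace formula, the surjectivity of $d_u\Phi$, and the control of the degenerate points where some $\gamma_n$ vanishes — together with the exact evolution formulas for the eigenfunctions needed in (B2); both rest on the fine quantitative spectral analysis of $L_u$ via the generating functional rather than on soft arguments.
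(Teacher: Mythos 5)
Your overall route --- spectral analysis of the Lax operator $L_u = D - T_u$ on the Hardy space, the gap sequence $\gamma_n$ with phase-normalized eigenfunctions $f_n$, the generating function $\mathcal H_\lambda$ and its trace formulas, the definition $\zeta_n = \kappa_n^{-1/2}\langle 1 \mid f_n\rangle$, Lax pairs for the $\mathcal H_\lambda$-flows, and the finite-gap submanifolds --- is the same as the paper's. Two of your steps, however, contain genuine gaps.

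First, the Lax structure alone does not determine all the Poisson brackets of the $\zeta_n$. Tracking the $\mathcal H_\lambda$-flows gives $\{\lambda_p,\lambda_n\}=0$ (hence $\{\gamma_p,\gamma_n\}=0$), and following the phase of $\langle 1\mid f_n\rangle$ gives $\{\gamma_p,\langle 1\mid f_n\rangle\}=i\langle 1\mid f_n\rangle\delta_{pn}$ (Corollary \ref{bracketlambda}, Proposition \ref{gamma,1,f}). But every $\mathcal H_\lambda$ and every $\gamma_k$ is a spectral invariant, so this family is blind to the angle--angle relations: it leaves $\{\varphi_p,\varphi_n\}$ completely undetermined, and your claim that $\mathcal H_\lambda$ and the $\gamma_k$ are ``rich enough to determine all Poisson brackets of the $\zeta_n$'' is false. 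The paper supplies the missing relation $\{\varphi_p,\varphi_n\}=0$ by a separate finite-dimensional argument on $\mathcal U_N$: writing $\tilde\omega := \omega - \sum_{n=1}^N d\gamma_n\wedge d\varphi_n$, closedness together with the already-proved relations forces $\tilde\omega = \sum a_{np}(\gamma)\,d\gamma_n\wedge d\gamma_p$, and then $\tilde\omega$ is evaluated on the fixed-point set $\{\varphi_1=\cdots=\varphi_N=0\}$ of the reflection $u(x)\mapsto u(-x)$, where $u$ is even and $\omega$ pulls back to zero. This evenness symmetry argument is the ingredient your outline omits, and it cannot be recovered from heavier use of the generating function.

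Second, your homeomorphism argument asserts that $d_u\Phi$ is everywhere a bounded linear isomorphism, with ``injectivity because $\Phi$ is symplectic'' and ``surjectivity from the trace bound.'' Neither inference is valid in infinite dimensions: nondegeneracy of a symplectic form on a Hilbert space does not yield bounded invertibility of a linearization, and the Parseval identity $\|u\|^2 = 2\sum_{n\ge 1}n|\zeta_n(u)|^2$ is a nonlinear constraint that says nothing about $d_u\Phi$. In fact the paper never claims $\Phi$ is a diffeomorphism --- Theorem \ref{main result} asserts only a homeomorphism, with regularity deferred to subsequent work. The correct route is entirely topological: $\Phi$ is one-to-one by the reconstruction formula \eqref{recover}, expressing $\Pi u$ in terms of $(\zeta_n)_n$ via the matrix $M(u)$; it is continuous and proper (Proposition \ref{Birkhoff map}); and it is onto because each $\Phi_N : \mathcal U_N \to \C^{N-1}\times\C^*$ is bijective (Theorem \ref{Ngap}) and properness allows passage to the limit of truncated sequences (Corollary \ref{Phionto}). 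A continuous proper bijection of metric spaces is automatically a homeomorphism, so the linearization need not --- and in the paper does not --- enter the argument.
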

\begin{remark} (i) The Birkhoff map $\Phi$ is bounded, meaning that for any bounded subset $B$ of $L^2_{r,0}$, the image $\Phi(B)$ is bounded.
Indeed, this is a direct consequence of the trace formula of Proposition \ref{formulae}, saying that for any $u \in L^2_{r,0}$, 
$$
\|u \|^2 = 2 \sum_{n =1}^\infty n |\zeta_n|^2 \ .
$$  
In analogy to Parseval's identity in Fourier analysis, we refer to this trace formula as {\em Parseval's identity for the nonlinear map $\Phi$}.\\
(ii) When restricted to submanifolds of finite gap potentials (cf. Definition \ref{definition finite gap}),
the map $\Phi$ is a canonical, real analytic diffeomorphism
onto corresponding Euclidean spaces -- see 
Theorem \ref{Ngap} in Section \ref{finite gap potentials} for details.
\end{remark}

\bigskip

\noindent
In subsequent work \cite{GKT} we plan to further study
the regularity of the Birkhoff map of
Theorem \ref{main result}
and its restrictions to the scale of Sobolev spaces $H^s_{r,0}$, $s \ge 0$, where
$$
H^s_{r,0} := \{ u \in L^2_{r,0}\, | \, \|u\|_s < \infty \}\, , \qquad    \|u\|_s:= \big( \sum_{n \in \Z} \langle n \rangle^{2s} |\widehat u(n) |^2 \big)^{1/2}
$$
and to make a detailed analysis of the solution map of the Benjamin--Ono equation, including qualitative properties 
of solutions of \eqref{BO} such as  their long time behaviour. 
As an immediate application of Theorem \ref{main result} in this direction we mention the following result on the 
solutions of the BO equation for initial data in $L^2_{r,0}$ obtained by quadrature,
when  equation \eqref{BO} is expressed in Birkhoff coordinates. 
\begin{theorem}\label{almost periodic} For every initial data $u(0)$ in $L^2_{r,0},$ the solution of the BO equation
with initial data $u(0)$,
$$
t\in \R \mapsto u(t)\in L^2_{r,0} \, ,
$$ 
is almost periodic. Its orbit is relatively compact in 
$L^2_{r,0}$.
\end{theorem}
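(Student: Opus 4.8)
The plan is to transport the Benjamin--Ono flow through the Birkhoff map $\Phi$ of Theorem \ref{main result} and to exploit that, in these coordinates, it becomes a linear flow on an invariant torus that is compact in $h^{1/2}_+$. The crucial — and only genuinely delicate — step is the claim that for every $u(0)\in L^2_{r,0}$ the solution satisfies $\zeta_n(u(t))=e^{-it\omega_n}\,\zeta_n(u(0))$ for all $n\ge1$ and $t\in\R$, with suitable real numbers $\omega_n=\omega_n(u(0))$. I would establish it first for smooth data $u(0)\in H^s_{r,0}$, $s\ge0$, where $\mathcal H$, its $L^2$-gradient, and the classical Hamiltonian formalism make sense along the solution: writing $K:=\mathcal H\circ\Phi^{-1}$ and $I_m:=|\zeta_m|^2$, property (B3) gives $K=K\big((I_m)_{m\ge1}\big)$, so by the Poisson relations (B2) the actions are pairwise in involution, $\mathcal H$ Poisson-commutes with each $I_n$ (hence each $I_n$ is conserved), and $\dot\zeta_n$ is a purely imaginary multiple of $\zeta_n$ whose coefficient ($=\pm\,\partial K/\partial I_n$) is a function of the conserved actions, so constant in time; the exponential formula follows. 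To pass to arbitrary $u(0)\in L^2_{r,0}$ I would invoke the global well-posedness of \eqref{BO} on $L^2_{r,0}$: choosing smooth $u^{(k)}(0)\to u(0)$ in $L^2_{r,0}$, the solutions satisfy $u^{(k)}(t)\to u(t)$ in $L^2_{r,0}$ for every $t$, so by continuity of the $\zeta_n$ (B1) the conservation of the actions passes to the limit; and if $\zeta_n(u(0))\neq0$, the ratio $\zeta_n(u^{(k)}(t))/\zeta_n(u^{(k)}(0))=e^{-it\omega_n(u^{(k)}(0))}$ converges, as $k\to\infty$, to $\rho_n(t):=\zeta_n(u(t))/\zeta_n(u(0))$, a continuous $\T$-valued function satisfying $\rho_n(s+t)=\rho_n(s)\rho_n(t)$, hence $\rho_n(t)=e^{-it\omega_n}$ for a unique real $\omega_n=:\omega_n(u(0))$ — which gives the formula (and when $\zeta_n(u(0))=0$ it holds trivially since then $\zeta_n(u(t))\equiv0$ by conservation of actions).

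Granting the claim, fix $u(0)$, set $r_n:=|\zeta_n(u(0))|$ and $\theta_n:=\arg\zeta_n(u(0))$, and note that $\sum_{n\ge1}n\,r_n^2=\tfrac12\|u(0)\|^2<\infty$ by Parseval's identity for $\Phi$. For every $t$ the point $\Phi(u(t))=\big(r_n e^{i\theta_n}e^{-it\omega_n}\big)_{n\ge1}$ lies in
\[
\mathcal T:=\big\{(z_n)_{n\ge1}\in h^{1/2}_+:\ |z_n|=r_n\ \text{for every }n\ge1\big\}\,.
\]
The set $\mathcal T$ is closed and bounded in $h^{1/2}_+$ and has uniformly small tails, since $\sup_{z\in\mathcal T}\sum_{n>N}n\,|z_n|^2=\sum_{n>N}n\,r_n^2\to0$ as $N\to\infty$; by the standard compactness criterion in weighted $\ell^2$-spaces $\mathcal T$ is therefore compact in $h^{1/2}_+$, so $\Phi^{-1}(\mathcal T)$ is compact in $L^2_{r,0}$. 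Since $\Phi^{-1}(\mathcal T)$ contains the whole orbit $\{u(t):t\in\R\}$, the orbit is relatively compact in $L^2_{r,0}$, which is the second assertion.

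For almost periodicity, write $f(t):=\Phi(u(t))$ and approximate it, uniformly in $t$, by the generalized trigonometric polynomials $f^{(N)}(t):=\sum_{n\le N}r_n e^{i\theta_n}e^{-it\omega_n}e_n+\sum_{n>N}r_n e^{i\theta_n}e_n$, where $e_n$ denotes the $n$-th coordinate vector of $h^{1/2}_+$; indeed $\|f(t)-f^{(N)}(t)\|_{1/2}^2\le 4\sum_{n>N}n\,r_n^2\to0$ uniformly in $t$. Each $f^{(N)}$ is almost periodic as a map $\R\to h^{1/2}_+$, and a uniform limit of almost periodic functions is almost periodic, so $f$ is almost periodic. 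Finally $u(\cdot)=\Phi^{-1}\circ f$, and since $f$ takes values in the compact set $\mathcal T$ on which the continuous inverse $\Phi^{-1}$ is uniformly continuous, $u(\cdot)$ is almost periodic with values in $L^2_{r,0}$. I expect the main obstacle to be the passage from smooth data to general $L^2_{r,0}$ data in the opening claim: it rests on the global well-posedness of \eqref{BO} in $L^2_{r,0}$ and on a careful limiting argument identifying the $L^2_{r,0}$ solution with the $\Phi$-pull-back of the linear flow, including the existence of the frequency map $u\mapsto(\omega_n(u))_{n\ge1}$ on all of $L^2_{r,0}$.
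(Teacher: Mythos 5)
Your proof is correct, and it differs from the paper's at two points. First, where the paper derives the exponential law $\zeta_n(u(t))=\zeta_n(u_0)\,e^{i\omega_n(u_0)t}$ as an ``immediate consequence'' of the Birkhoff coordinates and the $L^2$ well-posedness of \cite{MP} (Proposition~\ref{BO frequencies}), you fill in the passage from smooth to general $L^2_{r,0}$ data explicitly: conservation of actions passes to the limit by continuity of $\zeta_n$, and for nonvanishing $\zeta_n(u_0)$ the ratio $\rho_n(t)=\zeta_n(u(t))/\zeta_n(u_0)$ is identified as a continuous $\T$-valued homomorphism (this also yields the frequency map on all of $L^2_{r,0}$ without invoking the explicit formula of Proposition~\ref{BO frequencies}; one should note that the group law $\rho_n(s+t)=\rho_n(s)\rho_n(t)$ uses the flow property of the $L^2_{r,0}$ well-posedness theory, not only convergence of the smooth approximants). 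Second, for almost periodicity the paper appeals directly to Bochner's criterion and a Cantor diagonal extraction, whereas you use the equivalent characterization of Banach-space-valued almost periodic functions as uniform limits of trigonometric polynomials: the tail bound $\|f(t)-f^{(N)}(t)\|_{1/2}^2\le 4\sum_{n>N}n\,r_n^2$ is exactly the point that makes this work, and it is the same tail control that underlies the paper's diagonal argument. Both approaches make essential use of the Parseval identity $\sum n r_n^2=\tfrac12\|u(0)\|^2$; your version makes the composition with $\Phi^{-1}$ (uniformly continuous on the compact torus $\mathcal T$) explicit, which the paper compresses into ``Since $\Phi$ is a homeomorphism.'' Minor remark: when you write ``smooth data $u(0)\in H^s_{r,0}$, $s\ge0$,'' you should require $s$ large enough (say $s\ge 3/2$ or greater) for the Hamiltonian formalism, $\nabla\mathcal H(u)\in H^1$, and the chain-rule manipulations with $K$ and the $I_m$ to be rigorously justified; $s=0$ is precisely where the direct argument breaks down.
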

\begin{remark}
(i) The solutions of the BO equation of Theorem \ref{almost periodic} coincide with the ones
obtained by Molinet in \cite{Mol} (cf. also \cite{MP}). Since the average is conserved by \eqref{BO},
the results of Theorem \ref{almost periodic} easily extend to solutions with initial data in 
$L^2_r$.\\
(ii)
Expressing a given solution $t \in \R \mapsto u(t) \in L^2_{r,0}$ of \eqref{BO}  in Birkhoff coordinates one immediately sees that Corollary 1.3 in 
\cite{TV} on the recurrence of solutions extends in the sense that for any initial datum $u(0) \in L^2_{r,0}$ 
there exists a sequence
$0 < t_1 < t_2 < \cdots$ of times with $t_n \to \infty$ so that 
$\lim_{n \to \infty}\| u(t_n) - u(0)\| = 0$.\\
(iii) As another immediate application of Theorem \ref{main result}, we present a new proof of the result due to 
Amick$\&$Toland \cite{AT}, characterizing the traveling wave solutions of the BO equation as solutions with initial data given by one gap potentials -- see Proposition \ref{traveling} in 
Appendix \ref{general one gap potentials} for details.
\end{remark}

\bigskip

\noindent
{\em Outline of the proof of Theorem \ref{main result}:}  At the heart of the proof is the Lax pair formulation 
$\partial_t L_u = [B_u, L_u]$ of the BO equation, discovered by Nakamura \cite{Nak} and Bock\&Kruskal \cite{BK}.
It is reviewed at the beginning of Section \ref{Lax operator} -- see also Appendix \ref{verification Lax pair equation}.
For any $u \in L^2_r,$ $L_u$ is a selfadjoint pseudodifferential operator of order one whose spectrum
is conserved along the flow of the BO equation.
In contrast to integrable PDEs with a Lax pair formulation such as the Korteweg-de Vries or the nonlinear Schr\"odinger equation,
the Lax operator $L_u$ for the $BO$ equation is not a differential operator.
When considered on the Hardy space $L^2_+ := \{ h \in L^2 \, : \, \widehat h(n) = 0 \,\, \forall n < 0 \}$,
the operator $L_u$ is given by $L_u = -i \partial_x - T_u$ where $T_u$ is the Toeplitz operator $T_uh =  \Pi(uh)$
and $\Pi$ denotes the Szeg\H{o} projector $\Pi : L^2_r \to L^2_+$.  Its spectrum consists of real
eigenvalues which when listed in increasing order and with their multiplicities take the form
$\lambda_0 \le \lambda_1 \le \lambda_2 \le \ldots$ with $\lambda_n \to \infty$ as $n \to \infty$.
In our analysis of the spectrum of $L_u$, the shift operator $S: L^2_+ \to L^2_+, h \mapsto e^{ix}h$
plays an important role. We point out that this operator played also a major role in the study of the Szeg\H{o} equation (cf. \cite{GG0}, \cite{GG1}).
Using commutator relations between $L_u$ and $S$ (cf. Proposition \ref{simple}) we infer that 
for any $u \in L^2_r$ 
$$
\gamma_n(u) := \lambda_n(u) - \lambda_{n-1}(u) -1 \ge 0 \,, \quad \forall \, n \ge 1 \, .
$$
Since all eigenvalues are simple, $\gamma_n \equiv \gamma_n(u)$, $n \ge 1,$ can be shown
to depend real analytically on $u$. Note that the eigenvalues $\lambda_n$, $n \ge 1$, can then be expressed
in terms of the $\gamma_k$'s by
$\lambda_n = \lambda_0 + n + \sum_{k=1}^{n} \gamma_k.$
Furthermore, we provide an orthonormal basis of eigenfunctions $f_n$, where each $f_n$ depends 
analytically on $u$ (Definition \ref{on basis}).

A key ingredient of the proof of Theorem \ref{main result} is the generating function $\mathcal H_\lambda (u)$, which we study in Section \ref{trace}.
For any $u \in L^2_r$,  $\lambda \mapsto  \mathcal H_\lambda (u)$ is the meromorphic function, defined by
$$
\mathcal H_\lambda (u):=\langle (L_u+\lambda Id )^{-1}1\, | \, 1\rangle \ .
$$
Note that $\mathcal H_\lambda (u)$ is holomorphic on $\mathbb C \setminus 
\{ - \lambda_n(u) \, : \, n \ge 0\}$ and might have simple poles at $\lambda =-\lambda_n(u), n\ge 0$.
By Proposition \ref{formulae}, $\mathcal H_\lambda (u)$ can be expressed as a function of $\lambda_0$ and the $\gamma_n$'s, 
and hence
is a family of integrals of the BO equation, parametrized by the spectral parameter $\lambda$.
For our study of the BO equation it plays a role comparable to the one of the discriminant in the normal form theory of the KdV equation.
In particular, $\mathcal H_\lambda$ admits a product representation (Proposition \ref{formulae}) and an expansion at $\lambda = \infty$ 
whose coefficients constitute the BO hierarchy. The functionals $\langle u | 1 \rangle$, $\|u\|^2$ as well as the BO Hamiltonian $\mathcal H$ are elements in this hierarchy.
As an application we derive a trace formula for $\|u\|^2$ (Proposition \ref{formulae}), implying that for any $u \in L^2_{r,0}$ the sequence $(\sqrt{\gamma_n})_{n \ge 1}$ 
is in $h^{1/2}_+$. Furthermore, we show that for any 
$n \ge 0,$ the absolute value $| \langle 1 | f_n \rangle | $ is a function of $\lambda_0$ and the $\gamma_n$'s alone  (Corollary \ref{1,f}).

Our candidates for the Birkhoff coordinates $(\zeta_n)_{n \ge 1}$ are then obtained by normalizing $(\langle 1 | f_n \rangle)_{n \ge 1}$ so that 
$| \zeta_n | = \sqrt{\gamma_n}$ for any $n \ge 1$ (cf. Section \ref{Birkhoff coordinates}) and the corresponding action angle coordinates 
are $(\gamma_n)_{n \ge 1}$, $({\rm arg} \langle 1 | f_n \rangle)_{n \ge 1}$. These variables are very reminiscent of the canonical variables
constructed for the Toda system with Dirichlet boundary condition in \cite{M}. In fact, the Toda system admits a Lax pair formulation where
the Lax operator $L$ is a Jacobi matrix. In the case of Dirichlet boundary conditions, all eigenvalues of $L$ are simple. 
Canonical variables are then given by these eigenvalues and the first components of appropriately normalized eigenvectors -- see \cite{M} for more details.

In the sequel, the map $\Phi: u \mapsto (\zeta_n(u))_{n \ge 1}$, defined on $L^2_{r,0}$ and with values in $h^{1/2}_+$, is studied in detail. 
A key observation is that any $u \in L^2_{r,0}$ can be reconstructed from $\langle 1 | f_n \rangle,$ $n \ge 0$, allowing to prove that
$\Phi$ is one to one (formula \eqref{recover}, Proposition \ref{injectivity}). 
The generating function also plays a major role for proving that $(\zeta_n)_{n \ge 1}$, $(\overline \zeta_n)_{n \ge 1}$ satisfy the canonical relations
stated in Theorem \ref{main result}.
Since for any $n \ge1,$ the $L^2-$gradient of $\nabla \zeta_n$ is in $H^1$ (Proposition \ref{zeta_n in H^1}), it follows 
that the Poisson brackets among these functionals are well defined. A key ingredient for proving that they satisfy canonical relations, 
is the observation that, for $\lambda$ sufficiently large, the $\Psi DE$ $\partial_t u = \partial_x \nabla \mathcal H_\lambda(u)$ admits 
the Lax pair formulation $\frac{dL_u}{dt}=[B_u^\lambda , L_u]$ where $B_u^\lambda$ is a certain skew adjoint operator
(Proposition \ref{laxpairHlambda}). As a consequence, the eigenvalues $\lambda_p$ commute with $\mathcal H_{\lambda}$, implying that
$\{ \gamma_p, \gamma_n \}$ vanishes on $L^2_{r,0}$ for any $p, n \ge 1$ (Corollary \ref{bracketlambda}). In a similar fashion 
one computes $\{ \mathcal H_\lambda,  \langle 1, f_n \rangle \}$ and then derives that for any $p\ge 1, n\ge 0,$ 
$\{ \gamma_p, \langle 1, f_n \rangle\} = i\, \langle 1,  f_n \rangle \delta_{pn}$ on $ L^2_{r,0}$ (Proposition \ref{gamma,1,f}).

As a last key ingredient into the proof of the claimed canonical relations for
$(\zeta_n)_{n \ge 1}$, $(\overline \zeta_n)_{n \ge 1}$ we consider finite gap potentials:
we say that $u \in L^2_r$ is a {\em finite gap potential} if the set $\{ n \ge 1 \, : \, \gamma_n(u) > 0 \}$ is finite.
We then show that for any $N \ge 1,$ the restriction of $\Phi$ to the submanifold $\mathcal U_N$ of special finite gap potentials, defined by
$$\mathcal U_N := \{ u \in L^2_{r,0} \, : \, \gamma_N(u) > 0, \, \gamma_n(u) = 0 \,\, \forall \, n > N \}\ , $$
is a symplectic diffeomorphism onto $\C^{N-1} \times \C^\ast$ 
(cf. Theorem \ref{Ngap} in Section \ref{finite gap potentials}).
\vskip 0.25cm
 In the course of the proof we 
derive a formula for an arbitrary element in $\mathcal U_N$, showing in particular that $\Pi u$ is of the form $\Pi u(x) = R(e^{ix})$ where $R$ is a rational function,
$R(z) = - z \frac{Q'(z)}{Q(z)}$, and $Q(z)$ a polynomial of degree $N$ with roots outside the unit disc -- see \eqref{formula Pi u}, Remark \ref{formula Ngap}. We note that $\bigcup_{N \ge 0} \mathcal U_N$ is the set of all
finite gap potentials in $L^2_{r,0}$ and is dense in $L^2_{r,0}$.
In Section \ref{proof of Theorem 1}  we make a synopsis of the proof of Theorem \ref{main result}
and as an application of Theorem \ref{main result} show Theorem \ref{almost periodic}.

\bigskip

\noindent
{\em Related work:}
The Benjamin-Ono equation has been extensively studied. 
For an excellent account we refer to the recent survey by 
J.C. Saut \cite{Sa}.
Besides the foundational work of Benjamin \cite{Benj} and Ono \cite{Ono}, let us point out a few highlights,
relevant in the context of the present paper.
The first results about \eqref{BO} by PDE methods
can be found in \cite{Sa0}, \cite{ABFS} and the most recent ones in \cite{Mol}, \cite{MP}, \cite{IT} -- we refer
to \cite{MP} and \cite{Sa} for numerous other references.\\
Concerning the investigation of the integrability of \eqref{BO}, besides the discovery of the Lax pair formulation of this equation
already referred to above (cf. \cite{Nak}, \cite{BK}), we mention the pioneering work of
Fokas$\&$Ablowitz \cite{FA}, Coifman$\&$Wickerhauser
\cite{CW}, Kaup$\&$Matsuno \cite{KM}, and the more recent contributions by Wu \cite{Wu}, \cite{Wu2},  on the scattering transform for the BO equation, which concerns the integrability of the BO equation when considered on the real line. Surprisingly, the integrability of the BO equation on the torus has not been studied in detail so far. We point out the work by Satsuma$\&$Ishimori \cite{SI} on the construction of multi-phase solutions of \eqref{BO} by Hirota's bilinear method,
the one by Amick$\&$Toland \cite{AT} on the characterisation of the solitary wave solutions as well as the work
by Dobrokhotov$\&$Krichever \cite{DK} where multi-phase solutions are constructed by the method of finite zone integration. We refer to these solutions as finite gap solutions and they are treated in detail in Section \ref{finite gap potentials}. 
In their work on the quantum BO equation,
Nazarov\&Sklyanin \cite{NS} introduced a generating function for the classical BO hierarchy of the type introduced in Section \ref{trace}, which admits a quantum analogue. 
The approach of Nazarov\&Sklyanin has been further developed by Moll \cite{Moll} in the context of the classical BO equation.
Finally, we mention the work
by Tzvetkov$\&$Visciglia on the construction of invariant Gaussian measures and applications to the long time behaviour of solutions of \eqref{BO} 
(cf. \cite{TV} and references therein).\\
Our analysis of the integrability of \eqref{BO}
borrows from the one carried out for the Korteweg-de Vries and the defocusing nonlinear Schr\"odinger equations
as well as the Szeg\H{o} equation. As the BO equation, 
the Korteweg-de Vries and the defocusing nonlinear Schr\"odinger equations are integrable in the sense that they
admit global Birkhoff coordinates -- see \cite{KP1}, \cite{GK1} and references therein.
Furthermore, it turned out that the Szeg\H{o} equation, an integrable $\Psi$DE introduced and studied in detail by G\'erard and Grellier,
is closely related to the BO equation and tools developed for the study of the integrability of this equation 
were of great use in the present paper -- cf. \cite{GG0}, \cite{GG1}, \cite{GG2} and references therein.

\vskip0.5cm

\noindent
{\em Notation:} We have already introduced the Hilbert spaces $L^2,$ $L^2_r$,$L^2_{r,0}$, and the Hardy space $L^2_+$
as well as the Szeg\H{o} projector $\Pi : L^2 \to L^2_+$.
 More generally,  for any $1 \le p \le \infty, $
 $L^p \equiv L^p(\T, \C)$ denotes the standard complex $L^p-$space  and the $L^p-$norm of an element $f \in L^p$
 is denoted by $\|f\|_{L^p}$. The subspace  of real valued functions in $L^p$ is denoted by $L^p_r \equiv L^p(\T, \R)$.
 Furthermore, we have already introduced the Sobolev spaces $H^s_{r,0}$. More generally, we denote by $H^s$ the standard
 Sobolev space $H^s(\T, \C)$ and by $H^s_r$ the corresponding real subspace. 
 In order to define the $L^2-$ gradient of a differentiable functional $\mu: f \mapsto \mu(f) = \text{Re} \mu(f) + i \text{Im} \mu(f)  \in \C$ , 
 defined on $L^2_r$, introduce the complex bilinear form,
 $$
\langle f , g \rangle :=\frac{1}{2\pi}\int_0^{2\pi}f(x) g(x)\, dx\ , \qquad \forall \,\, f, g \in L^2\ .
 $$
Note that the function $g$ in the latter integral is not complex conjugated. The $L^2-$gradient $\nabla \mu (f)$ of $\mu$ at $f \in L^2_r$ 
is then defined as the element in $L^2$, uniquely determined by
$$
\langle \nabla \mu (f) , g \rangle = d \mu (f)[g]\ , \qquad \forall \, g \in L^2_r \ .
$$
The $L^2-$gradient of a functional, defined on a subspace of $L^2_r$ such as a Sobolev space, is defined in a similar fashion
by extending the bilinear form $\langle f , g \rangle$ as a dual paring between the subspace and its dual.
In case the functional $\mu$ is defined on a complex neighborhood of $L^2_r$ in $L^2$ and analytic there, the $L^2-$gradient
of $\mu$ is defined in the same way. 

Finally, we recall that we have already introduced the sequence spaces $h^s(J)$ where $J \subset \Z$.
In case $s=0$, we write $\ell^2(J)$ for $h^0(J)$ and endow it with the standard inner product
$$
\langle (z_n)_{ n \in J} | (w_n)_{ n \in J} \rangle_{\ell^2} = \sum_{n \in J} z_n \overline{w_n}\, .
$$
More generally, for any $1 \le p < \infty$ and $s \in \R,$ denote by $\ell^{p, s}(J) \equiv \ell^{p, s}(J,\C)$ the weighted $\ell^p-$sequence space,
consisting of sequences $\xi = ( \xi_n)_{n \in J} \subset \C$ so that 
$$
 \| \xi \|_{\ell^{p,s}} := \big( \sum_{n \in J} \langle n \rangle^{ps} |\xi_n|^p \big)^{1/p} < \infty\ .
$$
Correspondingly we define the sequence spaces $\ell^{p, s}(J, \R)$. 
The spaces $\ell^{\infty, s}(J) \equiv 
\ell^{\infty, s}(J,\C)$ and $\ell^{\infty, s}(J,\R)$ 
are defined in a similar fashion.
In the sequel we will often use the notation
 $\ell^p_n$ to denote the $n$--th element of a sequence in $\ell^p$.

\section{The Lax operator}\label{Lax operator}

Nakamura \cite{Nak} and Bock\&Kruskal \cite{BK} discovered that the BO equation admits a Lax pair. 
In the periodic setup it can be described as follows.
For any $u \in L^2_r$, introduce the unbounded operators $L_u$ and $B_u$ on $L^2_+$. 
\begin{equation}\label{Lax pair for BO}
L_u  = D  - T_u \,, \qquad B_u  :=  i D^2  + 2i T_{ D( \Pi u)}  - 2i D T_u  \,.
\end{equation}
where $D$ is the unbounded operator on $L^2_+$, defined by
$D:=-i\partial_x$,
$T_u$ the Toeplitz operator on $L^2_+$  with symbol $u$,
$$
T_u :  L^2_+ \to L^2_+, \, h \, \mapsto \, T_uh = \Pi (uh)
$$ 
 and $\Pi : L^2 \to L^2_+$ the Szeg\H{o} projector.
 For any $u \in L^2_r$, $T_u$ is an unbounded operator, satisfying the estimate
$$\| T_u(h)\|\leq \|u\|\, \|h\|_{L^\infty}\ .$$
Hence $L_u= D - T_u$ is an unbounded operator with domain $H^1_+ := H^1 \cap L^2_+$ 
and $B_u$ is such an operator with domain 
$H^2_+ := H^2 \cap L^2_+$.
Here
$H^m\equiv H^m(\mathbb T, \mathbb C)$ denotes the standard Sobolev space of order $m \ge 1$. 
Furthermore, it follows that $T_u$ is $D-$compact.
Arguing as in Appendix \ref{L_u on the line} 
(cf. Lemma \ref{relative bound}) one verifies that $L_u$ is selfadjoint and $B_u$ skew-adjoint. The Lax pair formulation of the BO equation then reads
\begin{equation}\label{Lax pair}
\frac{d}{dt} L_u = [B_u, L_u]\,.
\end{equation}
\begin{remark}\label{Btilde}
In order to make the paper self-contained, we verify the Lax pair formulation of the BO equation in Appendix A.
Clearly,  the operator  $B_u$ is not uniquely determined. Instead of $B_u$ one might take any operator of the form $B_u + P_u$
where $P_u$ is a skew adjoint operator commuting with $L_u.$ In particular, one might choose instead of $B_u$ the operator 
$\tilde B_u:=B_u - i L_u^2.$ By a straightforward computation one gets 
$$\tilde B_u=  i(T_{ |\partial_x| u}  -  T_u^2)\ ,
$$ 
which is a pseudodifferential operator of order $0$. This Lax pair formulation will be also a consequence of Proposition \ref{laxpairHlambda}.
\end{remark}
The Lax pair formulation of the BO equation implies that at least formally, the spectrum of the operator $L_u$ is left invariant by the solution map of the BO equation.
For this reason, we want to analyze it in more detail.
In addition of being selfadjoint,  $L_u$ has a compact resolvent. 
Furthermore note that for $u=0$, $L_0 = D$ is a nonnegative operator and its spectrum consists of simple eigenvalues,
which we list in  increasing order, $\lambda_n = n$, $n \ge 0$. Since $T_u$ is $D$--compact
it then follows that for any $u \in L^2_r,$ $L_u$ is bounded from below and its spectrum consists of eigenvalues
which are bounded from below and have finite multiplicities. 
We list these eigenvalues in increasing order 
and with their multiplicities so that
$$ \lambda_n(u)  \le \lambda_{n+1}(u) , \quad \forall n \ge 0.$$
Our first result says that all eigenvalues of $L_u$ are simple. More precisely, we have the following

\begin{proposition}\label{simple} For any $u \in L^2_r$ and $n \ge 0$, the eigenvalues $\lambda_n \equiv \lambda_n(u)$ satisfy
$$\lambda_{n+1} \geq \lambda_n + 1\ .$$
As a consequence, all eigenvalues of $L_u$ are simple.
\end{proposition}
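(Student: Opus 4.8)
The plan is to exploit the shift operator $S:L^2_+\to L^2_+$, $h\mapsto e^{ix}h$, together with its adjoint $S^*$ (the backward shift, $\widehat{S^*h}(n)=\widehat h(n+1)$), and the elementary facts that $\ker S^*=\C\cdot 1$, while $S^*$ maps $\{1\}^\perp\cap L^2_+$ isometrically into $L^2_+$. The key input is a commutator computation: since $[D,S]=S$ and, for $h\in H^1_+$,
$$
[T_u,S]h=\Pi(uSh)-S\Pi(uh)=\widehat{uh}(-1)\,1
$$
is a multiple of the constant function, applying $S^*$ and using $S^*S=\mathrm{Id}$ and $S^*1=0$ yields the intertwining identity $S^*L_uS=L_u+\mathrm{Id}$ on $H^1_+$. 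Equivalently — and this is the form I would use — for every $\phi\in H^1_+$ with $\widehat\phi(0)=0$ one has $\|S^*\phi\|=\|\phi\|$ and
$$
\langle L_uS^*\phi\mid S^*\phi\rangle=\langle L_u\phi\mid\phi\rangle-\|\phi\|^2\,.
$$
(This may also be checked directly from $L_u=D-T_u$ by a Fourier-series computation, using that $|S^*\phi|=|\phi|$ pointwise when $\widehat\phi(0)=0$.)

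Granting this, I would count eigenvalues via the min--max principle. Since $L_u$ is self-adjoint, bounded below and has compact resolvent, for each $t\in\R$ the spectral subspace $E_t:=\mathrm{Ran}\,\mathbf 1_{(-\infty,t)}(L_u)$ is finite dimensional, of dimension $N(t):=\#\{k\ge 0:\lambda_k(u)<t\}$, and is contained in $H^1_+$. The subspace $E_t':=E_t\cap\{1\}^\perp$ has dimension at least $N(t)-1$, and $S^*$ is injective on it because $E_t'\cap\C\cdot 1=\{0\}$. For $0\ne\phi\in E_t'$ the identity above together with $\langle L_u\phi\mid\phi\rangle<t\,\|\phi\|^2$ (valid on $E_t$) gives
$$
\langle (L_u-(t-1))S^*\phi\mid S^*\phi\rangle=\langle L_u\phi\mid\phi\rangle-t\,\|\phi\|^2<0\,,
$$
so $L_u-(t-1)$ is negative definite on $S^*(E_t')$, a subspace of dimension at least $N(t)-1$. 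By min--max, $L_u$ has at least $N(t)-1$ eigenvalues strictly below $t-1$, i.e.\ $N(t-1)\ge N(t)-1$. In other words, every half-open interval $[s,s+1)$ contains at most one eigenvalue of $L_u$, counted with multiplicity.

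It remains to read off the statement. If $\lambda_{n+1}(u)<\lambda_n(u)+1$ for some $n\ge 0$, pick $t\in(\lambda_{n+1}(u),\lambda_n(u)+1]$; then $\lambda_0\le\cdots\le\lambda_{n+1}$ all lie below $t$, so $N(t)\ge n+2$, while $t-1\le\lambda_n(u)$ forces $N(t-1)\le n$, contradicting $N(t)\le N(t-1)+1$. Hence $\lambda_{n+1}(u)\ge\lambda_n(u)+1$ for all $n\ge 0$, and in particular all eigenvalues of $L_u$ are simple. The only non-routine point is the intertwining identity $S^*L_uS=L_u+\mathrm{Id}$; everything after it is a standard comparison argument. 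The mild technical care needed is to check that $S^*$ preserves $H^1_+$ so that the form evaluations are legitimate, and that the commutator $[T_u,S]$ — despite $u$ being merely in $L^2_r$ — is genuinely a finite-rank operator with range in $\C\cdot 1$, which is exactly why this rank-one defect is annihilated by $S^*$ and does not disturb the comparison.
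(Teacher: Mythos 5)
Your proof is correct and rests on the same key identity as the paper's, namely $S^*L_uS=L_u+\mathrm{Id}$ on $H^1_+$, obtained from the same commutator computation with the shift. The only difference is the packaging of the variational step: the paper applies the forward shift to test subspaces in the max--min formula (enlarging $F$ to $\C 1\oplus S(F)$ and using $m(G(F))=m(F)+1$), whereas you apply the backward shift $S^*$ to the spectral subspace $E_t\cap\{1\}^\perp$ and phrase the comparison through the counting function inequality $N(t-1)\ge N(t)-1$ --- dual formulations of the same argument.
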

\begin{remark}\label{constant potentials} Since for any $u \in L^2_r$, 
$ \langle  L_u 1 | 1 \rangle =  -  \langle u | 1 \rangle$, 
 it follows from the variational characterization of $\lambda_0$
that $ \lambda_0 \le -  \langle  u | 1 \rangle$.  Furthermore, if $\lambda_0 = -  \langle  u | 1 \rangle$, then $f_0 = 1$ is a normalized eigenfunction corresponding to $\lambda_0$.
 Since $L_u 1 = - \Pi u$, one has $\Pi u =  \langle  u | 1 \rangle 1$, implying that $u =  \langle  u | 1 \rangle$.
\end{remark}

The proof of Proposition \ref{simple} is based on properties of the shift operator $S$ and its commutator relations with  $L_u$. More precisely,
$S$ is the operator on $L^2_+$, defined by
$$S : L^2_+ \to L^2_+, \, h \, \mapsto \, Sh(x)={\rm e}^{ix}h(x)\ .$$
Its adjoint with respect to the $L^2-$inner product \eqref{L2 inner product} is given by
$$S^*=T_{{\rm e}^{-ix}}\ ,$$
implying that the following identities hold
\begin{equation}\label{SS*}
S^*S=Id\ ,\quad \ SS^*=Id - \langle \, \cdot \, | 1\rangle 1\ .
\end{equation}
Furthermore, since for any $f \in L^2$, 
$\Pi (e^{ix} f)=S\Pi (f)+\langle e^{ix} f | 1\rangle $
one has for any $h\in H^1_+$,
\begin{equation}\label{T_uS}
T_u(Sh)=ST_u h+ \langle uS h | 1\rangle 1\,.
\end{equation}
Using that  $S^*1=0$, one then gets
\begin{equation}\label{conjugation of T_u}
S^*T_uS=T_u 
\end{equation}
and consequently, since $DS=SD+S$ and hence $S^*DS = D + Id$,
\begin{equation}\label{S*LS}
S^*L_uS=L_u+ Id\,.
\end{equation}
Moreover, combining  $DS=SD+S$ with \eqref{T_uS}, one sees that
\begin{equation}\label{L_uS}
L_u S = S L_u + S  - \langle \,  uS  \cdot \, | 1 \rangle 1\ .
\end{equation}
With these preparations we are now ready to prove Proposition \ref{simple}.
\begin{proof}
Apply the max-min formula for $\lambda_n$,
$$\lambda_n=\max_{\dim F=n}m(F)\ ,\ m(F):=\min \{  \langle  L_u h | h \rangle \, : \,  h\in H^1_+\cap F^\perp \ ,\ \|h\|=1\}\ ,$$
where, in the above maximum, $F$ describes all  $\mathbb C-$subspaces of $L^2_+$ of dimension $n$. If $F$ is such a vector subspace, observe that
$G(F):=\C 1\oplus S(F)$
is a vector subspace of dimension $n+1$, and that $G(F)^\perp \cap H^1_+$ is precisely 
$S(F^\perp \cap H^1_+)$.
Therefore, we have
$$m(G(F))=\min \{ \langle  L_u Sh | S h \rangle\, : \,  h\in H^1_+\cap F^\perp \ ,\ \|h\|=1\}\ .$$
From \eqref{S*LS}, we infer
$\langle L_uSh,Sh \rangle = \langle  L_u h | h \rangle+\|h\|^2$
and consequently
$$
m(G(F)) = m(F)+1\ .$$
Substituting this identity into the above formula for $\lambda_{n+1}$, we conclude that
$
\lambda_{n+1}\geq  \max_{\dim F=n}m(G(F))=\lambda_n+1$.
\end{proof}
We shall prove in Section \ref{gradients} that the eigenvalues $\lambda_n$  are real analytic. At this stage, we establish that they are Lipschitz continuous.
\begin{proposition}\label{lipschitz}
For every $n\ge 0$, the functional $\lambda_n$ is uniformly Lipschitz continuous on bounded subsets of $L^2_r(\T )$. 
\end{proposition}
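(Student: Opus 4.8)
\emph{Proof strategy.} The plan is to combine the min--max characterisation of the eigenvalues with a relative form bound for the Toeplitz term $T_u$, which compensates for the fact that $T_u$ is unbounded on $L^2_+$. Throughout, write $q_u(h):=\langle L_u h\,|\,h\rangle=\langle Dh\,|\,h\rangle-\tfrac1{2\pi}\int_0^{2\pi}u\,|h|^2\,dx$, a quadratic form which is well defined on $H^{1/2}_+:=H^{1/2}\cap L^2_+$, since by the Sobolev embedding $H^{1/4}(\T)\hookrightarrow L^4(\T)$ and Cauchy--Schwarz one has, for every $h\in H^{1/2}_+$ and every $\e>0$,
\[
\tfrac1{2\pi}\int_0^{2\pi}|u|\,|h|^2\,dx\ \le\ \|u\|\,\|h\|_{L^4}^2\ \le\ C\|u\|\,\langle Dh\,|\,h\rangle^{1/2}\|h\|\ \le\ \e\,\langle Dh\,|\,h\rangle+C_\e\|u\|^2\|h\|^2 .
\]
In particular $T_u$ is form--bounded with respect to $D$ with relative bound $0$, so $q_u$ is the closed quadratic form of $L_u$ with form domain $H^{1/2}_+$, and $\lambda_n(u)=\inf\{\,\sup\{q_u(h):h\in G,\ \|h\|=1\}\ :\ G\subset H^{1/2}_+,\ \dim G=n+1\,\}$, the form version of the max--min formula used in the proof of Proposition~\ref{simple}.

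First I would record a crude uniform upper bound: testing with $G=\mathrm{span}\{1,e^{ix},\dots,e^{inx}\}$ and using $\|h\|_{L^\infty}\le\sqrt{n+1}\,\|h\|$ on this subspace gives $\lambda_n(u)\le n+c_n\|u\|$ for all $u\in L^2_r$, with $c_n$ depending only on $n$. Next, fix $R>0$ and $v\in L^2_r$ with $\|v\|\le R$, and let $f_0,\dots,f_n$ be $L^2$--orthonormal eigenfunctions of $L_v$ associated with $\lambda_0(v)\le\cdots\le\lambda_n(v)$. Pairing the eigenvalue equation $Df_j=\lambda_j(v)f_j+T_vf_j$ with $f_j$ yields $\langle Df_j\,|\,f_j\rangle=\lambda_j(v)+\tfrac1{2\pi}\int_0^{2\pi}v\,|f_j|^2\,dx$; bounding the last term by the displayed inequality with $\e=\tfrac12$ (so that $\tfrac12\langle Df_j\,|\,f_j\rangle$ is absorbed into the left side) and using $\lambda_j(v)\le\lambda_n(v)\le n+c_nR$ gives $\langle Df_j\,|\,f_j\rangle\le M_0(n,R)$. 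Hence $\|f_j\|_{1/2}^2\le 1+M_0(n,R)$, and by Sobolev $\|f_j\|_{L^4}^2\le K_0(n,R)$, uniformly in $0\le j\le n$ and in $\|v\|\le R$.

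Finally, given $u,v\in L^2_r$ with $\|u\|,\|v\|\le R$, I would insert $G=\mathrm{span}(f_0,\dots,f_n)$ into the min--max for $\lambda_n(u)$. For $h=\sum_{j=0}^n c_jf_j$ with $\|h\|=1$ one has $q_v(h)=\sum_{j=0}^n\lambda_j(v)|c_j|^2\le\lambda_n(v)$ and $\|h\|_{L^4}\le\sum_{j=0}^n|c_j|\,\|f_j\|_{L^4}\le\big((n+1)K_0(n,R)\big)^{1/2}$, so that
\[
q_u(h)=q_v(h)-\tfrac1{2\pi}\int_0^{2\pi}(u-v)|h|^2\,dx\ \le\ \lambda_n(v)+\|u-v\|\,\|h\|_{L^4}^2\ \le\ \lambda_n(v)+(n+1)K_0(n,R)\,\|u-v\| .
\]
Taking the supremum over such $h$ gives $\lambda_n(u)\le\lambda_n(v)+(n+1)K_0(n,R)\|u-v\|$; exchanging the roles of $u$ and $v$ yields $|\lambda_n(u)-\lambda_n(v)|\le(n+1)K_0(n,R)\,\|u-v\|$, which is exactly uniform Lipschitz continuity on the ball of radius $R$. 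The main obstacle is precisely the unboundedness of $T_u$: it is what forces the detour through the relative form bound, both to obtain uniform $H^{1/2}$--control of the eigenfunctions $f_j$ and to keep the perturbation term $\tfrac1{2\pi}\int(u-v)|h|^2$ under control; once these are in hand the argument is the standard min--max comparison.
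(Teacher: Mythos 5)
Your proof is correct and follows essentially the same approach as the paper: both rely on the Courant--Fischer variational characterisation of $\lambda_n$, take as trial vectors unit elements in the span of the first $n+1$ eigenfunctions of one of the two operators, and control the perturbation term $\frac{1}{2\pi}\int(u-v)|h|^2$ via the Sobolev estimate $\|h\|_{L^4}^2\lesssim \langle(1+D)h\,|\,h\rangle^{1/2}$ together with an a priori $H^{1/2}$ bound on those trial vectors. The only (cosmetic) difference is that you use the min--max form over $(n+1)$-dimensional subspaces and bound each $\|f_j\|_{L^4}$ separately before recombining, whereas the paper uses the dual max--min form, selecting for each $n$-dimensional $F$ a single unit vector $h\in F^\perp\cap\bigoplus_{k\le n}\ker(L_u-\lambda_k(u)\,Id)$ and bounding $\langle Dh\,|\,h\rangle$ for that combination directly, which yields a slightly cleaner dependence of the Lipschitz constant on $n$ and $\|u\|$ — but both arguments establish the same uniform Lipschitz continuity on bounded sets.
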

\begin{proof} For any $h\in H^1_+$,
\begin{equation}\label{comparison}
\vert \langle L_uh,h\rangle -\langle L_vh,h\rangle\vert =\vert \langle (u-v), |h|^2\rangle \vert\leq \| u-v\|\| h\|_{L^4}^2\
\end{equation}
and if in addition $\| h \| = 1$, one has by the Sobolev embedding theorem
\begin{equation}\label{Sobolev}
\| h\|_{L^4}^2\leq C \langle (1+D)^{1/2}h\vert h\rangle \leq  C\langle (1+D)h\vert h\rangle ^{1/2} \ .
\end{equation}
Applying the max-min formula and \eqref{comparison} for $v=0$, we obtain
\begin{eqnarray*}
\lambda_n(u)&\leq &\max_{\dim F=n}\min \{ \langle Dh+C\| u\| \langle (1+D)^{1/2}h \, \vert \, h\rangle \, : 
\, h\in F^\bot \cap H^1_+, \| h\|=1\}\\
&\leq &n+C\| u\|(1+n)^{1/2}\ .
\end{eqnarray*}
Given a subspace $F \subset L^2_+$ with $\dim F=n$,  select 
$h\in H^1_+\cap \oplus_{k=0}^n \ker (L_u-\lambda_k(u) Id)\cap F^\perp $ with $\| h\|=1$. From \eqref{comparison} and \eqref{Sobolev} one then infers that
$$\langle L_vh\vert h\rangle \leq \langle  L_uh\vert h\rangle +C \| u-v\| \langle (1+D)h\vert h\rangle^{1/2}\ .$$
On the other hand, $\langle L_uh\vert h\rangle \leq \lambda_n(u)$, implying that
$$\langle Dh\vert h\rangle =\langle uh\vert h\rangle +\lambda_n(u)\leq C\| u\| \langle (1+D)h\vert h\rangle ^{1/2}+n +C\| u\|(1+n)^{1/2}\ ,$$
yielding
$$\langle Dh\vert h\rangle \leq \tilde C(n+1+\| u\|^2)\ .$$
We then conclude by the max-min formula that
$$\lambda_n(v)\leq \lambda_n(u)+ K\| u-v\| (n+1+\| u\|^2)^{1/2}$$
where $K > 0$ is an absolute constant.
\end{proof}
In view of Proposition \ref{simple} we introduce for any $n \ge 1$ and $u \in L^2_r$,
\begin{equation}\label{gamma}
\gamma_n(u):=\lambda_n(u)-\lambda_{n-1}(u)-1\ ,
\end{equation}
and refer to it as the $n$--th gap length --
see Proposition \ref{Rspectrum} in Appendix
\ref{L_u on the line} for an explanation of this terminology.
It then follows that for any $n \ge 1$
\begin{equation}\label{lower bound ev}
\lambda_n(u) = n + \lambda_0(u) + \sum_{k=1}^n \gamma_k(u) \ge n + \lambda_0(u)\ .
\end{equation}
Next we obtain a useful characterisation for the vanishing of $\gamma_n.$
For any $u \in L^2_r$ and $n \ge 0$, we denote by $f_n$ a $L^2$-normalized eigenfunction of $L_u$ for the eigenvalue $\lambda_n$. Hence
$f_n \in H^1_+$ and $\| f_n\| = 1.$
In view of Proposition \ref{simple}, $f_n$ is determined uniquely up to a phase factor for any $n \ge 0$. 
Later we will show that this indeterminacy can be removed in such a way that the eigenfunctions $f_n$ depend real analytically on $u$.
\begin{lemma}\label{vanishing of gamma_n}
For any $u \in L^2_r$, $\langle f_0 | 1  \rangle \ne 0$. Furthermore, for any $n \ge 1$ and $u \in L^2_r$,  the following statements are equivalent:  
$$(1) \langle f_n | 1  \rangle=0\ ,\ 
(2) L_u(S f_{n-1})=\lambda_nSf_{n-1}\ ,\  
(3) \langle Sf_{n-1}\vert u\rangle =0\ ,\ 
(4) \gamma_n=0\ .
$$
\end{lemma}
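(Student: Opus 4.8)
\medskip
\noindent\emph{Sketch of proof.} The plan is to exploit the commutation relations \eqref{SS*}, \eqref{S*LS} and \eqref{L_uS} between the shift $S$ and $L_u$, which say that conjugating $L_u$ by $S$ increases it by $Id$, modulo a rank-one term carried by the constant function $1$. First, for the claim $\langle f_0\,|\,1\rangle\ne 0$, I would argue by contradiction: if $\langle f_0\,|\,1\rangle=0$ then $\widehat{f_0}(0)=0$, so $h:=S^*f_0={\rm e}^{-ix}f_0$ lies in $H^1_+$, has norm $1$, and satisfies $Sh=f_0$ by \eqref{SS*}; then \eqref{S*LS} gives $(L_u+Id)h=S^*L_uSh=\lambda_0 S^*f_0=\lambda_0 h$, i.e. $L_u h=(\lambda_0-1)h$, which contradicts that $\lambda_0$ is the bottom of the spectrum of $L_u$.

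For $n\ge 1$, the computational core is the identity obtained by applying \eqref{L_uS} to $f_{n-1}$ and using that $u$ is real (so $\langle uSf_{n-1}\,|\,1\rangle=\langle Sf_{n-1}\,|\,u\rangle$):
\begin{equation}
L_u(Sf_{n-1})=(\lambda_{n-1}+1)\,Sf_{n-1}-\langle Sf_{n-1}\,|\,u\rangle\,1\,,\tag{$\star$}
\end{equation}
together with the elementary remarks $\|Sf_{n-1}\|=\|f_{n-1}\|=1$ and, since $S^*1=0$, $\langle Sf_{n-1}\,|\,1\rangle=0$. I would then close the chain $(1)\Rightarrow(2)\Rightarrow(3)\Rightarrow(4)\Rightarrow(1)$. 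For $(1)\Rightarrow(2)$: if $f_n\perp 1$, then as above $h:=S^*f_n\in H^1_+$ satisfies $Sh=f_n$ and, by \eqref{S*LS}, $L_uh=(\lambda_n-1)h$; since $\gamma_n\ge 0$ we have $\lambda_{n-1}\le\lambda_n-1<\lambda_n$, and as $L_u$ has no eigenvalue strictly between $\lambda_{n-1}$ and $\lambda_n$, necessarily $\lambda_n-1=\lambda_{n-1}$, hence by simplicity $h$ is a unimodular multiple of $f_{n-1}$ and $f_n$ a unimodular multiple of $Sf_{n-1}$, giving $(2)$. For $(2)\Rightarrow(3)$: substituting $(2)$ into $(\star)$ yields $\gamma_n\,Sf_{n-1}=-\langle Sf_{n-1}\,|\,u\rangle\,1$, and pairing with $1$ (using $\langle Sf_{n-1}\,|\,1\rangle=0$) forces $\langle Sf_{n-1}\,|\,u\rangle=0$. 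For $(3)\Rightarrow(4)$: then $(\star)$ says $Sf_{n-1}\ne 0$ is an eigenfunction of $L_u$ for $\lambda_{n-1}+1\in(\lambda_{n-1},\lambda_n]$, so by simplicity $\lambda_{n-1}+1=\lambda_n$, i.e. $\gamma_n=0$.

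The delicate step will be $(4)\Rightarrow(1)$: from $\gamma_n=0$ alone $(\star)$ only shows that $Sf_{n-1}$ is an eigenfunction of $L_u$ for $\lambda_n$ \emph{up to} the rank-one term $\langle Sf_{n-1}\,|\,u\rangle\,1$, which has to be eliminated. I would do this by pairing $(\star)$ with $f_n$ and using self-adjointness of $L_u$, which gives $\langle Sf_{n-1}\,|\,u\rangle\,\langle 1\,|\,f_n\rangle=0$: if $\langle 1\,|\,f_n\rangle=0$ we are done, and otherwise $\langle Sf_{n-1}\,|\,u\rangle=0$, so $(\star)$ reduces (using $\gamma_n=0$) to $L_u(Sf_{n-1})=\lambda_n Sf_{n-1}$, and by simplicity $Sf_{n-1}$ is a unimodular multiple of $f_n$, forcing $\langle f_n\,|\,1\rangle=0$. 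The structural ingredient used throughout is the simplicity of the spectrum of $L_u$ (Proposition \ref{simple}): it is what permits identifying the shifted eigenfunctions $S^{\pm1}f_k$ with the expected $f_{k\mp1}$ whenever the relevant gap $\gamma_n$ closes.
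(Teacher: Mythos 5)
Your proof is correct and takes essentially the same route as the paper: the key identity $(\star)$ obtained by applying \eqref{L_uS} to $f_{n-1}$, the lift $h=S^*f_n$ together with \eqref{S*LS} for the implication $(1)\Rightarrow(2)$ and for $\langle f_0\,|\,1\rangle\ne0$, simplicity of the spectrum to locate $\lambda_n-1$ and $\lambda_{n-1}+1$, and pairing $(\star)$ with $f_n$ and using self-adjointness to close $(4)\Rightarrow(1)$. The only cosmetic difference is that you run the cycle $(1)\Rightarrow(2)\Rightarrow(3)\Rightarrow(4)\Rightarrow(1)$ with each step spelled out, whereas the paper compresses $(2)\Leftrightarrow(3)$ and $(3)\Rightarrow(4)$ into one remark.
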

\begin{proof} Assume that, for some $n\ge 0$, we have 
$\langle f_n | 1  \rangle= 0$. Then $f_n=Sg_n$ for some $g_n$, and, by \eqref{L_uS}, 
$$
\lambda_nSg_n =L_uSg_n=S(L_ug_n+g_n)-\langle Sg_n\vert u\rangle 1\ .$$
Applying $S^*$ to both sides, we infer, in view of \eqref{SS*}, that 
$$L_ug_n=(\lambda_n-1)g_n\ .$$
In the case $n=0$, the latter identity contradicts the fact that 
$\lambda_0$ is the smallest eigenvalue of $L_u$
and hence $\langle f_0 | 1  \rangle \ne 0$.
In the case $n\ge 1$, the identity implies that $\lambda_n-1=\lambda_{n-1}$ and that $g_n$ is collinear to $f_{n-1}$, 
showing that (1) implies (2).\\
Applying \eqref{L_uS} to $f_{n-1}$ immediately yields the equivalence of (2) and (3), and that (3) implies (4). \\
Finally, assume that $\gamma_n = 0$ for some $n\ge 1$. Then, again by applying \eqref{L_uS} to $f_{n-1}$,
$$
L_uSf_{n-1}  =(\lambda_{n-1}+1)Sf_{n-1} - \langle Sf_{n-1} | u \rangle 1
=  \lambda_nSf_{n-1} - \langle Sf_{n-1} | u\rangle 1 \ .
$$
Taking the inner product with $f_n$ of both sides of the latter identity, one gets
$$
\langle uSf_{n-1} | 1\rangle \langle 1 | f_n \rangle=0\ ,
$$
which means that either (1) holds, or (3) holds, hence (2) holds, so that $Sf_{n-1}$ is collinear to $f_n$, implying  (1). 
\end{proof} 
In a next step we  show that the eigenfunctions $f_n$ can be normalized in such a way that they depend continuously in $u$.
First we need to make some preliminary considerations.
\begin{lemma}\label{formula with langle f_p | 1 rangle}
For any $u \in L^2_r$ and $k, p \ge 0$
$$
(\lambda_p-\lambda_k-1) \langle f_p | Sf_k \rangle = - \langle f_p | 1\rangle \langle u | Sf_k\rangle .
$$
\end{lemma}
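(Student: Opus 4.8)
\emph{Proof proposal.} The plan is to read the identity off directly from the commutator relation \eqref{L_uS} between $L_u$ and the shift $S$, exactly as in the proof of Lemma \ref{vanishing of gamma_n}. First I would apply \eqref{L_uS} to the eigenfunction $f_k$ and use $L_u f_k = \lambda_k f_k$ to obtain
$$
L_u(Sf_k) = (\lambda_k+1)\,Sf_k - \langle uSf_k | 1\rangle\, 1\ .
$$
Here it is worth recording that $Sf_k = e^{ix}f_k \in H^1_+$, so that $Sf_k$ lies in the domain $H^1_+$ of $L_u$ and the left-hand side is meaningful.

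Next I would take the $L^2$-inner product \eqref{L2 inner product} of this identity with $f_p$. On the left-hand side, since $L_u$ is selfadjoint and both $Sf_k$ and $f_p$ belong to $H^1_+ = \mathrm{dom}(L_u)$, one has $\langle L_u(Sf_k) | f_p\rangle = \langle Sf_k | L_u f_p\rangle = \lambda_p\langle Sf_k | f_p\rangle$. Comparing with the right-hand side $(\lambda_k+1)\langle Sf_k | f_p\rangle - \langle uSf_k | 1\rangle\langle 1 | f_p\rangle$ and rearranging gives
$$
(\lambda_p-\lambda_k-1)\,\langle Sf_k | f_p\rangle = - \langle uSf_k | 1\rangle\,\langle 1 | f_p\rangle\ .
$$

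Finally I would pass to complex conjugates. The factor $\lambda_p-\lambda_k-1$ is real (the $\lambda_n$ are real), $\overline{\langle Sf_k | f_p\rangle} = \langle f_p | Sf_k\rangle$, $\overline{\langle 1 | f_p\rangle} = \langle f_p | 1\rangle$, and, because $u$ is real valued, $\overline{\langle uSf_k | 1\rangle} = \langle u | Sf_k\rangle$. This yields precisely
$$
(\lambda_p-\lambda_k-1)\,\langle f_p | Sf_k\rangle = - \langle f_p | 1\rangle\,\langle u | Sf_k\rangle\ ,
$$
as claimed. I do not expect any genuine obstacle: the only points requiring a little care are checking that $Sf_k$ lies in the domain of $L_u$ so that selfadjointness may be invoked, and the bookkeeping of the complex conjugations relating the Hermitian inner product to the terms appearing in \eqref{L_uS}.
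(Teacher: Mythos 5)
Your proof is correct and follows exactly the paper's argument: apply the commutator identity \eqref{L_uS} to $f_k$ to get $L_u(Sf_k)=(\lambda_k+1)Sf_k-\langle Sf_k\,|\,u\rangle 1$, then pair with $f_p$ and use selfadjointness of $L_u$. The extra remarks on $Sf_k\in H^1_+$ and the conjugation bookkeeping are fine and consistent with the paper's conventions.
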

\begin{proof}
Recall from \eqref{L_uS} that for any $k \ge 0$,
\begin{equation}\label{LuSf}
L_u(Sf_k)= ( \lambda_k+1) Sf_k- \langle Sf_k\, | \, u \rangle 1\ .
\end{equation}
Computing the inner product of $f_p$ with both sides of \eqref{LuSf} leads to the claimed formula.
\end{proof}

\begin{lemma}\label{nonvanishing lemma}
For any $u \in L^2_r$ and  $n\ge 1$,
$\langle f_n | Sf_{n-1}  \rangle\ne 0\ .$
\end{lemma}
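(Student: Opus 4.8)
The plan is to argue by a dichotomy according to whether the $n$-th gap length $\gamma_n$ vanishes or not, exploiting the equivalences collected in Lemma \ref{vanishing of gamma_n} together with the identity of Lemma \ref{formula with langle f_p | 1 rangle}. Recall that $\gamma_n \ge 0$ by Proposition \ref{simple}, so the two cases $\gamma_n > 0$ and $\gamma_n = 0$ are exhaustive.

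First I would treat the case $\gamma_n > 0$. By the equivalences $(1)\Leftrightarrow(4)$ and $(3)\Leftrightarrow(4)$ in Lemma \ref{vanishing of gamma_n}, the condition $\gamma_n > 0$ forces both $\langle f_n | 1 \rangle \ne 0$ and $\langle Sf_{n-1} | u \rangle \ne 0$; since $u$ is real valued, $\langle Sf_{n-1} | u \rangle$ is the complex conjugate of $\langle u | Sf_{n-1} \rangle$, so $\langle u | Sf_{n-1} \rangle \ne 0$ as well. I would then apply Lemma \ref{formula with langle f_p | 1 rangle} with $p = n$ and $k = n-1$, using $\lambda_n - \lambda_{n-1} - 1 = \gamma_n$, to obtain $\gamma_n \langle f_n | Sf_{n-1} \rangle = - \langle f_n | 1 \rangle \langle u | Sf_{n-1} \rangle$. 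The right-hand side is nonzero and $\gamma_n \ne 0$, hence $\langle f_n | Sf_{n-1} \rangle \ne 0$.

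Next I would treat the case $\gamma_n = 0$. Then Lemma \ref{vanishing of gamma_n} (implication $(4)\Rightarrow(2)$) gives $L_u(Sf_{n-1}) = \lambda_n Sf_{n-1}$, so $Sf_{n-1}$ is an eigenfunction of $L_u$ for the eigenvalue $\lambda_n$. Since $\lambda_n$ is simple by Proposition \ref{simple}, $Sf_{n-1}$ is collinear with $f_n$, say $Sf_{n-1} = c f_n$; and because $S$ is an isometry ($S^*S = Id$ by \eqref{SS*}) one has $|c| = \|Sf_{n-1}\| = \|f_{n-1}\| = 1$. Therefore $\langle f_n | Sf_{n-1} \rangle = \bar c$, which is nonzero, completing the proof.

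This argument is essentially a bookkeeping exercise built on the previous lemmas, so I do not anticipate a genuine obstacle; the only point requiring a little care is, in the first case, to pass from the vanishing criterion $(3)$, which is phrased as $\langle Sf_{n-1} | u \rangle = 0$, to the nonvanishing of the precise quantity $\langle u | Sf_{n-1} \rangle$ that appears on the right-hand side of the identity in Lemma \ref{formula with langle f_p | 1 rangle} — this is exactly where the reality of $u$ is used.
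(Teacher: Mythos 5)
Your proof is correct and relies on exactly the same ingredients as the paper's (Lemma~\ref{vanishing of gamma_n}, Lemma~\ref{formula with langle f_p | 1 rangle}, and simplicity of the eigenvalues), merely organizing them as a case split on $\gamma_n>0$ versus $\gamma_n=0$ rather than as a single proof by contradiction. The paper instead assumes $\langle f_n | Sf_{n-1}\rangle = 0$, applies Lemma~\ref{formula with langle f_p | 1 rangle} to get $\langle f_n|1\rangle\langle u|Sf_{n-1}\rangle=0$, and then invokes Lemma~\ref{vanishing of gamma_n} to force $Sf_{n-1}$ collinear with $f_n$, contradicting the hypothesis — this is shorter but logically equivalent to your two cases.
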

\begin{proof}
Assume that for some $u \in L^2_r$ and $n \ge 1$, $\langle f_n | Sf_{n-1}  \rangle= 0$ . From Lemma \ref{formula with langle f_p | 1 rangle}, we infer
$$
\langle f_n | 1  \rangle\langle u | Sf_{n-1}  \rangle=0\ .
$$
By Lemma 2.1, this implies 
that $Sf_{n-1}$ and $f_n$ are collinear, contradicting the 
assumption $\langle f_n | Sf_{n-1}  \rangle=0$.
\end{proof}

Lemma \ref{vanishing of gamma_n}(i) and Lemma \ref{nonvanishing lemma} allow to define an orthonormal basis of eigenfunctions of $L_u$ as follows.

\begin{definition}\label{on basis} For any $u \in L^2_r$,
$f_0$ is defined to be the $L^2-$normalized eigenfunction of $L_u$ corresponding to the eigenvalue $\lambda_0$, uniquely determined by the condition
$$
\langle 1\vert f_0 \rangle>0\,.
$$ 
The $L^2-$normalized eigenfunction $f_1$ of $L_u$, corresponding to the eigenvalue $\lambda_1$, is then defined to be the eigenfunction uniquely determined by the condition
$$ 
\langle f_1 | Sf_{0}  \rangle >0 \ .
$$
Assume that for any $0 \le k \le n$, the eigenfunctions  $f_k$ have been defined. Then the $L^2-$normalized eigenfunction of $L_u$, 
corresponding to the eigenvalue $\lambda_{n+1}$, is defined by the condition
$$
\langle f_{n+1} | Sf_{n}  \rangle >0 \ .
$$
\end{definition}
In the remaining part of the paper, for any $u \in L^2_r$, $f_n(\cdot\, , u)$, $n \ge 0$, will always denote the orthonormal basis of $L^2_+$ introduced in Definition \ref{on basis}. 
\begin{remark}\label{f_n continuous} It is easy to see that the map $u\mapsto f_n(\cdot\, ,u)$ is continuous from $L^2_r$ to $H^1_+$. Indeed, if $u^{(k)}\to u$ in $L^2_r$, 
$f_n^{(k)}:=f_n(\cdot ,u^{(k)})$ satisfies
$$\| Df_n^{(k)}\|^2=\lambda_n(u^{(k)})\langle f_n^{(k)}\, \vert \,  Df_n^{(k)}\rangle +\langle u^{(k)}f_n^{(k)}| Df_n^{(k)}\rangle $$
which, by Proposition \ref{lipschitz} and Sobolev inequalities, implies that $f_n^{(k)}$ is bounded in $H^1_+$ as $k\to \infty$. By Rellich's theorem and Proposition \ref{lipschitz}, we infer that any weak * limit point  $g_n$ of $(f_n^{(k)})_{k\ge 1}$ in $H^1_+$ satisfies $L_u g_n=\lambda_n(u)g_n$,
$\| g_n\|=1$ and $\| Dg_n\|^2=\lim_{k\to \infty}\| Df_n^{(k)}\|^2$. Furthermore, the conditions
$\langle 1, f_0 \rangle\geq 0 \ ,\ \langle f_{n+1} | Sf_{n}  \rangle \geq 0 $ are also stable by 
weak * convergence in $H^1_+$, implying that
$g_0 = f_0$ (Lemma \ref{vanishing of gamma_n}) 
and $g_n = f_n$ for any $n \ge 1$
(Lemma \ref{nonvanishing lemma}).
In Section 5, we will prove that  $u\mapsto f_n(\cdot\, ,u)$ 
is a real analytic map from $L^2_r$ to $H^1_+$. 
\end{remark}
 The complex numbers 
$\langle 1 | f_n \rangle$, $n \ge 1$, appropriately rescaled, are our candidates for the complex Birkhoff coordinates.
Note that for any $u \in L^2_r,$ one can express $\Pi u$ in terms of $\langle 1 | f_n \rangle$, $n \ge 0$, as follows. Since 
 $f_n$, $n\ge 0$, is an orthonormal basis of $L^2_+$, one has $\Pi u = \sum_{n= 0}^\infty \langle \Pi u | f_n \rangle f_n$. When combined with the identity
\begin{equation}\label{identity in  f_n and u}
\lambda_n \langle 1 | f_n \rangle = \langle 1| L_u f_n\rangle = \langle L_u 1| f_n\rangle = - \langle \Pi u | f_n\rangle = - \langle u | f_n\rangle
\end{equation}
one is led to the following trace formula
\begin{equation}\label{u in eigenbasis}
\Pi u = - \sum_{n= 0}^\infty \lambda_n \langle1 | f_n \rangle f_n 
\end{equation}
and hence
\begin{equation}\label{L 2 norm}
\| \Pi u \|^2 = \sum_{n=0} ^\infty \lambda_n^2  \, | \langle 1 | f_n \rangle |^2
\end{equation}
and
\begin{equation}\label{average}
\langle u \, | \, 1 \rangle 
=   - \sum_{n =0} ^\infty \lambda_n  | \langle 1 | f_n \rangle |^2\ .
\end{equation}
Similarly, the expansion
$1 = \sum_{n =0}^\infty  \langle 1 | f_n  \rangle f_n $ leads to 
$\sum_{n =0}^\infty | \langle 1  |   f_n \rangle|^2 = 1\ .$  

We record the following property of the eigenfunctions $f_n$.
\begin{lemma}\label{f_n for gamma_n = 0}
For any $u \in L^2_r$ with $\gamma_n= 0$ for a given $n \ge 1$, $f_n = S f_{n-1}.$
\end{lemma}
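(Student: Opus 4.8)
\textit{Proof proposal.} The plan is to read off the conclusion from the equivalent characterizations of $\gamma_n=0$ already established in Lemma \ref{vanishing of gamma_n}, combined with the simplicity of the spectrum (Proposition \ref{simple}) and the phase normalization of the eigenbasis fixed in Definition \ref{on basis}. Concretely, assuming $\gamma_n=0$ for the given $n\ge1$, I would first invoke the implication $(4)\Rightarrow(2)$ of Lemma \ref{vanishing of gamma_n}, which yields $L_u(Sf_{n-1})=\lambda_n Sf_{n-1}$; thus $Sf_{n-1}$ is an eigenfunction of $L_u$ for the eigenvalue $\lambda_n$.

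Next I would observe that $Sf_{n-1}$ is a unit vector: by the first identity in \eqref{SS*} the shift $S$ is an isometry of $L^2_+$, and $\|f_{n-1}\|=1$. Since $\lambda_n$ is a simple eigenvalue by Proposition \ref{simple}, the corresponding eigenspace $\ker(L_u-\lambda_n\,Id)$ is one-dimensional and spanned by $f_n$, so $Sf_{n-1}=c\,f_n$ for some $c\in\C$ with $|c|=1$. It then remains to pin down the phase $c$. For this I would use the normalization of Definition \ref{on basis}, namely $\langle f_n\,|\,Sf_{n-1}\rangle>0$ (a nonzero quantity by Lemma \ref{nonvanishing lemma}). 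Substituting $Sf_{n-1}=c\,f_n$ and using $\|f_n\|=1$ gives $\langle f_n\,|\,Sf_{n-1}\rangle=\overline{c}$, so $\overline{c}>0$, and together with $|c|=1$ this forces $c=1$. Hence $f_n=Sf_{n-1}$.

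I do not anticipate any genuine obstacle: the statement is essentially a repackaging of items (2) and (4) of Lemma \ref{vanishing of gamma_n} together with the bookkeeping of the phase convention. The only point requiring a little care is consistency of conventions, i.e.\ that the inner product appearing in Definition \ref{on basis} is the sesquilinear one $\langle\,\cdot\,|\,\cdot\,\rangle$ of \eqref{L2 inner product}, conjugate-linear in the second argument, so that $\langle f_n\,|\,c f_n\rangle=\overline{c}$; this merely determines whether one reads off $c=1$ directly or after a harmless conjugation.
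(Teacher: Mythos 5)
Your proof is correct and takes essentially the same approach as the paper: invoke the equivalence $(4)\Rightarrow(2)$ of Lemma~\ref{vanishing of gamma_n} to see that $Sf_{n-1}$ is a normalized eigenfunction for $\lambda_n$, then use simplicity (Proposition~\ref{simple}) and the phase normalization $\langle f_n\,|\,Sf_{n-1}\rangle>0$ of Definition~\ref{on basis} to conclude. The paper states this more tersely (noting directly that $Sf_{n-1}$ itself satisfies the defining normalization of $f_n$), but the content is identical.
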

\begin{proof}
By Lemma \ref{vanishing of gamma_n}(iii), for any $n \ge 1$ with $\gamma_n= 0$, the function $S f_{n-1}$ is an $L^2-$normalized eigenfunction of $L_u$, 
corresponding to the eigenvalue $\lambda_n$ and satisfies the condition $\langle f_{n} | Sf_{n-1}  \rangle >0$.
\end{proof}

In analogy with the notion of a finite gap potential,
introduced in the context of the Korteweg-de Vries equation
and the nonlinear Schr\"odinger equation, we define
the corresponding notion in the context of the BO equation.
\begin{definition} \label{definition finite gap}
A potential $u \in L^2_{r}$ is said to be a {\em finite gap} potential if 
 $J(u)= \{ n \ge 1\, : \, \gamma_n(u) > 0 \} \subset \N$ is finite. In case $J(u)$
 consists of one element only, $u$ is referred to as a {\em one gap} potential.
\end{definition}
Note that for any finite gap potential $u$, the expansion \eqref{u in eigenbasis},
is a finite sum. Finite gap potentials play a similar role as trigonometric polynomials in harmonic analysis.
For any given subset $J \subset \N$ introduce
$$
G_J := \{ u \in L^2_{r} : \gamma_j (u)> 0 \,\, \forall j \in J; \,\, \gamma_j(u) = 0 \,\, \forall j  \in \N \setminus J \}\ .
$$
Elements in the set $G_J$ are referred to as $J-$gap potentials.  We refer to Section \ref{finite gap potentials}
for a detailed studied of $J-$gap potentials with $J=\{1, \cdots, N \}$ and to Appendix \ref{general one gap potentials}
for a description of one gap potentials.

\begin{remark} One might ask if there exists a potential $u\in L^2_r$ with the property that $\gamma_n(u)>0$ for every $n\ge 1$. We claim that $u(x)=2\alpha \cos x$ is such a potential for every $\alpha \in \R \setminus
 \{ 0\}$. Indeed, we have for any $n\ge 1$,
$$
\langle Sf_{n-1} | u \rangle =\alpha  \langle Sf_{n-1} | {\rm e}^{ix} \rangle = \alpha \langle f_{n-1} |  1\rangle \,.
$$
Hence, by Lemma \ref{vanishing of gamma_n}, $\langle f_n\vert 1\rangle =0$ if and only if 
$\langle f_{n-1}\vert u\rangle =0$. 
 Since 
$\langle f_0 | 1 \rangle \ne 0$, we conclude that $\langle f_n | 1  \rangle \ne 0$ for every $n$.\\
\end{remark}

We finish this section with a brief discussion of the symmetry induced by translation.
For any $\tau \in \R$, denote by $Q_\tau$ the linear isometry on $L^2_r,$ given by the translation by $\tau,$
$$
Q_\tau: L^2_r \to L^2_r, \ ,\  Q_\tau u (x) := u( x+ \tau)\,.
$$
\begin{lemma} For any $\tau \in \R$, $u \in L^2_r$, and $n \ge 0$, one has
$$
\lambda_n(Q_\tau u) = \lambda_n(u)\,, \qquad
f_n( x, Q_\tau u) = e^{- in\tau}  f_n( x+ \tau, u)\,.
$$
As a consequence,
\begin{equation}\label{identity for (f_n , 1)}
\langle 1\, |\, f_n(\cdot, Q_\tau u) \rangle  = e^{i n\tau}\langle 1\, | \, f_n(\cdot, u) \rangle \,, \quad \forall \, n \ge 0\,.
\end{equation}
\end{lemma}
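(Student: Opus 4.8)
The plan is to realize the translation symmetry at the level of the Lax operator and then transfer it to its spectrum and eigenfunctions. First I would introduce the translation operator $U_\tau$ on $L^2_+$, $(U_\tau h)(x):=h(x+\tau)$; since on the Fourier side it acts by $\widehat h(n)\mapsto e^{in\tau}\widehat h(n)$, it is a unitary operator which preserves $L^2_+$ and $H^1_+$ and commutes with $D=-i\pa_x$ and with the Szeg\H{o} projector $\Pi$. Next I would check that the Toeplitz operator transforms by conjugation: for $h\in H^1_+$ one has $(Q_\tau u)\,h=U_\tau\big(u\cdot(U_\tau^{-1}h)\big)$, hence, using $\Pi U_\tau=U_\tau\Pi$,
\[
T_{Q_\tau u}h=\Pi\big((Q_\tau u)h\big)=U_\tau\Pi\big(u\cdot(U_\tau^{-1}h)\big)=U_\tau T_u U_\tau^{-1}h\ .
\]
Together with $U_\tau D U_\tau^{-1}=D$ this gives $L_{Q_\tau u}=D-T_{Q_\tau u}=U_\tau L_u U_\tau^{-1}$. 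Unitary equivalence then immediately yields $\lambda_n(Q_\tau u)=\lambda_n(u)$ for every $n\ge0$, and shows that $x\mapsto f_n(x+\tau,u)=(U_\tau f_n(\cdot,u))(x)$ is an $L^2$-normalized eigenfunction of $L_{Q_\tau u}$ for the eigenvalue $\lambda_n(u)$. Since this eigenvalue is simple by Proposition \ref{simple}, there is a unimodular constant $c_n(\tau)$ with $f_n(x,Q_\tau u)=c_n(\tau)\,f_n(x+\tau,u)$.

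It then remains to identify $c_n(\tau)=e^{-in\tau}$, which I would do by induction on $n$ using the normalization conditions of Definition \ref{on basis} and the invariance of $\frac1{2\pi}\int_0^{2\pi}\cdot\,dx$ under $x\mapsto x+\tau$. For $n=0$, a change of variables gives $\langle 1\,|\,f_0(\cdot,Q_\tau u)\rangle=\overline{c_0(\tau)}\,\langle 1\,|\,f_0(\cdot,u)\rangle$; since both these quantities are positive, $c_0(\tau)=1$. For the inductive step, assuming $c_{n-1}(\tau)=e^{-i(n-1)\tau}$, I would expand $\langle f_n(\cdot,Q_\tau u)\,|\,Sf_{n-1}(\cdot,Q_\tau u)\rangle$ using $Sf_{n-1}(x,Q_\tau u)=e^{ix}e^{-i(n-1)\tau}f_{n-1}(x+\tau,u)$ and perform the substitution $y=x+\tau$; keeping track of the extra factor $e^{i\tau}$ produced by $e^{-ix}=e^{i\tau}e^{-iy}$, one obtains
\[
\langle f_n(\cdot,Q_\tau u)\,|\,Sf_{n-1}(\cdot,Q_\tau u)\rangle=c_n(\tau)\,e^{in\tau}\,\langle f_n(\cdot,u)\,|\,Sf_{n-1}(\cdot,u)\rangle\ .
\]
Both inner products are positive by the normalization of $f_n$, so this forces $c_n(\tau)e^{in\tau}=1$, i.e. $c_n(\tau)=e^{-in\tau}$, completing the induction and establishing $f_n(x,Q_\tau u)=e^{-in\tau}f_n(x+\tau,u)$.

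Finally, formula \eqref{identity for (f_n , 1)} follows directly: by translation invariance of the integral, $\langle 1\,|\,f_n(\cdot,Q_\tau u)\rangle=\overline{e^{-in\tau}}\,\frac1{2\pi}\int_0^{2\pi}\overline{f_n(x+\tau,u)}\,dx=e^{in\tau}\langle 1\,|\,f_n(\cdot,u)\rangle$. I do not expect any serious obstacle; the only thing requiring care is the bookkeeping of phases in the recursive normalization step — in particular that the shift $S$ contributes an extra factor $e^{i\tau}$ under the change of variables, and that $\langle\cdot\,|\,\cdot\rangle$ conjugates its second argument, so that the normalizing constants of $f_n(\cdot,u)$ and $f_n(\cdot,Q_\tau u)$ are related through $\overline{c_n(\tau)}$ when testing against $1$ but through $c_n(\tau)$ when testing $f_n$ against $Sf_{n-1}$.
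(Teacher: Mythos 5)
Your proof is correct and follows essentially the same route as the paper's: you express the translation symmetry as the conjugation $L_{Q_\tau u}=Q_\tau L_u Q_\tau^{-1}$ (the paper writes this as $L_{Q_\tau u}(Q_\tau f_n)=Q_\tau(L_u f_n)$), deduce the invariance of the spectrum and that $f_n(\cdot+\tau,u)$ is an eigenfunction for the same eigenvalue, and then pin down the phase factor inductively via the normalizations $\langle 1|f_0\rangle>0$ and $\langle f_n|Sf_{n-1}\rangle>0$, with the same bookkeeping of the $e^{i\tau}$ produced by the shift $S$ under $x\mapsto x+\tau$. The only cosmetic difference is that you introduce the unknown unimodular constant $c_n(\tau)$ and solve for it, whereas the paper directly verifies that $e^{-in\tau}f_n(\cdot+\tau,u)$ satisfies the normalization conditions.
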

\begin{proof} For any $\tau \in \R$, $u \in L^2_r$, and $n \ge 0,$ note that $f_n \equiv f_n(\cdot, u)$ satisfies
$$ 
L_{Q_\tau u} (Q_\tau f_n) = Q_\tau (L_u f_n) = \lambda_n(u) Q_\tau f_n\,.
$$
It implies that
$\lambda_n(Q_\tau u) = \lambda_n(u)$
and that $Q_\tau f_n(\cdot, u)$ is an $L^2-$normalized eigenfunction of $L_{Q_\tau u}$, corresponding to the eigenvalue $\lambda_n$.
Furthermore, 
$$
\langle 1 \, | \, f_0( \cdot + \tau, u) \rangle = \langle 1 \, | \, f_0( \cdot , u) \rangle > 0
$$
implying that $f_0(\cdot , Q_\tau u) = f_0 ( \cdot + \tau, u)$. Similarly, for any $n \ge 1,$
$$
\langle e^{- i n\tau} f_{n} ( \cdot + \tau, u) \, | \, S e^{- i (n -1)\tau} f_{n-1} ( \cdot + \tau, u) \rangle 
= \langle f_{n} ( \cdot + \tau, u) \, | \, e^{ i \tau} S f_{n-1} ( \cdot + \tau, u) \rangle
$$
which equals $\langle f_{n} ( \cdot , u) \, | \,  S f_{n-1} ( \cdot , u) \rangle > 0.$
Arguing by induction one then concludes that $f_{n} ( \cdot , Q_\tau u) = e^{- i n \tau} f_{n}(\cdot + \tau, u)$
as claimed.
\end{proof}


\section{Generating function and trace formulae}\label{trace}

One of the main results of this section are trace formulas for $| \langle 1 | f_n \rangle |^2$, stated in Corollary \ref{1,f}.
They will be used to define our candidates of (complex) Birkhoff coordinates by appropriately scaling $\langle 1 | f_n \rangle$. 
A key ingredient for the proof of these trace formulas is the generating function $\mathcal H_\lambda$. We remark that $\mathcal H_\lambda$
plays a role for the normal form theory of the BO equation comparable to the one of the discriminant in the normal form theory of the KdV equation.

For any $u \in L^2_r$, the generating function is the meromorphic function $\lambda \mapsto  \mathcal H_\lambda (u)$, defined by
\begin{equation}\label{Hlambda}
\mathcal H_\lambda (u):=\langle (L_u+\lambda Id )^{-1}1\, | \, 1\rangle \ .
\end{equation}
Note that $\mathcal H_\lambda (u)$ is holomorphic on $\mathbb C \setminus \{ - \lambda_n(u) \, : \, n \ge 0\}$ and might have simple poles at $\lambda =-\lambda_n(u), n\ge 0$. 
Substituting the expansion $1 = \sum_{n = 0}^\infty \langle 1 \, | \, f_n \rangle f_n$ into the expression for  $\mathcal H_\lambda (u)$ one obtains
\begin{equation}\label{expandHlambda}
\mathcal H_\lambda (u)=\sum_{n=0}^\infty \frac{|\langle 1 | f_n \rangle|^2}{\lambda_n+\lambda }\ .
\end{equation}
\begin{proposition}\label{formulae}
For any $u \in L^2_r$, the following identities hold:
$$
(i) \,\,\, \mathcal H_\lambda (u)=\frac{1}{\lambda_0+\lambda}\prod_{n=1}^\infty \big(1-\frac{\gamma_n}{\lambda_n+\lambda}   \big),  \qquad \qquad 
$$
$$
(ii) \,\,\, \| u\|^2- \langle  u | 1 \rangle^2 =  2\sum_{n=1}^\infty n\gamma_n\, , \qquad
{ \langle  u | 1 \rangle}=-\lambda_0-\sum_{n=1}^\infty \gamma_n\ . 
$$
Hence  $\gamma_n(u) = \frac{1}{n} \ell^1_n$ and both
$ \langle  u | 1 \rangle$ and $\| u\|^2$ are spectral invariants.
\end{proposition}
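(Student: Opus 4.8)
The plan is to prove the product representation (i) by complex‑analytic means and then read off (ii) by expanding both sides of (i) at $\lambda=\infty$; the concluding assertions follow at once. For (i), I would start from the partial‑fraction formula \eqref{expandHlambda}: it exhibits $\mathcal H_\lambda(u)=\sum_{n\ge0}|\langle1|f_n\rangle|^2/(\lambda_n+\lambda)$ as the Cauchy transform of the positive measure $\sum_n|\langle1|f_n\rangle|^2\,\delta_{\lambda_n}$ of total mass $\sum_n|\langle1|f_n\rangle|^2=1$ (from $1=\sum_n\langle1|f_n\rangle f_n$). Hence $\mathcal H_\lambda$ is a Herglotz function: real and strictly decreasing on $\R$ away from its poles, non‑real off $\R$, and $\lambda\,\mathcal H_\lambda(u)\to1$ as $\lambda\to+\infty$. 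By Lemma \ref{vanishing of gamma_n}, $\langle1|f_0\rangle\ne0$ always and, for $n\ge1$, $\langle1|f_n\rangle=0$ iff $\gamma_n=0$, so the poles of $\mathcal H_\lambda$ are simple and located exactly at $-\lambda_0$ and at the $-\lambda_n$ with $\gamma_n>0$. Rewriting $1-\tfrac{\gamma_n}{\lambda_n+\lambda}=\tfrac{\lambda_{n-1}+1+\lambda}{\lambda_n+\lambda}$ (a trivial factor when $\gamma_n=0$), and combining \eqref{lower bound ev} with the a priori bound $\lambda_n\le n+C\|u\|(1+n)^{1/2}$ from the proof of Proposition \ref{lipschitz} — which gives $\sum_{k\le n}\gamma_k=O(\sqrt n)$, hence $\sum_n\gamma_n/n<\infty$ by Abel summation — the infinite product on the right converges off $\{-\lambda_n\}$ to a meromorphic $P(\lambda)$ with exactly the same poles (no cancellation, since consecutive eigenvalues differ by $\ge1$ by Proposition \ref{simple}).

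The crux is the location of the \emph{zeros}, and here is the new input: $\mathcal H_{-\lambda_m-1}(u)=0$ whenever $\gamma_{m+1}>0$. Indeed \eqref{LuSf} reads $(L_u-(\lambda_m+1)\mathrm{Id})(Sf_m)=-\langle Sf_m|u\rangle\,1$, and $\langle Sf_m|u\rangle\ne0$ by Lemma \ref{vanishing of gamma_n} since $\gamma_{m+1}>0$; so $\lambda_m+1$ is not an eigenvalue of $L_u$ and $(L_u-(\lambda_m+1)\mathrm{Id})^{-1}1=-\langle Sf_m|u\rangle^{-1}Sf_m$, which is orthogonal to $1$ because $S^*1=0$, whence $\mathcal H_{-\lambda_m-1}(u)=\langle(L_u-(\lambda_m+1)\mathrm{Id})^{-1}1|1\rangle=0$. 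Each such $-\lambda_m-1$ sits strictly between two consecutive poles of $\mathcal H_\lambda$ (trapped between $-\lambda_{m+1}$ on its left and a $-\lambda_k$ with $k\le m$ on its right), and since $\mathcal H_\lambda$ decreases from $+\infty$ to $-\infty$ across each gap between consecutive poles, from $+\infty$ to $0^+$ to the right of all poles, and stays negative to the left of all poles, a counting argument identifies these as \emph{all} of its zeros — which coincide with the zeros of $P$ (the $n$‑th factor contributing the simple zero $-\lambda_{n-1}-1$ precisely when $\gamma_n>0$). Therefore $g:=\mathcal H_\lambda(u)/P(\lambda)$ extends to an entire function without zeros; a growth estimate along circles $|\lambda|=R_k\to\infty$ staying at a fixed distance from the (discrete, unit‑separated) sets of poles and zeros — bounding $|\mathcal H_\lambda|$ by $\sum_n|\langle1|f_n\rangle|^2=1$, and $|P|$ from below via the near‑telescoping structure of the product, $\sum_n\gamma_n/n<\infty$, and the Herglotz property — shows $g$ grows at most polynomially, hence is a polynomial, hence (having no zeros) a constant, equal to $1$ by comparing the behaviour at $+\infty$. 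As an equivalent route, one may first prove (i) for finite‑gap potentials, where $\mathcal H_\lambda$ and $P$ are rational and the identity reduces via the pole–zero count above to partial fractions, and then pass to general $u$ by density and continuity of both sides in $u$.

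For (ii), I would expand both sides of (i) at $\lambda=\infty$. On the left, \eqref{expandHlambda} together with $L_u1=-\Pi u$, $\langle L_u^21|1\rangle=\|L_u1\|^2=\|\Pi u\|^2=\tfrac12(\|u\|^2+\langle u|1\rangle^2)$ (the last equality since $u$ is real with $\widehat u(0)=\langle u|1\rangle$) and $\sum_n\lambda_n^2|\langle1|f_n\rangle|^2=\|\Pi u\|^2<\infty$ (cf.\ \eqref{L 2 norm}) yields $\mathcal H_\lambda(u)=\lambda^{-1}+\langle u|1\rangle\lambda^{-2}+\tfrac12(\|u\|^2+\langle u|1\rangle^2)\lambda^{-3}+o(\lambda^{-3})$. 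On the right, one expands $(\lambda_0+\lambda)^{-1}$ and $\log\prod_n(1-\tfrac{\gamma_n}{\lambda_n+\lambda})$ and substitutes $\lambda_n=n+\lambda_0+\sum_{k\le n}\gamma_k$ from \eqref{lower bound ev}. Matching the $\lambda^{-2}$ coefficients gives $\langle u|1\rangle=-\lambda_0-\sum_n\gamma_n$ (in particular $\sum_n\gamma_n<\infty$); matching the $\lambda^{-3}$ coefficients and simplifying with $\sum_n\gamma_n\lambda_n=\sum_n n\gamma_n+\lambda_0\sum_n\gamma_n+\tfrac12(\sum_n\gamma_n)^2+\tfrac12\sum_n\gamma_n^2$ gives $\|u\|^2-\langle u|1\rangle^2=2\sum_n n\gamma_n$. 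Then $\gamma_n(u)=\tfrac1n\ell^1_n$ is immediate, and since $\langle u|1\rangle=-\lambda_0-\sum_n\gamma_n$ and $\|u\|^2=\langle u|1\rangle^2+2\sum_n n\gamma_n$ are explicit functions of the spectrum $\{\lambda_n\}_{n\ge0}$ of $L_u$, they are spectral invariants — conserved along the BO flow through the Lax pair \eqref{Lax pair}.

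The main obstacle is the analytic endgame of (i): upgrading the statement that $\mathcal H_\lambda$ and $P$ are Herglotz functions with the same poles, zeros and asymptotics to the equality $\mathcal H_\lambda=P$. The lower bound on $|P|$ along large circles needed for the growth estimate — indeed the convergence of the infinite product itself — is entangled with the summability $\sum_n n\gamma_n<\infty$ that is exactly part (ii); one must therefore either run the bootstrap sketched above or reduce to the finite‑gap case, where these difficulties disappear entirely.
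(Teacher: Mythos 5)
Your part (ii) is essentially the paper's argument (expansion of $\mathcal H_\lambda$ at $\lambda=\infty$, using $\langle L_u^2 1|1\rangle=\|\Pi u\|^2=\tfrac12(\|u\|^2+\langle u|1\rangle^2)$ and monotone convergence in $\gamma_n\ge 0$), but your part (i) takes a genuinely different route. The paper never locates the zeros of $\mathcal H_\lambda$: it applies Lemma \ref{inverse of S^*AS} to $A=L_u+\lambda Id$, observes that $(L_u+(\lambda+1)Id)^{-1}-S^*(L_u+\lambda Id)^{-1}S$ is a rank-one operator, and computes its trace in two ways to obtain directly
$\frac{d}{d\lambda}\log\mathcal H_\lambda(u)=\sum_{n\ge0}\big(\frac{1}{\lambda_n+\lambda+1}-\frac{1}{\lambda_n+\lambda}\big)$,
which telescopes and integrates to the product formula — no Liouville-type endgame, and the convergence issues are absorbed into the trace computation. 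You instead prove the identity by matching two meromorphic Herglotz functions: your key new observation, that $\mathcal H_{-\lambda_m-1}(u)=0$ whenever $\gamma_{m+1}>0$ because $(L_u-(\lambda_m+1)Id)^{-1}1$ is a multiple of $Sf_m\perp 1$, is correct and correctly derived from \eqref{LuSf} and Lemma \ref{vanishing of gamma_n}, and your interlacing count (one zero of the strictly decreasing $\mathcal H_\lambda$ in each gap between consecutive actual poles, matched by exactly one candidate $-\lambda_{n_j-1}-1$ per gap) does identify all zeros; the a priori bound $\lambda_n\le n+C\|u\|(1+n)^{1/2}$ legitimately gives $\sum\gamma_n/n<\infty$ and hence convergence of the product without circularity. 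What the paper's route buys is that the hard analytic step disappears; what yours buys is a spectral-theoretic picture (poles, zeros, Herglotz structure) that makes the formula transparent.

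Two caveats. First, the endgame — upgrading ``same poles, zeros and asymptotics'' to equality — is the genuine gap you acknowledge, and your growth estimate only yields $|g(\lambda)|\le e^{C\sqrt{R}}$ on well-separated circles (the lower bound on $|P|$ costs $\exp(-C\sum_{\lambda_n\le 2R}\gamma_n)=\exp(-O(\sqrt R))$), so ``polynomial growth'' is not literally available; you must either invoke Hadamard's theorem for the zero-free entire function $g$ of order $\le 1/2$, or, more cleanly, note that since $\mathcal H_\lambda$ and $P$ both map $\{\mathrm{Im}\,\lambda>0\}$ into a half-plane, $\arg g$ is bounded, so $\log g$ is entire with bounded imaginary part and hence constant. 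Second, your proposed shortcut via finite-gap potentials and density would be circular at this point of the paper: density of finite gap potentials is only established in Corollary \ref{Phionto}, whose proof rests on Proposition \ref{formulae} itself (through Proposition \ref{compact}, Corollary \ref{1,f} and Theorem \ref{Ngap}). The primary route you sketch does not need it, so this is a dispensable remark rather than a flaw.
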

\begin{proof} (i) The proof makes use of the shift operator $S$ and in particular identity \eqref{S*LS}.
Assume that $\lambda $ is real and $\lambda \ge   -\lambda_0 +1$, so that 
$L_u+\lambda Id : H^1_+ \to L^2_+$ is invertible.  
By \eqref{S*LS}, one has
\begin{equation}\label{formula for S^* (L + lambda)S}
S^*(L_u+\lambda Id)S=L_u+(\lambda +1)Id \ ,
\end{equation}
implying that the operator $S^*(L_u+\lambda Id)S : H^1_+ \to L^2_+$ is invertible. To compute its inverse we need the following
\begin{lemma}\label{inverse of S^*AS}
Assume that $A : H^1_+ \to L^2_+$ is positive and selfadjoint so that $A$ and $S^*AS$ are invertible and $A^{-1}$ is positive on $L^2_+$. Then
$$(S^*AS)^{-1}=S^*A^{-1}S - \frac{\langle \, \cdot \, | \, S^*A^{-1}1\rangle }{\langle A^{-1}1\, | \, 1 \rangle} S^*A^{-1}1\ .$$
\end{lemma}
\noindent
{\em Proof of Lemma \ref{inverse of S^*AS}.} Consider $f\in L^2_+$ and let $h:=(S^*AS)^{-1}f$. Applying $S$ to both sides of the equation
$$S^*ASh=f\ ,$$
we infer from \eqref{SS*} that
$$ASh=Sf+\langle ASh | 1\rangle 1 \ ,$$
hence 
\begin{equation}\label{formula Sh}
Sh=A^{-1}Sf+ \langle ASh | 1\rangle A^{-1}1\ .
\end{equation}
In particular, we have
$$\langle A^{-1}Sf | 1 \rangle =- \langle ASh | 1 \rangle \langle A^{-1}1 | 1\rangle \ .$$
Since $A^{-1}$ is positive, $\langle A^{-1}1 | 1 \rangle > 0$ and hence we obtain
$$\langle ASh | 1\rangle =-\frac{\langle A^{-1}Sf | 1\rangle}{\langle A^{-1}1 | 1 \rangle }=-\frac{\langle f | S^*A^{-1}1 \rangle }{\langle A^{-1}1,1\rangle }\ .$$
Substituting this identity into the equation \eqref{formula Sh} for $Sh$ and applying $S^*$ to both sides, we obtain the claimed statement.
 \hfill $\square$
 
\vskip 0.25cm

\noindent
Let us now go back to the proof of item(i).
Since $L_u+\lambda Id$ is selfadjoint and positive 
we can apply Lemma \ref{inverse of S^*AS} to $A:=L_u+\lambda Id$ to get, in view of \eqref{formula for S^* (L + lambda)S},
\begin{align}\label{formula for difference}
(L_u+ & (\lambda +1)Id)^{-1}  - S^*(L_u+\lambda Id)^{-1}S \nonumber \\
& = - \frac{\langle \, \cdot \, | S^*(L_u+\lambda Id )^{-1}1\rangle }{\langle (L_u+\lambda Id)^{-1}1 |  1 \rangle }S^*(L_u+\lambda Id)^{-1}1.
\end{align}
Since the latter operator is of rank 1, it is of trace class and hence so is the operator $(L_u+(\lambda +1)Id)^{-1}-S^*(L_u+\lambda Id)^{-1}S$. 
To compute its trace we write it as a sum of two operators,
$$ (L_u+(\lambda +1)Id)^{-1}-S^*(L_u+\lambda Id)^{-1}S= A_1 + A_2$$
where 
$$ A_1 := (L_u+(\lambda +1)Id)^{-1}-(L_u+\lambda Id)^{-1}\,,$$
$$ A_2 := (L_u+\lambda Id)^{-1}-S^*(L_u+\lambda Id)^{-1}S\,.$$
In view of Proposition \ref{simple}, $A_1$ is of trace class and 
$$
 {\rm Tr} A_1=\sum_{n=0}^\infty \Big( \frac{1}{\lambda_n+\lambda +1}-\frac{1}{\lambda_n+\lambda}\Big)\ .
$$
Since by \eqref{formula for difference}, $A_1 + A_2$ has rank $1$ and hence is of trace class, the operator $A_2$ is also of trace class. 
Computing its trace with respect to the orthonormal basis $({\rm e}^{inx})_{n\ge 0}$ of $L^2_+$, we obtain
$${\rm Tr} A_2 = {\rm Tr}\big( (L_u+\lambda Id)^{-1}-S^*(L_u+\lambda Id)^{-1}S \big)
=\langle (L_u+\lambda Id)^{-1}1 | 1\rangle \ .$$
On the other hand, by \eqref{formula for difference}, ${\rm Tr} (A_1 + A_2)$ equals
$$
{\rm Tr} \big( -\frac{\langle \, \cdot \, | S^*(L_u+\lambda Id)^{-1}1\rangle }
{\langle (L_u+\lambda Id)^{-1}1 | 1\rangle }   S^*(L_u+\lambda Id)^{-1}1 \big) 
= -\frac{\|S^*(L_u+\lambda Id)^{-1}1\|^2}{\langle L_u+\lambda Id)^{-1}1 | 1 \rangle}
$$
and hence by \eqref{SS*}
$$
{\rm Tr} (A_1 + A_2) = -\frac{\|(L_u+\lambda Id)^{-1}1\|^2}{\langle (L_u+\lambda Id)^{-1}1 | 1 \rangle } +
\langle (L_u+\lambda Id)^{-1} 1 | 1 \rangle \ .
$$
Altogether we have proved that
$$\sum_{n=0}^\infty \Big( \frac{1}{\lambda_n+\lambda +1}- \frac{1}{\lambda_n+\lambda} \Big)
= - \frac{\|(L_u+\lambda Id)^{-1}1\|^2}{\langle (L_u+\lambda Id)^{-1}1 | 1 \rangle}
$$
or
\begin{equation}\label{trace1}
\sum_{n=0}^\infty \Big(\frac{1}{\lambda_n+\lambda +1}-\frac{1}{\lambda_n+\lambda} \Big) =
-\ \frac{\displaystyle{\sum_{n=0}^\infty \frac{|\langle 1| f_n \rangle|^2}{(\lambda_n+\lambda)^2}}}{\displaystyle{\sum_{n=0}^\infty \frac{|\langle 1 | f_n \rangle|^2}{\lambda_n+\lambda}}}=\frac{d}{d\lambda}\log \mathcal H_\lambda (u)\ .
\end{equation}
Isolating the term $-(\lambda_0+\lambda)^{-1}$ on the left hand side of the latter identity and then rewriting the remaining sum as a telescopic series yields
\begin{equation}\label{trace2}
\frac{d}{d\lambda}\log \mathcal H_\lambda (u)=-\frac{1}{\lambda_0+\lambda}+\sum_{n=1}^\infty\frac{\gamma_n}{(\lambda+\lambda_{n-1}+1)(\lambda+\lambda_n)}
\end{equation}
or, written in a slightly more convenient form, 
$$\frac{d}{d\lambda}\log \big( (\lambda_0 +\lambda )\mathcal H_\lambda (u) \big) =\sum_{n=1}^\infty\frac{\gamma_n}{(\lambda+\lambda_{n-1}+1)(\lambda+\lambda_n)}\ .$$
Integrating both sides of the latter identity from $\lambda$ to $+\infty$, we infer
$$-\log \big( (\lambda_0 +\lambda )\mathcal H_\lambda (u) \big) =-\sum_{n=1}^\infty\log \big(1-\frac{\gamma_n}{\lambda+\lambda_n}\big)\ ,$$
implying that the formula of item (i) holds for any $\lambda >  |- \lambda_0 + 1|$. By analyticity, it is  then valid for any $\lambda \in \mathbb C \setminus \{ -\lambda_n \, : \, n\ge 0\}$. \\
(ii) The claimed trace formulas are  obtained by expanding $\mathcal H_\lambda$ at $\lambda = \infty$. 
For $\e:= \frac{1}{\lambda} >0$ with $\lambda > | - \lambda_0 + 1|$, define 
\begin{equation}\label{definition tilde mathcal H}
\tilde{ \mathcal H}_\e := \frac{1}{\e}\mathcal H_{\frac 1\e} = \sum_{n=0}^\infty \frac{|\langle 1 | f_n \rangle|^2}{1+\e \lambda_n}\ ,
\end{equation}
so that in view of \eqref{L 2 norm} - \eqref{average},
$\tilde {\mathcal H}_0=1$,  $\frac{d}{d\e } |_{ \e = 0} \tilde {\mathcal H}_\e = \langle  u | 1 \rangle$, and 
\begin{equation}\label{second derivative}
 \frac{d^2}{d\e ^2} |_{ \e = 0}\tilde {\mathcal H}_\e = 2\| \Pi u\|^2 = \| u\|^2 + \langle  u | 1 \rangle^2
 \end{equation}
 and hence, using these identities, 
 $$
  \frac{d}{d\e} |_{ \e = 0} \log \tilde {\mathcal H}_\e =  \frac{d}{d\e} |_{ \e = 0}  \tilde {\mathcal H}_\e 
  = \langle  u | 1 \rangle
  $$
  and
 \begin{equation}\label{| u |^2}
  \frac{d^2}{d\e ^2} |_{ \e = 0} \log \tilde {\mathcal H}_\e = - \big( \frac{d}{d\e } |_{ \e = 0} \tilde {\mathcal H}_\e\big)^2  +  \frac{d^2}{d\e ^2} |_{ \e = 0}\tilde {\mathcal H}_\e
  = \| u \|^2 \ .
 \end{equation}
Furthermore, identity \eqref{trace2} becomes
\begin{equation}\label{trace3}
-\frac{d}{d\e }\log \tilde {\mathcal H}_\e =
\frac{\lambda_0}{1+\e \lambda_0}+\sum_{n=1}^\infty \frac{\gamma_n}{(1+\e (\lambda_{n-1}+1))(1+\e \lambda_n)}\ .
\end{equation}
Passing to the limit as $\e \to 0$ in both sides, we obtain that the series $\sum_{n = 1}^\infty \gamma_n$ is summable --- notice that $\gamma_n\ge 0$ and $\lambda_n\ge 0$ for $n$ large enough ---
and that
\begin{equation}\label{mean}
- \langle  u | 1 \rangle=\lambda_0+\sum_{n=1}^\infty \gamma_n\ .
\end{equation}
Taking the derivatives of both sides of \eqref{trace3} with respect to $\e $, one concludes that $\frac{d^2}{d\e ^2}\log \tilde {\mathcal H}_\e$ equals
$$
\frac{\lambda_0^2}{(1+\e \lambda_0)^2}+
\sum_{n=1}^\infty \frac{\gamma_n}{(1+\e (\lambda_{n-1}+1))(1+\e \lambda_n)}\big(\frac{\lambda_{n-1}+1}{1+\e (\lambda_{n-1}+1)}+\frac{ \lambda_n}{1+\e \lambda_n} \big)\ .
$$
Passing to the limit as $\e \to 0$ and using \eqref{| u |^2}, one concludes that
$$\|u\|^2  =\lambda_0^2+\sum_{n=1}^\infty \gamma_n(\lambda_n+\lambda_{n-1}+1)\ .$$
From identity \eqref{mean} and the definition of $\gamma_n$ we have for every $n\ge 0$,
\begin{equation}\label{expandlambda}
\lambda_n=n- \langle  u | 1 \rangle-\sum_{k=n+1}^\infty \gamma_k\ .
\end{equation}
As a consequence,
$$\lambda_n+\lambda_{n-1}+1=2n-2 \langle  u | 1 \rangle-\gamma_n-2\sum_{k=n+1}^\infty \gamma_k\ ,$$
so that 
$$\|u\|^2  = \lambda_0^2 + 2\sum_{n=1}^\infty n\gamma_n-2 \langle  u | 1 \rangle\sum_{n=1}^\infty \gamma_n
- \big( \sum_{n=1}^\infty \gamma_n \big)^2\ ,$$
which, in view of \eqref{mean}, leads to
$$\|u\|^2  = \langle  u | 1 \rangle^2+2\sum_{n=1}^\infty n\gamma_n\ ,$$
whence the claimed formula for $\|u\|^2$. 
\end{proof}
\begin{remark}
(i) It follows from the proof of Proposition \ref{formulae} that
$\mathcal H_\lambda (u)$ equals the trace of the rank one operator $ (L_u+\lambda Id)^{-1}-S^*(L_u+\lambda Id)^{-1}S$.
(ii) Writing $u \in L^2_{r}$ as $v + c$ with $c = \langle u | 1 \rangle$ and hence $v \in L^2_{r,0}$, one computes
$$
\mathcal H(u) = \mathcal H(v) - c \frac{1}{2\pi} \int_0^{2 \pi} v^2 d x - \frac{1}{3}c^3\ .
$$
By the computations in the proof of Proposition \ref{formulae} one has (cf. also \eqref{formula BO Hamiltonian})
$$
\mathcal H(v) =-\frac 16\frac{d^3}{d\e ^3} |_{ \e = 0}  \tilde{\mathcal H}_\e (v)\ , \quad 
\| v\|^2 =  \frac{d^2}{d\e ^2} |_{ \e = 0}\tilde {\mathcal H}_\e (v)\ , 
$$
and $c = \langle  u | 1 \rangle = \frac{d}{d\e} |_{ \e = 0}  \tilde {\mathcal H}_\e(u)$.
\end{remark}

The identity of Proposition \ref{formulae}(i) can be used to obtain product representations for $|\langle 1 | f_n \rangle |^2$,
which will be used later to appropriately scale $\langle 1 | f_n \rangle$ in order to obtain our candidates for (complex) Birkhoff coordinates.
\begin{corollary}\label{1,f} For any $u \in L^2_r$ and $n \ge 1,$
\begin{eqnarray*}
|\langle 1 | f_0 \rangle|^2 &=& \kappa_0\ , \qquad
\kappa_0 := \prod_{p=1}^\infty \Big( 1-\frac{\gamma_p}{\lambda_p-\lambda_0} \Big)\ , \\
|\langle 1 | f_n \rangle|^2 &=& \gamma_n \kappa_n \ , \quad
\kappa_n := \frac{1}{\lambda_n-\lambda_0}
\prod_{1\leq p\ne n} \Big( 1-\frac{\gamma_p}{\lambda_p-\lambda_n}\Big)\ .
\end{eqnarray*}
In particular,  $|\langle 1, f_n \rangle|^2$, $n \ge 0,$ are spectral invariants.
\end{corollary}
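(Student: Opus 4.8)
The plan is to read off the quantities $|\langle 1\,|\,f_n\rangle|^2$ as residues of the generating function $\mathcal H_\lambda(u)$ and to compute those residues from the product representation of Proposition \ref{formulae}(i). Indeed, by \eqref{expandHlambda} and the simplicity of the spectrum (Proposition \ref{simple}), $\mathcal H_\lambda(u)$ has at $\lambda=-\lambda_n$ a pole of order at most one and
$$
|\langle 1\,|\,f_n\rangle|^2=\lim_{\lambda\to-\lambda_n}(\lambda_n+\lambda)\,\mathcal H_\lambda(u),\qquad n\ge 0 .
$$
On the other hand, Proposition \ref{formulae}(i) gives $\mathcal H_\lambda(u)=\frac{1}{\lambda_0+\lambda}\prod_{p\ge 1}\big(1-\frac{\gamma_p}{\lambda_p+\lambda}\big)$, so the same limit can be evaluated factor by factor, and the two identities of the corollary drop out.

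For $n=0$ only the factor $(\lambda_0+\lambda)^{-1}$ is singular at $-\lambda_0$, so the limit equals the value of the product at $\lambda=-\lambda_0$, namely $\prod_{p\ge 1}\big(1-\frac{\gamma_p}{\lambda_p-\lambda_0}\big)=\kappa_0$. For $n\ge 1$ I would use $\gamma_n=\lambda_n-\lambda_{n-1}-1$ to write $1-\frac{\gamma_n}{\lambda_n+\lambda}=\frac{(\lambda_{n-1}+1)+\lambda}{\lambda_n+\lambda}$; this is the unique factor with a pole at $\lambda=-\lambda_n$, with $\lim_{\lambda\to-\lambda_n}(\lambda_n+\lambda)\big(1-\frac{\gamma_n}{\lambda_n+\lambda}\big)=-\gamma_n$. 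Multiplying by the values at $\lambda=-\lambda_n$ of the remaining factors, $\frac{1}{\lambda_0-\lambda_n}$ and $\prod_{1\le p\ne n}\big(1-\frac{\gamma_p}{\lambda_p-\lambda_n}\big)$, gives
$$
|\langle 1\,|\,f_n\rangle|^2=(-\gamma_n)\cdot\frac{1}{\lambda_0-\lambda_n}\prod_{1\le p\ne n}\Big(1-\frac{\gamma_p}{\lambda_p-\lambda_n}\Big)=\gamma_n\kappa_n .
$$

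The routine points to verify are the convergence of the products defining $\kappa_n$ and the legitimacy of passing to the limit inside the infinite product. Both follow from $\sum_p\gamma_p<\infty$ (Proposition \ref{formulae}(ii)) together with $\lambda_p-\lambda_0\ge p$ from \eqref{lower bound ev} and, more generally, $\lambda_p-\lambda_n\to\infty$ as $p\to\infty$, which make all denominators nonzero and give $\sum_p\sup|\frac{\gamma_p}{\lambda_p+\lambda}|<\infty$ on a neighbourhood of $-\lambda_n$ avoiding the other poles, hence locally uniform convergence of $\prod_{p\ge1}\big(1-\frac{\gamma_p}{\lambda_p+\lambda}\big)$ there. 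The only case warranting a separate remark is $\gamma_n=0$: then the $n$-th factor equals $1$, $\mathcal H_\lambda$ is regular at $-\lambda_n$, and indeed $|\langle 1\,|\,f_n\rangle|^2=0$ by Lemma \ref{vanishing of gamma_n}, consistent with $\gamma_n\kappa_n=0$. Since the right-hand sides are functions of $\lambda_0$ and the $\gamma_p$'s, which are spectral invariants by Proposition \ref{formulae}, so are the $|\langle 1\,|\,f_n\rangle|^2$. I do not expect any genuine obstacle: the substance lies entirely in Proposition \ref{formulae}(i), and this corollary is essentially residue bookkeeping.
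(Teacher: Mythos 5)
Your proposal is correct and follows exactly the approach the paper sketches: the paper's proof consists of the single sentence that the product representations are obtained by computing the residue of $\mathcal H_\lambda(u)$ at $\lambda=-\lambda_n$ using Proposition \ref{formulae}(i), which is precisely what you carry out. Your additional care about convergence of the infinite product and the degenerate case $\gamma_n=0$ is sound and merely fills in details the paper leaves implicit.
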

\begin{remark}
The formulas of Corollary \ref{1,f} provide a new proof of the fact that $\langle 1 | f_0 \rangle\ne 0$ and that, for $n\ge 1$, $\langle 1 | f_n \rangle=0$ if and only if $\gamma_n=0$.
\end{remark}
\begin{proof}
The claimed product representations are obtained by computing with the help of the identity of Proposition \ref{formulae}(i) 
the residue of $\mathcal H_\lambda (u)$ at $\lambda=-\lambda_n$ for any $n \ge 0$.
\end{proof}

As an application of Proposition \ref{formulae} and Corollary \ref{1,f} we analyze the isospectral set $\text{Iso}(u)$ of an arbitrary potential $u \in L^2_r$,
defined as 
$$
\text{Iso}(u) := \{ v \in L^2_r : \lambda_n(v)= \lambda_n(u) \,\, \forall n \ge 0\}\,. 
$$
Note that $\text{Iso}(u)$ is closed by Proposition \ref{lipschitz} and that  
$$
\text{Iso}(u) = \{ v \in L^2_r : \mathcal H_\lambda (v)= \mathcal H_\lambda(u) \,\, \forall \lambda \in \mathbb C \setminus\{ \lambda_n(u)\,: \, n \ge 0 \}\}\ .
$$
First we need to make some preliminary considerations. Recall that by Proposition \ref{formulae}, 
$(\gamma_n(u))_{n \ge 1}$ is in the weighted $\ell^1-$sequence space $\ell^{1,1}(\mathbb N, \mathbb R)$
and $\gamma_n(u) \ge 0$ for any $n \ge 1$.
Hence the map $\Gamma$
\begin{equation}\label{Gamma}
\Gamma : L^2_{r,0} \to \mathcal C_+^{1,1}\,, u \mapsto (\gamma_n(u))_{n \ge 1}\,
\end{equation}
is well defined. Here  $\mathcal C_+^{1,1}$ denotes the positive cone in $\ell^{1,1}(\mathbb N, \mathbb R) $,
$$
\mathcal C_+^{1,1} := \{ (r_n)_{n \ge 1} 
\in \ell^{1,1}(\mathbb N, \mathbb R) \, : \,  r_n \ge  0 \,\, \, \forall n \ge 1 \}.
$$
\begin{proposition}\label{compact}
The map $\Gamma $ is proper.
\end{proposition}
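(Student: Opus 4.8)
The plan is to show that $\Gamma^{-1}(K)$ is sequentially compact in $L^2_{r,0}$ for every compact $K\subset\mathcal C_+^{1,1}$. Two consequences of Proposition \ref{formulae} are used throughout. On $L^2_{r,0}$ one has $\langle u|1\rangle=0$, so the trace formula reduces to $\|u\|^2=2\sum_{n\ge1}n\gamma_n(u)$ and formula \eqref{expandlambda} becomes $\lambda_n(u)=n-\sum_{k>n}\gamma_k(u)$. In particular $\Gamma^{-1}(K)$ is bounded in $L^2$ (indeed $\sup_{u\in\Gamma^{-1}(K)}\|u\|^2=2\sup_{c\in K}\sum_{n\ge1}nc_n<\infty$), each $\lambda_n$ is a continuous function of $\Gamma(u)$ for the $\ell^1$-topology, and $\lambda_n(u)-n\to0$ as $n\to\infty$ for every $u\in L^2_{r,0}$.

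Let $(u_m)_{m\ge1}\subset\Gamma^{-1}(K)$. Passing to subsequences without changing notation, I would arrange that: (a) $\Gamma(u_m)\to\gamma^\infty=(\gamma_n^\infty)_{n\ge1}$ in $\ell^{1,1}$, with $\gamma^\infty\in K$, using compactness of $K$; (b) $u_m\rightharpoonup u^\infty$ weakly in $L^2$, where $u^\infty\in L^2_{r,0}$ because a bounded sequence in the Hilbert space $L^2$ has a weakly convergent subsequence, $L^2_{r,0}$ is weakly closed, and $\langle u^\infty|1\rangle=\lim_m\langle u_m|1\rangle=0$; (c) by a diagonal argument, $f_n(u_m)\to g_n$ strongly in $L^2_+$ for each $n\ge0$. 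Extraction (c) is legitimate because the computation in Remark \ref{f_n continuous} --- which uses only $\|f_n(u_m)\|=1$, the boundedness of $(\|u_m\|)_m$ and of $(\lambda_n(u_m))_m$, and Sobolev embedding --- bounds $(f_n(u_m))_m$ in $H^1_+$ for each fixed $n$, and $H^1_+\hookrightarrow L^2_+$ is compact. From (a) we also get, for each $n$, $\lambda_n(u_m)\to\lambda_n^\infty:=n-\sum_{k>n}\gamma_k^\infty$, with $\lambda_n^\infty-\lambda_{n-1}^\infty-1=\gamma_n^\infty$ and $\lambda_n^\infty-n\to0$ as $n\to\infty$.

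The decisive step is to show that $\Gamma(u^\infty)=\gamma^\infty$. Rewrite the eigenvalue relation as $Df_n(u_m)=\lambda_n(u_m)f_n(u_m)+\Pi(u_mf_n(u_m))$ and let $m\to\infty$. Since $f_n(u_m)\to g_n$ strongly and $u_m\rightharpoonup u^\infty$ weakly in $L^2$, one has $u_mf_n(u_m)\to u^\infty g_n$ in $\mathcal D'(\T)$ (weak--strong convergence tested against smooth functions), whence $L_{u^\infty}g_n=\lambda_n^\infty g_n$ in $\mathcal D'(\T)$; a routine elliptic bootstrap then gives $g_n\in H^1_+$, while $\|g_n\|=\lim_m\|f_n(u_m)\|=1$ shows $g_n\ne0$. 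So each $\lambda_n^\infty$ is an eigenvalue of $L_{u^\infty}$, and since $\lambda_0^\infty<\lambda_1^\infty<\cdots$, these form a subset of the spectrum of $L_{u^\infty}$. If this subset were proper, there would be an eigenvalue of $L_{u^\infty}$ strictly below some $\lambda_n^\infty$ and not among the $\lambda_j^\infty$, which would force $\lambda_{n+1}(u^\infty)\le\lambda_n^\infty$ for all large $n$, hence $\lambda_n^\infty-n\ge(\lambda_{n+1}(u^\infty)-(n+1))+1\to1$, contradicting $\lambda_n^\infty-n\to0$. Therefore $\lambda_n(u^\infty)=\lambda_n^\infty$ for every $n$, hence $\gamma_n(u^\infty)=\gamma_n^\infty$, i.e. $\Gamma(u^\infty)=\gamma^\infty\in K$ and $u^\infty\in\Gamma^{-1}(K)$. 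Finally, the trace formula gives $\|u^\infty\|^2=2\sum_{n\ge1}n\gamma_n^\infty=\lim_m2\sum_{n\ge1}n\gamma_n(u_m)=\lim_m\|u_m\|^2$, which together with $u_m\rightharpoonup u^\infty$ implies $u_m\to u^\infty$ strongly in $L^2$. This shows $\Gamma^{-1}(K)$ is sequentially compact, hence compact, i.e. $\Gamma$ is proper.

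The main obstacle I anticipate is exactly the identification of the weak limit in the third step: eigenvalues of $L_u=D-T_u$ need not behave continuously under mere weak $L^2$-convergence of $u$ (because of possible concentration), and the argument genuinely exploits both the uniform $H^1$-bound on the eigenfunctions --- a consequence of $\|T_uh\|\le\|u\|\,\|h\|_{L^\infty}$ and Sobolev embedding --- and the asymptotics $\lambda_n(u)-n\to0$ that holds for mean-zero potentials. Everything else (boundedness of $\Gamma^{-1}(K)$, the successive extractions, and the final passage from weak to strong $L^2$-convergence) is routine.
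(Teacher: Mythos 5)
Your argument is correct but takes a genuinely different route from the paper's. You work directly at the level of eigenfunctions: from the locally uniform $H^1$-bound you extract, for each $n$, a strong $L^2$-limit $g_n$ of $f_n(u_m)$, pass to the limit in the eigenvalue equation by weak--strong convergence, and then invoke the asymptotics $\lambda_n^\infty - n\to 0$ and $\lambda_n(u^\infty)-n\to 0$ to rule out the possibility that the numbers $\lambda_n^\infty$ enumerate only a proper subset of the spectrum of $L_{u^\infty}$. The paper instead extracts weak $H^1$-limits of the resolvent vectors $w_\lambda^{(k)} = (L_{u^{(k)}}+\lambda Id)^{-1}1$, deduces $\mathcal H_\lambda(u^{(k)})\to\mathcal H_\lambda(u)$ for a set of $\lambda$'s with a cluster point, and matches the resulting product representation of $\mathcal H_\lambda(u)$ in terms of the limiting data $(\gamma_n),(\lambda_n)$ against the expansion of $\log\tilde{\mathcal H}_\e$ at $\e=0$, which yields $\|u\|^2=2\sum n\gamma_n$ directly, without ever locating individual eigenvalues or eigenfunctions of $L_u$. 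Your route identifies the spectral data of $L_{u^\infty}$ explicitly, at the cost of tracking a sequence of eigenfunctions; the paper's route is leaner, fits its overall strategy of exploiting the generating function $\mathcal H_\lambda$, and requires only the limit of a single scalar per value of $\lambda$. One small caveat on your write-up: the ``routine elliptic bootstrap'' claimed to give $g_n\in H^1_+$ does not quite get off the ground from $g_n\in L^2_+$ alone (the product $u^\infty g_n$ need not lie in $L^2$ unless $g_n$ is already bounded); but this is harmless, since $g_n\in H^1_+$ comes for free as a weak $H^1$-limit of the $H^1$-bounded sequence $f_n(u_m)$ -- the same bound that feeds your Rellich extraction.
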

\begin{proof} Let $(u^{(k)})_{k \ge 1}$ be a sequence in $L^2_{r,0}$ so that 
$$\Gamma (u^{(k)})\to \gamma =(\gamma_n)_{n\ge 1}\in \mathcal C_+^{1,1}\ \,\, \text{ as } \,  k \to \infty.$$
By Proposition \ref{formulae}(ii) and the assumption, 
$$\| u^{(k)}\|^2= 2\sum_{n=1}^\infty n\gamma_n(u^{(k)}) \,\,\, \to \,\,\, 2 \sum_{n=1}^\infty n\gamma_n\,\,\,\, \text{ as } k \to \infty\, ,
$$
and thus we may assume, after extracting a subsequence if needed, that $u^{(k)}\rightharpoonup u$ weakly in 
$L^2_{r,0}$. 
The proof will be complete if we establish that this convergence is strong, or equivalently that
$$\| u\|^2= 2\sum_{n=1}^\infty n\gamma_n\ .$$
Since (again by  Proposition \ref{formulae}(ii))
$$L_{u^{(k)}}\ge \lambda_0(u^{(k)})=-\sum_{n=1}^\infty \gamma_n(u^{(k)})\to -\sum_{n=1}^\infty \gamma_n\ ,$$
we infer that there exists $c >   |- \lambda_0 +1|$ so that for any $k \ge 1$ and $\lambda \ge c$,
$$L_{u^{(k)}}+\lambda Id : H^1_+\longrightarrow L^2_+$$ is an isomorphism whose inverse is bounded uniformly in $k$. Therefore 
$$w_\lambda ^{(k)}:=(L_{u^{(k)}}+\lambda Id )^{-1}[1]\ ,$$
 is a well defined, bounded sequence of $H^1_+$. Let us choose an arbitrary countable subset $\Lambda $ of $[c,+\infty )$ with a cluster point. 
By a diagonal procedure, we extract a subsequence of $w_\lambda ^{(k)}$, again denoted by $w_\lambda ^{(k)}$, so that for every $\lambda \in \Lambda $, 
$w_\lambda ^{(k)}$ converges weakly in $H^1_+$ to some element $w_\lambda \in H^1_+$ as $k \to \infty$. By Rellich's theorem we infer that, weakly in $L^2_+$,
$$
(L_{u^{(k)}}+\lambda Id )w_\lambda^{(k)} \rightharpoonup (L_u+\lambda Id )w_\lambda\,\, \text{ as } k \to \infty\,.
$$ 
But since $(L_{u^{(k)}}+\lambda Id )w_\lambda^{(k)}  = 1$ for any $k \ge 1$ it then follows that for every $\lambda \in \Lambda $, $(L_u+\lambda Id )w_\lambda = 1$ and hence
by the definition of $\mathcal H_\lambda (u)$,
$$\mathcal H_\lambda (u^{(k)})= \langle w_\lambda^{(k)} | 1\rangle \,\, \to \,\, \langle w_\lambda | 1\rangle =\mathcal H_\lambda (u) \,\,  \quad \forall \, \lambda \in \Lambda.$$
On the other hand, for every $n\ge 0$,
$$\lambda_n(u^{(k)})=n-\sum_{j=n+1}^\infty \gamma_j(u^{(k)})\,\, \to \,\, n-\sum_{j=n+1}^\infty \gamma_j\  \text{ as }\,\,\, k \to \infty\ $$
uniformly with respect to $n \ge 0$.  Hence, setting $\lambda_n :=  n-\sum_{j=n+1}^\infty \gamma_j$ for any $n \ge 0$ one infers that 
 for any $\lambda \in [ c,  \infty )$,
$$\frac{1}{\lambda_0(u^{(k)})+\lambda}\prod_{n=1}^\infty \Big(1-\frac{\gamma_n(u^{(k)})}{\lambda_n(u^{(k)})+\lambda}   \Big) \,\, 
\to \,\, \frac{1}{\lambda_0+\lambda}\prod_{n=1}^\infty \Big(1-\frac{\gamma_n}{\lambda_n+\lambda}   \Big).$$
By Proposition \ref{formulae}(i), it then follows that  
$$\mathcal H_\lambda (u)=\frac{1}{\lambda_0+\lambda}\prod_{n=1}^\infty \left(1-\frac{\gamma_n}{\lambda_n+\lambda}   \right)$$
for any $\lambda \in \Lambda $ and hence by analyticity, for any  $\lambda \in [c,+\infty )$.  Thus \eqref{second derivative}
applies and we conclude that $\| u\|^2 = 2 \sum_{n=1}^\infty n\gamma_n.$
\end{proof}
Our result on $\text{Iso}(u)$ then reads as follows.
\begin{proposition}\label{isocompact}
For every $u\in L^2_r$, $\text{Iso}(u) $ is a compact subset of $L^2_r$.
In fact, $\text{Iso}(u) - \langle u | 1 \rangle$ is a compact subset of the sphere in $L^2_{r,0}$ of radius $\|u - \langle u | 1 \rangle\|$, centered at $0$.
In particular, for any $c \in \R$,  $\text{Iso}(c)$ consists of the constant potential $c$ only.
\end{proposition}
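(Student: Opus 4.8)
The plan is to deduce everything from Proposition \ref{formulae}. First I would reduce to the case $\langle u | 1 \rangle = 0$. Since $L_{u+c} = L_u - c\,\mathrm{Id}$, one has $\lambda_n(u+c) = \lambda_n(u) - c$ for every $n \ge 0$, so $\gamma_n(u+c) = \gamma_n(u)$ for all $n \ge 1$ and $\langle u + c | 1 \rangle = \langle u | 1 \rangle + c$. Consequently $\mathrm{Iso}(u) = \mathrm{Iso}(v) + \langle u | 1\rangle$ where $v := u - \langle u | 1 \rangle \in L^2_{r,0}$, and it suffices to show that $\mathrm{Iso}(v)$ is a compact subset of the sphere in $L^2_{r,0}$ of radius $\|v\|$ centered at $0$.

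Next I would show $\mathrm{Iso}(v) \subset L^2_{r,0}$ and that it lies on the claimed sphere. By Proposition \ref{formulae}(ii), for any $w \in \mathrm{Iso}(v)$ the spectrum of $L_w$ equals that of $L_v$, hence $\gamma_n(w) = \gamma_n(v)$ and $\lambda_0(w) = \lambda_0(v)$ for all $n$; therefore $\langle w | 1 \rangle = -\lambda_0(w) - \sum_{n \ge 1}\gamma_n(w) = -\lambda_0(v) - \sum_{n\ge 1}\gamma_n(v) = \langle v | 1 \rangle = 0$, so $w \in L^2_{r,0}$, and moreover $\|w\|^2 = 2\sum_{n\ge 1} n\,\gamma_n(w) = 2\sum_{n\ge 1}n\,\gamma_n(v) = \|v\|^2$. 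Thus $\mathrm{Iso}(v)$ is contained in the sphere of radius $\|v\|$ in $L^2_{r,0}$, and in particular it is bounded.

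For compactness I would use Proposition \ref{compact}: the map $\Gamma : L^2_{r,0} \to \mathcal C_+^{1,1}$, $v \mapsto (\gamma_n(v))_{n\ge 1}$, is proper, and $\mathrm{Iso}(v) \subset L^2_{r,0}$ is precisely the set $\Gamma^{-1}(\{\Gamma(v)\})$ intersected with the isospectral constraint $\lambda_0 = \lambda_0(v)$ — but since $\lambda_0 = -\langle \cdot | 1\rangle - \sum \gamma_n$ and $\langle \cdot | 1 \rangle = 0$ on $L^2_{r,0}$, the constraint $\Gamma(w) = \Gamma(v)$ already forces $\lambda_0(w) = \lambda_0(v)$ and hence the full isospectrality. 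Therefore $\mathrm{Iso}(v) = \Gamma^{-1}(\{\Gamma(v)\})$, which is the preimage of a point (a compact set) under a proper continuous map, hence compact. Finally, for $u = c$ constant one has $\gamma_n(c) = 0$ for all $n$ and $\|c - \langle c | 1\rangle\| = 0$, so $\mathrm{Iso}(c) - c$ is a compact subset of the single point $\{0\}$, giving $\mathrm{Iso}(c) = \{c\}$.

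The main obstacle, and the point that needs care, is verifying the identification $\mathrm{Iso}(v) = \Gamma^{-1}(\{\Gamma(v)\})$ on $L^2_{r,0}$: one direction is the elementary observation that equal spectra give equal gaps, while the other requires precisely the reconstruction of $\lambda_0$ from the gaps together with the vanishing of the mean, which is exactly what Proposition \ref{formulae}(ii) provides. Everything else — continuity of $\Gamma$, properness, the trace formula — is already in hand, so the argument is short once this point is pinned down.
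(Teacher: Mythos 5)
Your proposal is correct and follows essentially the same route as the paper: Proposition \ref{formulae}(ii) pins down the mean and the $L^2$-norm of every element of $\text{Iso}(u)$, and compactness comes from the properness of $\Gamma$ (Proposition \ref{compact}). The only (harmless) cosmetic difference is that you prove the equality $\text{Iso}(v)=\Gamma^{-1}(\{\Gamma(v)\})$ on $L^2_{r,0}$ outright, whereas the paper only uses the inclusion $\text{Iso}(u)-\langle u\,|\,1\rangle\subset\Gamma^{-1}(\{\Gamma(v)\})$ together with the closedness of $\text{Iso}(u)$.
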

\begin{proof}
Let  $v\in \text{Iso}(u)$. Since by Proposition \ref{formulae}(ii)
$$\langle v | 1 \rangle =-\lambda_0(u)-\sum_{n=1}^\infty \gamma_n(u)= \langle  u | 1 \rangle$$
and $\gamma_n(u - \langle u | 1 \rangle) = \gamma_n(u)$ by the definition of $\gamma_n$, $ n \ge 1$, one has
$$v- \langle  u | 1 \rangle\in \Gamma^{-1}\{ (\gamma_n(u))_{n\ge 1}\}. $$
Using that $\Gamma$ is proper by Proposition \ref{compact} and that $\|u - \langle u | 1 \rangle \|$ is a spectral invariant,  it follows that
$\text{Iso}(u - \langle u | 1 \rangle) = \text{Iso}(u) - \langle u | 1 \rangle$ is contained in a compact subset of the sphere in $L^2_{r,0}$ of radius $\|u - \langle u | 1 \rangle\|$, centered at $0$ . 
Since $\text{Iso}(u)$ is closed one then concludes that $\text{Iso}(u) - \langle u | 1 \rangle$ and in turn $\text{Iso}(u)$ are compact.
\end{proof}


\section{Complex Birkhoff coordinates}\label{Birkhoff coordinates}

In this section we introduce our candidates of complex Birkhoff coordinates, define the corresponding Birkhoff map $\Phi$, and discuss first properties of $\Phi$.

For any $u \in L^2_{r,0}$ and $n\ge 0$, define
\begin{equation}\label{birkhoff}
\zeta_n(u) :=\frac{\langle 1 | f_n \rangle}{\sqrt {\kappa_n}} \end{equation}
where we recall that 
$\kappa_0 = \prod_{p\ge 1} \Big( 1-\frac{\gamma_p}{\lambda_p-\lambda_0}\Big)$ and,  for any $n \ge 1,$
$$  
\kappa_n =\frac{1}{\lambda_n-\lambda_0}\prod_{1\leq p\ne n} \Big( 1-\frac{\gamma_p}{\lambda_p-\lambda_n}\Big)\ .
$$ 
The functionals $\zeta_n$, $n \ge 1$, are our candidates
for complex Birkhoff coordinates of the BO equation.
By Corollary \ref{1,f}, $\zeta_0 =1$ and for any $n \ge 1$, 
$|\zeta_n|^2=\gamma_n$.
\begin{proposition}\label{Birkhoff map}
 The map
\begin{equation}\label{Phi}
\Phi : L^2_{r,0} \to h^{1/2}_+\,, \, u \mapsto 
(\zeta_n(u))_{n \ge 1}
\end{equation}
is continuous and proper.
\end{proposition}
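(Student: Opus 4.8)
The plan is to establish continuity and properness of $\Phi$ separately, exploiting the trace formula $\|u\|^2 = 2\sum_{n\ge 1} n|\zeta_n(u)|^2$ from Proposition \ref{formulae} together with the structure of the scaling factors $\kappa_n$. For continuity, I would first invoke Remark \ref{f_n continuous} (or the promised real analyticity in Section 5) to get that $u \mapsto \langle 1 | f_n \rangle$ is continuous $L^2_{r,0} \to \C$ for each fixed $n$. Since $\gamma_n$ and $\lambda_n$ are continuous (Proposition \ref{lipschitz} and the relation $\lambda_n = n + \lambda_0 + \sum_{k=1}^n \gamma_k$), the product defining $\kappa_n$ converges locally uniformly in $u$ on $L^2_{r,0}$ — here one uses that on a bounded set the tail $\sum_{p > N} \gamma_p$ is small uniformly (again by Proposition \ref{formulae}(ii)) and that $\lambda_p - \lambda_n \ge p - n$ for $p > n$, so each factor $1 - \gamma_p/(\lambda_p - \lambda_n)$ is bounded away from $0$ and the infinite product is continuous and nonvanishing. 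Hence each $\zeta_n = \langle 1 | f_n\rangle/\sqrt{\kappa_n}$ is continuous. To upgrade coordinatewise continuity to continuity into $h^{1/2}_+$, I would use that $\sum_n n|\zeta_n(u)|^2 = \|u\|^2/2$: given $u^{(k)} \to u$ in $L^2_{r,0}$, the tails $\sum_{n > N} n|\zeta_n(u^{(k)})|^2 = \frac12\|u^{(k)}\|^2 - \sum_{n \le N} n|\zeta_n(u^{(k)})|^2$ converge to $\sum_{n>N} n|\zeta_n(u)|^2$, which is small for $N$ large uniformly along the sequence, while the finitely many low modes converge by coordinatewise continuity; this gives $\Phi(u^{(k)}) \to \Phi(u)$ in $h^{1/2}_+$.

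For properness, let $(u^{(k)})$ be a sequence in $L^2_{r,0}$ with $\Phi(u^{(k)}) \to \xi = (\xi_n)_{n\ge1}$ in $h^{1/2}_+$; I must show $(u^{(k)})$ has a subsequence converging in $L^2_{r,0}$. Since $|\zeta_n(u^{(k)})|^2 = \gamma_n(u^{(k)})$, convergence in $h^{1/2}_+$ gives $\gamma_n(u^{(k)}) \to |\xi_n|^2 =: \gamma_n$ with $\sum_n n\gamma_n < \infty$, i.e. $\Gamma(u^{(k)}) \to (\gamma_n)_{n\ge1}$ in $\ell^{1,1}$ (the $h^{1/2}_+$ convergence of $\sqrt{\gamma_n}$-type quantities controls the $\ell^{1,1}$ norm of the $\gamma_n$ via $\sum n\gamma_n = \sum n|\zeta_n|^2$; one has to check the tail passes to the limit, which follows from the uniform tail bound). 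By Proposition \ref{compact}, $\Gamma$ is proper, so after passing to a subsequence $u^{(k)} \to u$ in $L^2_{r,0}$ for some $u$, and then by continuity of $\Phi$ we get $\Phi(u) = \xi$. This shows preimages of compact sets are compact, i.e. $\Phi$ is proper.

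The main obstacle I anticipate is the uniform control of the infinite product $\kappa_n$ and of the tail $\sum_{n>N} n\gamma_n$ along sequences — in particular ensuring $\kappa_n$ stays bounded away from both $0$ and $\infty$ locally uniformly in $u$, since $\zeta_n$ involves dividing by $\sqrt{\kappa_n}$. The factor $\frac{1}{\lambda_n - \lambda_0}$ in $\kappa_n$ behaves like $1/n$, so $\gamma_n\kappa_n \sim \gamma_n/n$ is consistent with $\langle 1 | f_n\rangle \to 0$ being $\ell^2$-summable; making the error estimates in the product uniform on bounded sets is the technical heart. Everything else — coordinatewise continuity, the reduction of properness to Proposition \ref{compact} — is then routine.
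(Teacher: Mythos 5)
Your proposal is correct and follows essentially the same route as the paper: coordinatewise continuity of $\zeta_n$ (via Remark \ref{f_n continuous} and the continuity of $\gamma_n$, $\kappa_n$), upgraded to continuity into $h^{1/2}_+$ by the Parseval identity $\|\Phi(v)\|_{1/2}^2 = 2\|v\|^2$, and properness reduced to the properness of $\Gamma$ from Proposition \ref{compact}. The paper phrases the upgrade as weak convergence plus norm convergence implying strong convergence in the Hilbert space $h^{1/2}_+$, which is the same mechanism as your uniform tail estimate.
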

\begin{proof}
Since by Proposition \ref{formulae}, $\gamma_n = \frac{1}{n} \ell_n^1$  and in addition $|\zeta_n| = |\gamma_n|^{1/2}$, it follows that
$\Phi(u) \in h^{1/2}_+$ for any $u\in L^2_{r,0}$. Furthermore, since for any $n \ge 1,$ we know from Proposition \ref{lipschitz}, Remark  \ref{f_n continuous} and Proposition \ref{formulae} (ii), that $\langle 1,f_n\rangle $ and $\kappa_n$ depend continuously on $u$, we infer that $\zeta_n$ depends continuously on $u$. Since by Proposition \ref{formulae} (ii),  
$\Phi $ is a bounded map, it then follows
that whenever $u_{k}\to u$ in $L^2_{r,0}$,
 $\Phi (u_{k})$ converges to $\Phi (u)$ weakly in the Hilbert space $h^{1/2}_+$. Since, again by Proposition \ref{formulae}, 
$$\| \Phi (v)\|_{1/2}^2 =2\| v\|^2 \ ,\quad v\in L^2_{r,0}\ ,$$
we infer that $\|\Phi (u_{k}) \|_{1/2} \to \| \Phi (u)\|_{1/2} $ and hence that $\Phi (u_k)\to \Phi (u)$ in $h^{1/2}_+$. This shows that $\Phi $ is continuous. Finally, by Proposition \ref{compact}, the map $\Gamma$ is proper and so is $\Phi$.
\end{proof}

In a next step we want to show that the map $\Phi$ is one-to-one. We prove this fact by showing that any potential $u \in L^2_{r,0}$
can be expressed in terms of the complex numbers $\zeta_n(u),$ $n \ge 1$. First note that 
the function $\Pi u$ extends as a holomorphic function to the unit disc $|z| < 1$, which
by a slight abuse of notation, we denote by $\Pi u(z)$.
It is given by its Taylor expansion at $z=0,$
$$
\Pi u(z)=\sum_{k=0}^\infty \widehat u(k)z^k=\sum_{k=0}^\infty\langle \Pi u | S^k1 \rangle z^k =\sum_{k=0}^\infty \langle (S^*)^k\Pi u | 1\rangle z^k\ .
$$
Denote by $\| S^*\| $ the norm of the operator $S^*:L^2_+\rightarrow L^2_+$. Since $|z| \| S^*\|=|z| <1$, one has $\sum_{k=0}^\infty (z S^*)^k = (Id-zS^*)^{-1} $  and hence 
\begin{equation}\label{Piu=}
\Pi u(z)= \langle (Id-zS^*)^{-1}\Pi u | 1\rangle \ ,\qquad  \forall \,  |z|<1.
\end{equation}
Denote by $M \equiv M(u)$ the matrix representation of the operator $S^*: L^2_+ \to L^2_+$ in the basis $(f_n)_{n \ge 0}$,
\begin{equation}\label{matrix representation of S^*}
M = (M_{np})_{n, p \ge 0}, \qquad  M_{np} \equiv M_{np}(u) = \langle S^*f_p |  f_n \rangle = \langle f_p | Sf_n\rangle \ .
\end{equation}
Then \eqref{Piu=} together with the formula \eqref{u in eigenbasis} for $\Pi u$ yields the following
\begin{lemma}\label{inverse formula}
For any $u \in L^2_r$ and any $z\in \C$ with $|z| < 1,$
\begin{equation}\label{recover}
\Pi u(z)= \langle (Id - zM(u))^{-1}X(u) | Y(u) \rangle _{\ell ^2}\ ,
\end{equation}
with $X(u)$ and $Y(u)$ being the 
column vectors 
\begin{equation}\label{definition X Y}
X(u) := - (\lambda_p \langle 1 | f_p \rangle )_{p\ge 0}\ ,\ Y(u) := (\langle 1 |  f_n \rangle)_{n\ge 0}\ .
\end{equation}
\end{lemma}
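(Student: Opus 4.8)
The plan is to transcribe identity \eqref{Piu=} into the sequence space $\ell^2(\Z_{\ge 0})$ via the orthonormal basis $(f_n)_{n\ge 0}$ of $L^2_+$. Let $U : L^2_+ \to \ell^2(\Z_{\ge 0})$ be the unitary coordinate map $g \mapsto (\langle g | f_n \rangle)_{n \ge 0}$. With the convention \eqref{matrix representation of S^*}, $M_{np} = \langle S^* f_p | f_n \rangle = \langle f_p | S f_n \rangle$, a one-line computation starting from $g = \sum_p \langle g | f_p\rangle f_p$ shows that $\langle S^* g | f_n \rangle = \langle g | S f_n\rangle = \sum_p M_{np}\langle g | f_p\rangle$, i.e. $U(S^* g) = M\, U(g)$ for every $g\in L^2_+$; in other words, under $U$ the operator $S^*$ becomes left multiplication by the matrix $M$.

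First I would identify the two coordinate vectors that enter the formula. By \eqref{u in eigenbasis} (equivalently by \eqref{identity in f_n and u}) one has $\langle \Pi u | f_n\rangle = -\lambda_n\langle 1 | f_n\rangle$, so $U(\Pi u) = X(u)$; this vector belongs to $\ell^2$ because $\sum_n \lambda_n^2|\langle 1|f_n\rangle|^2 = \|\Pi u\|^2 < \infty$ by \eqref{L 2 norm}. Likewise the expansion $1 = \sum_n \langle 1 | f_n\rangle f_n$ gives $U(1) = Y(u)$, which lies in $\ell^2$ since $\sum_n|\langle 1|f_n\rangle|^2 = 1$. Next I would handle the resolvent: by \eqref{SS*}, $S^*$ is a co-isometry, so $\|S^*\|_{L^2_+\to L^2_+} = 1$, and since $M$ represents $S^*$ in an orthonormal basis, $\|M\|_{\ell^2\to\ell^2}=1$ as well. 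Hence for $|z|<1$ the Neumann series $\sum_{k\ge 0}z^k(S^*)^k$ and $\sum_{k\ge 0}z^kM^k$ converge in operator norm to $(Id - zS^*)^{-1}$ and $(Id - zM)^{-1}$ respectively, and letting $k\to\infty$ in $U(S^*)^k = M^kU$ yields $U\,(Id - zS^*)^{-1} = (Id - zM)^{-1}\,U$.

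Finally, I would combine these observations with \eqref{Piu=} and the unitarity of $U$:
\[
\Pi u(z) = \langle (Id - zS^*)^{-1}\Pi u \, | \, 1\rangle = \langle U(Id - zS^*)^{-1}\Pi u \, | \, U(1)\rangle_{\ell^2} = \langle (Id - zM(u))^{-1}X(u) \, | \, Y(u)\rangle_{\ell^2},
\]
which is exactly \eqref{recover}. There is no real analytic difficulty here: the only points deserving attention are getting the index convention right so that $S^*$ turns into left (rather than right) multiplication by $M$, and verifying that $X(u)$ and $Y(u)$ belong to $\ell^2$ so that the resolvent $(Id-zM)^{-1}$ may legitimately be applied to them — both settled by \eqref{matrix representation of S^*}, \eqref{u in eigenbasis}, \eqref{L 2 norm}, and the co-isometry relation \eqref{SS*}.
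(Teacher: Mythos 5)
Your argument is correct and is essentially the same computation the paper has in mind — the paper simply states that \eqref{Piu=} together with \eqref{u in eigenbasis} yields \eqref{recover}, and your write-up is a careful transcription of \eqref{Piu=} into $\ell^2$-coordinates via the unitary $U: g \mapsto (\langle g\,|\,f_n\rangle)_{n\ge 0}$, under which $S^*$ becomes left multiplication by $M$, $\Pi u$ becomes $X(u)$, and $1$ becomes $Y(u)$.
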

Recall that by \eqref{SS*}, $MM^*=Id$ and hence $Id - zM$ is invertible for any $|z| < 1$. 
\begin{proposition}\label{injectivity}
The map  $\Phi$ is one-to-one.
\end{proposition}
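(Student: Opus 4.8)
The plan is to reconstruct $u$ from the two sequences $(\lambda_n(u))_{n\ge 0}$ and $(\langle 1\,|\,f_n(u)\rangle)_{n\ge 0}$ by means of formula \eqref{recover}, after observing that both sequences are determined by $\Phi(u)$. So suppose $u,v\in L^2_{r,0}$ with $\Phi(u)=\Phi(v)$. Since $|\zeta_n|^2=\gamma_n$ by Corollary \ref{1,f}, we get $\gamma_n(u)=\gamma_n(v)$ for all $n\ge 1$; combined with $\langle u\,|\,1\rangle=\langle v\,|\,1\rangle=0$ and Proposition \ref{formulae}(ii) this gives $\lambda_0(u)=-\sum_{n\ge1}\gamma_n(u)=\lambda_0(v)$, and then \eqref{lower bound ev} yields $\lambda_n(u)=\lambda_n(v)$ for every $n\ge0$, i.e.\ $u$ and $v$ are isospectral. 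Moreover, inspecting the formulae of Corollary \ref{1,f} and using \eqref{lower bound ev} to rewrite each difference $\lambda_p-\lambda_n$ through the $\gamma_k$'s, one sees that $\kappa_n$ is a function of $(\gamma_k)_{k\ge1}$ alone; hence $\kappa_n(u)=\kappa_n(v)$, and since $\langle 1\,|\,f_n\rangle=\zeta_n\sqrt{\kappa_n}$ for $n\ge1$ while $\langle 1\,|\,f_0\rangle=\sqrt{\kappa_0}>0$ by Definition \ref{on basis}, we obtain $\langle 1\,|\,f_n(u)\rangle=\langle 1\,|\,f_n(v)\rangle$ for all $n\ge0$. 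In particular the column vectors $X,Y$ of \eqref{definition X Y} satisfy $X(u)=X(v)$ and $Y(u)=Y(v)$.

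It then remains to show that the matrix $M=M(u)$ of \eqref{matrix representation of S^*} is likewise determined by the two sequences $(\lambda_n)$ and $(\langle 1\,|\,f_n\rangle)$; granting this, \eqref{recover} forces $\Pi u(z)=\Pi v(z)$ for all $|z|<1$, hence $\Pi u=\Pi v$ in $L^2_+$, and since $u-v$ is real with vanishing Szeg\H{o} projection it follows that $u=v$. To identify $M$, fix $n\ge0$ and examine its $n$-th row. If $\gamma_{n+1}=0$, then $f_{n+1}=Sf_n$ by Lemma \ref{f_n for gamma_n = 0}, so $M_{np}=\langle f_p\,|\,Sf_n\rangle=\delta_{p,n+1}$ and the row is known. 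If $\gamma_{n+1}>0$, then by Proposition \ref{simple} the eigenvalues are strictly increasing with consecutive gaps at least $1$, so no eigenvalue equals $\lambda_n+1$, and Lemma \ref{formula with langle f_p | 1 rangle} gives
$$
M_{np}=-\,\frac{\langle f_p\,|\,1\rangle}{\lambda_p-\lambda_n-1}\,b_n\quad(p\ne n),\qquad M_{nn}=\langle f_n\,|\,1\rangle\,b_n,\qquad b_n:=\langle u\,|\,Sf_n\rangle ,
$$
so the whole row equals the known vector $\big(c_{np}\langle f_p\,|\,1\rangle\big)_{p\ge0}$ (with $c_{nn}=1$ and $c_{np}=-1/(\lambda_p-\lambda_n-1)$ for $p\ne n$) multiplied by the single scalar $b_n$. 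Thus $M$ is determined once $b_n$ is. Its modulus is read off from $\|Sf_n\|=1$, i.e.\ from $\sum_p|M_{np}|^2=1$, which gives
$$
|b_n|^{-2}=|\langle 1\,|\,f_n\rangle|^2+\sum_{p\ne n}\frac{|\langle 1\,|\,f_p\rangle|^2}{(\lambda_p-\lambda_n-1)^2},
$$
a convergent expression in the data (the series converges because $|\langle 1\,|\,f_p\rangle|^2=\gamma_p\kappa_p$ by Corollary \ref{1,f} and $(\gamma_p)\in\ell^{1,1}$, while $\lambda_p-\lambda_n-1$ is bounded away from $0$ for $p\ne n$; equivalently, $Sf_n=-b_n(L_u-\lambda_n-1)^{-1}1$ and one may use $\|(L_u+\lambda)^{-1}1\|^2=-\tfrac{d}{d\lambda}\mathcal H_\lambda(u)$). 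The argument of $b_n$ is fixed by the normalization $\langle f_{n+1}\,|\,Sf_n\rangle>0$ of Definition \ref{on basis}: since $M_{n,n+1}=\langle f_{n+1}\,|\,Sf_n\rangle=-\langle f_{n+1}\,|\,1\rangle\,b_n/\gamma_{n+1}$ with $\langle f_{n+1}\,|\,1\rangle=\overline{\langle 1\,|\,f_{n+1}\rangle}\ne0$, this pins $b_n$ to a definite ray through the origin. Together with $|b_n|$ this determines $b_n$, hence $M$, which completes the argument.

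The main obstacle is this last step — showing that the matrix of $S^*$ in the eigenbasis is recoverable from the spectral data together with the numbers $\langle 1\,|\,f_n\rangle$ — and within it the delicate point is the reconstruction of the \emph{phase} of $b_n=\langle u\,|\,Sf_n\rangle$, which forces one to exploit the sign normalization built into the choice of eigenbasis in Definition \ref{on basis}; the degenerate indices with $\gamma_{n+1}=0$ (where $f_{n+1}=Sf_n$) have to be singled out and handled separately, as above. Everything else reduces to bookkeeping with the trace formulae of Proposition \ref{formulae} and Corollary \ref{1,f} and with the reconstruction identity \eqref{recover}.
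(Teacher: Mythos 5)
Your proof is correct and follows essentially the same route as the paper's: reconstruct $\Pi u$ from formula \eqref{recover} by showing that the vectors $X,Y$ and the matrix $M$ of \eqref{matrix representation of S^*} are all determined by the Birkhoff coordinates, with the $n$-th row of $M$ handled by cases on whether $\gamma_{n+1}$ vanishes, its modulus read off from $\|Sf_n\|^2=\sum_p|M_{np}|^2=1$, and the phase ambiguity resolved by the normalization $\langle f_{n+1}\,|\,Sf_n\rangle>0$ of Definition \ref{on basis}. The only cosmetic difference is that you parametrize the row by the scalar $b_n=\langle u\,|\,Sf_n\rangle$ and split its modulus and argument, whereas the paper substitutes the $p=n+1$ relation directly into the row formula to express $M_{np}$ through $\langle f_{n+1}\,|\,Sf_n\rangle$ (see \eqref{Mnp}--\eqref{eqfSf}); the two bookkeepings are equivalent.
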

\begin{proof}

First recall that that for any $u \in L^2_{r,0}$,
$\langle 1| f_0 \rangle=\sqrt{\kappa_0}$
with $\kappa_0= \prod_{ p\ge 1} \Big( 1-\frac{\gamma_p}{\lambda_p-\lambda_0}\Big)$
and for any $n \ge 1$
$$
\langle 1| f_n \rangle=\sqrt{\kappa_n}\zeta_n\ ,\qquad  
\kappa_n=\frac{1}{\lambda_n-\lambda_0}\prod_{1\leq p\ne n} \Big( 1-\frac{\gamma_p}{\lambda_p-\lambda_n}\Big)
$$
where  $ \gamma_p=|\zeta_p|^2$ and 
$
\lambda_n=n-\sum_{k=n+1}^\infty \gamma_k\ .
$
Therefore the components $\langle 1 | f_n \rangle$ of $Y(u)$ and the components $- \lambda_p \langle 1 | f_p \rangle$ of $X(u)$ can be expressed in terms of
the $\zeta_k$, $k \ge 1$.
We claim that the coefficients $M_{np}$ can also be expressed in this way. Indeed, for any given $n \ge 0$ one argues as follows:
if $\zeta_{n+1}=0$, then $\gamma_{n+1} = 0$ and hence $f_{n+1} = Sf_n$, implying that 
$$M_{np}=\delta_{p, n+1} .$$
If $\zeta_{n+1}\ne 0$, then $\gamma_{n+1} \ne 0$ and $\langle 1 | f_{n+1} \rangle \ne 0$. We then use 
that by Lemma \ref{formula with langle f_p | 1 rangle} for any $p \ge 0$,
$$
(\lambda_p - \lambda_n -1) M_{np} = - \langle u | Sf_n \rangle \langle f_p | 1 \rangle
$$
and hence in particular for $p = n+1$, $\gamma_{n+1} \langle f_{n+1} | Sf_{n} \rangle = - \langle u | Sf_n \rangle \langle f_{n+1} | 1 \rangle$.
Combining these two identities then yields
\begin{equation}\label{Mnp}
M_{np}=  \frac{ \gamma_{n+1} \langle f_{n+1} |  Sf_n \rangle}{\langle f_{n+1} | 1\rangle}   \frac{\langle f_p | 1\rangle}{ \lambda_p-\lambda_n-1} \, .
\end{equation}
Since $ \langle f_{n+1} |  Sf_n\rangle > 0$ the identity $1 =  \|S f_n \|^2 =  \sum_{p = 0}^\infty |M_{np}|^2$ then reads
\begin{equation}\label{eqfSf}
1 =  \langle f_{n+1} |  Sf_n\rangle ^2\frac{\gamma_{n+1}^2}{|\langle f_{n+1} |1 \rangle |^2}\sum_{p=0}^\infty \frac{|\langle f_p | 1\rangle |^2}{(\lambda_p-\lambda_n-1)^2}
\end{equation}
Altogether we thus have shown that all terms in the formula \eqref{Mnp} for $M_{np}$ can be expressed in terms 
of $\zeta_k$, $k \ge 1$, and hence also $\Pi u(z)$ for $|z|<1$ can be expressed in this way. Since $\Pi u(z)$, $|z|<1$, completely determines $u$ we have shown 
that $\Phi$ is one-to-one.
\end{proof}
\begin{remark}
By the definition \eqref{expandHlambda} of $\mathcal H_\lambda$ and formula \eqref{eqfSf},
$$
 1 =\langle f_{n+1} | Sf_n \rangle ^2\frac{\gamma_{n+1}^2}{| \langle f_{n+1} | 1\rangle |^2} \big( -\mathcal H'_{-\lambda_n-1} \big) \ .
$$
In view of Proposition \ref{formulae}(i) this leads to
$\langle f_{n+1} | Sf_n \rangle =\sqrt{\mu_{n+1}}$
with 
\begin{equation}\label{fSf}
\mu_{n+1}:=\Big(1-\frac{\gamma_{n+1}}{\lambda_{n+1}-\lambda_0}\Big)
\prod_{1\leq p\ne n + 1}\frac{\displaystyle{\Big( 1-\frac{\gamma_p}{\lambda_p-\lambda_{n+1}}\Big)}}{\displaystyle{\Big(1-\frac{\gamma_p}{\lambda_p-\lambda_{n}-1}\Big)}}\ .
\end{equation}
\end{remark}
We finish this section with discussing two symmetry properties of the map 
$\Phi$. For any $u \in L^2_r,$ denote by $u_*$
the element in $L^2_r,$ given by $u_*(x):= u(-x).$
Note that the fixed points of the involution $u \mapsto u_*$ are the even functions in $L^2_r$.
\begin{proposition}
For any $u \in L^2_{r}$, 
$$
\lambda_n(u_*) = \lambda_n(u)\, , \quad
f_n(x, u_*) = \overline{f_n(-x, u)}\, , 
\qquad \forall \, n \ge 0\, .
$$
As a consequence, for any $u \in L^2_{r,0}$,
$$
\zeta_n(u_*) = \overline{\zeta_n(u)} \qquad 
\forall \, n \ge 1\ .
$$
Hence $u$ is even if and only if 
for any $n \ge 1$, $\zeta_n(u) \in \R$.
\end{proposition}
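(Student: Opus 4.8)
The plan is to realize the symmetry $u \mapsto u_*$ at the operator level by conjugating with the antilinear map $C$ on $L^2_+$ defined by $Ch(x) := \overline{h(-x)}$. First I would record the elementary properties of $C$, all of which are immediate on the Fourier side, where $\widehat{Ch}(n) = \overline{\widehat h(n)}$: $C$ preserves $L^2_+$ and $H^1_+$, it is an antiunitary involution (so $C^2 = {\rm Id}$ and $\langle Ch | Cg \rangle = \overline{\langle h | g\rangle}$), it fixes the constant function $1$, and it commutes with the Szeg\H{o} projector $\Pi$, with $D=-i\partial_x$, and with the shift $S$ (for the last two one checks $DC h(x) = i\overline{h'(-x)} = CDh(x)$ and $SCh(x)=e^{ix}\overline{h(-x)} = CSh(x)$ directly). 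Since $u$ is real valued, $C(uh) = u_*\cdot Ch$ as functions, and combining this with $\Pi C = C\Pi$ yields $CT_u = T_{u_*}C$ on $H^1_+$. Putting these together gives the key intertwining identity $CL_u = L_{u_*}C$ on $H^1_+$.

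From $CL_u = L_{u_*}C$ together with the fact that $C$ is an antiunitary bijection, one reads off that $L_u$ and $L_{u_*}$ are antiunitarily equivalent, hence have the same spectrum with multiplicities; thus $\lambda_n(u_*) = \lambda_n(u)$ for all $n\ge 0$, and $Cf_n(\cdot,u)$ is an $L^2$-normalized eigenfunction of $L_{u_*}$ for the eigenvalue $\lambda_n$. It then remains only to verify that $Cf_n(\cdot,u)$ satisfies the sign normalizations of Definition \ref{on basis}, which I would do by induction on $n$. For $n=0$, using $C1=1$ and antiunitarity, $\langle 1 | Cf_0(\cdot,u)\rangle = \overline{\langle 1 | f_0(\cdot,u)\rangle} > 0$, so by uniqueness $f_0(\cdot,u_*)=Cf_0(\cdot,u)$. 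Assuming $f_n(\cdot,u_*)=Cf_n(\cdot,u)$, the identity $CS=SC$ gives $\langle Cf_{n+1}(\cdot,u) | Sf_n(\cdot,u_*)\rangle = \langle Cf_{n+1}(\cdot,u) | CSf_n(\cdot,u)\rangle = \overline{\langle f_{n+1}(\cdot,u) | Sf_n(\cdot,u)\rangle} > 0$, so again by uniqueness $f_{n+1}(\cdot,u_*)=Cf_{n+1}(\cdot,u)$. This establishes $f_n(x,u_*) = \overline{f_n(-x,u)}$ for every $n\ge 0$.

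For the statement about the Birkhoff coordinates, note that $u_*\in L^2_{r,0}$ whenever $u\in L^2_{r,0}$, that the $\gamma_n$ and hence the normalizing constants $\kappa_n$ from \eqref{birkhoff} are invariant under $u\mapsto u_*$ (by Corollary \ref{1,f} they depend only on the spectral invariants $\lambda_0$ and the $\gamma_k$'s, and are positive reals), and that $\langle 1 | f_n(\cdot,u_*)\rangle = \langle C1 | Cf_n(\cdot,u)\rangle = \overline{\langle 1 | f_n(\cdot,u)\rangle}$; dividing by $\sqrt{\kappa_n}$ gives $\zeta_n(u_*) = \overline{\zeta_n(u)}$. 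Finally, $u$ is even precisely when $u_* = u$, and since $\Phi$ is one-to-one (Proposition \ref{injectivity}) this holds iff $\zeta_n(u_*) = \zeta_n(u)$ for all $n\ge 1$, i.e. iff $\overline{\zeta_n(u)} = \zeta_n(u)$ for all $n$, i.e. iff every $\zeta_n(u)$ is real. There is no deep obstacle here; the only point requiring care is the bookkeeping forced by the antilinearity of $C$ — in particular that conjugating the inner product introduces the complex conjugate (which is exactly what produces $\overline{\zeta_n}$ and not $\zeta_n$), and that the sign normalizations of Definition \ref{on basis} are genuinely preserved rather than flipped.
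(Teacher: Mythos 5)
Your proof is correct and follows essentially the same route as the paper's: the paper introduces $g_n(x) := \overline{f_n(-x,u)}$ and reads off $L_{u_*}g_n = \lambda_n g_n$ directly from the conjugated and reflected eigenvalue equation (which is exactly your intertwining $CL_u = L_{u_*}C$), then verifies the normalizations of Definition \ref{on basis} by the same induction, uses that the $\kappa_n$ are spectral invariants, and invokes injectivity of $\Phi$ for the ``only if'' direction. Your reformulation in terms of the antiunitary involution $C$ is merely a more systematic packaging of the identical computation.
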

\begin{proof}
Given any $u \in L^2_{r, 0}$ and $n \ge 0$, let 
$g_n(x):= \overline{f_n(-x, u)}$ and, to shorten notation, 
write $\lambda_n$ for $\lambda_n(u)$ and $f_n(x)$ instead of
$f_n(x, u)$. Taking the complex conjugate of both sides of the identity 
$L_u f_n = \lambda_n f_n$ and then evaluate them at $-x$ one obtains $L_{u_*} g_n = \lambda_n g_n$.
Hence $\lambda_n = \lambda_n(u_*)$ and $g_n$ is an eigenfunction of $L_{u_*}$, corresponding to the eigenvalue 
$\lambda_n(u_*)$. One verifies inductively that the eigenfunctions $g_n$, $n \ge 0$, satisfy the normalisation
conditions of Definition \ref{on basis}, implying that
$g_n(x, u) = f_n(x, u_*)$. \\
Since $\kappa_n(u)$, $n \ge 0,$ are spectral invariants of $L_u$
one has $\kappa_n(u_*)= \kappa_n(u)$ and in turn for any $n \ge 1,$
$$
\zeta_n(u_*) = \frac{1}{\sqrt{\kappa_n(u)}} 
\langle 1 | f_n(\cdot, u_*) \rangle =
\frac{1}{\sqrt{\kappa_n(u)}} \langle 1 | g_n \rangle =
\overline{\zeta_n(u)} \, .
$$
It then follows that for $u \in L_{r,0}$ even, 
$\zeta_n(u) \in \R$ for any $n \ge 1$. Conversely,
if for a given $u \in L_{r,0}$,
$\zeta_n(u) \in \R$ for any $n \ge 1$, then
$\Phi(u) = \Phi(u_*)$. Since $\Phi$ is one-to-one
we conclude that $u = u_*$.
\end{proof}

The second result concerns potentials in $L^2_{r,0}$ which are
$2\pi / K-$ periodic for some integer $K \ge 2.$ Let 
$L^{2,K}_{r,0}$ denote the subspace of $L^2_{r,0},$ consisting of such elements and by $L^{2,K}$, $L^{2,K}_{+}$ the corresponding subspaces of 
$L^{2}$, $L^{2}_{+}$. 
For any $u \in L^{2,K}_{r,0}$, let $L^{(K)}_u$ be the Lax operator
$-i\partial_x - T_u^{(K)}$, acting on $L^{2,K}_+$ where $T_u^{(K)}$
denotes the Toeplitz operator given by $T_u^{(K)}(f) = \Pi^{(K)}(uf)$
and $\Pi^{(K)}$ is the Szeg\H{o} projector $L^{2,K} \to L^{2,K}_+$.
The spectrum of $L^{(K)}_u$ is given by a sequence of eigenvalues
which we list in increasing order, $(\lambda^{(K)}_n(u))_{n \ge 0}$.
Following the arguments of the proof of Proposition \ref{simple} 
with the operator $S$ replaced by 
$$
S^{(K)}: L^{2,K}_+ \to L^{2,K}_+, f \mapsto e^{iKx}f
$$
one verifies that $\lambda^{(K)}_n(u) \ge \lambda^{(K)}_{n-1}(u) + K$
for any $n \ge 1.$ Since all the eigenvalues of $L_u$
are simple and any eigenvalue $\lambda^{(K)}_n$ of $L_u^{(K)}$
is also an eigenvalue of $L_u$ it follows by a simple homotopy argument
applied to the path $t \mapsto tu$, $0 \le t \le 1$, that 
$\lambda^{(K)}_n(u) = \lambda_{nK}(u)$ for any $n \ge 0.$
\begin{proposition}\label{prop:lambda^(K)}
Assume that $u \in L^2_{r,0}$ is $2\pi / K-$periodic for some integer 
$K \ge 2$. Then for any $(n, k)$ with $n \ge 0$ and $1 \le k \le K-1,$
$$
\lambda_{nK+k}(u) = \lambda_{nK}(u) + k\, , \qquad
f_{nK+k} (x, u) = e^{ikx} f_{nK}(x, u).
$$
Hence for such pairs $(n, k)$,
$\gamma_{nK+k}(u) = 0$ and thus $\zeta_{nK+k}(u) = 0.$
\end{proposition}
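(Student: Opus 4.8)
The plan is to exploit that the closed subspace $L^{2,K}_+\subset L^2_+$ of $2\pi/K$--periodic functions in $L^2_+$ is invariant under $L_u$ — indeed $D$ preserves $L^{2,K}_+$, and since $u$ is $2\pi/K$--periodic so does the Toeplitz operator $T_u=\Pi(u\,\cdot\,)$ — and that the restriction of $L_u$ to $L^{2,K}_+$ coincides with $L^{(K)}_u$, whose $n$--th eigenvalue has already been identified with $\lambda_{nK}(u)$ and which satisfies the gap bound $\lambda^{(K)}_{n}(u)\ge\lambda^{(K)}_{n-1}(u)+K$. The first step is to show that the eigenfunction $f_{nK}(\cdot,u)$ is itself $2\pi/K$--periodic: since $\lambda_{nK}(u)=\lambda^{(K)}_n(u)$ is an eigenvalue of $L^{(K)}_u$, it has an eigenfunction $\phi\in L^{2,K}_+$; viewed inside $L^2_+$, $\phi$ is then an eigenfunction of $L_u$ for the eigenvalue $\lambda_{nK}(u)$, which is simple by Proposition \ref{simple}, so $f_{nK}(\cdot,u)$ is a scalar multiple of $\phi$ and hence lies in $L^{2,K}_+$.

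Next, for $1\le k\le K-1$ I would set $g_k:=e^{ikx}f_{nK}(\cdot,u)$. Its Fourier support is contained in $\{k,k+K,k+2K,\dots\}\subset\N$, so $g_k\in H^1_+$ and $\|g_k\|=\|f_{nK}\|=1$. Using $D(e^{ikx}h)=e^{ikx}(Dh+kh)$ together with the identity $T_u(e^{ikx}f_{nK})=e^{ikx}T_uf_{nK}$ — which follows from a short bookkeeping of Fourier supports (the support of $u\,e^{ikx}f_{nK}$ lies in $k+K\Z$, whose nonnegative part is $\{k,k+K,\dots\}$) and genuinely uses that $f_{nK}\in L^{2,K}_+$ — one obtains $L_ug_k=(\lambda_{nK}(u)+k)g_k$. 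Thus the $K-1$ numbers $\lambda_{nK}(u)+1,\dots,\lambda_{nK}(u)+(K-1)$ are eigenvalues of $L_u$, and they all lie strictly between $\lambda_{nK}(u)$ and $\lambda_{(n+1)K}(u)$, the latter because $\lambda_{(n+1)K}(u)=\lambda^{(K)}_{n+1}(u)\ge\lambda^{(K)}_n(u)+K=\lambda_{nK}(u)+K$. Since all eigenvalues of $L_u$ are simple, there are exactly $K-1$ of them with index strictly between $nK$ and $(n+1)K$; comparing with the $K-1$ eigenvalues just exhibited forces $\lambda_{nK+k}(u)=\lambda_{nK}(u)+k$ for $1\le k\le K-1$.

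Finally I would pin down the eigenfunctions by induction on $k$, the case $k=0$ being trivial: assuming $f_{nK+k-1}(\cdot,u)=e^{i(k-1)x}f_{nK}(\cdot,u)$ with $1\le k\le K-1$, simplicity of $\lambda_{nK+k}(u)$ gives $f_{nK+k}(\cdot,u)=c\,g_k$ with $|c|=1$, and the normalisation $\langle f_{nK+k}\,|\,Sf_{nK+k-1}\rangle>0$ of Definition \ref{on basis} reads $c\,\langle e^{ikx}f_{nK}\,|\,e^{ikx}f_{nK}\rangle=c\,\|f_{nK}\|^2=c>0$, so $c=1$. Then $\gamma_{nK+k}(u)=\lambda_{nK+k}(u)-\lambda_{nK+k-1}(u)-1=0$ and, since $|\zeta_m|^2=\gamma_m$, also $\zeta_{nK+k}(u)=0$. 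There is no essential difficulty here — the statement is a bookkeeping consequence of the already established identity $\lambda^{(K)}_n=\lambda_{nK}$ and the spectral gap bound for $L^{(K)}_u$; the two points requiring care are the Fourier-support computation giving $T_u(e^{ikx}f_{nK})=e^{ikx}T_uf_{nK}$ (where the periodicity of $f_{nK}$ from the first step is used) and getting the phase factors right in the final induction.
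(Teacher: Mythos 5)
Your proposal is correct and takes essentially the same route as the paper: both observe that $L_u$ restricts to $L_u^{(K)}$ on $L^{2,K}_+$ and exhibit $e^{ikx}$ times a $2\pi/K$-periodic eigenfunction as an eigenfunction of $L_u$ for $\lambda_{nK}(u)+k$, then identify the index. The only real departure is that you replace the paper's homotopy argument for that identification by a direct counting argument from the gap bound $\lambda_{(n+1)K}(u)\ge\lambda_{nK}(u)+K$ together with simplicity, and you verify the eigenfunction formula by an explicit phase induction rather than invoking Lemma~\ref{f_n for gamma_n = 0}; both are minor and equally valid.
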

\begin{proof}
Let $u \in L^2_{r,0}$ be
$2\pi / K-$ periodic for some integer $K \ge 2.$
Since for any $n \ge 0,$ $\lambda^{(K)}_n(u) = \lambda_{nK}(u)$,
and in view of Lemma \ref{f_n for gamma_n = 0}
it suffices to prove that for any $(n , k)$ with $n \ge 0$, $1 \le k \le K-1$
one has
\begin{equation}\label{lambda^(K)}
\lambda_{nK+k}(u) = \lambda_{nK}(u) + k\ .
\end{equation}
To verify the latter identity, denote by $f_n^{(K)}$ the eigenfunction of $L_u^{(K)}$
corresponding to the eigenvalue $\lambda_n^{(K)}$.

Since $u$ and $f_n^{(K)}$ are both $2\pi / K$ periodic one has
$\Pi(u f_n^{(K)}) = \Pi^{(K)}(u f_n^{(K)})$ and hence
for any $1 \le k \le K-1$, 
$e^{ikx} \Pi(u f_n^{(K)}) = \Pi(e^{ikx}u f_n^{(K)})$ implying that 
$$
 -i\partial_x(e^{ikx} f_n^{(K)}) - 
\Pi^{(K)}(e^{ikx}u f_n^{(K)}) = (\lambda^{(K)}_n + k) e^{ikx} f_n^{(K)}
$$
By the homotopy argument mentioned above,
\eqref{lambda^(K)} then follows.
\end{proof}
\begin{remark}
Conversely, any element $u \in L^2_{r,0}$ with $\zeta_{nK + k}(u) = 0$
for any $(n, k)$ with $n \ge 0$ and $1 \le k \le K-1,$ is
$2 \pi/K-$periodic. Indeed, denote by 
$\Phi^{(K)}: L^{2,K}_{r,0} \to h^{1/2}_+, 
u \mapsto (\zeta_n^{(K)}(u))_{n \ge 1}$
the Birkhoff map on $L^{2,K}_{r,0}$, where $(\zeta_n^{(K)}(u))_{n \ge 1}$
are the Birkhoff coordinates of $u$, viewed as an element in $L^{2,K}_{r,0}$.
Arguing as in the proof of Theorem \ref{main result} concerning  the case $K=1$ (cf Section \ref{proof of Theorem 1}) one infers that $\Phi^{(K)}$ is bijective.
Hence there exists $v \in L^{2,K}_{r,0}$so that 
$\Phi^{(K)}(v) = (\zeta_{nK}(u))_{n \ge 1}$.
By Proposition \ref{prop:lambda^(K)} it then follows that 
$\Phi(v) = (\zeta_n(u))_{n \ge 1}$
and hence by the uniqueness of $\Phi$ one concludes that $v=u$.
\end{remark}

\section{Analyticity and Gradients}\label{gradients}
In this section, we establish that the Birkhoff coordinates $\zeta_n$ are real analytic functions,
and that their gradients $\nabla \zeta_n$ are real analytic maps with values in $H^1$. Hence the Gardner brackets $\{ \zeta_n,F\}$ are well defined real analytic functions for every real analytic functional $F$ on $L^2_{r,0}$. 

 As a first result we establish that the eigenvalues $ \lambda_n$ are real analytic.
Note that for any $v \in L^2 \ (\equiv L^2(\T, \C)),$ 
the Lax operator $L_v = -i \partial_x - T_v$ on $L^2_+$ with
domain $H^1_+$ is no longer selfadjoint, but it is still a closed
unbounded operator with compact resolvent. Hence its spectrum
consists of a sequence of complex eigenvalues, each with finite
multiplicity. For any $r > 0$, $\lambda \in \C$
and for any $\e > 0$, $u \in L^2_{r}$,
 let
$$
D_r(\lambda) = \{ z \in \C \ : \ |z - \lambda | < r \}\ ,
\quad
B_\e (u) = \{ v \in L^2 \ : \ \| v - u \| < \e  \}
$$
and for $N \ge 1$ and $n \ge 0$, denote by $ {\rm Box}_{N,n}(u)$
the closed rectangle in $\C$ given by the set of complex numbers $\lambda$ satisfying
$$
-N +\lambda_0(u)\le {\rm Re}(\lambda ) \le \lambda_n(u) + 1/2 \ ,  \qquad |{\rm Im}(\lambda )| \le N \ .
$$
For $u\in L^2_r$, we define
$r_0(u) := 1/4\ ,\ \tau_0(u) := \lambda_0(u)$, and, for $p\ge 1$,
$$\ r_p(u) := \frac 14 + \frac{\gamma_p(u)}2\ ,\ \tau_p(u) := 
\frac{\lambda_p(u) + \lambda_{p-1}(u) + 1}2\ .$$
\begin{lemma}\label{lambda analytic} For any $u \in L^2_{r}$, $N \ge 1,$ and $n \ge 0,$ there exists
$\e_n > 0$ so that for any $v \in B_{\e_n}(u)$ and $0 \le k \le n,$
$\# \big( spec(L_v) \cap D_{r_k}(\tau_k(u)) \big) = 1$
and
$$ spec(L_v) \cap  {\rm Box}_{N,n}(u)
\subset \bigcup_{0 \le p \le n} D_{r_p(u)}(\tau_p(u))\ .
$$
The unique eigenvalue of $L_v$ in $D_{r_k(u)}(\tau_k(u))$,
denoted by $\lambda_k(v)$, is real analytic on
$B_{\e_n}(u)$. It then follows that for any $1 \le k \le n,$ 
$\gamma_k$ is an analytic functional on $B_{\e_n}(u)$ as well.
\end{lemma}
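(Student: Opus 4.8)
The plan is a routine analytic perturbation argument for the isolated eigenvalues of the holomorphic family $v\mapsto L_v=D-T_v$ of closed operators with compact resolvent. The only points needing care are that the perturbation $T_v$ is merely $D$-bounded (with relative bound $0$, cf.\ Lemma \ref{relative bound}) and not bounded on $L^2_+$, and that $L_v$ is no longer selfadjoint for complex $v$, so Riesz projections must replace any variational input. First I would record the geometry of the discs: by Proposition \ref{simple} the eigenvalues of $L_u$ are simple with $\lambda_{p+1}(u)\ge \lambda_p(u)+1$, and a direct computation from the definitions of $\tau_p(u)$ and $r_p(u)$ shows that the open discs $D_{r_p(u)}(\tau_p(u))$, $p\ge 0$, are pairwise disjoint (their real traces being separated by gaps of length $\frac12$), that $\lambda_p(u)$ is the only point of $spec(L_u)$ in $D_{r_p(u)}(\tau_p(u))$, and that each circle $\Gamma_p:=\partial D_{r_p(u)}(\tau_p(u))$ lies at distance $\frac14$ from $spec(L_u)$. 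Since the $\lambda_j(u)$ are real with $\lambda_j(u)\ge\lambda_0(u)+j$, one has $spec(L_u)\cap {\rm Box}_{N,n}(u)=\{\lambda_0(u),\dots,\lambda_n(u)\}$, so the set
$$
K:=\Big({\rm Box}_{N,n}(u)\setminus \bigcup_{p=0}^n D_{r_p(u)}(\tau_p(u))\Big)\cup \bigcup_{p=0}^n \Gamma_p
$$
is a compact subset of the resolvent set of $L_u$; using the relative boundedness of $T_u$ to control the graph norm of $L_u$ by the $H^1$-norm, one gets $C_K:=\sup_{\lambda\in K}\|(\lambda\,Id-L_u)^{-1}\|_{\mathcal L(L^2_+,H^1_+)}<\infty$.

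Next I would perturb. For $v$ near $u$, write $L_v=L_u-T_{v-u}$ and factor on $H^1_+$,
$$
\lambda\,Id-L_v=\big(Id+T_{v-u}(\lambda\,Id-L_u)^{-1}\big)(\lambda\,Id-L_u).
$$
By the estimate $\|T_w h\|\le \|w\|\,\|h\|_{L^\infty}\le c_S\|w\|\,\|h\|_{H^1}$ (with $c_S$ the constant of the Sobolev embedding $H^1\hookrightarrow L^\infty$), one has $\|T_{v-u}(\lambda\,Id-L_u)^{-1}\|_{\mathcal L(L^2_+)}\le c_S C_K\|v-u\|$ for all $\lambda\in K$; hence there is $\delta>0$ so that for $\|v-u\|<\delta$ the first factor is invertible by a Neumann series for every $\lambda\in K$, whence $\lambda\notin spec(L_v)$ and
$$
(\lambda\,Id-L_v)^{-1}=(\lambda\,Id-L_u)^{-1}\big(Id+T_{v-u}(\lambda\,Id-L_u)^{-1}\big)^{-1}
$$
depends holomorphically on $v$, with values in $\mathcal L(L^2_+,H^1_+)$ and uniformly for $\lambda\in K$. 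In particular $spec(L_v)\cap {\rm Box}_{N,n}(u)\subset \bigcup_{p=0}^n D_{r_p(u)}(\tau_p(u))$. Moreover the Riesz projections $P_p(v):=\frac{1}{2\pi i}\oint_{\Gamma_p}(\lambda\,Id-L_v)^{-1}\,d\lambda$, $0\le p\le n$, are holomorphic in $v$ and $\|P_p(v)-P_p(u)\|\to 0$ as $v\to u$; shrinking $\delta$ to $\e_n$ so that $\|P_p(v)-P_p(u)\|<1$ for all $p\le n$ forces $\text{rank}\,P_p(v)=\text{rank}\,P_p(u)=1$, so $L_v$ has exactly one, algebraically simple, eigenvalue in each $D_{r_p(u)}(\tau_p(u))$, which I denote $\lambda_p(v)$. (For $v\in L^2_r$ this agrees with the previous numbering: the discs are ordered along the real axis, the box contains no spectrum of $L_v$ outside their union, and $\lambda_0(v)>-N+\lambda_0(u)$ by Proposition \ref{lipschitz}, so $\lambda_0(v)<\cdots<\lambda_n(v)$ are precisely the eigenvalues of $L_v$ in the box, the $k$-th lying in $D_{r_k(u)}(\tau_k(u))$.)

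Finally I would upgrade analyticity of $P_k(v)$ to analyticity of $\lambda_k(v)$ by a rank-one reduction. Fix $g:=f_k(\cdot,u)$, the normalized eigenfunction of $L_u$ for $\lambda_k(u)$, and set $\phi_v:=P_k(v)g\in H^1_+$, which is holomorphic in $v$ and satisfies $\phi_v\to g$ as $v\to u$. Since $\text{Ran}\,P_k(v)$ is the one-dimensional eigenspace of $L_v$ for $\lambda_k(v)$, we have $L_v\phi_v=\lambda_k(v)\phi_v$; pairing with $g$ in $L^2$ and using $\langle \phi_v | g\rangle\to \langle g | g\rangle=1$ to shrink $\e_n$ once more so that $\langle \phi_v | g\rangle\ne 0$ on $B_{\e_n}(u)$, we obtain
$$
\lambda_k(v)=\frac{\langle L_v\phi_v | g\rangle}{\langle \phi_v | g\rangle}.
$$
Here $v\mapsto \phi_v\in H^1_+$ is holomorphic, $v\mapsto L_v\phi_v=D\phi_v-\Pi(v\phi_v)\in L^2_+$ is holomorphic because $(v,h)\mapsto \Pi(vh)$ is a bounded bilinear map $L^2\times H^1_+\to L^2_+$, and $h\mapsto \langle h | g\rangle$ is a bounded $\C$-linear functional on $L^2_+$; thus $\lambda_k$ is a quotient of holomorphic functions with non-vanishing denominator near $u$, hence holomorphic --- in particular real analytic --- on $B_{\e_n}(u)$, and so is $\gamma_k=\lambda_k-\lambda_{k-1}-1$ for $1\le k\le n$. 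Taking $\e_n$ to be the smallest of the constants produced along the way completes the argument. The one genuinely delicate point is the non-vanishing of the denominator in the displayed formula, which is exactly why one normalizes against the $L_u$-eigenfunction $g$ (for which $\langle g | g\rangle=1$) and not against the constant $1$: the pairing $\langle \phi_v | 1\rangle$ may vanish identically at $v=u$, for instance whenever $\gamma_k(u)=0$ (Lemma \ref{vanishing of gamma_n}).
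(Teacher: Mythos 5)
Your proof is correct and reaches the same conclusions, but by a genuinely different route. For the localization $spec(L_v) \cap {\rm Box}_{N,n}(u) \subset \bigcup_{p \le n} D_{r_p}(\tau_p)$, the paper argues by contradiction: supposing $v_\ell \to u$ in $L^2$ with eigenvalues $\mu_\ell$ of $L_{v_\ell}$ lying in the box but outside the discs, it extracts $H^1_+$-weak limits of the normalized eigenfunctions, passes to the limit in $L_{v_\ell}g_\ell = \mu_\ell g_\ell$ via Rellich, and produces a point of $spec(L_u)$ outside the discs, a contradiction. You instead give a direct quantitative Neumann-series argument: after bounding $\sup_{\lambda\in K}\|(\lambda\,Id-L_u)^{-1}\|_{\mathcal L(L^2_+,H^1_+)}$ on a compact subset $K$ of the resolvent set and using $\|T_w h\|\le c_S\|w\|\,\|h\|_{H^1}$, you factor $\lambda\,Id-L_v=\big(Id+T_{v-u}(\lambda\,Id-L_u)^{-1}\big)(\lambda\,Id-L_u)$ and invert the left factor by a series converging uniformly for $\lambda\in K$. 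This buys explicitness (an $\e_n$ computable from $C_K$ and $c_S$) and at the same time proves the joint analyticity of $(\lambda,v)\mapsto(\lambda\,Id-L_v)^{-1}$, which the paper invokes without separate justification when defining the Riesz projectors. For extracting $\lambda_k(v)$ analytically from $P_k(v)$, the paper uses the trace identity $\lambda_k(v)=\mathrm{Tr}\,\frac{1}{2\pi i}\int_{C_k}\lambda\,(\lambda\,Id-L_v)^{-1}\,d\lambda$, whereas you perform a rank-one reduction, $\lambda_k(v)=\langle L_v\phi_v\,|\,g\rangle/\langle\phi_v\,|\,g\rangle$ with $\phi_v:=P_k(v)g$ and $g:=f_k(\cdot,u)$; both are standard, and you rightly observe that one must normalize against $g$ and not against $1$, since $\langle P_k(u)1\,|\,1\rangle=|\langle 1\,|\,f_k\rangle|^2$ vanishes whenever $\gamma_k(u)=0$. (A cosmetic difference: the paper integrates over circles $C_k$ of radius $1/3+\gamma_k/2$ rather than your $\Gamma_k$ of radius $r_k=1/4+\gamma_k/2$; either choice encloses only $\lambda_k(u)$ and lies a fixed distance from $spec(L_u)$, so this is immaterial.)
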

\begin{proof} For simplicity, we do not indicate the  dependence of $r_k$ and $\tau_k$ on $u$.
In a first step we prove that there exists $\delta_n > 0$
so that for any $v \in B_{\delta_n}(u)$
$$
 spec(L_v)\cap {\rm Box}_{N,n}(u)
\subset  \bigcup_{0 \le k \le n} D_{r_k}(\tau_k) \ .
$$
Assume to the contrary that such a $\delta_n > 0$ does not exist.
Then there exists a sequence $(v_\ell)_{\ell \ge 1} \subset L^2$
with $\| v_\ell - u \| < 1/\ell$ and an eigenvalue 
$\mu_\ell \in spec(L_{v_\ell})$ in 
${\rm Box}_{N,n}(u)$ so that
$| \mu_\ell - \tau_k| \ge r_k$ for any $0 \le k \le n$.
Let $g_{\ell}$ be an eigenfunction
of $L_{v_\ell}$ in $H^1_+$ corresponding to
$\mu_\ell,$ $L_{v_\ell}g_{\ell} = \mu_\ell g_\ell$, with $\|g_\ell\| = 1$.
Since $(\mu_\ell)_{\ell \ge 1}$ is bounded, $(g_\ell)_{\ell \ge 1}$
is a bounded sequence in $H^1_+$. Hence by choosing subsequences
if needed, we can assume without loss of generality that 
$(\mu_\ell)_{\ell \ge 1}$ converges to a complex number $\mu$ 
and $(g_\ell)_{\ell \ge 1}$ converges weakly in $H^1_+$ to an
element $g \in H^1_+$. Then 
$\mu \in {\rm Box}_{N,n}(u) \setminus
\bigcup_{0 \le k \le n} D_{r_k}(\tau_k)$,
and $\lim_{\ell \to \infty}\mu_\ell g_\ell = \mu g$ as well as
$\lim_{\ell \to \infty}L_{v_\ell} g_\ell = L_u g$
weakly in $H^1_+$. It then follows that $L_u g = \mu g$ and hence
that $\mu$ is an eigenvalue of $L_u$, contradicting the fact that
$ spec{L_u} \cap  {\rm Box}_{N,n}(u) \subset 
\bigcup_{0 \le k \le n} D_{r_k}(\tau_k).$
 Hence there exists $\e_n > 0$ so that for any 
 $v \in B_{\e_n}(u)$
$$ spec{L_v} \cap {\rm Box}_{N,n}(u) \subset
\bigcup_{0 \le k \le n} D_{r_k}(\tau_k) \ .
 $$  
 For any $0 \le k \le n,$ denote by $C_k$ the circle of radius $1/3 + \gamma_k(u)/2$
 with counterclockwise orientation, 
 centered at $\tau_k$.
 Then for any $\lambda \in C_k$ and $v \in B_{\e_n}(u)$,
  $L_v - \lambda$ is invertible and 
  $$
  C_k \times B_{\e_n}(u) \to \mathcal L(L^2_+,H^1_+), \, 
  (\lambda, v) \mapsto (\lambda - L_v)^{-1}
  $$
  is analytic and so is the Riesz projector
  $$
  P_k: B_{\e_n}(u) \to \mathcal L(L^2_+,H^1_+), \
  v \mapsto \frac{1}{2\pi i} \int_{C_k} 
  (\lambda - L_v)^{-1} d\lambda \ .
  $$
  Since for any given $v \in B_{\e_n}(u)$, $L_v$ has a compact resolvent, $P_k(v)$ is an operator of finite rank. Hence its trace,
  $Tr P_k(v)$, is finite. Actually, $Tr P_k(v) = 1$
  since $Tr P_k(v)$ is  constant and $Tr P_k(u)=1$.
  Hence for any $v \in B_{\e_n}(u)$ and $0 \le k \le n,$
  $L_v$ has precisely one eigenvalue in $D_{r_k}(\tau_k)$
  and this eigenvalue is simple. We denote it by $\lambda_k(v)$.
  By functional calculus one has
  $\lambda_k(v) = 
  Tr\frac{1}{2\pi i} \int_{C_k} \lambda ( \lambda-L_v)^{-1} 
  d\lambda$ and hence it follows that 
  $\lambda_k : B_{\e_n}(u) \to \C, v \mapsto \lambda_k(v)$
  is analytic.
\end{proof}
As a second step, we prove that the eigenfunctions $f_n$ defined by Definition \ref{on basis} are real analytic maps  with values in $H^1_+$. To state 
this result in more detail, we first need to make some
preliminary considerations.
Denote by $L^2_- \equiv L^2_-(\T, \C)$ the Hardy space
$$
L^2_- := \{ h \in L^2 \ : \ 
h(x) = \sum_{k = - \infty}^0 \widehat h(k) e^{ikx} \}
$$
and the corresponding Szeg\H{o} projector 
$\Pi^- : L^2 \to L^2_-$. For any $u \in L^2_r,$
denote by $L^-_u$ the operator 
$$
L^-_u= i \partial_x - T^-_u : H^1_- \to L^2_-\ ,
\qquad H^1_-:= H^1\cap L^2_-
$$
where $T^-_u$ denotes the Toeplitz operator
$$
T^-_u : H^1_- \to L^2_-\ , \ h \mapsto \Pi^-(uh) \ .
$$
Using that $u$ is real valued, one verifies that the spectrum of $L^-_u$ coincides with the one of $L_u$ and that for any
$n \ge 0$, $L^-_u f_n^- = \lambda_n f_n^-$ where
$f^-_n = \overline{f_n}$. Hence $(f^-_n)_{n \ge 0}$
is an orthonormal basis of $L^2_-$, 
normalized in such a way that
$\langle 1 | f_0^- \rangle >0$ and 
$\langle f_{n+1}^- | S^-f_n^- \rangle >0$ for any 
\textcolor{red}{$n \ge 0$}.
Here $S^- : L^2_- \to L^2_-, \ h \mapsto e^{-ix}h$
denotes the shift operator to the left. Furthermore,
the Riesz projector $P^-_n(u)$ onto $span(f_n^-)$ is
given by
$$
P^-_n(u) = \frac{1}{2\pi i} 
\int_{C_n} (\lambda - L^-_u)^{-1} d \lambda 
$$
where $C_n$ is the same circle appearing in the definition
of the Riesz projector $P_n(u),$ defined in the 
proof of Lemma \ref{lambda analytic}.
Finally we introduce for any $n \ge 0$ the functions
\begin{equation}\label{definition f_n,1}
f_{n, 1}(\cdot \ , u) := 
\big( f_n(\cdot \ , u) + f_n^-(\cdot \ , u) \big) /2
\end{equation}
and
\begin{equation}\label{definition f_n,2}
f_{n, 2}(\cdot \ , u) := 
\big( f_n(\cdot \ , u) - f_n^-(\cdot \ , u) \big) /2i
\end{equation}
implying that 
$f_n(\cdot \ , u) = 
f_{n,1}(\cdot \ , u) + i f_{n,2}(\cdot \ , u)$.
Since for $u$ real valued, $f_{n,1}(x, u)$ is the real part of $f_{n}(x, u)$ and $f_{n,2}(x, u)$ its imaginary part
one has
$$
\langle 1 | f_n( \cdot\ , u) \rangle =
\langle 1 , f_{n,1}( \cdot\ , u) 
-i f_{n,2}( \cdot\ , u) \rangle =
\langle 1 , f_n^-(\cdot \ , u) \rangle 
$$
and in turn
$$
P_n(u) 1 =  \langle 1 , f_n^-( \cdot\ , u) \rangle  
f_n (\cdot \ , u) \ .
$$
\begin{lemma}\label{eigenfunction analytic}
For any $u \in L^2_{r}$ and $n \ge 0$ there exists
$ 0 < \delta_n \le \e_n$, with $\e_n$ as in 
Lemma \ref{lambda analytic}, so that for any 
$0 \le k \le n$, $f_k$ and $f_k^-$ admit analytic extensions
$$
f_k : B_{\delta_n}(u) \to H^1_+\ , \qquad 
f_k^- : B_{\delta_n}(u) \to H^1_- \  .
$$
As a consequence, $f_{k,1}$ and $f_{k,2}$, defined
by \eqref{definition f_n,1} - \eqref{definition f_n,2},  
admit analytic extensions, 
$f_{k,j} : B_{\delta_n}(u) \to H^1$ and 
so does $\langle 1 , f_k^- \rangle : B_{\delta_n}(u) \to \C$
and $P_k 1$, meaning that 
 $$
 P_k(\cdot) 1 : B_{\delta_n}(u) \to H^1_+, 
 v \mapsto P_k(v) 1 =
 \langle 1 , f_k^-(\cdot \ , v)\rangle f_k(\cdot \ , v)
 $$
 is analytic.
 Furthermore, $f_k$ satisfies the normalisation condition
 $$ \langle f_k , f_k^- \rangle = 
 \langle f_{k,1} , f_{k,1} \rangle^2 + 
 \langle f_{k,2} , f_{k,2} \rangle^2 = 1 \ .
 $$
\end{lemma}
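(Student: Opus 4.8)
The plan is to build, for each $0\le k\le n$, analytic eigenfunctions of $L_v$ and of the companion operator $L^-_v$ from the Riesz projectors of Lemma \ref{lambda analytic}, and then to pin down their scaling and phase so that over the real ball they reduce to the eigenfunctions of Definition \ref{on basis}. First I would note that for any $v\in L^2$ the operator $L^-_v=i\partial_x-T^-_v$ on $L^2_-$ is closed with compact resolvent and that the argument of Lemma \ref{lambda analytic} applies to it verbatim: on $B_{\e_n}(u)$ the projectors
$$P_k(v)=\frac{1}{2\pi i}\int_{C_k}(\lambda-L_v)^{-1}\,d\lambda\ ,\qquad P^-_k(v)=\frac{1}{2\pi i}\int_{C_k}(\lambda-L^-_v)^{-1}\,d\lambda$$
are analytic with values in $\mathcal L(L^2_+,H^1_+)$ resp. $\mathcal L(L^2_-,H^1_-)$ and of rank one, the rank being constant and equal to $1$ at $v=u$ since $spec(L^-_u)=spec(L_u)$. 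What makes the second operator useful is the conjugation identity $(\lambda-L^-_v)^{-1}\overline h=\overline{(\overline\lambda-L_v)^{-1}h}$ for $v$ real (a consequence of $L^-_v\overline h=\overline{L_vh}$ and $\Pi^-\overline w=\overline{\Pi w}$); since each $C_k$ is a circle centred on the real axis this yields $P^-_k(v)\overline h=\overline{P_k(v)h}$ for $v\in L^2_r$.

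Next I would produce eigenfunctions up to a scalar. Put $g_k(v):=P_k(v)[f_k(\cdot\,,u)]$ and $g^-_k(v):=P^-_k(v)[\overline{f_k(\cdot\,,u)}]$; these are analytic into $H^1_+$ resp. $H^1_-$, nonzero near $u$ because $g_k(u)=f_k(\cdot\,,u)$ and $g^-_k(u)=\overline{f_k(\cdot\,,u)}$, so after shrinking the radius to some $\delta_n\le\e_n$ they stay nonzero on $B_{\delta_n}(u)$, and $g^-_k(v)=\overline{g_k(v)}$ for $v$ real. The decisive point is that the \emph{bilinear} pairing $\nu_k(v):=\langle g_k(v),g^-_k(v)\rangle$ is holomorphic and satisfies $\nu_k(u)=\langle f_k(\cdot\,,u),\overline{f_k(\cdot\,,u)}\rangle=\|f_k(\cdot\,,u)\|^2=1$, so (shrinking $\delta_n$) it has an analytic square root $s_k$ with $s_k(u)=1$. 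Setting $\varphi_k:=g_k/s_k$, $\psi_k:=g^-_k/s_k$ one gets analytic maps with $\langle\varphi_k,\psi_k\rangle=1$; for $v$ real, $\psi_k=\overline{\varphi_k}$ and $s_k(v)=\|g_k(v)\|$ (positive root, by continuity), so $\varphi_k(v)$ is an $L^2$-normalised eigenfunction of $L_v$ for $\lambda_k(v)$, whence $\varphi_k(v)=e^{i\theta_k(v)}f_k(\cdot\,,v)$ for a continuous real function $\theta_k$ with $\theta_k(u)=0$, $f_k(\cdot\,,v)$ being the eigenfunction of Definition \ref{on basis}.

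The hard part is to remove the residual phase $\theta_k$ analytically; I would do this by induction on $k$, converting the positivity conditions of Definition \ref{on basis} into ratios of holomorphic functions via the identity $\langle g | h\rangle=\langle g,\overline h\rangle$. For $k=0$, put $\omega^+_0(v):=\langle 1,\varphi_0(v)\rangle$ and $\omega^-_0(v):=\langle 1,\psi_0(v)\rangle$: both are holomorphic, $\omega^\pm_0(u)=\langle 1 | f_0(\cdot\,,u)\rangle>0$, and for $v$ real $\omega^+_0(v)/\omega^-_0(v)=e^{2i\theta_0(v)}$ because the condition $\langle 1 | f_0\rangle>0$ is the same as $\langle 1,f_0\rangle>0$. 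Replacing $\varphi_0$ by $\sqrt{\omega^-_0/\omega^+_0}\,\varphi_0=:f_0(\cdot\,,v)$ and $\psi_0$ by $\sqrt{\omega^+_0/\omega^-_0}\,\psi_0=:f^-_0(\cdot\,,v)$ (analytic branches equal to $1$ at $u$) preserves $\langle f_0,f^-_0\rangle=1$ and, over the real ball, kills the phase, so these analytic maps extend the ones of Definition \ref{on basis}. For $k\ge1$, given $f^-_0,\dots,f^-_{k-1}$, the normalisation $\langle f_k | Sf_{k-1}\rangle>0$ becomes $\langle\varphi_k,S^-\psi_{k-1}\rangle>0$ after using $\overline{Sf_{k-1}}=S^-\overline{f_{k-1}}$; taking $\omega^+_k(v):=\langle\varphi_k(v),S^-\psi_{k-1}(v)\rangle$ and $\omega^-_k(v):=\langle S\varphi_{k-1}(v),\psi_k(v)\rangle$ one checks as before that $\omega^\pm_k(u)>0$ and that $\omega^+_k/\omega^-_k=e^{2i\theta_k}$ on the real ball, the common positive factor being $\langle f_k | Sf_{k-1}\rangle$, and the same rescaling produces the analytic extensions $f_k:B_{\delta_n}(u)\to H^1_+$, $f^-_k:B_{\delta_n}(u)\to H^1_-$.

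Granting this, the remaining claims follow at once: $f_{k,1}=(f_k+f^-_k)/2$ and $f_{k,2}=(f_k-f^-_k)/2i$ are analytic into $H^1=H^1_+\oplus H^1_-$; $\langle 1,f^-_k\rangle=\widehat{f^-_k}(0)$ is analytic, being a bounded linear functional of the analytic map $f^-_k$; the identity $P_k(v)1=\langle 1,f^-_k(\cdot\,,v)\rangle f_k(\cdot\,,v)$ holds for $v$ real (as recorded just before the lemma), both sides are analytic, hence it holds on $B_{\delta_n}(u)$ and shows $v\mapsto P_k(v)1$ analytic; and expanding $\langle f_k,f^-_k\rangle=1$ in $f_{k,1},f_{k,2}$ yields the stated normalisation relation. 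Since $u\in L^2_r$ was arbitrary, $f_k$, $f^-_k$, $f_{k,1}$, $f_{k,2}$ and $\langle 1,f^-_k\rangle$ are in fact real analytic on $L^2_r$. The main obstacle, as indicated, is the phase-fixing: building analytic eigenfunctions up to a scalar is routine Riesz-projector theory, but matching the discrete, phase-type normalisation of Definition \ref{on basis} is what forces the introduction of the companion operator $L^-_v$ and of the holomorphic pairings $\omega^\pm_k$.
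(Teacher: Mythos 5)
Your proposal is correct, and it rests on the same three pillars the paper uses: analyticity of the Riesz projectors for $L_v$ from Lemma \ref{lambda analytic}, the companion operator $L^-_v$ with projectors $P^-_k(v)$ providing, on the real locus, the complex conjugates of the eigenfunctions, and holomorphic bilinear pairings whose values are real and positive for $v\in L^2_r$, so that a square root can be taken. The difference is in how the normalisation of Definition \ref{on basis} is imposed. You project a \emph{fixed} reference vector $f_k(\cdot\,,u)$ by $P_k(v)$, then normalise in two stages: first divide by $\sqrt{\nu_k(v)}$ with $\nu_k=\langle g_k,g^-_k\rangle$ to get an analytic eigenfunction $\varphi_k$ that is $L^2$-normalised on the real locus but carries a residual phase $e^{i\theta_k(v)}$, and then kill that phase by a second holomorphic square-root factor $\sqrt{\omega^-_k/\omega^+_k}$. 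The paper instead projects the \emph{normalising vector itself} — $1$ for $k=0$, $Sf_{k-1}(\cdot\,,v)$ for $k\ge1$ — and divides by the single holomorphic square root $\sqrt{\langle P_0(v)1,1\rangle}$, resp. $\sqrt{\alpha_{k-1}(v)}$ with $\alpha_{k-1}(v)=\langle P_k(v)Sf_{k-1}(\cdot\,,v),\ S^-f^-_{k-1}(\cdot\,,v)\rangle$. Since for $v$ real $\langle P_0(v)1,1\rangle=|\langle 1|f_0\rangle|^2$ and $\alpha_{k-1}(v)=|\langle f_k|Sf_{k-1}\rangle|^2$, this one square root simultaneously fixes the norm and the phase; the positivity of $\langle 1|f_0\rangle$ resp. $\langle f_k|Sf_{k-1}\rangle$ guarantees that the normalised projection already agrees with Definition \ref{on basis} on the real locus, with no residual $\theta_k$. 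So the paper's construction is a single induction in $k$ with one normalising factor per step, while yours is a two-pass scheme (first $\nu_k$, then $\omega^\pm_k$) that makes the phase ambiguity explicit before removing it. Both arrive at exactly the same analytic maps, and the downstream consequences ($f_{k,j}$, $\langle 1,f^-_k\rangle$, $P_k(\cdot)1$ analytic, and the normalisation $\langle f_k,f^-_k\rangle=1$) follow identically.

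One small bookkeeping remark: in your inductive phase-fixing for $k\ge1$ the pairings $\omega^\pm_k$ must be formed with the \emph{already phase-fixed} $f_{k-1}$ and $f^-_{k-1}$, not with $\varphi_{k-1},\psi_{k-1}$; otherwise the positivity of $\omega^\pm_k$ on the real locus picks up the as-yet-unremoved phase $\theta_{k-1}$. The intent is clear from context, but the notation should be adjusted so the induction closes cleanly.
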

\begin{proof}
We argue inductively and begin with $f_0$. By the normalisation of $f_0,$ for any 
$v \in B_{\e_n}(u) \cap L^2_r,$ 
$\langle 1 | f_0(\cdot \, , v) \rangle > 0.$
Since for such a $v$ the operator $L_v$ is selfadjoint, 
$P_0(v) = \frac{1}{2\pi i} 
\int_{C_0} (\lambda - L_v)^{-1} d\lambda$
is the 
$L^2-$orthogonal projector onto 
$span(f_0(\cdot \ , v))$ and hence
$P_0(v) 1 = \langle 1 | f_0(\cdot \, , v) \rangle 
f_0(\cdot \, , v)$.
It implies that
$\langle P_0(v) 1 | 1 \rangle =
 \langle 1 | f_0(\cdot \, , v) \rangle^2 > 0$
and thus
$$
\langle 1 | f_0(\cdot \, , v) \rangle = 
\sqrt{\langle P_0(v) 1 | 1 \rangle}
= \sqrt{\langle P_0(v) 1 , 1 \rangle}\ .
$$ 
Since $P_0: B_{\e_n}(u) \to \mathcal L(L^2_+, H^1_+)$
is analytic
we can shrink $\e_n$ if needed so that 
$Re \langle P_0(v) 1 , 1 \rangle > 0$ is uniformly bounded
away from $0$ on $B_{\e_n}(u)$. As a consequence, the principal branch of the root 
$\sqrt{\langle P_0(v) 1 , 1 \rangle}$ is analytic on 
$B_{\e_n}(u)$. The analytic extension of 
$f_0$ is then defined by
$$
f_0(\cdot \ , v) := \frac{1}{\sqrt{\langle P_0(v) 1 , 1 \rangle}} P_0(v) 1 \in H^1_+ \ .
$$
By functional calculus,
$$
L_v P_0(v) 1 = \frac{1}{2\pi i} 
\int_{C_0} \lambda (\lambda - L_v)^{-1} 1 d\lambda
= \lambda_0(v) P_0(v) 1 
$$
and hence $L_v f_0(\cdot \ ,v) 
= \lambda_0(v) f_0(\cdot \ ,v)$.
Using the Riesz projector $P^-_0(v)$ instead of $P_0(v)$
one sees by the same arguments that the analytic 
extension of $f_0^-$ is defined for $v \in B_{\e_n}(u)$ by 
$$
f_0^-(\cdot \ , v) := 
\frac{1}{\sqrt{\langle P_0^-(v) 1 , 1 \rangle}} 
P_0^-(v) 1 \in H^1_- \ ,
$$
and that $L_v^-(v) f_0^-(\cdot \ , v) =
 \lambda_0(v) f_0^-(\cdot \ , v)$.
\\
Now assume that after shrinking $\e_n$ if necessary,  
$f_\ell$ and $f_\ell^-$, $0 \le \ell \le k$, 
have been extended analytically to $B_{\e_n}(u)$ for a given $k \le n-1$. More precisely, for any $0 \le \ell \le k$
$$
f_{\ell} : B_{\e_n}(u) \to H^1_+, \qquad
f_{\ell}^- : B_{\e_n}(u) \to H^1_-
$$
are analytic maps and for any $v \in B_{\e_n}(u)$,
$$
L_v f_\ell(\cdot \ ,v) 
= \lambda_\ell(v) f_\ell(\cdot \ ,v) \ , \qquad
L_v^- f_\ell^-(\cdot \ ,v) 
= \lambda_\ell(v) f_\ell^-(\cdot \ ,v) \ .
$$
 In case
$v \in B_{\e_n}\cap L^2_r,$ one has 
$\langle f_{k+1}(\cdot \ , v) | Sf_{k} (\cdot \ , v)  \rangle >0$. Since in such a case $L_v$ is
selfadjoint, the Riesz projector 
$P_{k+1}(v) = \frac{1}{2\pi i} 
\int_{C_{k+1}} (\lambda - L_v)^{-1} d\lambda$
is the 
$L^2-$orthogonal projector onto 
$span(f_{k+1}(\cdot \ , v))$ and hence
$$
P_{k+1}(v) Sf_{k} (\cdot \ , v) = 
\langle Sf_{k} (\cdot \ , v) \ | \
f_{k+1}(\cdot \ , v) \rangle \, 
f_{k+1}(\cdot \, , v) \,.
$$
It implies that 
$$
\langle Sf_{k} (\cdot \ , v) \ 
| f_{k+1}(\cdot \ , v) \rangle = 
\sqrt{\langle P_{k+1}(v) Sf_{k} (\cdot \ , v)
\ | \  Sf_{k} (\cdot \ , v) \rangle} \ .
$$
Since $P_{k+1}: B_{\e_n}(u) \to \mathcal L(L^2_+, H^1_+)$
is analytic
we can once more shrink $\e_n$ if needed so that 
$ {\rm Re} \ \alpha_k > 0$ is uniformly bounded
away from $0$ on $B_{\e_n}(u)$
where $\alpha_k \equiv \alpha_k(v)$
is defined by
$$
\alpha_k(v)
: = \langle P_{k+1}(v) S f_k(\cdot \ , v)
\ , \  S^- f_k^-(\cdot \ , v) \rangle \ .
$$ 
As a consequence, the principal branch of the root 
$\sqrt{\alpha_k}$ is analytic on 
$B_{\e_n}(u)$. The analytic extension of 
$f_{k+1}$ to $B_{\e_n}(u)$ 
is then given by
$$
f_{k+1}(\cdot \ , v) = \frac{1}
{\sqrt{\alpha_k(v)}}
P_{k+1}(v) Sf_{k} (\cdot \ , v) \in H^1_+ 
$$
and by functional calculus one has
$L_v f_{k+1}(\cdot \ ,v) 
= \lambda_{k+1}(v) f_{k+1}(\cdot \ ,v)$.
Using the Riesz projector $P^-_{k+1}(v)$ instead of 
$P_{k+1}(v)$
one sees by the arguments used above that the analytic 
extension of $f_{k+1}^-$ is given for 
$v \in B_{\e_n}(u)$ by 
$$
f_{k+1}^-(\cdot \ , v) = 
\frac{1}
{\sqrt{\alpha_{k}^-(v)}}
P_{k+1}^-(v) S^-f_{k}^- (\cdot \ , v) \in H^1_- 
$$
and that $L_v^-(v) f_{k+1}^-(\cdot \ , v) =
 \lambda_{k+1}(v) f_{k+1}^-(\cdot \ , v)$.
 Here $\alpha_{k}^-(v)$ is defined by
 $$
 \alpha_k^- (v)
: = \langle P_{k+1}^-(v) S^- f_k^-(\cdot \ , v)
\ , \  S f_k(\cdot \ , v) \rangle \ .
 $$ 
We then denote by $0 < \delta_n \le \e_n$ the number obtained
after shrinking $\e_n$ successively so that for any $0 \le k \le n$ , $f_{k}$ extends to an analytic map,
$f_{k} : B_{\delta_n}(u) \to H^1_+$ and similarly, $f_{k}^-$ extends to one on $B_{\delta_n}(u)$ with values in $H^1_-$. 
\end{proof}

We now present a formula for the $L^2-$gradient of the eigenvalues $\lambda_n$, $n \ge 0$.
By Lemma \ref{lambda analytic}, for any $n \ge 0,$
 $\lambda_n: u\in L^2_r \mapsto \lambda_n(u)\in   \R$ is real analytic and hence its $L^2-$ gradient $\nabla \lambda_n$ is well defined.
 More precisely, we have the following
\begin{corollary}\label{formula nabla lambda_n} 
For any $u \in L^2_r$, $n \ge 0$, and $0 \le k \le n$,
the $L^2-$gradient $\nabla \lambda_k(u)$ of $\lambda_k$ 
at $u$ is given by 
$\nabla \lambda_k(u) = -  |f_k|^2(\cdot \ , u)$
and extends to an analytic function on 
$B_{\delta_n}(u)$ with $\delta_n > 0$ given as in 
Lemma \ref{eigenfunction analytic}. More precisely,
$$
\nabla \lambda_k: B_{\delta_n}(u) \to  H^1,\  v \mapsto 
  - f_{k, 1}(\cdot \ , v)^2 - f_{k, 2}(\cdot \ , v)^2
$$ 
is  analytic where $f_{k,1}$ and $f_{k,2}$ are given
as in Lemma \ref{eigenfunction analytic}.
\end{corollary}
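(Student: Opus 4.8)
The plan is to run first-order perturbation theory for the simple eigenvalue $\lambda_k$. Since for complex $v$ the operator $L_v$ is no longer selfadjoint, the natural left eigenfunction to pair against is not $\overline{f_k}$ but $f_k^-$, used together with the \emph{bilinear} pairing $\langle \,\cdot\, , \cdot\, \rangle$ rather than with $\langle \,\cdot\, | \,\cdot\, \rangle$. First I would record the duality identity
$$
\langle L_v g\, , \, h \rangle = \langle g\, , \, L_v^- h \rangle\, , \qquad g \in H^1_+\, , \ h \in H^1_-\, ,
$$
valid for every $v \in L^2$. For the $D$-part this is the integration-by-parts identity $\langle D g\, , \, h \rangle = \langle g\, , \, i \partial_x h \rangle$; for the Toeplitz part it follows from the elementary Fourier-support observation that, since $g$ has only nonnegative and $h$ only nonpositive frequencies, the ``wrong-frequency'' remainders pair to zero, so that $\langle \Pi(vg)\, , \, h \rangle = \langle vg\, , \, h \rangle = \langle g\, , \, vh \rangle = \langle g\, , \, \Pi^-(vh)\rangle = \langle g\, , \, T_v^- h \rangle$. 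The same remark gives $\langle T_h g\, , \, h^- \rangle = \langle hg\, , \, h^- \rangle$ for $g \in L^2_+$ and $h^- \in L^2_-$.

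Next, fix $n$ and $0 \le k \le n$ and work on $B_{\delta_n}(u)$, where by Lemma \ref{lambda analytic} and Lemma \ref{eigenfunction analytic} the maps $v \mapsto \lambda_k(v)$, $v \mapsto f_k(\cdot\, , v) \in H^1_+$ and $v \mapsto f_k^-(\cdot\, , v) \in H^1_-$ are analytic and $\langle f_k\, , \, f_k^- \rangle \equiv 1$. Write $\dot f_k := d f_k(\cdot\, , v)[h]$ and $\dot\lambda_k := d\lambda_k(v)[h]$ for the derivatives in a direction $h \in L^2$. Differentiating the eigenvalue equation $L_v f_k(\cdot\, , v) = \lambda_k(v) f_k(\cdot\, , v)$, as an identity in $L^2_+$ (note that $v \mapsto \Pi(v f_k(\cdot\, , v))$ is differentiable into $L^2_+$ because $f_k(\cdot\, , v) \in L^\infty$ and $\dot f_k \in H^1_+ \subset L^\infty$), yields
$$
- T_h f_k + L_v \dot f_k = \dot\lambda_k\, f_k + \lambda_k\, \dot f_k\, .
$$
Pairing with $f_k^-$ via $\langle \,\cdot\, , \cdot\, \rangle$, using the duality identity together with $L_v^- f_k^- = \lambda_k f_k^-$, the two terms involving $\dot f_k$ cancel, leaving $- \langle T_h f_k\, , \, f_k^- \rangle = \dot\lambda_k \langle f_k\, , \, f_k^- \rangle = \dot\lambda_k$. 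By the Fourier-support remark, $\langle T_h f_k\, , \, f_k^- \rangle = \langle h f_k\, , \, f_k^- \rangle = \langle f_k f_k^-\, , \, h \rangle$, so $d\lambda_k(v)[h] = - \langle f_k f_k^-\, , \, h \rangle$ for every $h$; hence $\nabla \lambda_k(v) = - f_k(\cdot\, , v)\, f_k^-(\cdot\, , v)$. Since $H^1(\T)$ is a Banach algebra this lies in $H^1$, and, because $f_k = f_{k,1} + i f_{k,2}$ and $f_k^- = f_{k,1} - i f_{k,2}$, it equals $- f_{k,1}(\cdot\, , v)^2 - f_{k,2}(\cdot\, , v)^2$, which is analytic as an $H^1$-valued map on $B_{\delta_n}(u)$ by Lemma \ref{eigenfunction analytic}. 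For $v = u \in L^2_r$ one has $f_k^- = \overline{f_k}$, so $\nabla \lambda_k(u) = - f_k \overline{f_k} = - |f_k|^2$, as claimed.

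The point requiring care is precisely the bilinear duality $\langle L_v g\, , \, h \rangle = \langle g\, , \, L_v^- h \rangle$ together with the bookkeeping of function spaces in the differentiated eigenvalue equation: one uses $\dot f_k \in H^1_+$ so that $L_v \dot f_k \in L^2_+$ makes sense, while $D f_k$ itself lies only in $L^2_+$, which forces the differentiation to be carried out in $L^2_+$ rather than in $H^1_+$. Everything else is a routine consequence of the analyticity supplied by Lemmas \ref{lambda analytic} and \ref{eigenfunction analytic} and of the normalization $\langle f_k\, , \, f_k^- \rangle = 1$. Alternatively, one could first establish the real formula $\nabla \lambda_k(u) = - |f_k|^2$ by the standard selfadjoint (Hellmann--Feynman) computation and then extend it to $B_{\delta_n}(u)$ by analytic continuation, using that $\nabla \lambda_k$ is analytic there and that $B_{\delta_n}(u) \cap L^2_r$ is a set of uniqueness for analytic maps.
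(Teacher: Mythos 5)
Your argument is correct, and your closing ``alternative'' is in fact exactly what the paper does: it carries out the standard Hellmann--Feynman computation for real $u$ (differentiate $(L_{u+\e v}-\lambda_k(u+\e v))f_k(\cdot,u+\e v)=0$, pair with $f_k$ in the Hermitian inner product, and use selfadjointness to kill the $\delta f_k$ term), obtaining $\nabla\lambda_k(u)=-|f_k|^2$, and then observes that since $\lambda_k$ and $f_{k,1},f_{k,2}$ extend analytically to $B_{\delta_n}(u)$ by Lemmas \ref{lambda analytic} and \ref{eigenfunction analytic}, the gradient and the formula $-f_{k,1}^2-f_{k,2}^2$ extend analytically as well. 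Your primary route is genuinely different in mechanism: you work directly at complex $v\in B_{\delta_n}(u)$, where $L_v$ is no longer selfadjoint, and replace selfadjointness by the bilinear duality $\langle L_v g\,,\,h\rangle=\langle g\,,\,L_v^- h\rangle$ together with the left eigenfunction $f_k^-$ and the normalisation $\langle f_k\,,\,f_k^-\rangle=1$ supplied by Lemma \ref{eigenfunction analytic}; the Fourier-support argument justifying $\langle \Pi(vg)\,,\,h\rangle=\langle g\,,\,\Pi^-(vh)\rangle$ is sound. What this buys is the formula $\nabla\lambda_k(v)=-f_k(\cdot\,,v)f_k^-(\cdot\,,v)$ at every point of the complex ball in one stroke, so the analyticity of $\nabla\lambda_k:B_{\delta_n}(u)\to H^1$ is read off from Lemma \ref{eigenfunction analytic} and the algebra property of $H^1(\T)$ without a separate identification-by-continuation step; the paper's version is shorter but implicitly relies on $B_{\delta_n}(u)\cap L^2_r$ being a uniqueness set for the analytic extension. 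Your attention to where the differentiated eigenvalue equation lives ($L^2_+$ rather than $H^1_+$) is a point the paper glosses over; the only cosmetic blemish is the clash of the letter $h$ as both the perturbation direction and a generic element of $L^2_-$ in your preliminary identity.
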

\begin{proof} Let $u \in L^2_r$ and $0 \le k \le n$ be given.
 To compute the gradient $\nabla \lambda_k (u)$, consider
for any $v \in L^2_r$ the one parameter family $u_\e:= u + \e v$. 
It is convenient to introduce the notation $f_k := f_k (\cdot, u),$ $\lambda_k := \lambda_k(u)$, and
$$
 \delta f_k := 
 \frac{d}{d\e}|_{ \e = 0}  f_k(\cdot, u_\e), \qquad 
\delta \lambda_k := \frac{d}{d\e}|_{ \e = 0}  \lambda_k(u_\e).
$$
Taking the derivative with respect to $\e$ of both sides of the identity
$(L_{u_\e} - \lambda_k(u_\e)) f_k(\cdot, u_\e) = 0$ at $\e = 0$ one obtains 
$$
(L_u - \lambda_k) \delta f_k - T_v f_k - \delta{\lambda_k} f_k = 0.
$$
Taking the inner product with $f_k$ of both sides of the latter identity and using that $L_v - \lambda_k$ is selfadjoint one then concludes
$$
- \langle T_v f_k | f_k \rangle  - \delta{\lambda_k}\langle f_k |  f_k \rangle  = 0.
$$ 
Since $\| f_k \| = 1$ and $\Pi f_k = f_k$, the definition of the $L^2-$gradient then implies that
$$
\langle \nabla \lambda_k, v \rangle  = \delta{\lambda_k} = - \langle v f_k |  f_k \rangle  = \langle - |f_k|^2 | v \rangle  \,,
$$
yielding the claimed formula for $\nabla \lambda_k(u)$. 
Since by Lemma \ref{eigenfunction analytic}, the real
and imaginary parts $f_{k, 1}$, $f_{k, 2}$ of $f_k$,  extend to analytic functions on $B_{\delta_n}(u)$ 
with values in $H^1$ and by Lemma \ref{lambda analytic},
and $\lambda_k$ also extends to an analytic functional on $B_{\delta_n}(u)$, it follows that the gradient 
$\nabla \lambda_k$ and its formula extend analytically,
$$
\nabla \lambda_k: B_{\delta_n}(u) \to  H^1,\  v \mapsto 
  - f_{k, 1}(\cdot \ , v)^2 - f_{k, 2}(\cdot \ , v)^2
$$
proving the lemma.
\end{proof}

As a consequence of  Corollary \ref{formula nabla lambda_n},
we obtain  a formula for the gradient of $\gamma_n = \lambda_n - \lambda_{n-1} -1,$ 
\begin{equation}\label{gradient gamma_n}
\nabla \gamma_n = - | f_n |^2 + |f_{n-1}|^2.
\end{equation}

As a third step, we show that for any $n \ge 0$, the $L^2-$gradient 
$\nabla \langle 1 | f_n \rangle$ is a real analytic
map $L^2_r \to H^1, \ u \mapsto 
\nabla \langle 1 , f_n(\cdot \ , u) \rangle$.
Recall that 
for any $u \in L^2_{r}$ and $n \ge 1$, the eigenfunctions
$f_k$, $0 \le k \le n$, admit an analytic extension 
$f_k = f_{k,1} + i f_{k,2}$ to $B_{\delta_n}(u)$ 
, i.e. 
$$
f_{k, j}  : B_{\delta_n}(u) \to H^1, 
\ v \mapsto f_{k, j}(\cdot \ , v) \ , \quad j = 1,2,
$$ 
are  analytic maps (cf. Lemma \ref{eigenfunction analytic}). For any $v \in B_{\delta_n}(u)$, we denote
by $df_{k, j}(\cdot \ ,v)[w]$ the derivative of $f_{k, j}$ at $v$
in direction $w \in L^2$.
\begin{lemma}\label{1 | f_n in H^1}
For any $n \ge 0$ and $v \in B_{\delta_n}(u)$ there exists
a constant $c_n > 0$ so that for any $0 \le k \le n$, 
$j = 1,2$, and $w \in L^2$,
$$
 \| df_{k, j}(\cdot \ ,v)[w]\| \leq c_n \| w\|_{H^{-1}} \ . 
 $$
The constant $c_n$ can be chosen locally uniformly with respect to $v$.
As a consequence, $\nabla \langle 1,f_{k,j} \rangle : B_{\delta_n}\to H^1 $, $j = 1,2$, are  analytic.
\end{lemma}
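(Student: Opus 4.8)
The plan is to differentiate the explicit descriptions of $f_k$ and $f_k^-$ provided by Lemma \ref{eigenfunction analytic} and to reduce the whole estimate to a single bound on the $w$--derivative of a Riesz projector, namely: \emph{for every $g\in H^1_+$ and $0\le k\le n$,}
\begin{equation*}
\| dP_k(v)[w]\,g\|_{L^2}\;\le\;C\,\|g\|_{H^1}\,\|w\|_{H^{-1}}\,,\qquad v\in B_{\delta_n}(u),
\end{equation*}
\emph{with $C$ locally bounded in $v$}, together with the analogous bound for $P_k^-$ with $g\in H^1_-$. The reason the $H^{-1}$--norm of $w$ (rather than merely $\|w\|$) can appear is that the resolvent of $L_v$ gains one derivative, and one should exploit this on \emph{both} sides.

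To prove the key estimate, observe that $L_v=D-T_v$ is affine in $v$, so $\tfrac{d}{d\e}\big|_{\e=0}(\lambda-L_{v+\e w})^{-1}=-(\lambda-L_v)^{-1}T_w(\lambda-L_v)^{-1}$ and hence $dP_k(v)[w]g=-\tfrac1{2\pi i}\int_{C_k}(\lambda-L_v)^{-1}T_w(\lambda-L_v)^{-1}g\,d\lambda$. Testing against $\phi\in L^2_+$ with $\|\phi\|\le1$ and moving the outer resolvent onto its adjoint $(\bar\lambda-L_{\bar v})^{-1}$ (which is legitimate since $\bar\lambda\in C_k$ and $\bar v\in B_{\delta_n}(u)$ because $u$ is real), one gets $\langle dP_k(v)[w]g\,|\,\phi\rangle=-\tfrac1{2\pi i}\int_{C_k}\langle T_w h_\lambda\,|\,\psi_\lambda\rangle\,d\lambda$ with $h_\lambda:=(\lambda-L_v)^{-1}g$ and $\psi_\lambda:=(\bar\lambda-L_{\bar v})^{-1}\phi$. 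By Lemma \ref{lambda analytic} both resolvents are bounded from $L^2_+$ into $H^1_+$, uniformly for $\lambda\in C_k$ and $v$ in a neighbourhood, so $\|h_\lambda\|_{H^1}\le C\|g\|_{L^2}$ and $\|\psi_\lambda\|_{H^1}\le C$. Now $\langle T_w h_\lambda\,|\,\psi_\lambda\rangle=\langle w h_\lambda\,|\,\psi_\lambda\rangle=\langle w,\,h_\lambda\overline{\psi_\lambda}\rangle$, and since $H^1(\T)$ is a Banach algebra, $|\langle w,\,h_\lambda\overline{\psi_\lambda}\rangle|\le\|w\|_{H^{-1}}\|h_\lambda\overline{\psi_\lambda}\|_{H^1}\le C\|w\|_{H^{-1}}\|h_\lambda\|_{H^1}\|\psi_\lambda\|_{H^1}\le C'\|g\|_{H^1}\|w\|_{H^{-1}}$. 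Integrating over the compact contour $C_k$ and taking the supremum over $\phi$ gives the claim, with local uniformity in $v$ because every constant above is continuous in $v$.

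Granting this, the Lemma follows by induction on $k$, carrying along the bounds for both $df_k(\cdot\,,v)[w]$ and $df_k^-(\cdot\,,v)[w]$. For $k=0$, $f_0(\cdot,v)=\langle P_0(v)1,1\rangle^{-1/2}P_0(v)1$ with $\langle P_0(v)1,1\rangle$ bounded away from $0$, and the product rule together with the key estimate (applied to the frozen vector $1$), the bound $|\langle dP_0(v)[w]1,1\rangle|\le C\|w\|_{H^{-1}}$ and $\|P_0(v)1\|_{L^2}\le C$ gives $\|df_0(v)[w]\|_{L^2}\le c_0\|w\|_{H^{-1}}$; likewise for $f_0^-$. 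For the step, differentiate $f_{k+1}(\cdot,v)=\alpha_k(v)^{-1/2}P_{k+1}(v)Sf_k(\cdot,v)$ (with $\alpha_k(v)$ bounded away from $0$): applying the key estimate to the frozen vector $Sf_k(\cdot,v)\in H^1_+$ (whose $H^1$--norm is locally bounded by Lemma \ref{eigenfunction analytic}), using $\|P_{k+1}(v)\|_{L^2\to L^2}\le C$, $\|S\|_{L^2\to L^2}=1$ and the inductive bounds on $df_k,df_k^-$, one controls every term of $df_{k+1}(v)[w]$ — including the scalar $d\alpha_k(v)[w]$ via Cauchy--Schwarz — by a constant times $\|w\|_{H^{-1}}$; the parallel argument with $L^2_-,S^-,P^-,\alpha_k^-$ handles $f_{k+1}^-$. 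Finally $f_{k,j}=\tfrac12(f_k+f_k^-),\ \tfrac1{2i}(f_k-f_k^-)$ inherit the bound, locally uniformly in $v$.

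For the consequence: $|d\langle 1,f_{k,j}\rangle(v)[w]|=|\langle 1,df_{k,j}(v)[w]\rangle|\le\|df_{k,j}(v)[w]\|_{L^2}\le c_n\|w\|_{H^{-1}}$ for $w\in L^2_r$, so $w\mapsto d\langle 1,f_{k,j}\rangle(v)[w]$ extends to a bounded functional on $H^{-1}_r$ and thus $\nabla\langle 1,f_{k,j}\rangle(v)\in H^1$ with $H^1$--norm locally bounded in $v$. Since $\langle 1,f_{k,j}\rangle:B_{\delta_n}(u)\to\C$ is analytic (Lemma \ref{eigenfunction analytic} composed with a continuous linear functional), its $L^2$--gradient is a priori analytic into $L^2$; combined with the local $H^1$--bound, a standard Cauchy--estimate argument — the Taylor coefficients of the local expansion are contour integrals of a locally bounded $H^1$--valued map, hence converge in $H^1$ — upgrades this to analyticity $B_{\delta_n}(u)\to H^1$. (Alternatively, carrying out the differentiation and using, for real $v$, the conjugation identity $\overline{(\bar\lambda-L_v)^{-1}1}=(\lambda-L_v^-)^{-1}1$ produces the closed formula $\nabla\langle 1,f_0\rangle(v)=-\big(4\pi i\sqrt{\langle P_0(v)1,1\rangle}\,\big)^{-1}\int_{C_0}(\lambda-L_v)^{-1}1\cdot(\lambda-L_v^-)^{-1}1\,d\lambda$, manifestly analytic into $H^1$, and similarly for larger $k$.) The only genuine difficulty is the key estimate — precisely the idea of throwing the resolvent onto both the input and the test vector, so that their product lands in $H^1$ and can be paired against $w\in H^{-1}$; everything after that is routine bookkeeping.
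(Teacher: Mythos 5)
Your proposal is correct and follows essentially the same route as the paper: the same key estimate $\| dP_k(v)[w]\|_{L^2_+\to L^2_+}\lesssim \|w\|_{H^{-1}}$ proved by duality (pairing $w\in H^{-1}$ against a product of two resolvent images lying in $H^1$, which is the paper's "multiplication by an $H^1$ function is bounded on $H^{-1}$" phrased via the Banach-algebra property), the same induction on $k$ through the formulas $f_0=\langle P_0(v)1,1\rangle^{-1/2}P_0(v)1$ and $f_{k+1}=\alpha_k(v)^{-1/2}P_{k+1}(v)(Sf_k)$ carrying $f_k^-$ alongside $f_k$, and the same upgrade of analyticity of $\nabla\langle 1,f_{k,j}\rangle$ from $L^2$ to $H^1$ via the uniform $H^1$ bound. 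The only cosmetic difference is that you state the key estimate with $\|g\|_{H^1}$ on the right while your own argument (and the paper) actually yields the stronger bound with $\|g\|_{L^2}$; this is harmless since the vectors you apply it to have locally bounded $H^1$ norms.
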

\begin{proof}
The latter assertion easily follows from the first one, since for any $0 \le k \le n$, $j = 1,2$, $v\in B_{\delta_n}(u)$, and $w\in L^2$,
$$
\begin{aligned}
| \langle \nabla \langle 1,f_{k,j}(\cdot \ , v)\rangle , 
w\rangle | & 
= | \langle 1, df_{k,j}(\cdot \ ,v) [w] \rangle | \\
& \leq \| df_{k,j}(\cdot \ ,v) [w]\|
\leq c_n \| w\|_{H^{-1}}\ ,
\end{aligned}
$$
implying that $g_{k,j}(v):=\nabla \langle 1, f_{k,j}(\cdot \ , v)\rangle$   is bounded in $H^1$ 
uniformly with respect to  $v\in B_{\delta_n}(u)$.
Since
$g_{k, j}  : B_{\delta_n}(u) \to L^2$
is an analytic map (cf. Lemma \ref{eigenfunction analytic}), so is 
$\langle g_{k, j} | e^{inx} \rangle :  B_{\delta_n}(u) \to \C$ 
for any $n \in \Z$. Together with the boundedness of 
$g_{k, j}$ in $H^1$ it then follows that 
$g_{k,j}: B_{\delta_n}(u) \to H^1$ is analytic
(cf. e.g. \cite[Appendix A]{GK1}).
\\
To prove the first assertion of the lemma, we 
proceed by induction. First we need
to make some preliminary considerations.
Without further reference, we will use
the notation introduced in the derivation of 
Lemma \ref{lambda analytic} and Lemma \ref{eigenfunction analytic}.
According to Lemma \ref{eigenfunction analytic},
$f_{k,1} =( f_k + f_k^{-})/2$ and 
$f_{k,2} = (f_k -f_k^{-})/2i$ where $f^-_k$, $k \ge 0,$
are the eigenfunctions of 
$L^-_v : H^1_- \to L^2_-, \  
h \mapsto i \partial_x h - \Pi^-(vh)$ and $\Pi^-$
is the Szeg\H{o} projector $L^2 \to L^2_-$. 
The first assertion
of the lemma then follows by proving corresponding estimates
for $df_k(\cdot \ , v)[w]$ and 
$df_k^-(\cdot \ , v)[w]$, $0 \le k \le n.$
By the definition of $r_n,$
$$
(D_{r_n + \frac{1}{2}} \setminus D_{r_n + \frac{1}{4}})
\times B_{\delta_n} \to \mathcal L(L^2_+, H^1_+), 
\ (\lambda, v) \mapsto (\lambda - L_v)^{-1} 
$$
is  analytic. Hence the adjoint of
$(\lambda - L_v)^{-1}$ with respect to the
dual pairing of $H^{-1}_+$ and $H^1_+$, also
gives rise to an analytic map,
$$
(D_{r_n + \frac{1}{2}} \setminus D_{r_n + \frac{1}{4}} )
\times B_{\delta_n} \to \mathcal L( H^{-1}_+, L^2_+), 
\ (\lambda, v) \mapsto (\lambda - L_v)^{-1} \, .
$$
Since the circles $C_k $, $0 \le k \le n$, are compact, it implies that for any 
$v \in B_{\delta_n}$ there exists $M_n > 0$ so that
for any $\lambda \in C_k$, $0 \le k \le n$,
\begin{equation}\label{resolvent estimate}
\| (\lambda -L_v)^{-1}\|_{L^2_+\to H^1_+}\leq M_n\ ,\qquad 
\| (\lambda -L_v)^{-1}\|_{H^{-1}_+\to L^2_+}\leq M_n\ ,
\end{equation}
where $M_n$ can be chosen locally uniformly with respect to
$v$. Furthermore, for any $0 \le k \le n$,
the Riesz projector 
$$
P_k(v)= \frac{1}{2\pi i} 
\int_{C_k}(\lambda - L_v)^{-1} d\lambda
$$
gives rise to an analytic map 
$P_k : B_{\delta_n} \to \mathcal L(L^2_+, H^1_+)$.
For any $h \in L^2_+$, the derivative
of $P_k(v) h$ with respect to $v$ in direction $w \in L^2$ is given by 
$$
(dP_k(v)[w])h = - \frac{1}{2\pi i}
\int_{C_k}(\lambda - L_v)^{-1}\Pi ( w (\lambda -L_v)^{-1}h) 
d\lambda\ .
$$
We want to estimate the operator norm  of $dP_k(v)[w]$,
viewed as an operator on $L^2_+$. Note that for any
$g, h \in L^2_+$, 
$$
\langle (dP_k(v)[w])h , \ g \rangle =
- \frac{1}{2\pi i} \int_{C_k} 
\langle w (\lambda -L_v)^{-1}h , \
(\lambda - L_v)^{-1} g \rangle d \lambda \, .
$$
Taking into account \eqref{resolvent estimate} and that multiplication by $f \in H^1$
yields a bounded operator on $H^{-1}$,
we infer that for any $v \in B_{\delta_n}$, there exists a constant $d_n > 0$
so that for any $0 \le k \le n$
\begin{equation}\label{estP}
\| dP_k(v)[w]\|_{L^2_+\to L^2_+} \leq d_n \| w \| _{H^{-1}}
\end{equation}
where $d_n$ can be chosen locally uniformly with respect 
to $v$.\\
We are now ready to present the induction argument
with respect to $0 \le k \le n$ for proving the claimed
estimates for $df_k(\cdot \ , v)[w]$
and $df_k^-(\cdot \ , v)[w]$. We start with $k=0.$
Recall that the formula for the analytic extension of 
$f_0$ to $B_{\delta_n}$ of 
Lemma \eqref{eigenfunction analytic} reads
$$
f_0(\cdot \ ,v)= \frac{1}{\sqrt{\langle P_0(v)1,1\rangle }} P_0(v)1 ,
$$
implying that for any $w \in L^2$
$$
df_0(\cdot \ ,v)[ w ]=
\frac{1}{\sqrt{\langle P_0(v)1,1 \rangle }} (dP_0(v)[w])1 -
\frac 12\frac{\langle (dP_0(v)[w])1,1\rangle}
{\langle P_0(v)1,1\rangle ^{\frac 32}}P_0(v)1\ .$$
Applying \eqref{estP} for $k=0$, we immediately conclude
that there exists $A_n >0$ so that
$$
\| df_0(v)w\| _{L^2}\leq A_n\| w\|_{H^{-1}} \ .
$$
The corresponding estimate for  $df_0^-(v)[w]$ 
is proved in an analogous way by using
the estimate for $dP^-_0(v)[w]$ corresponding 
to the one of $dP_0(v)[w]$.\\
Now assume that the claimed estimates for
$df_\ell(\cdot \ , v)[w]$ and
$df_\ell^-(\cdot \ , v)[w]$ are valid for
any  $0 \le \ell \le k$ for a given $0 \le k \le n-1$. 
We claim that they also hold for $k+1$. 
Let us first consider the one for $df_{k+1}(\cdot \ , v)[w]$
Again we use the formula for the analytic extension of $f_{k+1}$ to $B_{\delta_n}$ derived in Lemma \ref{eigenfunction analytic},
$$
f_{k+1}(\cdot \ ,v) = \frac{1}{\sqrt{\alpha_{k+1}(v)}}
P_{k+1}(v)\big(Sf_k(\cdot \ ,v)\big)
$$
where
$$
\alpha_{k+1}(v) = \big\langle \ P_{k+1}(v)(Sf_k(\cdot \ ,v))\vert 
\ 
e^{-ix} f_{k}^-(\cdot \ , v) \ \big\rangle 
$$
The  directional derivative $f_{k+1}(\cdot \ ,v)[w]$ 
can then be computed as
$$
df_{k+1}(\cdot \ ,v)[w] = \frac{1}{\sqrt{\alpha_{k+1}(v)}}I 
- \frac12 \frac{d\alpha_{k+1}(v)[w]}{\alpha_{k+1}(v)^{3/2}}  
P_{k+1}(v)\big(Sf_k(\cdot \ ,v)\big)
$$
where
$$
I:= dP_{k+1}(v)[w] (Sf_k(\cdot ,v)) + 
P_{k+1}(v)( Sdf_k(\cdot ,v)[w])
$$
and $d\alpha_{k+1}(v)[w] = II_1 + II_2 + II_3$ with
$$
II_1 :=  \langle  dP_{k+1}(v)[w](Sf_k( \cdot \ ,v)) , \
e^{-ix} f_{k}^-(\cdot \ ,v) \rangle  
$$
$$
II_2 := \langle P_{k+1}(v)\big(S df_k(\cdot \ ,v)[w]\big) , \
e^{-ix} f_{k}^-(\cdot \ ,v) \rangle  
$$
$$
II_3:= \langle P_{k+1}(v)[Sf_k( \cdot \ ,v)], \
e^{-ix}df_k^-(\cdot \ ,v)[w]\rangle \ .
$$
By increasing $A_n$ if needed, 
the estimate \eqref{estP} for $k + 1$ together with the induction hypothesis then yield
$$
\| df_{k+1}(v)[w]\| _{L^2}\leq A_{n}\| w\|_{H^{-1}}\ ,
$$
The corresponding estimate for  $df_{k+1}^-(v)[w]$, 
is proved in an analogous way by using the
estimate for $dP^-_k(v)[w]$ corresponding 
to the one of $dP_k(v)[w]$.
Altogether we have proved the induction step.
Going through the arguments of the proof one verifies that
the constant $A_n$ can be chosen locally uniformly with respect to $v$.
\end{proof}
 \begin{remark}\label{ langle 1 | f_n rangle at 0}
At $u = 0,$ one has  $ f_k (x) = {\rm e}^{i k x}$ for any $k \ge 0$ and thus $\langle 1 | f_n \rangle = 0$ for any $n \ge 1$. Let us compute the gradient of $\langle 1\vert f_n\rangle $ at $u=0$ for $n\ge 1$. For any $v \in L^2_r$ and $\e \in \R,$ define  $\delta f_n := \frac{d}{d\e} _{| \e = 0} f_n( \cdot, \e v)$ and $\delta \lambda_n := \frac{d}{d\e} _{| \e = 0} \lambda_n( \cdot, \e v)$.
Arguing as in the proof of Corollary \ref{formula nabla lambda_n} one has for any $n \ge 0$
$$
(D-n) \delta f_n = T_v({\rm e}^{inx}) + \delta \lambda_n {\rm e}^{inx}\ .$$
If $n\ge 1$, taking the inner product of both sides with $1$, we obtain
$$-n \langle \delta f_n\vert 1\rangle =\langle {\rm e}^{inx}, v\rangle $$ which leads to
$$
 \nabla \langle 1 | f_n \rangle =   - \frac{1}{n} {\rm e}^{-i n x} \ . 
 $$
\end{remark}
As a fourth step, we  discuss the analytic extension  of
$\kappa_n$, $n\ge 0$, given by the formulae in  Corollary \ref{1,f}. A difficulty here is that these formulae involve infinitely many eigenvalues $\lambda_n$, while  the domain of analyticity of $\lambda_n$ might shrink as $n \to \infty$. Therefore we need an extra argument. We shall appeal to the generating functional $\mathcal H_\lambda $ .First we need to make some preliminary considerations. Let $u \in L^2_r$ be given. If needed, shrink the radius
$\delta_n >0$ of the ball 
$B_{\delta_n} \equiv B_{\delta_n}(u)$ so that for any
$v \in B_{\delta_n}$ and $1 \le k \le n$, 
$\lambda_{k-1}(v) + 1$ is in the disk $D_{r_n}(\tau_n)$.
For any $v \in B_{\delta_n}$ and $0 \le k \le n,$
$\mathcal H_\lambda(v) = 
\langle (L_v + \lambda Id)^{-1} 1, 1 \rangle$
is holomorphic for $- \lambda$ in 
$D_{r_k + 1/4}(\tau_k) \setminus \{ \lambda_k(v) \}$ and
possibly has a simple pole at $- \lambda = \lambda_k(v)$.
Let $P_k^\bot(v) = Id - P_k(v)$ where $P_k(v)$ denotes
the Riesz projector, introduced in the proof of 
Lemma \ref{lambda analytic}. Note that 
$$
\mathcal H^\bot_{\lambda, k}(v) := 
\langle (L_v + \lambda Id)^{-1} P_k^\bot(v) 1, 1 \rangle
$$ 
is holomorphic on $- D_{r_k + 1/4}(\tau_k)$. 
By Lemma \ref{eigenfunction analytic}, 
$$
P_k(v) 1 = \langle 1 \ , \ f_{k,1}(\cdot \ , v)
 - i f_{k, 2}(\cdot \ , v) \rangle f_k(\cdot \ , v)
$$
implying that
$\mathcal H_{\lambda, k}(v) := 
\langle (L_v + \lambda Id)^{-1} P_k(v) 1, 1 \rangle$
equals 
$$
\big( \langle f_{k,1}(\cdot \ , v) , 1 \rangle^2
+ \langle  f_{k,2}(\cdot \ , v) , 1 \rangle^2 \big) \frac{1}{\lambda_k(v) + \lambda}\ .
$$
Hence the residue $res_k(v)$ of $\mathcal H_{\lambda}(v) =
\mathcal H_{\lambda, k}(v) + 
\mathcal H^\bot_{\lambda,k}(v)$
at $\lambda = - \lambda_k(v)$ is given by 
\begin{equation}\label{residue}
res_k(v) = 
\langle f_{k,1}(\cdot \ , v) , 1 \rangle^2
+ \langle  f_{k,2}(\cdot \ , v) , 1 \rangle^2 \ .
\end{equation}
Furthermore, for any $v \in B_{\delta_n}$,
$(\lambda_k(v) + \lambda)\mathcal H_{\lambda}(v)$
is holomorphic on $- D_{r_k + 1/4}(\tau_k)$. Actually,
the map
$$
- D_{r_k + 1/4}(\tau_k) \times B_{\delta_n} \to \C, \, 
(\lambda, v) \mapsto (\lambda_k(v) + \lambda)
\mathcal H_{\lambda}(v)
$$
is analytic. Let us now discuss this map in more detail.
We begin with the case $k=0$.
For any $(\lambda, v)$ in 
$- D_{r_0 + 1/4}(\tau_0) \times B_{\delta_n}$ set 
$$
\chi_0(\lambda, v) := (\lambda_0(v) + \lambda)
\mathcal H_{\lambda}(v)\ , \quad
\kappa_0(v) := \chi_0(- \lambda_0(v), v) \ .
$$ 
Notice that Proposition \ref{formulae} (i) implies that, for $v \in 
B_{\delta_n}\cap L^2_r$, the above definition of $\kappa_0(v)$ is consistent with 
the one in Corollary \ref{1,f}. 
With this notation we then get
$$
res_0(v) = 
 \langle f_{0,1}(\cdot \ , v) , 1 \rangle^2
+ \langle  f_{0,2}(\cdot \ , v) , 1 \rangle^2 
= \kappa_0(v) \ .
$$
By Corollary \ref{1,f}, for any $v \in 
B_{\delta_n}\cap L^2_r$, one has $\kappa_0( v) > 0$.
Hence by shrinking $\delta_n$ if necessary, we conclude
that ${\rm Re}\,  \kappa_0( v) > 0$ is uniformly bounded away
from $0$ on $B_{\delta_n}$.\\
Now let us consider the case $1 \le k \le n$.
Recall that our choice of $\delta_n$ mentioned above assures that
$\lambda_{k-1}(v) + 1 \in  D_{r_k}(\tau_k)$
for any $v \in B_{\delta_n}$. Hence it 
follows from Lemma \ref{lambda analytic} that the map 
$$
B_{\delta_n} \to \C, \, 
v \mapsto (\lambda_k(v) + \lambda)
\mathcal H_{\lambda}(v)|_{\lambda = 
- \lambda_{k-1}(v) - 1}
$$
is also analytic. Since by Proposition \ref{formulae}, 
for any $v \in B_{\delta_n}\cap L^2_r,$
$$
(\lambda_k(v) + \lambda)
\mathcal H_{\lambda}(v)|_{ \lambda = 
- \lambda_{k-1}(v) - 1} = 0 \ , 
$$
$(\lambda_k(v) + \lambda)
\mathcal H_{\lambda}(v)|_{ \lambda = 
- \lambda_{k-1}(v) - 1}$
vanishes identically on $B_{\delta_n}$.
Thus the function $(\lambda_k(v) + \lambda) \mathcal H_{\lambda}(v)$
is of the form $- (\lambda + \lambda_{k-1}(v) + 1)\chi_k(\lambda, v)$ where
$$
\chi_k : - D_{r_k}(\tau_k) \times B_{\delta_n} \to \C
$$
is analytic. Hence we have proved that for any
$(\lambda , v) \in - D_{r_k}(\tau_k) \times B_{\delta_n}$,
\begin{equation}\label{identity for mathcal H_lambda}
\mathcal H_{\lambda}(v) = -
\frac{\lambda_{k-1}(v) + 1 + \lambda}{\lambda_k(v) + \lambda} 
\chi_k(\lambda, v)\ .
\end{equation}
Using that $\lambda_{k-1}(v) + 1 =
\lambda_{k}(v) - \gamma_k(v)$ one then concludes
from \eqref{residue} that the residue $res_k(v)$ of
$\mathcal H_\lambda(v)$ at $\lambda = - \lambda_k(v)$
is given by 
$$ 
 \langle f_{k,1}(\cdot \ , v) , 1 \rangle^2
+ \langle  f_{k,2}(\cdot \ , v) , 1 \rangle^2 = 
 \gamma_k(v) \kappa_k(v)\ , \quad
 \kappa_k(v) := \chi_k(- \lambda_k(v), v) \ .
$$
Again Proposition \ref{formulae} (i) implies that, for $v \in 
B_{\delta_n}\cap L^2_r$, the above definition of $\kappa_k(v)$ is consistent with 
the one in Corollary \ref{1,f}. Moreover, by Corollary \ref{1,f},  $\kappa_k(v) > 0$ for any $v \in  B_{\delta_n}\cap L^2_r$.
Hence by shrinking $\delta_n$ if necessary, we conclude
that ${\rm Re}\, \kappa_k(v) > 0$ is uniformly bounded away from $0$ on $B_{\delta_n}$.
For later reference we record our findings as follows:
\begin{lemma}\label{kappa holomorphic}
After shrinking $\delta_n > 0$ if needed, the following holds: for
any $u \in L^2_r$, $v \in B_{\delta_n}(u)$, 
$$
\langle f_{0,1}(\cdot \ , v) , 1 \rangle^2
+ \langle  f_{0,2}(\cdot \ , v) , 1 \rangle^2 =
 \kappa_0(v)
$$
and for $1 \le k \le n$,
$$
\langle f_{k,1}(\cdot \ , v) , 1 \rangle^2
+ \langle  f_{k,2}(\cdot \ , v) , 1 \rangle^2 =
\gamma_k(v) \kappa_k(v)\ .
$$
For any $0 \le k \le n$, the functional 
$\kappa_k : B_{\delta_n}(u) \to \C$
is analytic with positive real part and ${\rm Re}\, \kappa_k(v) > 0$ is uniformly bounded away from $0$.
\end{lemma}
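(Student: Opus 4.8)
The plan is to read off both identities and the positivity directly from the local structure of the generating function $\mathcal H_\lambda(v)=\langle(L_v+\lambda\,\mathrm{Id})^{-1}1\,|\,1\rangle$ near its poles $\lambda=-\lambda_k(v)$, $0\le k\le n$, thereby bypassing the infinite products of Corollary \ref{1,f}, whose factors involve \emph{all} the eigenvalues and so cannot be controlled on a fixed ball $B_{\delta_n}(u)$. First I would compute the residue of $\mathcal H_\lambda(v)$ at $\lambda=-\lambda_k(v)$. Splitting $1=P_k(v)1+P_k^\bot(v)1$ with $P_k(v)$ the Riesz projector from Lemma \ref{lambda analytic}, the term $\langle(L_v+\lambda\,\mathrm{Id})^{-1}P_k^\bot(v)1\,|\,1\rangle$ is holomorphic in a neighbourhood of $-\lambda_k(v)$, so the singular part comes solely from $P_k(v)1$; inserting the formula $P_k(v)1=\langle 1,f_{k,1}(\cdot,v)-if_{k,2}(\cdot,v)\rangle f_k(\cdot,v)$ from Lemma \ref{eigenfunction analytic} gives
$$
\mathrm{res}_k(v)=\langle f_{k,1}(\cdot,v),1\rangle^2+\langle f_{k,2}(\cdot,v),1\rangle^2,
$$
which is precisely the left-hand side of both claimed equalities.

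Next I would manufacture the analytic functionals $\kappa_k$ as values of holomorphic de-singularisations of $\mathcal H_\lambda$. For $k=0$, the function $\chi_0(\lambda,v):=(\lambda_0(v)+\lambda)\mathcal H_\lambda(v)$ is analytic on $-D_{r_0+1/4}(\tau_0)\times B_{\delta_n}(u)$ by Lemma \ref{lambda analytic}, and $\kappa_0(v):=\chi_0(-\lambda_0(v),v)$ then equals $\mathrm{res}_0(v)$. For $1\le k\le n$, after shrinking $\delta_n$ so that $\lambda_{k-1}(v)+1\in D_{r_k}(\tau_k)$ for all $v\in B_{\delta_n}(u)$, the function $(\lambda_k(v)+\lambda)\mathcal H_\lambda(v)$ is analytic on $-D_{r_k}(\tau_k)\times B_{\delta_n}(u)$ and, by Proposition \ref{formulae}(i), vanishes at $\lambda=-\lambda_{k-1}(v)-1$ whenever $v$ is real; since it depends analytically on $v$ and $B_{\delta_n}(u)\cap L^2_r$ is a real slice through the centre of the connected ball $B_{\delta_n}(u)$, it vanishes there identically, hence factors as $-(\lambda+\lambda_{k-1}(v)+1)\chi_k(\lambda,v)$ with $\chi_k$ analytic. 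Setting $\kappa_k(v):=\chi_k(-\lambda_k(v),v)$ and using $\lambda_{k-1}(v)+1=\lambda_k(v)-\gamma_k(v)$ yields $\mathrm{res}_k(v)=\gamma_k(v)\kappa_k(v)$, and Proposition \ref{formulae}(i) also shows that on $B_{\delta_n}(u)\cap L^2_r$ these $\kappa_k$ coincide with the infinite products of Corollary \ref{1,f}.

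For the positivity statement I would invoke Corollary \ref{1,f} to obtain $\kappa_k(v)>0$ for every $v\in B_{\delta_n}(u)\cap L^2_r$, in particular at $v=u$; since each $\kappa_k$, $0\le k\le n$, is analytic and hence continuous on $B_{\delta_n}(u)$, a final shrinking of $\delta_n$ (taken the same for the finitely many $k$) forces ${\rm Re}\,\kappa_k(v)>\tfrac{1}{2}\kappa_k(u)>0$ uniformly on $B_{\delta_n}(u)$. I expect the only genuine subtlety --- the ``main obstacle,'' such as it is --- to be the identity-principle step: one must be sure that the vanishing of $(\lambda_k(v)+\lambda)\mathcal H_\lambda(v)$ at $\lambda=-\lambda_{k-1}(v)-1$, which Proposition \ref{formulae}(i) delivers only for real potentials, really propagates to complex $v\in B_{\delta_n}(u)$; this holds because, after the preliminary shrinking, $v\mapsto(\lambda_k(v)+\lambda)\mathcal H_\lambda(v)\big|_{\lambda=-\lambda_{k-1}(v)-1}$ is a bona fide analytic function on the connected open set $B_{\delta_n}(u)$ vanishing on $B_{\delta_n}(u)\cap L^2_r$. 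Everything else is bookkeeping with the radii of the discs $D_{r_k}(\tau_k)$ so as to keep the relevant points and poles separated as $v$ varies.
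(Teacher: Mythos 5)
Your proof is correct and follows essentially the same route as the paper: split off the pole at $\lambda=-\lambda_k(v)$ via the Riesz projector $P_k(v)$, use the formula $P_k(v)1=\langle 1,f_{k,1}-if_{k,2}\rangle f_k$ to read off the residue, define $\kappa_k$ as the value of a de-singularized generating function $\chi_k$, propagate the vanishing of $(\lambda_k(v)+\lambda)\mathcal H_\lambda(v)$ at $\lambda=-\lambda_{k-1}(v)-1$ from the real slice to the full complex ball by the identity principle, and obtain positivity from Corollary~\ref{1,f} plus continuity on a shrunken ball. The one place where you are a bit more explicit than the paper is in flagging the identity-principle step (that analyticity on $B_{\delta_n}(u)$ together with vanishing on $B_{\delta_n}(u)\cap L^2_r$ forces identical vanishing), which the paper invokes without comment; this matches the intended argument.
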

Finally we analyze the $L^2-$gradient of
$\kappa_n$. 
\begin{lemma} \label{nabla kappa_n}
 For any $n \ge 1,$ 
$\nabla \kappa_n$ takes values in 
$H^1_r$ and 
$\nabla \kappa_n : L^2_{r} \to H^1_r$
is real analytic.
\end{lemma}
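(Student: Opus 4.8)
\noindent\emph{Plan of proof.} Fix $u\in L^2_r$ and $n\ge 1$, and work on a complex ball $B_{\delta_n}(u)$ as in Lemma \ref{kappa holomorphic}, shrinking $\delta_n>0$ whenever convenient. The naive approach — dividing the identity $\langle f_{n,1},1\rangle^2+\langle f_{n,2},1\rangle^2=\gamma_n\kappa_n$ of Lemma \ref{kappa holomorphic} by $\gamma_n$ and then differentiating — breaks down on the set where $\gamma_n$ vanishes, so the plan is to exploit instead the holomorphic dependence of the generating function $\mathcal H_\lambda$ on the spectral parameter, which yields a formula for $\kappa_n$ free of any division by $\gamma_n$. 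Recall that, by Lemma \ref{lambda analytic}, $spec(L_v)\cap D_{r_n}(\tau_n)=\{\lambda_n(v)\}$ for every $v\in B_{\delta_n}(u)$, so $\xi\mapsto\mathcal H_\xi(v)=\langle(L_v+\xi\,Id)^{-1}1,1\rangle$ is holomorphic on $-D_{r_n}(\tau_n)$ apart from a simple pole at $\xi=-\lambda_n(v)$, with $(\xi,v)\mapsto\mathcal H_\xi(v)$ jointly analytic off that pole; recall also from \eqref{identity for mathcal H_lambda} with $k=n$ that $(\lambda_n(v)+\xi)\mathcal H_\xi(v)=-(\xi+\lambda_{n-1}(v)+1)\chi_n(\xi,v)$ with $\chi_n:-D_{r_n}(\tau_n)\times B_{\delta_n}(u)\to\C$ jointly analytic and $\kappa_n(v)=\chi_n(-\lambda_n(v),v)$.

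I would then take $\Gamma$ to be a fixed small counterclockwise circle contained in $-D_{r_n}(\tau_n)$, chosen after shrinking $\delta_n$ so that for every $v\in B_{\delta_n}(u)$ both $-\lambda_n(v)$ and $-\lambda_{n-1}(v)-1$ lie strictly inside $\Gamma$ while $\mathcal H_\xi(v)$ has no pole on $\Gamma$; then $|\xi+\lambda_{n-1}(v)+1|$ and $|\xi+\lambda_n(v)|$ are bounded away from $0$ on $\Gamma$, locally uniformly in $v$. On $\Gamma$ one may write $\mathcal H_\xi(v)=-(\xi+\lambda_{n-1}(v)+1)\chi_n(\xi,v)/(\lambda_n(v)+\xi)$, so, since $\chi_n(\cdot,v)$ is holomorphic on a neighbourhood of the closed disc bounded by $\Gamma$ and $-\lambda_n(v)$ lies inside $\Gamma$, Cauchy's formula gives
$$
\kappa_n(v)=\chi_n(-\lambda_n(v),v)=\frac1{2\pi i}\oint_{\Gamma}\frac{\chi_n(\xi,v)}{\xi+\lambda_n(v)}\,d\xi=-\frac1{2\pi i}\oint_{\Gamma}\frac{\mathcal H_\xi(v)}{\xi+\lambda_{n-1}(v)+1}\,d\xi .
$$
This representation holds for all $v\in B_{\delta_n}(u)$ with no case distinction on the sign of $\gamma_n$, and since $\mathcal H_\xi(v)$ and $\lambda_{n-1}(v)$ depend analytically on $v$ near $\Gamma$, it already re-proves that $\kappa_n$ is analytic on $B_{\delta_n}(u)$.

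It then remains to differentiate under the integral sign. By Corollary \ref{formula nabla lambda_n}, $\nabla\lambda_{n-1}(v)\in H^1$ and $v\mapsto\nabla\lambda_{n-1}(v)$ is analytic. For the derivative of $\mathcal H_\xi$ in $v$ at fixed $\xi\in\Gamma$, I would set $h_\xi(v):=(L_v+\xi\,Id)^{-1}1\in H^1_+$ and $h^-_\xi(v):=(L^-_v+\xi\,Id)^{-1}1\in H^1_-$, which are well defined and analytic in $(\xi,v)$ for $\xi$ near $\Gamma$ since $spec(L^-_v)=spec(L_v)$. Using that $L^-_v$ is the transpose of $L_v$ with respect to the bilinear form $\langle\cdot,\cdot\rangle$ — the transpose of $D$ on $L^2_+$ is $i\partial_x$ on $L^2_-$, and the transpose of $T_v$ is $T^-_v$ — together with $\frac{d}{d\e}|_{\e=0}L_{v+\e w}=-T_w$, one computes
$$
d_v\mathcal H_\xi(v)[w]=\langle(L_v+\xi\,Id)^{-1}\Pi(w\,h_\xi(v)),1\rangle=\langle w\,h_\xi(v),h^-_\xi(v)\rangle=\langle w,\,h_\xi(v)\,h^-_\xi(v)\rangle ,
$$
so that $\nabla_v\mathcal H_\xi(v)=h_\xi(v)\,h^-_\xi(v)\in H^1$, because $H^1(\T)$ is a multiplicative Banach algebra and multiplication $H^1\times H^1\to H^1$ is bounded bilinear, hence analytic. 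Differentiating the contour formula then yields
$$
\nabla\kappa_n(v)=-\frac1{2\pi i}\oint_{\Gamma}\left(\frac{h_\xi(v)\,h^-_\xi(v)}{\xi+\lambda_{n-1}(v)+1}-\frac{\mathcal H_\xi(v)}{(\xi+\lambda_{n-1}(v)+1)^2}\,\nabla\lambda_{n-1}(v)\right)d\xi ,
$$
an integral over the compact contour $\Gamma$ of an $H^1$-valued function, analytic in $v$ and continuous in $\xi$, with scalar coefficients bounded on $\Gamma$ locally uniformly in $v$. Hence $\nabla\kappa_n:B_{\delta_n}(u)\to H^1$ is analytic; restricting to $L^2_r$, where $\kappa_n$ is real-valued by Lemma \ref{kappa holomorphic}, forces $\nabla\kappa_n(u)\in L^2_r\cap H^1=H^1_r$, and since $u$ was arbitrary $\nabla\kappa_n:L^2_r\to H^1_r$ is real analytic.

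The only genuine difficulty is the possible vanishing of $\gamma_n$, which forbids the elementary quotient formula and forces the detour through $\mathcal H_\lambda$ and its holomorphy in the spectral parameter; once the division-free contour representation of $\kappa_n$ is in hand, the remaining points — that $\nabla_v\mathcal H_\xi(v)$ is $H^1$-valued uniformly for $\xi\in\Gamma$ (via the transpose relation between $L_v$ and $L^-_v$ and the algebra property of $H^1(\T)$) and that differentiation under the integral is legitimate (by joint analyticity and the uniform bounds on $\Gamma$) — are routine.
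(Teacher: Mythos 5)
Your argument is correct and yields a cleaner route than the one in the paper, although both proofs rest on the same two pillars: the factorization $(\lambda_n(v)+\lambda)\mathcal H_\lambda(v)=-(\lambda+\lambda_{n-1}(v)+1)\chi_n(\lambda,v)$ of Lemma \ref{kappa holomorphic}, and the fact that the $v$-gradient of the resolvent pairing is $H^1$-valued (a product of an $H^1_+$ and an $H^1_-$ function). What differs is how the local degeneracy at $\gamma_n=0$ is tamed. The paper differentiates the factorization identity to obtain $\nabla G_n=-(\lambda+\lambda_{n-1}+1)\nabla\chi_n-\chi_n\nabla\lambda_{n-1}$, proves that $\nabla G_n$ is $H^1$-valued by splitting $G_n=\langle f_{n,1},1\rangle^2+\langle f_{n,2},1\rangle^2+(\lambda_n+\lambda)\mathcal H^\perp_{\lambda,n}$ (which requires Lemma \ref{1 | f_n in H^1} and a contour formula for $\nabla\mathcal H^\perp_{\lambda,n}$), and then invokes a Weierstrass-type division to extract $\nabla\chi_n$ as an analytic $H^1$-valued map; after that the chain rule produces $\nabla\kappa_n$. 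You replace all of this by a single Cauchy integral representation $\kappa_n(v)=-\frac1{2\pi i}\oint_\Gamma \mathcal H_\xi(v)(\xi+\lambda_{n-1}(v)+1)^{-1}\,d\xi$ over a contour $\Gamma$ encircling both $-\lambda_n(v)$ and $-\lambda_{n-1}(v)-1$, which is manifestly division-free; differentiating under the $\xi$-integral then only requires $\nabla\lambda_{n-1}\in H^1$ (Corollary \ref{formula nabla lambda_n}) and your identity $\nabla_v\mathcal H_\xi(v)=h_\xi(v)h^-_\xi(v)$ obtained from the transpose relation $L_v^{\mathrm T}=L_v^-$ for the bilinear pairing. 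The gain is that you sidestep both the factoring step and the decomposition of $G_n$ via $P_n^\perp$; the paper's approach, on the other hand, also produces along the way the analyticity of $\nabla\chi_n$ on the whole disc $-D_{r_n+\frac14}(\tau_n)$, which is slightly more information than the single value $\nabla\chi_n(-\lambda_n(v),v)$. The reality of $\nabla\kappa_n(u)$ on $L^2_r$ follows, as you say, from $\kappa_n$ being real-valued there. Two small presentational remarks: you should state explicitly that the transpose is taken with respect to the bilinear pairing $L^2_+\times L^2_-\to\C$, and note that $H^1(\T)\subset L^\infty$ is what makes $H^1$ a multiplicative algebra, but neither affects the validity of the argument.
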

\begin{remark} (i) The map 
$\nabla \kappa_0 :L^2_{r} \to H^1_r$
is real analytic. Indeed, by Lemma \ref{kappa holomorphic},
$\kappa_0 (u) = 
\langle 1 , f_{0,1}(\cdot \ , u) \rangle ^2
+ \langle 1 , f_{0,2}(\cdot \ , u) \rangle ^2$.
The claim then follows from Lemma \ref{1 | f_n in H^1}.\\
(ii) 
For $u=0$ one infers from the product representation
of $\kappa_n$ and the facts that
$f_n(x , 0) = e^{inx}$, $n \ge 0$,
and both $\gamma_p(0)$
and $\nabla \gamma_p (0)$, $p \ge 1$, vanish, that 
$\nabla \kappa_n(0) = 0$ for any $n \ge 0$. 
\end{remark}
\begin{proof} 
Let $n \ge 1$ and $u \in L^2_r.$
For any $1 \le k \le n$, $\kappa_n $ 
is an analytic map $B_{\delta_n}(u) \to \C$
(cf. Lemma \ref{kappa holomorphic}),
implying that $\nabla \kappa_n:  B_{\delta_n}(u) \to L^2$ is 
such a map. To simplify notation, we write
$B_{\delta_n}$ for $B_{\delta_n}(u)$.
We now prove that 
$\nabla \kappa_n$  is in fact an analytic map 
$B_{\delta_n} \to H^1$. Let us start from the identity \eqref{identity for mathcal H_lambda},
\begin{equation}\label{basic}
(\lambda _k+\lambda )\mathcal H_\lambda (v)=-(\lambda+\lambda_{k-1}(v)+1)\chi_k(\lambda ,v)\ ,
\end{equation}
which holds for $(\lambda ,v)\in -D_{r_k+\frac 14}(\tau_k)\times B_{\delta_n}$. Here $\chi_k $ is a holomorphic function on the latter domain 
and is related to $\kappa_k$ by 
$\kappa_k(v)=\chi_k(-\lambda_k(v), v)$.
By the chain rule we then have
$$\nabla \kappa_k(v)=-\frac{\partial \chi_k}{\partial \lambda}(-\lambda_k(v), v)\nabla \lambda_k(v)+\nabla \chi_k(-\lambda_k(v),v)\ .$$
Since $\nabla \lambda_k : B_{\delta_n} \to H^1$ is  
an analytic map
(cf. Corollary \ref{formula nabla lambda_n}, 
Lemma \ref{eigenfunction analytic} ), 
it remains to prove that 
$v\mapsto \nabla \chi_k(-\lambda_k(v),v)$ is 
an analytic map $B_{\delta_n} \to H^1$. 
It suffices to prove that 
$$
\nabla \chi_k :  
- D_{r_k+\frac 14}(\tau_k)\times B_{\delta_n} \to H^1
$$
is analytic. Indeed, let us take the gradient with respect to $v$ of both sides of the identity \eqref{basic}. Writing 
$$G_k(\lambda ,v):=(\lambda _k(v)+\lambda )\mathcal H_\lambda (v)\ ,$$
we obtain the following identity,
\begin{equation}\label{basicprime}
\nabla G_k(\lambda ,v)=
-(\lambda+\lambda_{k-1}(v)+1)\nabla \chi_k(\lambda ,v)-\chi_k(\lambda ,v)\nabla \lambda_{k-1}(v)\ .
\end{equation}
We claim  that $\nabla G_k$ is $H^1$ valued and
$\nabla G_k : -D_{r_k+\frac14}(\tau_k)\times B_{\delta_n} \to  H^1$ is analytic.
Indeed, $G_k$ satisfies the identity (cf. derivation of Lemma \ref{kappa holomorphic}) 
$$
G_k(\lambda ,v)=\langle f_{k,1}(\cdot \ ,v),1\rangle ^2+
\langle f_{k,2}(\cdot \ ,v),1\rangle ^2 +
(\lambda_k+\lambda )\mathcal H_{\lambda, k} ^\perp (v)
$$
where
$$
\mathcal H_{\lambda,k} ^\perp (v)=
\langle (L_v+\lambda Id)^{-1}P_k^\perp (v)1,1\rangle \ ,
\qquad P_k^\perp (v) = Id -P_k(v)\ ,
$$
and $P_k(v)$ denotes the Riesz projector
onto $span(f_k(\cdot \ ,v))$. 
For any 
$(\lambda, v) \in 
-D_{r_k+\frac14}(\tau_k) \times B_{\delta_n}$, 
we rewrite
$$
(L_v+\lambda Id)^{-1}P_k^\perp (v)=
(L_v+\lambda Id)^{-1} - (L_v+\lambda Id)^{-1}P_k (v) \ ,
$$
using 
$$
(L_v+\lambda Id)^{-1} = 
\frac{1}{2\pi i} \int_{C_k}(L_v + \lambda Id)^{-1} 
\frac{1}{\mu + \lambda} d \mu \ ,
$$
together with
$$
(L_v+\lambda Id)^{-1}P_k (v) = 
\frac{1}{2\pi i} \int_{C_k}
(L_v+\lambda Id)^{-1} (\mu Id -L_v)^{-1} d\mu\ ,
$$
and the resolvent identity
$$
(\mu + \lambda) (L_v+\lambda Id)^{-1} (\mu Id -L_v)^{-1} =
(L_v+\lambda Id)^{-1} + (\mu Id -L_v)^{-1} \, .
$$
It yields
$$
(L_v+\lambda Id)^{-1}P_k^\perp (v)=
- \frac{1}{2\pi i} \int_{C_k}
\frac{1}{\mu + \lambda} (\mu Id -L_v)^{-1}d\mu \ .
$$
Consequently,
$$\nabla \mathcal H_{\lambda ,k}^\perp (v)=
\frac{1}{2\pi i} \int_{C_k}(\mu Id -L_v)^{-1}
[1] 
\overline {(\overline \mu Id -L_{\overline v})^{-1}
[1]} \frac{1}{\lambda +\mu } d\mu 
$$
implying that 
$\nabla \mathcal H_{\lambda ,k}^\perp (v) \in H^1$ and
$\nabla \mathcal H_{\cdot \, ,k}^\perp: 
-D_{r_k + \frac{1}{4}}(\tau_k)\times B_{\delta_n } 
\to H^1$  
 is analytic. Finally, since
  $ \nabla \langle f_{k,j}(\cdot \ ,v),1\rangle:
 B_{\delta_n } \to H^1$, $j = 1,2$, are analytic
 (cf. Lemma \ref{1 | f_n in H^1}), it follows 
$$
\nabla G_k : - D_{r_k + \frac{1}{4}}(\tau_k)\times B_{\delta_n}
\to H^1
$$
is analytic. 
Coming back to \eqref{basicprime}, we infer that the holomorphic map
$$
\nabla G_k +  \chi_k \nabla \lambda_{k-1} :
 - D_{r_k +  \frac{1}{4}}(\tau_k)\times B_{\delta_n} 
 \to H^1
$$ 
vanishes at
$\lambda =-\lambda_{k-1}(v)-1$ for any
$v \in  B_{\delta_n}$.
 Therefore there exists a holomorphic map
$$
F_k : -D_{r_k+\frac 14}(\tau_k)\times B_{\delta_n} 
\to H^1
$$
such that for any $(\lambda, v) \in 
-D_{r_k+\frac 14}(\tau_k)\times B_{\delta_n} $
$$
\nabla G_k(\lambda, v) +
\chi_k(\lambda ,v)\nabla \lambda_{k-1}(v) =
- (\lambda+\lambda_{k-1}(v) + 1)F_k(\lambda, v) \,.
$$
Comparing with \eqref{basicprime}, one deduces that
$$\nabla \chi_k(\lambda ,v)= F_k(\lambda ,v)$$
and thus has the claimed property.
\end{proof}

With these preparations made, we now can study functionals $\zeta_n$. Recall from the definition \eqref{birkhoff} that for any $n \ge 1$, 
$u \in L^2_{r,0}$,
$\zeta_n(u) =\frac{\langle 1 | f_n(\cdot \  , u) \rangle}{\sqrt {\kappa_n(u)}}$.
The functionals
$ \langle 1 | f_n \rangle : L^2_{r,0} \to \C$
(cf. Lemma \ref{eigenfunction analytic}),
and $ \kappa_n : L^2_{r,0} \to \R$ (Lemma \ref{kappa holomorphic}) are real analytic.
Hence we claim that $\zeta_n: L^2_{r,0} \to \C$ is real analytic. Indeed,  let $u \in L^2_{r,0}$, $n \ge 1,$ and $1 \le k \le n.$
By Lemma \ref{eigenfunction analytic},
$\langle 1 , f_{k,1} - i f_{k, 2} \rangle  $
is analytic on $B_{\delta_n}(u)\cap L^2_{r,0}$ and
by Lemma \ref{kappa holomorphic}, so is $\kappa_k$.
Since by the same lemma, ${\rm Re}\, \kappa_k > 0$ is uniformly
bounded away from $0$ on $B_{\delta_n}(u) \cap L^2_{r,0}$
 it then follows
that the principal branch of the root
$\sqrt{\kappa_k}$ is a well defined, analytic functional
which is also uniformly bounded away
from $0$ on this ball. We thus have proved that
$\zeta_k = \frac{\langle 1 | f_{k,1} \rangle}
{\sqrt {\kappa_k}} 
- i \frac{\langle 1 | f_{k,2} \rangle}
{\sqrt {\kappa_k}}$ extends analytically
to $B_{\delta_n}(u) \cap L^2_{r,0}$.
\\
Now we study the gradient of $\zeta_n$. 
By the chain rule we have on $L^2_{r,0}$ 
\begin{equation}\label{formula nabla zeta_n}
\nabla \zeta_n = - \frac{1}{2} \frac{\nabla \kappa_n}{\kappa_n^{3/2}} \langle 1 | f_n \rangle + \frac{1}{\sqrt{\kappa_n}} \nabla \langle 1 | f_n \rangle 
\, .
\end{equation}
Since
$\nabla \langle 1 | f_n \rangle : L^2_{r,0} \to H^1$
(cf. Lemma \ref{1 | f_n in H^1})
and 
$\nabla \kappa_n : L^2_{r,0} \to H^1$
(cf. Lemma \ref{nabla kappa_n})
are real analytic, the following result holds true. 
\begin{proposition}\label{zeta_n in H^1}
 For any $n \ge 1,$ the functional 
 $\zeta_n : L^2_{r,0} \to \C$ and the map
$\nabla \zeta_n : L^2_{r,0} \to H^1$
are real analytic.
\end{proposition}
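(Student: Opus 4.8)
The plan is to deduce both assertions from the explicit formula $\zeta_n(u) = \langle 1 | f_n(\cdot\,, u) \rangle / \sqrt{\kappa_n(u)}$ and the analyticity already established for its two ingredients, so that the argument reduces to a chain-rule computation.

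First I would fix $u \in L^2_{r,0}$ and $n \ge 1$ and work on the ball $B_{\delta_n}(u)\cap L^2_{r,0}$, with $\delta_n$ as in Lemma \ref{kappa holomorphic}. On that ball, $\langle 1 | f_n \rangle = \langle 1 , f_{n,1} \rangle - i\langle 1 , f_{n,2} \rangle$ is analytic by Lemma \ref{eigenfunction analytic}, while $\kappa_n$ is analytic with ${\rm Re}\,\kappa_n$ uniformly bounded away from $0$ by Lemma \ref{kappa holomorphic}. Consequently the principal branch of the square root is analytic in a neighbourhood of the range of $\kappa_n$, so $\sqrt{\kappa_n}$ is an analytic, nonvanishing scalar functional and $\zeta_n = \langle 1 | f_n \rangle / \sqrt{\kappa_n}$ is analytic there. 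Since $u$ is arbitrary, this yields the real analyticity of $\zeta_n : L^2_{r,0}\to\C$.

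Next I would treat the gradient. Differentiating the defining relation by the chain rule gives formula \eqref{formula nabla zeta_n},
$$
\nabla \zeta_n = -\frac12\,\frac{\langle 1 | f_n \rangle}{\kappa_n^{3/2}}\,\nabla\kappa_n + \frac{1}{\sqrt{\kappa_n}}\,\nabla\langle 1 | f_n \rangle\,.
$$
Here $\nabla\langle 1 | f_n \rangle : L^2_{r,0}\to H^1$ is real analytic by Lemma \ref{1 | f_n in H^1}, $\nabla\kappa_n : L^2_{r,0}\to H^1$ is real analytic by Lemma \ref{nabla kappa_n}, and the scalar coefficients $\langle 1 | f_n \rangle / \kappa_n^{3/2}$ and $1/\sqrt{\kappa_n}$ are analytic and locally bounded, again because ${\rm Re}\,\kappa_n$ stays away from $0$. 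Since the product of an analytic scalar functional with an analytic $H^1$-valued map is again an analytic $H^1$-valued map, the right-hand side, and hence $\nabla\zeta_n$, takes values in $H^1$ and is real analytic on $L^2_{r,0}$.

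The one point worth flagging is that none of this can be read off directly from the product formulae for $\kappa_n$ in Corollary \ref{1,f}, since those involve infinitely many eigenvalues whose common domain of analyticity may shrink with $n$; the required substitute is exactly Lemma \ref{kappa holomorphic}, obtained from the generating function $\mathcal H_\lambda$, together with the resolvent estimates of Lemma \ref{1 | f_n in H^1} and the integral representation used in the proof of Lemma \ref{nabla kappa_n}. Once those inputs are granted, the present proposition is a purely formal consequence, and I expect no further obstacle.
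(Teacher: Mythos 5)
Your proof is correct and follows essentially the same route as the paper's: you express $\zeta_n = \langle 1 | f_n \rangle / \sqrt{\kappa_n}$, invoke Lemmas \ref{eigenfunction analytic} and \ref{kappa holomorphic} to make the numerator and the square root analytic on $B_{\delta_n}(u)\cap L^2_{r,0}$, and then apply the chain-rule formula \eqref{formula nabla zeta_n} together with Lemmas \ref{1 | f_n in H^1} and \ref{nabla kappa_n} to obtain analyticity of $\nabla\zeta_n$ with values in $H^1$. Your closing remark about the product formulae for $\kappa_n$ not being directly usable, and Lemma \ref{kappa holomorphic} being the needed substitute, correctly identifies the key subtlety that the paper's preparatory lemmas are designed to handle.
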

\begin{remark}\label{nabla zeta_n at 0}
The $L^2-$gradient of $\zeta_n$ can be computed explicitly
at $u=0$. Indeed,
since $\kappa_n(0) = \frac{1}{n}$ and
$\nabla \langle 1 | f_n \rangle (0) 
= -\frac{1}{n} e^{-inx}$
(cf. Remark \ref{ langle 1 | f_n rangle at 0})
one has
$$
\nabla \zeta_n(0) = -\frac{1}{\sqrt{n}} e^{-inx} \, .
$$
\end{remark}


\section{Poisson brackets}
The main goal of this section is to compute the Poisson brackets $\{\gamma_p, \gamma_n  \}$ and $\{ \gamma_p, \langle 1 | f_n \rangle \}$.
As a first step we  show that the flow, corresponding to the generating function $\mathcal H_{\lambda}$, leaves the eigenvalues of $L_u$ invariant.
Recall from \eqref{Hlambda} that for $u \in L^2_r$, the generating function is given by
$$
\mathcal H_\lambda (u)=\langle (L_u + \lambda Id)^{-1}1 \, | 1 \rangle .
$$
Consider $\mathcal H_\lambda$ with 
$\lambda \in \mathbb R \setminus \{ - \lambda_n(u) \, : \, n \ge 0 \} $. For any  $v \in L^2_r$ one has
$$
d\mathcal H_\lambda (u) [v] = \langle (L_u+\lambda Id)^{-1}T_v (L_u+\lambda Id)^{-1}1 \ | 1 \rangle = \langle v | \, |w_\lambda |^2 \rangle ,
$$
where $w_\lambda \equiv w_\lambda ( \cdot, u)$ is given by
$$w_\lambda  := (L_u+\lambda Id)^{-1} 1 \in H^1_+\ .$$
Hence by the definition of the $L^2-$gradient
$$
\nabla \mathcal H_\lambda (u)= |w_\lambda (\cdot,  u) |^2
$$
and the Hamiltonian equation associated to $\mathcal H_\lambda $ is
\begin{equation}\label{evolHlambda}
\partial_t u=\partial_x |w_\lambda ( \cdot , u)|^2\ .
\end{equation}
Since the map $ u \in L^2_r\mapsto  w_\lambda( \cdot, u)\in H^1_+$ is Lipschitz on bounded subsets,  so is
 the Hamiltonian vector field $ u \in L^2_r \mapsto \partial_x | w_\lambda ( \cdot , u)|^2\in L^2_r$ 
and hence the corresponding initial value problem  admits a local in time solution for
any initial data $u \in L^2_r$ and $\lambda \in 
\C \setminus \{ \lambda_n(u) :  n \ge 0 \}$.
Furthermore, the Toeplitz operators $T_{w_\lambda}$ and $T_{\overline w_\lambda}$
are bounded operators on $L^2_+$. The following result says that equation
\eqref{evolHlambda} has a Lax pair representation.
\begin{proposition}\label{laxpairHlambda}  For any $\lambda \in \mathbb R \setminus \{ - \lambda_n(u) \, : \, n \ge 0 \} $,
$$
\frac{dL_u}{dt}=[B_u^\lambda , L_u]\ ,\qquad  B_u^\lambda :=iT_{w_\lambda}T_{\overline w_\lambda}
$$
along the evolution \eqref{evolHlambda}.
\end{proposition}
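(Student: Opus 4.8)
The plan is to reduce the assertion to a single operator identity on $L^2_+$ and then to verify that identity by testing it against trigonometric polynomials.

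Since $D$ does not depend on $t$, along the flow \eqref{evolHlambda} one has $\frac{d}{dt}L_u = - T_{\partial_t u} = - T_{\partial_x|w_\lambda|^2}$, so the Proposition is equivalent to
$$
[B_u^\lambda , L_u]\, h = - T_{\partial_x|w_\lambda|^2}\, h \qquad \text{for every trigonometric polynomial } h\in L^2_+,
$$
which then extends to all of $H^1_+$ by density. Write $w:= w_\lambda\in H^1_+$. The only structural input needed is the pair of first order equations for $w$ and $\overline w$: the relation $(L_u+\lambda)w = 1$ reads $Dw = \Pi(uw) + 1 - \lambda w$, and taking complex conjugates — using that $u$ is real, so that $\overline{\Pi(uw)} = \Pi^-(u\overline w)$ — shows that $\overline w$ solves $(L_u^- + \lambda)\overline w = 1$, i.e. $D\overline w = -\,\Pi^-(u\overline w) - 1 + \lambda\overline w$, with $L_u^-$ the operator introduced in Section \ref{gradients}.

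I would then expand $[B_u^\lambda, L_u]h = i\,T_w T_{\overline w}(D-T_u)h - i(D-T_u)T_w T_{\overline w}h$ for $h$ a trigonometric polynomial, using that $T_w g = w g$ (since $w$ is holomorphic), the Leibniz rule $D(fg)=(Df)g + f(Dg)$, and $D\Pi = \Pi D$. The first observation is that all the terms containing $Dh$ cancel — this is precisely why the commutator turns out to be an operator of order zero. Next, substituting the two equations above eliminates $Dw$ and $D\overline w$, and the terms proportional to $\lambda$ cancel. In parallel I would expand the right hand side, $-T_{\partial_x|w|^2}h = -\Pi\big((\partial_x(w\overline w))h\big)$, via $\partial_x(w\overline w) = (\partial_x w)\overline w + w\,\partial_x\overline w$ and again the same two equations; here too the $\lambda$-terms drop out.

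The two remaining expressions are finite sums of terms of the form $\Pi$ applied to a product of two or three of the functions $w,\ \overline w,\ \Pi u,\ \overline{\Pi u},\ h$, some carrying an inner $\Pi$ or $\Pi^-$. Checking that their difference vanishes is the only real work, and I expect this bookkeeping to be the main obstacle: it is entirely elementary but must be carried out carefully. The relevant tools are the standard commutation rules for the Szeg\H{o} projector on products — $\Pi(vg) = v\,\Pi(g) + \Pi\big(v(Id-\Pi)g\big)$ when $v$ is holomorphic, $\Pi(vg) = \Pi\big(v\,\Pi g\big)$ when $v$ has only nonpositive frequencies, the dual statement $\Pi(Fg) = (\Pi F)\,g + \Pi\big((Id-\Pi)F\cdot g\big)$ when $g$ is holomorphic, the fact that $\Pi$ annihilates the product of a function of nonpositive frequency with a function of strictly negative frequency, and the obvious companion facts about frequency supports — together with the decomposition $u = \Pi u + \overline{\Pi u} - \langle u\,|\,1\rangle$. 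Grouping the leftover terms according to which of $\Pi u$ and $\overline{\Pi u}$ occurs in them, each group telescopes to $0$, which establishes $[B_u^\lambda, L_u] = -T_{\partial_x|w_\lambda|^2}$ on trigonometric polynomials and hence, by density, on $H^1_+$. Finally, expanding this identity at $\lambda = \infty$ via $w_\lambda = \tfrac1\lambda\,1 + O(\lambda^{-2})$ and collecting the appropriate order in $1/\lambda$ then yields the Lax pair formulations of the flows in the Benjamin--Ono hierarchy, in particular the one with $\tilde B_u = i(T_{|\partial_x|u} - T_u^2)$ announced in Remark \ref{Btilde}.
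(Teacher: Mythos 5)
Your reduction to the single identity $[D - T_u, T_{w_\lambda}T_{\overline{w_\lambda}}] = T_{D|w_\lambda|^2}$, and your intention to exploit $Dw_\lambda = T_u w_\lambda + 1 - \lambda w_\lambda$ together with the decomposition $u = \Pi u + \overline{\Pi u} - \langle u\,|\,1\rangle$, matches the paper's strategy exactly. The paper systematizes the verification with Hankel operators $H_v(h) := \Pi(v\overline h)$ and a short lemma of commutation formulas, whereas you propose a bare-hands expansion tested on trigonometric polynomials; that organizational choice is harmless, and your preliminary observations (cancellation of $Dh$-terms, cancellation of $\lambda$-terms) are correct.

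The gap is that you never carry out the verification. The sentences \emph{``I expect this bookkeeping to be the main obstacle''} and \emph{``each group telescopes to $0$''} constitute a plan, not a proof, and in this proposition the algebraic cancellation \emph{is} the entire content --- everything preceding it is routine setup. Moreover your proposed endgame, grouping the residual terms by which of $\Pi u$, $\overline{\Pi u}$ they contain and telescoping within each group, does not visibly match what actually happens. After one substitutes the equations for $Dw$ and $D\overline w$, the surviving terms contain no $u$ at all: in the paper's notation the residue is $-H_1H_w + H_wH_1 + \langle\,\cdot\,|\,w\rangle 1 - \langle\,\cdot\,|\,1\rangle w$, and it vanishes because of the specific (and not at all telescoping) coincidences $\langle\,\cdot\,|\,w\rangle 1 = H_1H_w$ and $\langle\,\cdot\,|\,1\rangle w = H_wH_1$. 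Until that final cancellation is exhibited explicitly --- by whatever bookkeeping scheme you prefer --- the argument is incomplete, and the assertion that the leftovers telescope is unsupported and, as stated, misleading about where the mechanism of cancellation lies.
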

\begin{remark}
Recall that for $\e > 0$ we introduced $\tilde{ \mathcal H}_{\e} = \frac{1}{\e} \mathcal H_{\frac{1}{\e}}$ and that $\frac{d^k}{d\e^k}\big|_{\e = 0}\tilde{ \mathcal H}_{\e} $, $k \ge 0$,
are referred to as the Hamiltonians in the BO hierarchy. Introducing $\tilde{ B}_\e := \frac{1}{\e} B^{\frac{1}{\e}}_u$, Proposition \ref{laxpairHlambda}
then leads to a Lax pair formulation for the equations corresponding to the Hamiltonians in the BO hierarchy, 
$$
\frac{dL_u}{dt}=[\frac{d^k}{d\e^k}\big|_{ \e = 0} \tilde{B}_\e , \,  L_u]\ ,
$$
where now $u$ evolves according to the flow of $\frac{d^k}{d\e^k}\big|_{ \e = 0}\tilde{ \mathcal H}_{\e}$.
In particular, in the case $k=3$, for any $u \in L^2_{r,0}$,  $\mathcal H(u) = - \frac{1}{6}  \frac{d^3}{d\e^3}\big|_{ \e = 0}\tilde{ \mathcal H}_{\e}(u)$
and one can check that the operator $- \frac{1}{6} \frac{d^3}{d\e^3}\big|_{ \e = 0} \tilde{B}_\e$ coincides 
with $\tilde B_u -\frac 12 i\| u\|^2 Id$, where
 $\tilde B_u  =  i (T_{|\partial_x|u}-T_u^2)$ 
 was introduced in Remark \ref{Btilde}.
\end{remark}
Recall that by Corollary \ref{formula nabla lambda_n}, $\nabla \lambda_n = - | f_n |^2$ for any $n \ge 0$. Hence
$\nabla \lambda_n \in \textcolor{red}{H^1_r}$ and the Poisson bracket $\{ \lambda_p, \lambda_n \}$ is well-defined on $L^2_r$
for any $p \ge 0$. Clearly, the same is true for $\{ \gamma_p,\gamma_n \}$.
\begin{corollary}\label{bracketlambda} 
(i) For any $p, n \ge 0$ and $u \in L^2_r$,\,
$\{ \lambda_p, \lambda_n \}(u)=0$. \\
(ii) For any $p, n \ge 1$ and $u \in L^2_r$,\, 
$ \{ \gamma_p,\gamma_n \}(u)=0 \ $
\end{corollary}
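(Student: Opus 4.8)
\emph{Proof proposal.} The plan is to derive both statements from the Lax pair representation of the $\mathcal H_\lambda$--flow obtained in Proposition \ref{laxpairHlambda}. First I would record that each eigenvalue $\lambda_n$ Poisson--commutes with every $\mathcal H_\lambda$. Indeed, the vector field $\partial_x\nabla\mathcal H_\lambda = \partial_x|w_\lambda(\cdot,u)|^2$ is locally Lipschitz on $L^2_r$, so the equation \eqref{evolHlambda} has local flows; along such a flow $\frac{d}{dt}L_u=[B_u^\lambda,L_u]$ with $B_u^\lambda=iT_{w_\lambda}T_{\overline w_\lambda}$ bounded and skew--adjoint (note $(T_{w_\lambda}T_{\overline w_\lambda})^\ast=T_{w_\lambda}T_{\overline w_\lambda}$), so there is a unitary family $U(t)$, $\dot U=B_u^\lambda U$, $U(0)=\mathrm{Id}$, conjugating $L_{u(t)}$ to $L_{u(0)}$; hence $\mathrm{spec}(L_u)$, and in particular each $\lambda_n(u)$, is an integral of \eqref{evolHlambda}. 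Since $\nabla\lambda_n=-|f_n|^2\in H^1_r$ by Corollary \ref{formula nabla lambda_n}, the bracket $\{\lambda_n,\mathcal H_\lambda\}$ is well defined and this invariance gives $\{\lambda_n,\mathcal H_\lambda\}=0$ on $L^2_r$ for every $n\ge 0$ and every $\lambda\in\R\setminus\{-\lambda_m(u):m\ge 0\}$. For $\lambda$ real and large, $\mathcal H_\lambda(u)=\langle(L_u+\lambda\,\mathrm{Id})^{-1}1\,|\,1\rangle>0$, so also $\{\lambda_p,\log\mathcal H_\lambda\}=\mathcal H_\lambda^{-1}\{\lambda_p,\mathcal H_\lambda\}=0$ there.

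Next I would convert this into a single meromorphic identity among the brackets $\{\lambda_p,\lambda_m\}$ by differentiating the product formula of Proposition \ref{formulae}(i). Using $\lambda_m-\gamma_m=\lambda_{m-1}+1$ we may write, for $\lambda$ real and large,
\[
\log\mathcal H_\lambda=-\log(\lambda_0+\lambda)+\sum_{m\ge 1}\bigl(\log(\lambda_{m-1}+1+\lambda)-\log(\lambda_m+\lambda)\bigr).
\]
Applying the derivation $\{\lambda_p,\cdot\}$ term by term (legitimate by telescoping, since the gradients $\nabla\lambda_m=-|f_m|^2$ satisfy $\||f_m|^2\|_{L^2}=O(m^{1/2})$ while the resulting denominators grow like $m^2$, the partial sums of $\nabla\log\mathcal H_\lambda$ converging in $L^2$) and collecting the telescoped sum, one obtains, for $\lambda$ real and large and hence by analytic continuation for all $\lambda$ in the domain,
\[
\sum_{m\ge 0}\frac{\{\lambda_p,\lambda_m\}}{(\lambda_m+\lambda)(\lambda_m+1+\lambda)}=0 .
\]

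Finally I would extract $\{\lambda_p,\lambda_m\}=0$ from this identity by an induction on $m$, at a fixed but arbitrary $u\in L^2_r$, computing residues of the left--hand side at $\lambda=-\lambda_m(u)$. At $\lambda=-\lambda_0$ only the $m=0$ term has a pole, with residue $\{\lambda_p,\lambda_0\}$, so $\{\lambda_p,\lambda_0\}=0$. For $j\ge 1$: if $\gamma_j(u)>0$ then $-\lambda_j$ is a pole of the $m=j$ term only, with residue $\{\lambda_p,\lambda_j\}$, giving $\{\lambda_p,\lambda_j\}=0$; if $\gamma_j(u)=0$ then $-\lambda_j=-\lambda_{j-1}-1$, the residue at $-\lambda_j$ equals $\{\lambda_p,\lambda_j\}-\{\lambda_p,\lambda_{j-1}\}$, and the induction hypothesis forces $\{\lambda_p,\lambda_j\}=\{\lambda_p,\lambda_{j-1}\}=0$. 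This proves (i) for all $p,n\ge 0$, and (ii) follows immediately by bilinearity from $\gamma_p=\lambda_p-\lambda_{p-1}-1$. The step I expect to require the most care is the bookkeeping behind the second paragraph---justifying the term--by--term differentiation of the infinite product and the convergence of the resulting series (equivalently, handling blocks of closed gaps in the residue argument); both are controlled by $(\gamma_m)\in\ell^{1,1}$ and $\lambda_m=m+O(1)$, via the telescoping that produces the displayed identity.
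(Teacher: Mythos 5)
Your argument is correct and starts exactly where the paper does: from Proposition \ref{laxpairHlambda}, the spectrum of $L_u$ is preserved along the $\mathcal H_\lambda$--flow, so $\{\lambda_n,\mathcal H_\lambda\}=0$ for every $n\ge 0$ and admissible $\lambda$. The two proofs then diverge in how they turn this into the vanishing of $\{\lambda_p,\lambda_n\}$. The paper applies $\{\cdot,\lambda_n\}$ to the additive partial-fraction expansion $\mathcal H_\lambda=\sum_p |\langle 1\,|\,f_p\rangle|^2/(\lambda_p+\lambda)$ of \eqref{expandHlambda}, producing an identity in which $\{\lambda_p,\lambda_n\}$ sits in front of the second-order pole $(\lambda_p+\lambda)^{-2}$ (and the auxiliary brackets $\{|\langle 1\,|\,f_p\rangle|^2,\lambda_n\}$ sit at the first-order poles); it then reads off $\{\lambda_p,\lambda_n\}=0$ from the vanishing of the principal part, which settles the case $\gamma_p>0$ directly and leaves the closed-gap case to a tacit density/analyticity step. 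You instead apply $\{\lambda_p,\cdot\}$ to the \emph{logarithm of the product formula} of Proposition \ref{formulae}(i); after telescoping this gives the cleaner identity $\sum_{m\ge 0}\{\lambda_p,\lambda_m\}/\bigl((\lambda_m+\lambda)(\lambda_m+1+\lambda)\bigr)=0$ involving only the brackets of eigenvalues, and you recover each $\{\lambda_p,\lambda_m\}=0$ from a simple-pole residue computation, with an explicit induction over $m$ that handles closed gaps (where the pole at $-\lambda_j$ is shared between the terms $m=j$ and $m=j-1$). Your route avoids the auxiliary brackets $\{|\langle 1\,|\,f_p\rangle|^2,\lambda_n\}$ altogether and makes the degenerate case transparent, at the cost of needing the quantitative bound $\||f_m|^2\|_{L^2}=O(m^{1/2})$ (which indeed follows from $\langle Df_m\,|\,f_m\rangle=O(m)$ and the Sobolev estimate \eqref{Sobolev}) to justify the term-by-term differentiation of the telescoped series; the paper's route is shorter but less explicit about what happens when $\gamma_p(u)=0$.
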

\begin{remark}
By the arguments of the proof of Corollary \ref{bracketlambda},
$$
\{\mathcal H_\lambda, \mathcal H_\mu \} = 0\ , \qquad  \{\mathcal H_\lambda, \lambda_n \} = 0
$$
 for any $\lambda, \mu \in \mathbb C \setminus \{ - \lambda_k(u) \, : \, k \ge 0 \} $ and $n \ge 0$.
\end{remark}

\noindent
{\em Proof of Corollary \ref{bracketlambda}.}
(i) In view of Proposition \ref{laxpairHlambda}, 
for any $n \ge 0$ and  $\lambda$ in $\mathbb R \setminus \{ - \lambda_k(u) \, : \, k \ge 0 \} $,
$\lambda_n(u)$ is constant along the evolution \eqref{evolHlambda}.
Hence on $L^2_r$, $\{ \mathcal H_\lambda ,\lambda_n \}=0$. Using \eqref{expandHlambda}, we therefore have
$$\sum_{p=0}^\infty \left ( -\frac{|\langle 1| f_p \rangle|^2}{ (\lambda_p+\lambda)^2 }\{ \lambda_p , \lambda_n \}
+\frac{\{ |\langle 1| f_p \rangle|^2, \lambda_n\}}{\lambda_p+\lambda}\right )=0\ .$$
By analytic continuation, the latter identity continues to hold for any $\lambda$ in 
$ \mathbb C \setminus \{ - \lambda_k(u) \, : \, k \ge 0 \} $.
Since the left hand side of this identity is a meromorphic function in $\lambda$,
 the principal part of the expansion at any of its pole vanishes, implying that $\{ \lambda_p, \lambda_n \}(u)=0$
 for any $p \ge 0.$
\hfill $\square$

\medskip

Let us come to the proof of Proposition \ref{laxpairHlambda}.

\noindent
{\em Proof of Proposition \ref{laxpairHlambda}.}
Assume that $u \equiv u(t)$ is a local in time solution of \eqref{evolHlambda} with $\lambda \in \R 
\setminus \{ \lambda_n(u(0)) : n \ge 0 \} $.
In the sequel of the proof, we denote also by $D$
the operator $- i \partial_x$, when viewed as operator
acting on $H^1.$
Since by the definition of the operator $L_u$ one  has
$$\frac{dL_u}{dt}=-T_{\partial_tu}=-iT_{D|w_\lambda|^2}\ ,$$
 the claimed result is equivalent to the identity
\begin{equation}\label{identity}
[D-T_u,T_{w_\lambda}T_{\overline w_\lambda}] - T_{D|w_\lambda|^2} = 0\ .
\end{equation}
We will use Hankel operators with symbols in $L^\infty_+$  to provide a formula for the commutators of Toeplitz operators.
Given a symbol $v\in L^\infty_+$, the Hankel operator $H_v$ is given by
$$
H_v : L^2_+ \to L^2_+, \, h \mapsto  H_v(h)=\Pi (v\overline h)\ .
$$
Notice that $H_v(h)$ is antilinear in $h$, but linear in $v$. 
\begin{lemma}\label{commutator of Toeplitz operators}
For any $v,w\in L^\infty_+$
\begin{equation}\label{bracket}
[ T_v, T_{\overline w} ] = \langle \, \cdot \, |  w\rangle v - H_v H_w \ .
\end{equation}
Furthermore, for any $u\in L^\infty_r$,
\begin{equation}\label{HT}
H_{T_uw}=T_wH_{\Pi u} +H_wT_u- \langle \Pi u |  \, \cdot \rangle w 
\end{equation}
\begin{equation}\label{HT2}
H_{T_uw} = H_{\Pi u}T_{\overline w} + T_uH_w - \langle w | \, \cdot \rangle \Pi u\ . 
\end{equation}
\end{lemma}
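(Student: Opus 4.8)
The plan is to derive all three identities by elementary Fourier--support bookkeeping, reading each one as an identity of linear maps on the dense subspace $\mathcal E\subset L^2_+$ of trigonometric polynomials; it then suffices to test on the generating family $(e^{ikx})_{k\ge 0}$, where all products lie in $L^2$ and every term is explicit, so boundedness of the operators plays no role and the one mild subtlety --- that for $u\in L^\infty_r$ the functions $\Pi u$ and $T_u w=\Pi(uw)$ are not in $L^\infty_+$ but only in $\bigcap_{p<\infty}L^p$ --- is harmless. Write $\Pi^\perp:=Id-\Pi$ (the projection onto the Fourier modes $k\le -1$). I first record two building blocks: (a) since the product of two functions with Fourier support in $\mathbb Z_{\ge 0}$ again has Fourier support in $\mathbb Z_{\ge 0}$, $\Pi(\phi g)=\phi g$ for any such $\phi,g$; (b) for $w\in L^\infty_+$ and $h\in L^2_+$ one has $\Pi^\perp(\overline w h)=\overline{H_w h}-\langle h\,|\,w\rangle 1$, and likewise $\Pi^\perp(b\overline h)=\overline{T_{\overline b}h}-\langle b\,|\,h\rangle 1$ for $b\in L^\infty_+$ --- both simply say that $\Pi^\perp$ of the product equals the conjugate of the corresponding Hankel, respectively Toeplitz, image, minus its zeroth Fourier mode.

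With these, identity \eqref{bracket} is immediate: by (a), $T_vT_{\overline w}h=v\,\Pi(\overline w h)$ and $T_{\overline w}T_v h=\Pi(v\overline w h)$, so splitting $\overline w h=\Pi(\overline w h)+\Pi^\perp(\overline w h)$ gives $[T_v,T_{\overline w}]h=-\Pi\bigl(v\,\Pi^\perp(\overline w h)\bigr)$; inserting (b) and using $\Pi\bigl(v\,\overline{H_w h}\bigr)=H_v(H_w h)$ together with $\Pi v=v$ yields $[T_v,T_{\overline w}]h=\langle h\,|\,w\rangle v-H_vH_w h$.

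Next I prove the auxiliary multiplicativity identity
\begin{equation}\label{aux}
H_{ab}=T_aH_b+H_aT_{\overline b}-\langle b\,|\,\cdot\,\rangle a\,,
\end{equation}
valid for $a\in L^\infty_+$, $b\in L^2_+$, and --- with the identical proof, which only uses that $a$ has Fourier support in $\mathbb Z_{\ge 0}$ --- also for $a\in L^2_+$, $b\in L^\infty_+$: exactly as above, $H_{ab}h=\Pi(ab\overline h)=a\,\Pi(b\overline h)+\Pi\bigl(a\,\Pi^\perp(b\overline h)\bigr)=T_aH_bh+\Pi\bigl(a\,\Pi^\perp(b\overline h)\bigr)$, and substituting (b) produces the last two terms. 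I also need a commutation identity: for $p\in L^2_+$ and $w\in L^\infty_+$, testing on $e^{ikx}$ shows that each of $H_{H_wp}$, $H_wT_p$ and $T_{\overline p}H_w$ is the map $h\mapsto\Pi(w\overline p\,\overline h)$, hence
$$
H_{H_wp}=H_wT_p=T_{\overline p}H_w\,.
$$

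Finally I assemble \eqref{HT} and \eqref{HT2}. Since $u$ is real, $u=\Pi u+\overline{\Pi u}-\langle u\,|\,1\rangle$, hence $T_u=T_{\Pi u}+T_{\overline{\Pi u}}-\langle u\,|\,1\rangle\,Id$, and by (a) $T_u w=\Pi(uw)=(\Pi u)\,w+H_w(\Pi u)-\langle u\,|\,1\rangle w$. Applying $H$, which is linear in the symbol, $H_{T_u w}=H_{(\Pi u)w}+H_{H_w(\Pi u)}-\langle u\,|\,1\rangle H_w$. For \eqref{HT}, expand $H_{(\Pi u)w}$ by \eqref{aux} with $a=w$, $b=\Pi u$, use $H_{H_w(\Pi u)}=H_wT_{\Pi u}$, and collect $H_w\bigl(T_{\overline{\Pi u}}+T_{\Pi u}-\langle u\,|\,1\rangle\,Id\bigr)=H_wT_u$. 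For \eqref{HT2}, expand $H_{(\Pi u)w}$ by \eqref{aux} with $a=\Pi u$, $b=w$, use $H_{H_w(\Pi u)}=T_{\overline{\Pi u}}H_w$, and collect $\bigl(T_{\Pi u}+T_{\overline{\Pi u}}-\langle u\,|\,1\rangle\,Id\bigr)H_w=T_uH_w$. The computation is entirely routine; the two points needing care are the rank-one correction terms --- which inner product appears and which argument is conjugated, the Hankel operators being antilinear --- and the already-noted fact that $\Pi u$ and $\Pi(uw)$ are unbounded, which is why everything is read as an identity on the polynomial core rather than a composition of bounded operators. This last point is the only genuine obstacle; the rest is bookkeeping.
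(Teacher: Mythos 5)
Your proof is correct. The driving computation is the same as the paper's: decompose a product such as $\overline w h$ into its holomorphic and antiholomorphic parts and keep careful track of the zeroth Fourier mode. Your building block (b) is precisely the paper's identity $f=\Pi f+\overline{\Pi\overline f}-\langle f\,|\,1\rangle$ rewritten as a formula for $\Pi^\perp f$, and your proof of \eqref{bracket} then coincides almost verbatim with the paper's. Where you genuinely diverge is in the treatment of \eqref{HT} and \eqref{HT2}. The paper attacks $H_{T_u w}h=\Pi(uw\overline h)$ by expanding the \emph{argument} $u\overline h$ into $\Pi(u\overline h)+\overline{\Pi(\overline u h)}-\langle u\,|\,h\rangle$ and reading off the three terms; \eqref{HT2} is left as ``proved similarly, or by taking adjoints.'' You instead expand the \emph{symbol} $T_u w=(\Pi u)w+H_w(\Pi u)-\langle u\,|\,1\rangle w$, and you factor out two reusable algebraic tools --- the Hankel product rule \eqref{aux} $H_{ab}=T_aH_b+H_aT_{\overline b}-\langle b\,|\,\cdot\rangle a$, and the commutation identity $H_{H_wp}=H_wT_p=T_{\overline p}H_w$ --- which then yield \eqref{HT} and \eqref{HT2} from the same one-line assembly, by applying \eqref{aux} with the two roles of $a,b$ swapped. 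This buys a more symmetric and modular derivation of the two Hankel identities and, because \eqref{aux} is of independent interest, a cleaner isolation of the mechanism. You also handle the domain issue (that $\Pi u$ and $T_u w$ need not be in $L^\infty_+$) more explicitly than the paper, by reading everything as operator identities on the dense core of trigonometric polynomials; the paper proceeds at the same level of formality without comment, so this is a mild improvement in rigor, not a correction of an error.
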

\noindent
{\em Proof of Lemma \ref{commutator of Toeplitz operators}.} We shall make use of the following elementary identities,
valid for any $f\in L^2$ and $ h\in L^2_+$,
$$ f=\Pi f+\overline{\Pi \overline f}- \langle f, 1 \rangle \ ,\quad  \Pi (f \overline h)=\Pi ((\Pi f) \overline h)\ .$$
Then, for any $h\in L^2_+$,
$$
[T_v,T_{\overline w}]h = \Pi (v\Pi (\overline wh))-\Pi (\overline w\Pi (vh))=\Pi (v\Pi (\overline wh))-\Pi (\overline w vh)\,.
$$
Using that $\overline w h = \Pi (\overline w h) + \overline{ \Pi (w \overline h) } - \langle \overline w h | 1 \rangle$ one then gets
$$
[T_v,T_{\overline w}]h = -\Pi (v\overline{\Pi (w\overline h)})+ \langle \overline w h | 1 \rangle v\ ,
$$
which yields \eqref{bracket}. As for \eqref{HT}, we have
$$
H_{T_uw}h = \Pi (\Pi (uw)\overline h) = \Pi (uw\overline h)\ .
$$
Since $u \overline h = \Pi (u \overline h) + \overline{ \Pi (\overline u h)} - \langle u | h \rangle$ and $u$ is real valued
one obtains 
\begin{eqnarray*}
H_{T_uw}h &=&\Pi (w\Pi (u\overline h))+\Pi (w\overline{\Pi (u h)}) - \langle u | h \rangle w\\
&=&\Pi (w\Pi ((\Pi u) \overline h))+\Pi (w\overline{\Pi (u h)})- \langle \Pi u | h \rangle w
\end{eqnarray*}
which establishes \eqref{HT}.
The identity \eqref{HT2} is proved in a similar way or alternatively,  by taking adjoints with respect to the real inner
product, induced by $\langle \ \cdot | \, \cdot \rangle$.
\hfill $\square$

\vskip 0.25cm
Let us now return to the proof of Proposition \ref{laxpairHlambda} and 
complete the verification of identity \eqref{identity}. We write $w$ for $w_\lambda $ for simplicity.
Using \eqref{bracket}, the Leibniz rule, and the antilinearity of $H_w$, we get
\begin{eqnarray*}
&&[D, T_wT_{\overline w}] = [D,T_{\overline w}T_w] + [D, \langle \, \cdot \, |  w \rangle w - H_w^2]\\
&&=T_{D\overline w}T_w + T_{\overline w}T_{Dw}+ \langle \, \cdot \, \vert w \rangle Dw- \langle \,  \cdot \, \vert Dw \rangle w - H_{Dw}H_w + H_wH_{Dw}\\
&&=T_{D|w|^2} + \langle \,  \cdot \, |  w\rangle Dw - \langle \, \cdot \, |  Dw \rangle w - H_{Dw}H_w + H_wH_{Dw}\ .
\end{eqnarray*}
Regarding $[T_u,T_wT_{\overline w}]$, note that  $[T_u,T_wT_{\overline w}] =[T_u,T_w]T_{\overline w}+T_w[T_u,T_{\overline w}]$.
Using $u = \Pi u + \overline{ \Pi u} - \langle u | 1 \rangle$ and again \eqref{bracket}, one gets
\begin{eqnarray*}
[T_u,T_wT_{\overline w}]&=&[T_{\overline {\Pi u}},T_w]T_{\overline w} + T_w[T_{\Pi u},T_{\overline w}]\\
&=&H_wH_{\Pi u}T_{\overline w} - T_wH_{\Pi u}H_w + \langle \  \cdot \, |  w \rangle T_w(\Pi u) - \langle \, \cdot \, | T_w (\Pi u) \rangle  w\ .
\end{eqnarray*}
Combining these two identities, we get
\begin{eqnarray*}\label{identity for T D|w|^2}
&&[D-T_u,T_wT_{\overline w}] - T_{D|w|^2}= - (H_{Dw}-T_wH_{\Pi u})H_w + H_w(H_{Dw} - H_{\Pi u}T_{\overline w}) \\
&& +  \langle  \, \cdot\, | \, w \rangle (Dw - T_w(\Pi u)) - \langle \, \cdot \, , Dw-T_w(\Pi u) \rangle w\,.
\end{eqnarray*}
We now have to analyze the terms $H_{Dw} - T_wH_{\Pi u}$ and $H_{Dw} - H_{\Pi u}T_{\overline w}$ in more detail.
Since $w = (L_u + \lambda)^{-1}1$, hence
$Dw = T_u w + 1  - \lambda w$, and since by \eqref{HT}
$$
T_wH_{\Pi u} = H_{T_uw}   - H_wT_u  + \langle \Pi u |  \, \cdot \rangle w
$$
one sees that
$$
\begin{aligned}
H_{Dw} - T_wH_{\Pi u} & =  H_{T_u w + 1  - \lambda w}  -  H_{T_uw}   +  H_wT_u  - \langle \Pi u |  \, \cdot \rangle w\\
& = H_1 - \lambda H_w + H_wT_u  - \langle \Pi u |  \, \cdot \rangle w\,.
\end{aligned}
$$
Using that $u$  is real valued and hence $\langle \Pi u | \, H_w h \rangle  = \langle h | \, H_w(\Pi u)\rangle$
one is led to the following identity for the term $- (H_{Dw}-T_wH_{\Pi u})H_w$,
$$
- (H_{Dw} - T_wH_{\Pi u} ) H_w = - H_1 H_w  + \lambda H_w H_w  - H_wT_u H_w  + \langle \, \cdot \, | \, H_w(\Pi u) \rangle w \,.
$$
Similarly, by  \eqref{HT2}
$$
H_{\Pi u}T_{\overline w} =  H_{T_uw} - T_uH_w +  \langle w | \,  \, \cdot \rangle \Pi u\
$$
one sees that
$$
\begin{aligned}
H_{Dw} - H_{\Pi u}T_{\overline w}  & =  H_{T_u w + 1  - \lambda w}  - H_{T_uw} + T_uH_w -  \langle w\vert  \, \cdot \rangle \Pi u\\\
& = H_1  - \lambda H_w  + T_uH_w -  \langle w\vert \, \cdot \rangle \Pi u\ .
\end{aligned}
$$
Finally, since $H_w$ is antilinear and $\lambda$ is real one concludes that
$$
H_w (H_{Dw} - H_{\Pi u}T_{\overline w} ) = H_w H_1 - \lambda H_w H_w + H_wT_uH_w -  \langle \, \cdot \, |  \, w \rangle H_w(\Pi u)\,.
$$
Substituting these identities into the formula for $[D-T_u,T_wT_{\overline w}] - T_{D|w|^2}$,
 obtained above, one infers that
$$
\begin{aligned}
&[D-T_u,T_wT_{\overline w}] - T_{D|w|^2}= -H_1H_w+H_wH_1\\
&+ \langle  \cdot \, | w \rangle \big( Dw - T_w(\Pi u) - H_w(\Pi u) \big) - \langle  \cdot \, | \,  Dw-T_w(\Pi u) - H_w(\Pi u) \rangle w\,.
\end{aligned}
$$
By \eqref{HT},
$$
\begin{aligned}
T_u(w)  & = H_{T_u (w)} (1) =T_wH_{\Pi u}(1) +H_wT_u(1)  - \langle \Pi u |  1 \rangle w\\
& = T_w(\Pi u) + H_w(\Pi u) - \langle u |  1 \rangle w\,.
\end{aligned}
$$
The identity $Dw = T_u w + 1  - \lambda w$ then reads
$$
Dw - T_w(\Pi u) - H_w(\Pi u) =  1 - \lambda w  - \langle u |  1\rangle w\,.
$$
Hence
$$
[D-T_u,T_wT_{\overline w}] - T_{D|w|^2}
= - H_1H_w + H_wH_1 + \langle  \cdot \, |  w \rangle 1 - \langle  \cdot \, | 1\rangle w
$$
which vanishes since $\langle \, \cdot \, |  w \rangle 1 = H_1H_w$ and $\langle \,  \cdot \, | 1\rangle w = H_wH_1$.
\hfill $\square$

\medskip

The second main result of this section calculates the Poisson brackets of $\langle 1 |  f_n   \rangle$ with the functionals $\gamma_p$. Recall that for any $p \ge 1,$ $\nabla \gamma_p$ is in $H^1$
(cf. Corollary \ref{formula nabla lambda_n}) 
and that for any $n \ge 0$, also
$\nabla \langle 1 \, | \,   f_n \rangle$ is in $H^1$ 
(cf. Lemma \ref{1 | f_n in H^1}). Hence
 the Poisson bracket $\{ \gamma_p, \langle 1 | f_n \rangle \}$ is well-defined on $L^2_r$
for any $p \ge 1$ and $n \ge 0$.
\begin{proposition}\label{gamma,1,f} For any $p\ge 1, n\ge 0,$ and $u \in L^2_{r,0}$,
$$\{ \gamma_p, \langle 1 | f_n \rangle\}(u) = i\, \langle 1 |  f_n( \cdot, u) \rangle \delta_{pn} \ . $$
\end{proposition}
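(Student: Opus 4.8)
The plan is to first compute the Poisson bracket $\{\mathcal H_\lambda,\langle 1|f_n\rangle\}$ of the generating function, exploiting its Lax pair formulation (Proposition \ref{laxpairHlambda}), and then to recover $\{\gamma_p,\langle 1|f_n\rangle\}$ by reading off the principal parts of $\{\mathcal H_\lambda,\langle 1|f_n\rangle\}$, viewed as a meromorphic function of the spectral parameter $\lambda$. Fix $u\in L^2_{r,0}$ and $\lambda\in\R$ with $\lambda+\lambda_0(u)$ large, so that $L_u+\lambda\,Id$ is positive and $w_\lambda=(L_u+\lambda\,Id)^{-1}1\in H^1_+$ is well defined, and let $u(t)$ solve $\partial_t u=\partial_x|w_\lambda|^2=\partial_x\nabla\mathcal H_\lambda(u)$ with $u(0)=u$; this flow stays in $L^2_{r,0}$ and along it $\tfrac{d}{dt}F=\{\mathcal H_\lambda,F\}$ for every smooth enough functional $F$. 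By Proposition \ref{laxpairHlambda} the flow is isospectral, $\tfrac{d}{dt}L_u=[B_u^\lambda,L_u]$ with $B_u^\lambda=iT_{w_\lambda}T_{\overline{w_\lambda}}$. Differentiating $L_uf_n=\lambda_n f_n$ in $t$ and using that $\lambda_n$ is constant, that $\ker(L_u-\lambda_n)=\C f_n$, that $\|f_n\|=1$ and that $B_u^\lambda$ is skew-adjoint, one obtains the eigenfunction evolution $\tfrac{d}{dt}f_n=B_u^\lambda f_n+i\rho_n f_n$ for a real scalar $\rho_n=\rho_n(t)$.

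\textbf{Computing $\{\mathcal H_\lambda,\langle 1|f_n\rangle\}$.} Since $T_{\overline{w_\lambda}}1=\Pi(\overline{w_\lambda})=\mathcal H_\lambda$ and $T_{w_\lambda}1=w_\lambda$, one has $B_u^\lambda 1=i\mathcal H_\lambda w_\lambda$; combining this with the skew-adjointness of $B_u^\lambda$ and with $\langle w_\lambda|f_n\rangle=\langle 1|f_n\rangle/(\lambda_n+\lambda)$ gives
$$
\{\mathcal H_\lambda,\langle 1|f_n\rangle\}=\frac{d}{dt}\langle 1|f_n\rangle=-i\Big(\rho_n+\frac{\mathcal H_\lambda}{\lambda_n+\lambda}\Big)\langle 1|f_n\rangle .
$$
It remains to identify $\rho_n$. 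For $n=0$ the normalisation $\langle 1|f_0\rangle>0$ is preserved by the flow, so $\langle 1|f_0\rangle$ stays real and positive, forcing $\rho_0=-\mathcal H_\lambda/(\lambda_0+\lambda)$ and $\{\mathcal H_\lambda,\langle 1|f_0\rangle\}=0$. For $n\ge 1$ the normalisation $\langle f_n|Sf_{n-1}\rangle>0$ is likewise preserved, so $\langle f_n|Sf_{n-1}\rangle$ is real along the flow; differentiating it, inserting $\tfrac{d}{dt}f_n$ and $\tfrac{d}{dt}f_{n-1}$, and simplifying $\langle B_u^\lambda f_n|Sf_{n-1}\rangle+\langle f_n|SB_u^\lambda f_{n-1}\rangle$ with the identities $T_{\overline{w_\lambda}}S^\ast=S^\ast T_{\overline{w_\lambda}}$ and $T_{\overline{w_\lambda}}(Sh)=ST_{\overline{w_\lambda}}h+\langle Sh|w_\lambda\rangle 1$ on $L^2_+$ (the latter being the analogue of \eqref{T_uS} for the symbol $\overline{w_\lambda}$), the two bilinear terms cancel; using then \eqref{LuSf} to express $(L_u+\lambda\,Id)^{-1}Sf_{n-1}$ and Lemma \ref{formula with langle f_p | 1 rangle} with $p=n$, $k=n-1$ to evaluate $\langle Sf_{n-1}|u\rangle$, one is left with
$$
\rho_n-\rho_{n-1}=\frac{\gamma_n\,\mathcal H_\lambda}{(\lambda_{n-1}+1+\lambda)(\lambda_n+\lambda)}=\mathcal H_\lambda\Big(\frac{1}{\lambda_{n-1}+1+\lambda}-\frac{1}{\lambda_n+\lambda}\Big).
$$
Summing from $1$ to $n$ and substituting into the displayed formula yields, first for $\lambda$ real and large and then for all $\lambda\in\C\setminus\{-\lambda_k(u):k\ge 0\}$ by meromorphy in $\lambda$,
$$
\{\mathcal H_\lambda,\langle 1|f_n\rangle\}=i\,\mathcal H_\lambda\,\langle 1|f_n\rangle\sum_{j=0}^{n-1}\Big(\frac{1}{\lambda_j+\lambda}-\frac{1}{\lambda_j+1+\lambda}\Big).
$$

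\textbf{Extracting $\{\gamma_p,\langle 1|f_n\rangle\}$.} On the other hand, differentiating the expansion \eqref{expandHlambda} of $\mathcal H_\lambda$ term by term gives
$$
\{\mathcal H_\lambda,\langle 1|f_n\rangle\}=\sum_{q\ge 0}\frac{\{|\langle 1|f_q\rangle|^2,\langle 1|f_n\rangle\}}{\lambda_q+\lambda}-\sum_{q\ge 0}\frac{|\langle 1|f_q\rangle|^2\,\{\lambda_q,\langle 1|f_n\rangle\}}{(\lambda_q+\lambda)^2}.
$$
By \eqref{expandHlambda} the function $\mathcal H_\lambda$ has residue $|\langle 1|f_q\rangle|^2$ at $\lambda=-\lambda_q$, and by Proposition \ref{formulae}(i) it has a simple zero at $\lambda=-\lambda_j-1$ for every $j\ge0$; hence the product in the last formula of the preceding paragraph has, at $\lambda=-\lambda_q$, a double pole with leading coefficient $|\langle 1|f_q\rangle|^2$ when $q\le n-1$ and only a simple pole otherwise. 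Matching the coefficients of $(\lambda_q+\lambda)^{-2}$ in the two expressions for $\{\mathcal H_\lambda,\langle 1|f_n\rangle\}$ gives $|\langle 1|f_q\rangle|^2\,\{\lambda_q,\langle 1|f_n\rangle\}=-i\,|\langle 1|f_q\rangle|^2\langle 1|f_n\rangle$ for $q<n$ and $=0$ for $q\ge n$. Since $|\langle 1|f_0\rangle|^2=\kappa_0>0$, and since for each $q\ge 1$ the real-analytic functional $\gamma_q$ is not identically zero so that $\{\gamma_q>0\}$ is dense in $L^2_{r,0}$, while $\{\lambda_q,\langle 1|f_n\rangle\}$ is continuous (both $\nabla\lambda_q$ and $\nabla\langle 1|f_n\rangle$ lie in $H^1$), we conclude that on all of $L^2_{r,0}$ one has $\{\lambda_q,\langle 1|f_n\rangle\}=-i\langle 1|f_n\rangle$ for $q<n$ and $\{\lambda_q,\langle 1|f_n\rangle\}=0$ for $q\ge n$. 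By \eqref{gamma} this gives $\{\gamma_p,\langle 1|f_n\rangle\}=\{\lambda_p,\langle 1|f_n\rangle\}-\{\lambda_{p-1},\langle 1|f_n\rangle\}=i\,\langle 1|f_n\rangle\,\delta_{pn}$, which is the assertion.

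\textbf{Main obstacle.} The delicate step is the second one, the computation of the phase rates $\rho_n-\rho_{n-1}$: it requires a careful manipulation of the non-selfadjoint Toeplitz operators $T_{w_\lambda}$, $T_{\overline{w_\lambda}}$ and of their commutators with the shift $S,\,S^\ast$, combined with the resolvent identity $Dw_\lambda=T_uw_\lambda+1-\lambda w_\lambda$, and hinges on the exact cancellation of the two bilinear terms coming from $B_u^\lambda f_n$ and $SB_u^\lambda f_{n-1}$. A minor additional technicality is the density argument used in the last step to pass from potentials with $\gamma_q>0$ to arbitrary $u\in L^2_{r,0}$.
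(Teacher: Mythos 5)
Your proof is correct and essentially reproduces the paper's argument: compute $\{\mathcal H_\lambda,\langle 1|f_n\rangle\}$ along the Hamiltonian flow of $\mathcal H_\lambda$ using the Lax pair $B_u^\lambda = iT_{w_\lambda}T_{\overline{w_\lambda}}$, match the coefficients of the double poles at $\lambda=-\lambda_q$ against the expansion \eqref{expandHlambda} to read off $\{\lambda_q,\langle 1|f_n\rangle\}$, and extend by density and continuity. The only difference is presentational: the paper propagates auxiliary functions $g_n^\lambda$ via $\dot g_n^\lambda=B_u^\lambda g_n^\lambda$ and divides by an explicit phase $e^{it\alpha_n(\lambda)}$, whereas you evolve $f_n$ directly and bookkeep the phase rates $\rho_n$; the key computation (reducing $\langle B_u^\lambda f_n|Sf_{n-1}\rangle+\langle f_n|SB_u^\lambda f_{n-1}\rangle$ to the rank-one contribution $i\langle w_\lambda|Sf_{n-1}\rangle\langle f_n|w_\lambda\rangle$ via $[S,B_u^\lambda]$, then invoking \eqref{LuSf} and Lemma \ref{formula with langle f_p | 1 rangle}) is identical.
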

\begin{proof}
Again we use the Hamiltonian flow of the generating function $\mathcal H_\lambda $ and its Lax pair formulation provided by Proposition \ref{laxpairHlambda},
$$\frac{dL_u}{dt} = [B_u^\lambda , L_u]\ ,\qquad  B_u^\lambda =iT_{w_\lambda }T_{\overline w_\lambda}\ .$$
Given $u_0\in L^2_{r,0}$, $n \ge 0,$ and $\lambda \ge | - \lambda_0 + 1|$, denote by $u=u(t)$ the trajectory issuing from $u_0$ at $t=0$, and define $g_n^\lambda \equiv g_n^\lambda (t)$ by 
$$
\frac{dg_n^\lambda }{dt} = B_{u}^\lambda g_n^\lambda\ ,\qquad  g_n^\lambda (0)=f_n(u_0)\ .
$$
Since  $B_u^\lambda$ is skew symmetric, $\|f_n(\cdot , u_0) \| =1$ ,
$\| g_n^\lambda (t) \| =1$ and since 
$$
\frac{d}{dt}  \Big( (L_{u} - \lambda_n )g_n^\lambda \Big) 
= B_u^\lambda(L_{u} - \lambda_n )g_n^\lambda \ ,\qquad \big((L_{u} - \lambda_n )g_n^\lambda \big) |_{ t = 0} =0\ ,
$$
$g_n^\lambda (t)$ is an eigenfunction of $L_{u(t)}$ corresponding to the eigenvalue $\lambda_n$, coinciding with the eigenfunction $f_n( \, \cdot , u(t))$ up to a phase factor. 
To determine this phase factor we need to compute the evolution of $\langle g_0^\lambda |  1 \rangle$ and of 
$\langle g_n^\lambda | Sg_{n-1}^\lambda \rangle$, $n \ge 1$. Using that 
$$
T_{\overline w_\lambda }1 = {\overline {\langle w_\lambda | 1 \rangle}} 1\ , \qquad 
 \langle g_n^\lambda \, | \, w_\lambda \rangle =  \langle  (L_u + \lambda)^{-1}g_n^\lambda \, | \, 1 \rangle = \frac{1}{\lambda_n + \lambda} \langle g_n^\lambda \, | \, 1 \rangle 
 $$ 
 one gets for any $n \ge 0$
$$
\frac{d}{dt} \langle g_n^\lambda | 1\rangle = \langle B_{u}^\lambda g_n^\lambda | 1 \rangle = i \langle g_n^\lambda |  T_{w_\lambda}T_{\overline w_\lambda }1 \rangle 
 = i \langle w_\lambda | 1\rangle  \langle g_n^\lambda \, | \, w_\lambda \rangle 
 $$
 implying that
 \begin{equation}\label{evolution g_0 | 1 }
 \frac{d}{dt} \langle g_n^\lambda | 1\rangle
= i \mathcal H_\lambda  \frac {\langle g_n^\lambda | 1\rangle}{\lambda_n+\lambda} \ .
\end{equation}
Similarly, using in addition that $B_{u}^\lambda$ is skew symmetric and that by \eqref{T_uS}
$$
ST_{w_\lambda} = T_{w_\lambda} S - \langle S w_\lambda \cdot \,  | 1\rangle = T_{w_\lambda} S 
$$ 
one has for any $n \ge 1$
$$
\frac{d}{dt} \langle g_n^\lambda | \, Sg_{n-1}^\lambda \rangle =  \langle g_n^\lambda \, | \,  [S,B_{u}^\lambda ]g_{n-1}^\lambda \rangle 
= i \langle g_n^\lambda \, | \,  T_{w_\lambda} [T_{\overline w_\lambda},S ]g_{n-1}^\lambda \rangle\,.
$$
Using  \eqref{T_uS} once more one gets 
$$
T_{w_\lambda} [T_{\overline w_\lambda},S ]g_{n-1}^\lambda  =   \langle S {\overline w_\lambda} g_{n-1}^\lambda \,  | 1 \rangle T_{w_\lambda}1
=  { \overline { \langle   w_\lambda  \,  | S g_{n-1}^\lambda \rangle}} w_\lambda
$$
and in this way concludes that
$$
\frac{d}{dt} \langle g_n^\lambda | \, Sg_{n-1}^\lambda \rangle
=   i \langle g_n^\lambda \, | \, w_\lambda \rangle \langle w_\lambda \, | \, Sg_{n-1}^\lambda \rangle 
 =   i\frac{\langle g_n^\lambda | 1\rangle}{\lambda_n+\lambda}  \langle w_\lambda | Sg_{n-1}^\lambda \rangle\ .
$$
Note that by \eqref{L_uS}
$$(\lambda_{n-1}+\lambda +1)Sg_{n-1}^\lambda =   (L_u+\lambda)Sg_{n-1}^\lambda  + \langle Sg_{n-1}^\lambda  | \, u \rangle 1 \ ,$$
and that 
$$
\langle w_\lambda  \, | \,  (L_u+\lambda)Sg_{n-1}^\lambda \rangle = \langle 1 \, | \, Sg_{n-1}^\lambda \rangle = 0\ ,
$$
yielding 
$$
\langle w_\lambda  \, | \, Sg_{n-1}^\lambda \rangle
= \frac{\langle u | Sg_{n-1}^\lambda \rangle \mathcal H_\lambda}{\lambda_{n-1}+\lambda +1}\ .
$$
Since by Lemma \ref{formula with langle f_p | 1 rangle}
$$
 - \langle g^{\lambda}_n | 1\rangle \langle u | Sg^{\lambda}_{n-1}\rangle  = (\lambda_n-\lambda_{n-1}-1) \langle g^{\lambda}_n | Sg^\lambda_{n-1} \rangle
  = \gamma_n \langle g^{\lambda}_n | Sg^\lambda_{n-1} \rangle 
$$
we  end up with the following identity
\begin{align}\label{evolution g_n | S g_(n-1)}
\frac{d}{dt} \langle g_n^\lambda \, | \, Sg_{n-1}^\lambda \rangle & =
- i \mathcal H_\lambda \cdot \frac{1}{\lambda_n+\lambda } \cdot \frac{\gamma_n}{\lambda_{n-1}+\lambda +1} \cdot \langle g_n^\lambda | \, Sg_{n-1}^\lambda \rangle \nonumber \\
&= i \mathcal H_\lambda \cdot \left ( \frac{1}{\lambda_n + \lambda} - \frac{1}{\lambda_{n-1}  + \lambda + 1} \right ) \langle g_n^\lambda | \, Sg_{n-1}^\lambda \rangle\
\end{align}
In view of \eqref{evolution g_0 | 1 }  and \eqref{evolution g_n | S g_(n-1)} we set
$$
\widetilde g_p^\lambda (t) = g_p^\lambda (t)\,{\rm e}^{-it\alpha_p(\lambda )}\ ,\ p\ge 0
$$
where $\alpha_0(\lambda ) : = \frac{\mathcal H_\lambda}{\lambda_0+\lambda}$ and for any $n \ge 1$,
$$
\alpha_n(\lambda ) : =\mathcal H_\lambda \cdot  \left ( \frac{1}{\lambda_0+\lambda} + \sum_{p=1}^n  \left (\frac{1}{\lambda_p+\lambda} -  \frac{1}{\lambda_{p-1}+\lambda + 1}\right )\right ) \ ,
$$
so that
$$\frac{d}{dt} \langle \widetilde g_0^\lambda | 1\rangle = 0\ ,\qquad
\frac{d}{dt} \langle \widetilde g_n^\lambda | S\tilde g_{n-1}^\lambda \rangle =0\ ,\ n\ge 1\ ,$$
implying that for any $p \ge 0$,  $\widetilde g_p(t) = f_p(u(t)).$
Taking into account \eqref{evolution g_0 | 1 }, one sees that for any $\lambda \ge | - \lambda_0 + 1|$ and $n \ge 1,$ $\{ \mathcal H_\lambda , \langle 1 |  f_n   \rangle\} 
=\frac{d}{dt} \langle 1\, | \,  f_n \rangle$, when evaluated at $t = 0$, is given by
$$
\begin{aligned}
& \{ \mathcal H_\lambda , \langle 1 |  f_n  \rangle\}  =
 i \langle 1 | f_n   \rangle   \left ( \alpha_n - \frac{\mathcal H_\lambda}{\lambda_n + \lambda} \right )\\
& =  i \langle 1 | f_n   \rangle \cdot  \mathcal H_\lambda \cdot  
\left (  \frac{1}{\lambda_0+\lambda} - \frac{1}{\lambda_n + \lambda}  + \sum_{p=1}^n  \frac{1}{\lambda_p+\lambda} -  \frac{1}{\lambda_{p-1}+\lambda +1}	 \right )
\end{aligned}
$$
or expressed in a shorter form,
$$
 \{ \mathcal H_\lambda , \langle 1 |  f_n  \rangle\}  =  i \langle 1 | f_n   \rangle \cdot  \mathcal H_\lambda \cdot  
 \sum_{p=0}^{n-1}   \left (  \frac{1}{\lambda_p+\lambda} -  \frac{1}{\lambda_{p}+\lambda +1}  \right )\, .
$$
On the other hand, using  formula \eqref{expandHlambda} for $ \mathcal H_\lambda$, one computes
$$
\{ \mathcal H_\lambda , \langle 1 | f_n \rangle\} =
\sum_{p=0}^\infty \left (-\frac{| \langle 1 | f_p \rangle |^2}{(\lambda_p+\lambda)^2}\{ \lambda_p, \langle 1 | f_n  \rangle\}+\frac{\{ |\langle 1 | f_p \rangle |^2,\langle  1 | f_n   \rangle\} }{\lambda_p+\lambda }   \right )\ .
$$
Since the left and right hand side of the latter identity are meromorphic functions in $\lambda$,
they have the same poles. In particular
 the principal part of the expansion at any of their poles of order two coincide, hence
for any $p \ge 0$ with $\langle 1 | f_p \rangle \ne 0$ one obtains
$$\{ \lambda_p, \langle 1 | f_n   \rangle\} =\begin{cases} -  i \langle  1 | f_n   \rangle\ &{\rm if}\ 0\le p\le n-1\ , \\
0\ &{\rm if}\  p\ge n\ .  \end{cases}$$
Since, for $p\ge 0$,  $\lambda_p=p-\sum_{k=p+1}^\infty \gamma_k\ ,$
it follows that for any $k \ge 1,$
\begin{equation}\label{pb gamma_k 1 | f_n}
\{ \gamma_k, \langle  1 | f_n \rangle \}  =   i \langle 1, f_n \rangle \delta_{kn}
\end{equation}
at least on the set
$$\mathcal O_\infty =\{ u\in L^2_{r,0} : \ \gamma_j(u)>0\ \, \forall j \ge 1\}\ .$$
Since for any $k \ge 1,$ $\gamma_k$ is a real analytic, not identically vanishing functional,
$\mathcal O_\infty $ is a dense $G_\delta $ subset of $L^2_{r,0}$, and hence by continuity,  
 identity \eqref{pb gamma_k 1 | f_n} holds on all of $L^2_{r,0}$.
\end{proof}

For any $n \ge 1$, on the dense open subset $L^2_{r,0} \setminus Z_n$ of $L^2_{r,0}$, consisting of potentials $u$ with $\gamma_n(u) >  0$, 
the argument $\varphi_n$ of $\langle 1 | f_n \rangle$ is well defined.
Since $| \langle 1 | f_n \rangle | = \gamma_n^{1/2} \kappa_n^{1/2}$, Corollary \ref{bracketlambda} and
 Proposition \ref{gamma,1,f} imply that
\begin{equation}\label{gammaphi} 
\{ \gamma_p,\varphi_n\} =\delta_{pn}\ ,\ p\ge 1\ , \qquad \forall u \in L^2_{r,0} \setminus Z_n \ .
\end{equation}

In order to show that the angle variables $\varphi_n$, $n \ge 1$, are conjugate to the action variables $\gamma_p$, $p \ge 1$,
it remains to establish on $L^2_{r,0} \setminus (Z_p \cup Z_n)$ the complementary identities
\begin{equation}\label{phiphi}
\{ \varphi_p,\varphi_n\}=0\ .
\end{equation}
In the subsequent section, we prove these identities for $1 \le p, n \le N$ with $N \ge 1$ arbitrary on the set
$$
\mathcal O_N =\{ u\in L^2_{r,0} : \ \gamma_j(u)>0\ \, \forall 1 \le j \le N;  \gamma_j(u) = 0\ \, \forall  j \ge N +1\}\ .
$$
In fact, we show that $\mathcal O_N$ is a symplectic manifold of dimension $2N$ with 
$\gamma_1,\dots ,\gamma_N$ and $\varphi_1,\dots ,\varphi_N$ as action and angle variables in the classical sense.


\section{Finite gap potentials}\label{finite gap potentials}
In this section we study the set $G_J$ of $J-$gap potentials, introduced in Section \ref{Lax operator},
with $J \subset \N$ finite (cf. Definition \ref{definition finite gap}).
In fact, to simplify the exposition we will consider only the case $J = \{ 1, \ldots , N \}$
where $N \ge 1$ is an arbitrary integer. Then
$$
G_{\{ 1, \ldots , N\} } =  \{ u \in L^2_r \, : \, \gamma_j (u)> 0 \,\, \forall 1 \le  j  \le N; \,\, \gamma_j(u) = 0 \,\, \forall  j > N  \}\ .
$$
Note that 
$$
G_{\{ 1, \ldots , N\} } = \bigcup_{c \in \R} (c + \mathcal O_N)\ , \qquad \mathcal O_N = G_{\{ 1, \ldots , N\} }  \cap L^2_{r,0}\, ,
$$
where $c + \mathcal O_N$ are the level sets of the restriction of the average functional $L^2_r \to \R, \, u \mapsto \langle u \, | \, 1 \rangle$,
to $G_{\{ 1, \ldots , N\} }.$  Furthermore, $L^2_{r,0}$ is endowed with the symplectic form
\begin{equation}\label{symplectic form}
\omega (u, v)= \langle u \, | \,  \partial_x^{-1}v \rangle ,
\end{equation}
so that one has the identity 
$\{ F,G\} = 
\omega (\partial_x(\nabla F), \partial_x(\nabla G))$. 
Note that the average functional is a Casimir for the Gardner bracket.
The aim of this section is to characterize elements in $\mathcal O_N$  and to show that the restriction of $\Phi$
to $\mathcal O_N$ is a real analytic symplectic diffeomorphism onto its image where we recall 
that $\Phi$ denotes the Birkhoff map, introduced in \eqref{Phionto}. 
As an important application we will prove that $\Phi$ is onto (cf. Corollary \ref{Phionto}).
We refer to Appendix \ref{general one gap potentials} for a study of the set of one  gap potentials $G_{\{ N\}}$ with $N \ge 1$
arbitrary.

Actually, we consider a slightly larger set than $\mathcal O_N$: for any $N \ge 1, $ define
\begin{equation}\label{definition U_N}
\mathcal U_N := \{ u \in L^2_{r,0}\, : \,  \gamma_N(u)>0\ ,\,\, \gamma_j(u)=0 \, \, \forall \,  j > N \} .
\end{equation}
It is convenient to define $\mathcal U_0 := \{ 0 \}$.
Note that for any $u \in \mathcal U_N$ and $n \ge N$ one has by \eqref{expandlambda} and
 Lemma \ref{f_n for gamma_n = 0}
 $$
 \lambda_n(u) = n\,, \qquad f_n(x, u) = e^{i(n-N)x} f_N(x, u)\, .
 $$
Furthermore, denote by $ \C_{N}[z]$ \big(respectively $ \C_{\le N}[z]$\big) the space of polynomials $P$ in the complex variable $z$ of degree  $N$
\big(of degree at most $N$\big) and by $ \C_{N}^+[z]$ the open subset  of polynomials $P\in \C_N[z]$ with the property that  $\{ P(z)=0\} \subset \{ |z|>1\}$.
We identify $\C_N[z]$ with the space $\C^{N} \times \C^*$
of coefficients of polynomials in $\C_N[z]$.
The following theorem characterizes elements in $\mathcal U_N$.
\begin{theorem}\label{Ngap}
For any $N\ge 1$
\begin{equation}\label{characterization}
\mathcal U_N =\big\{   u=h+\overline h \, : \, \ h(x)=-{\rm e}^{ix}\frac{Q'({\rm e}^{ix})}{Q({\rm e}^{ix})}\, , \, \, Q \in \C_{N}^+[z]
\big\} 
\end{equation}
where $Q'(z) := \partial_z Q(z).$
Furthermore, $\mathcal U_N$ is a connected, 
real analytic,
symplectic submanifold of $L^2_{r,0}$ of dimension $2N$. 
The restriction $\Phi_N$ of $\Phi$ to $\mathcal U_N$,
$$\Phi _N:   \mathcal U_N \to \C^{N-1}\times \C^* , \,   u  \mapsto (\zeta_n(u))_{1\le n\le N}$$
is a real analytic, symplectic diffeomorphism,
$$
(\Phi_N)_*\omega =
i\sum_{n=1}^N d\zeta_n\wedge d \overline{ \zeta_n}\ .
$$
In particular, $\mathcal O_N$
is  a dense open subset of $\mathcal U_N$ with $\Phi _N(\mathcal O_N) = (\C^*)^N$.
\end{theorem}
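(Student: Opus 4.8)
\emph{Proof proposal.} The plan is to first prove the explicit description \eqref{characterization} of $\mathcal U_N$, then read the submanifold structure off it, and finally identify $\Phi_N$ as a symplectic diffeomorphism onto $\C^{N-1}\times\C^*$ by combining a topological (invariance of domain + properness) argument with the Poisson bracket relations of Section~6.

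\emph{The characterization, forward direction.} Let $u\in\mathcal U_N$. By \eqref{expandlambda} and Lemma~\ref{f_n for gamma_n = 0} one has $\lambda_n(u)=n$ and $f_n=e^{i(n-N)x}f_N$ for all $n\ge N$, so $W:=\overline{\operatorname{span}}\{f_n:n\ge N\}=\overline{\operatorname{span}}\{S^kf_N:k\ge0\}$ is invariant under the shift $S$; by Beurling's theorem $W=\theta L^2_+$ with $\theta$ inner, and since its orthogonal complement $\operatorname{span}\{f_0,\dots,f_{N-1}\}=K_\theta$ is $N$-dimensional, $\theta$ is a Blaschke product of degree $N$. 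The relations $f_N\in W$ and $f_N\perp SW$ (orthonormality of the $f_n$) force $f_N=c\,\theta$ with $|c|=1$; and the relations $L_u(e^{ikx}f_N)=(N+k)e^{ikx}f_N$, $k\ge0$, force $\Pi(e^{ikx}uf_N)=e^{ikx}\Pi(uf_N)$ for every $k\ge0$, i.e.\ $uf_N\in L^2_+$. Hence $uf_N=(D-N)f_N$, which after division by $c\theta$ reads $u=z\theta'/\theta-N$ (with $D$ acting as $z\partial_z$). Expanding $z\theta'/\theta$ in partial fractions and separating the non-negative from the negative Fourier modes yields $u=h+\overline h$ with $h=\Pi u=-zQ'/Q$, where $Q\in\C_N^+[z]$ is the monic polynomial whose roots are the reflections across $\T$ of the zeros of $\theta$.

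\emph{Converse direction and the manifold structure.} Given $Q\in\C_N^+[z]$, let $\theta$ be the Blaschke product with zeros at the reflections of the roots of $Q$, and set $u:=h+\overline h$ with $h:=-zQ'/Q$; the computation just made is reversible and gives the algebraic identity $u\theta=z\theta'-N\theta\in L^2_+$. Therefore $e^{ikx}\theta\in H^1_+$ is an eigenfunction of $L_u=D-T_u$ with eigenvalue $N+k$ for every $k\ge0$; the finite-dimensional complement $K_\theta$ is $L_u$-invariant, hence carries exactly $N$ eigenvalues of $L_u$, all simple. By Proposition~\ref{simple} none of these can lie in $[N,\infty)$ (two eigenvalues in one unit interval would violate $\lambda_{m+1}\ge\lambda_m+1$), so they are $\lambda_0<\dots<\lambda_{N-1}$ and $\lambda_n=n$ for $n\ge N$; moreover $\lambda_{N-1}<N-1$, since $\gamma_N=0$ would give $f_N=\theta=Sf_{N-1}\in SL^2_+$ by Lemmas~\ref{vanishing of gamma_n} and \ref{f_n for gamma_n = 0}, contradicting $\theta(0)\ne0$ (which holds because $Q$ has degree exactly $N$). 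Thus $u\in\mathcal U_N$, proving \eqref{characterization}. Consequently, writing $\mathcal V_N\subset\C^N$ for the open, connected set of monic degree-$N$ polynomials with all roots in $\{|z|>1\}$, the real-analytic map $u_\bullet:\mathcal V_N\to L^2_{r,0}$, $Q\mapsto -zQ'/Q+\overline{(-zQ'/Q)}$, is a bijection onto $\mathcal U_N$ (injective because $\Pi u$ recovers $Q'/Q$ and hence $Q$), with continuous inverse, and with injective differential: $du_Q[\delta Q]=-z(\delta Q/Q)'+\overline{(\cdot)}$ vanishes only if the Hardy function $z(\delta Q/Q)'$ has no constant term and purely imaginary boundary values, i.e.\ only if $\delta Q=0$. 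Hence $u_\bullet$ is a real-analytic embedding and $\mathcal U_N$ is a connected real-analytic submanifold of $L^2_{r,0}$ of dimension $2N$.

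\emph{$\Phi_N$ is a symplectic diffeomorphism.} The map $\Phi_N=\Phi|_{\mathcal U_N}$ is real-analytic (Proposition~\ref{zeta_n in H^1}) and injective (Proposition~\ref{injectivity}); since $\gamma_n=|\zeta_n|^2=0$ for $n>N$ and $\gamma_N>0$ on $\mathcal U_N$, its image lies in $\C^{N-1}\times\C^*$. It is proper: for $K\subset\C^{N-1}\times\C^*$ compact, every $u\in\Phi^{-1}(K)$ has $\gamma_N(u)\ge\min_K|\zeta_N|^2>0$ and $\gamma_n(u)=0$ for $n>N$, so $\Phi^{-1}(K)\subset\mathcal U_N$ and $\Phi_N^{-1}(K)=\Phi^{-1}(K)$ is compact by Proposition~\ref{Birkhoff map}. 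By invariance of domain $\Phi_N$ is open between the connected $2N$-manifolds $\mathcal U_N$ and $\C^{N-1}\times\C^*$, and being proper it is also closed, hence a homeomorphism onto $\C^{N-1}\times\C^*$; in particular $\mathcal O_N=\Phi_N^{-1}((\C^*)^N)$ is dense open in $\mathcal U_N$ with $\Phi_N(\mathcal O_N)=(\C^*)^N$. On the dense set $\mathcal O_N$, write $\zeta_n=\sqrt{\gamma_n}\,e^{i\varphi_n}$; from $\{\gamma_p,\gamma_n\}=0$ (Corollary~\ref{bracketlambda}), $\{\gamma_p,\varphi_n\}=\delta_{pn}$ (\eqref{gammaphi}) and the Leibniz rule one gets
\[
\{\zeta_p,\overline{\zeta_n}\}=-i\delta_{pn}+\zeta_p\overline{\zeta_n}\,\{\varphi_p,\varphi_n\},\qquad
\{\zeta_p,\zeta_n\}=\zeta_p\zeta_n\,\{\varphi_p,\varphi_n\},
\]
so everything comes down to the identity \eqref{phiphi}, $\{\varphi_p,\varphi_n\}=0$ on $\mathcal O_N$ — this is the main obstacle. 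The plan for it: closedness of the (constant) form $\omega$ restricted to $\mathcal O_N$, together with the two bracket relations above, forces $\{\varphi_p,\varphi_n\}$ to depend only on the actions $\gamma_1,\dots,\gamma_N$; then, using the explicit finite-gap description to see that $\mathcal O_N$ fibers over $(\R_{>0})^N$ with Lagrangian tori $\operatorname{Iso}(u)\cap\mathcal O_N$ on which the commuting flows of $\gamma_1,\dots,\gamma_N$ act transitively, and pinning down the remaining action-dependent ambiguity by comparison with the flat normal form near $u=0$ (where $\nabla\zeta_n(0)=-n^{-1/2}e^{-inx}$ by Remark~\ref{nabla zeta_n at 0} gives $\{\zeta_p,\overline{\zeta_n}\}(0)=-i\delta_{pn}$) together with the translation symmetry $\varphi_n\mapsto\varphi_n+n\tau$ of \eqref{identity for (f_n , 1)}, one concludes $\{\varphi_p,\varphi_n\}\equiv0$. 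Granting this, the $2N\times 2N$ Poisson matrix of $(\zeta_1,\dots,\zeta_N,\overline{\zeta_1},\dots,\overline{\zeta_N})$ is non-degenerate at every point of $\mathcal O_N$, so $d\Phi_N$ is an isomorphism there and, by density and real-analyticity, on all of $\mathcal U_N$; hence $\Phi_N$ is a real-analytic diffeomorphism. Finally $i\,d\zeta_n\wedge d\overline{\zeta_n}=d\gamma_n\wedge d\varphi_n$, so the bracket relations give $(\Phi_N)_*\omega=i\sum_{n=1}^N d\zeta_n\wedge d\overline{\zeta_n}$ on $\mathcal O_N$ and hence, by continuity, on all of $\mathcal U_N$, which in particular shows that $\omega$ restricts to a symplectic form on $\mathcal U_N$ — completing the proof. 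I expect the verification of \eqref{phiphi} (equivalently, that the eigenvector phases $\varphi_n$ are already canonically conjugate to the actions, with no action-dependent correction) to be the genuinely delicate point.
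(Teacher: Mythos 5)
Your treatment of the characterization \eqref{characterization} is essentially correct and takes a genuinely different route from the paper's: you identify $\overline{\mathrm{span}}\{f_n : n\ge N\}$ as a shift-invariant subspace and invoke Beurling's theorem to produce the Blaschke product $\theta$ with $f_N=c\theta$, whereas the paper works with the explicit inverse formula \eqref{recover}, reduces to the finite matrix $M_N$, sets $Q(z)=\det(Id-zM_N)$ and computes a Cauchy determinant to get $\deg Q=N$. Your converse is the same as the paper's, and your use of invariance of domain plus properness to show that $\Phi_N$ is a homeomorphism onto $\C^{N-1}\times\C^*$ is a clean alternative to the paper's route (local diffeomorphism from the bracket relations, then openness and properness). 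One small point in the forward direction: you should record that $\theta(0)\ne0$, hence $\deg Q=N$ exactly, because $\gamma_N(u)>0$ forces $\langle 1\,|\,f_N\rangle\ne0$ by Lemma \ref{vanishing of gamma_n}.

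The genuine gap is exactly where you feared it: the identity \eqref{phiphi}. Your first reduction is fine and matches the paper: closedness of $\omega$ together with Corollary \ref{bracketlambda} and \eqref{gammaphi} forces $\tilde\omega:=\omega-\sum_n d\gamma_n\wedge d\varphi_n$ to equal $\sum_{n<p}a_{np}(\gamma)\,d\gamma_n\wedge d\gamma_p$. But neither of the two inputs you propose can kill the coefficients $a_{np}$. The translation flow acts by $\gamma\mapsto\gamma$, $\varphi_n\mapsto\varphi_n+n\tau$; it preserves $\omega$ and $\sum_n d\gamma_n\wedge d\varphi_n$ separately, hence preserves every form $\sum a_{np}(\gamma)\,d\gamma_n\wedge d\gamma_p$ automatically, so it yields no constraint. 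The ``flat normal form near $u=0$'' is not available either: $u=0$ lies outside $\mathcal O_N$ and the angles are undefined there; and even a limit statement $a_{np}(\gamma)\to0$ as $\gamma\to0$ would not exclude nonconstant $a_{np}$ (for $N=2$ every $a_{12}(\gamma_1,\gamma_2)\,d\gamma_1\wedge d\gamma_2$ is closed, so $d\tilde\omega=0$ gives nothing more; an Arnold--Liouville argument only produces \emph{some} conjugate angles, possibly differing from your $\varphi_n$ by action-dependent shifts, which is precisely what $a_{np}$ measures). The missing idea is a Lagrangian section of the angle fibration: the paper evaluates $\tilde\omega$ on the slice $\{\varphi_1=\cdots=\varphi_N=0\}$, which by the inverse formula consists of even potentials; for even tangent vectors $u,v$ one has $\partial_x^{-1}v$ odd, hence $\omega(u,v)=\langle u\,|\,\partial_x^{-1}v\rangle=0$, while $\sum_n d\gamma_n\wedge d\varphi_n$ pulls back to zero on the slice, so $\sum_{n<p}a_{np}(\gamma)\,d\gamma_n\wedge d\gamma_p$ vanishes on a submanifold where the $\gamma_n$ are still coordinates, forcing $a_{np}\equiv0$. (Equivalently: the involution $u\mapsto u(-\cdot)$ is anti-symplectic and sends $(\gamma,\varphi)$ to $(\gamma,-\varphi)$, so $\tilde\omega\mapsto-\tilde\omega$ while $\sum a_{np}(\gamma)\,d\gamma_n\wedge d\gamma_p$ is invariant.) A secondary, minor issue: your passage from ``$d\Phi_N$ is an isomorphism on $\mathcal O_N$'' to ``on all of $\mathcal U_N$ by density'' does not work as stated, since being an isomorphism is an open, not a closed, condition; one needs the nondegeneracy of $\omega|_{\mathcal U_N}$ at every point, which the paper obtains from the K\"ahler structure of $\Pi(\mathcal U_N)$.
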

\begin{remark}\label{formula Ngap}
(i) Alternatively, potentials $u \in \mathcal U_N$ with $N \ge 1$ can be written in the form (cf. formula \eqref{formula Pi u} below)
\begin{equation}\label{u in terms of roots}
u(x) = \sum_{k=1}^{\infty} (\sum_{j = 0}^{N-1}  q_j^k ) e^{ikx} + \sum_{k=1}^{\infty} (\sum_{j = 0}^{N-1} \overline{q_j}^k) e^{- ikx} 
\end{equation}
where $1/q_0, \ldots , 1/q_{N-1} \in \C^\ast$ denote the roots of the polynomial
$Q(z)$ of degree $N$, defined in \eqref{definition Q} below. Note that the above formula for $u$ can also be expressed in terms of the Poisson kernel
$P_r (x)= \frac{1 - r^2}{1 - 2r\cos(x) + r^2}$,
$$
u(x) = \sum_{j = 0}^{N-1} ( P_{r_j} (x + \alpha_j) -1 )\,, \quad q_j = r_j e^{i \alpha_j} \,\,\, \forall \,  0 \le j \le N-1\ .
$$
(ii) Real coordinates on $\mathcal U_N$ are given by the
real and imaginary parts $(Q_{j,1})_{1 \le j \le N}$, 
$(Q_{j,2})_{1 \le j \le N}$, 
of the coefficients $(Q_j)_{1 \le j \le N}$
of the polynomial $Q \in \C_{N}^+[z]$ normalized by $Q(0)=1$, $Q(z) = 1 + \sum_{j =1}^N 
(Q_{j,1} + i Q_{j,2})z^j$. 
The corresponding potential $u$ can then be written as
$$
u(x) = - e^{ix}\frac{Q'(e^{ix})}{Q(e^{ix})} 
 - e^{-ix}\frac{P'(e^{-ix})}{ P(e^{-ix})}
$$
where
$$
P(z) = 1 + \sum_{j =1}^N P_jz^j\ , \qquad P_j := Q_{j,1} - i Q_{j,2}\ .
$$
Note that this formula for $u$ is real analytic in the coordinates
$Q_{j,1}$, $Q_{j,2}$, $1 \le j \le N$. Complex valued coefficients
$Q_{j,1}$, $Q_{j,2}$, $1 \le j \le N$ then will lead to complex valued
potentials. \\
Furthermore note that by Proposition \ref{Birkhoff map},
for any $n \ge 1,$ the real part $\zeta_{n, 1}$ and the imaginary part $\zeta_{n, 2}$ of $\zeta_n$ are real analytic functionals on
$L^2_{r,0}$ and so are their restrictions to $\mathcal U_N$.
It then follows that $\Phi_N$ is real analytic when viewed as map
$\mathcal U_N \to \R^{2(N-1)} \times \R^2\setminus \{(0, 0)\}$.
\end{remark}
\begin{proof}
We first prove that any element $ u \in \mathcal U_N$ has the claimed form. 
To simplify notation within this proof, we do not indicate the dependence of various quantities on $u$.
Our starting point is the inverse formula
\eqref{recover}, stated in Lemma \ref{inverse formula},
$$
\Pi u(z)= \langle (Id-zM)^{-1}X | Y \rangle _{\ell ^2}\, , \quad  z\in \C, \,\, |z| < 1,
$$
where $\ X = - (\lambda_p \langle 1 | f_p \rangle )_{p\ge 0}$ and $Y =  (\langle 1 | f_n \rangle)_{n\ge 0}$.
According to the computations in the proof of Proposition \ref{injectivity}, the $n$--th row of  $M$ with $n \ge N$
in the case at hand satisfies
$$
(M_{np} )_{p \ge 0} = (\delta_{ n+1, p})_{p \ge 0} \, \quad  \forall \, n \ge N\ .
$$
Since the coefficients of $X=(-\lambda_p \langle 1 \, | f_p \rangle )_{p\ge 0}$ with $p\ge N+1$ vanish, 
$(\xi_n(z))_{n\ge 0}:=(Id-zM)^{-1}[X]$ satisfies 
$$
\xi_N(z) - z\xi_{N+1}(z)= - \lambda_N \langle 1 | f_N \rangle, \quad 
 \xi_n(z) - z\xi_{n+1}(z)=0, \,\,\,  \forall \, n\ge N+1,
$$
or
$$
\xi_{N+1}(z) = z^{n-N-1}\xi_n(z) \ ,\quad \forall \, n \ge N+1.
$$
Combined with the fact that $\sum_{n \ge 0} | \xi_n(z) |^2 < \infty $ for any $|z|<1$, we infer that
$$
\xi_n(z)=0\ ,\ n\ge N+1\ ,  \qquad  \xi_N(z)= - \lambda_N \langle 1 | f_N \rangle \ .
$$
This means that
\begin{equation}\label{inverse formula for N gap}
\Pi u(z)=\langle (Id - zM_N)^{-1}X_N \, | \, Y_N \rangle _{\C^{N+1}}\ ,
\end{equation}
where
$$
X_N=(-\lambda_p \langle 1 | f_p \rangle )_{0\le p\le N},\,\,\, Y_N=(\langle 1 | f_n \rangle)_{0\le n\le N},\,\,\, M_N=(M_{np})_{0\le n,  p\le N}.
$$
Notice that the last row of $M_N$ is $0$. With the same arguments used to derive the formula for $\Pi u(z)$ (cf. \eqref{Piu=})
one obtains a similar formula for the extension $f_n(z)$ of the eigenfunctions $f_n$ of $L_u$ to the unit disc. 
For $0 \le n \le N$ and $|z| < 1$ one has
$$
f_n(z)=\langle (Id-zM_N)^{-1}{\bf 1}_n | Y_N \rangle_{\C^{N+1}}\ ,
$$
where ${\bf 1}_n:=(\delta_{pn})_{0\le p\le N}\ .$ We thus conclude that 
$\Pi u$ and $f_0,\dots ,f_N$  belong to the $\C-$vector space 
\begin{equation}\label{definition Q}
\mathcal R_N := \left \{\frac{P(z)}{Q(z)}\ : \, P\in \C_{\le N}[z] \right \}, \qquad Q(z):=\det (Id - zM_N),
\end{equation}
and that $f_0,\dots ,f_N$ are $N+1$ linearly independent elements in $\mathcal R_N$ and thus form a basis. As a consequence, $1 / Q(z)$ belongs to the Hardy space of the unit disc, hence $Q(z)$ cannot vanish at any point $z$ with $|z|\le 1$. Furthermore one has $Q(0)=1$. 
To see that $\deg (Q)=N$ first note that the last row of $M_N$ is identically zero and thus
\begin{equation}\label{formula Q(z)}
\det (Id - zM_N)=\det (Id - zM_{N-1}),
\end{equation}
where $M_{N-1}=(M_{np})_{0\le n,  p\le N-1}$. It is to prove that $\det (M_{N-1})\ne 0\ .$ 
The formula \eqref{Mnp} for the coefficients $M_{np}$ 
for $\zeta_{n+1}\ne 0$ and the definition \eqref{fSf} of $\mu_{n+1}$ imply
that in the case at hand the coefficients $M_{np}$ with $0 \le n, p \le N-1$ read
\begin{equation}\label{formulas for Mnp}
M_{np}=\begin {cases} \delta_{p,n+1}\ &{\rm if}\ \zeta_{n+1}=0\ ,\\
\sqrt{\mu_{n+1}}\gamma_{n+1}
\frac{ \langle f_p | 1 \rangle} { (\lambda_p-\lambda_n-1) \langle f_{n+1} | 1\rangle}\  &{\rm if}\  \zeta_{n+1} \ne 0\ .
\end{cases}
\end{equation}
Expanding $\det (M_{N-1})$ by the formula for Cauchy determinants, one sees that 
\begin{eqnarray*}
&&|\det(M_{N-1})| = \left |\det \left ( \sqrt{\mu_n} \gamma_n  \frac{\langle f_p | 1 \rangle} { (\lambda_p - \lambda_{n-1}-1) \langle f_{n} | 1 \rangle} \right )_{n \in J, p\in \tilde J  }   \right | \nonumber\\
&&=\left ( \prod_{n\in J}  \sqrt{\mu_n}\gamma_n  \right )\, \frac{\langle f_0\vert 1\rangle }{\vert \langle f_N\vert 1\rangle \vert } \left |\det\left ( \frac{1}{\lambda_{n-1}+1-\lambda_p}  \right )_{n\in J, p \in \tilde J} \right|  \,  \ne \, 0
\end{eqnarray*}
where  $J:=\{ 1 \le  j  \le N \, | \,  \gamma_j >0 \}, \tilde J:=\{ 0\} \cup J\setminus \{ N\}$.
Thus we showed that $\deg (Q)=N.$  

Next we prove that
$\Pi u(z) = - z Q'(z) /Q(z).$
Indeed since $ \langle  u | 1 \rangle=0$, there exists a polynomial $R \in  \C_{\le N-1} [z]$ so that
$\Pi u(z)=z R(z) / Q(z).$
Furthermore, since $L_uf_n = \lambda_n f_n$ and since $f_0, \ldots , f_N$  is a basis of $\mathcal R_N$,
$L_u$ leaves $\mathcal R_N$ invariant. Let us look at the action of $L_u$ on $\mathcal R_N$ in more detail.
For any $h \in L^2_+,$ one has
$$
T_u h= (\Pi u) h +H_h(\Pi u)\ ,\quad H_h(f) := \Pi (h \overline f) 
$$
By elementary properties of Hankel operators --- see e.g. \cite{GG0}, Prop. 11---, for every $h\in \mathcal R_N$,  the range of the Hankel operator $H_h$ is included in  $\mathcal R_N$.
Hence the linear map $h\mapsto L_uh - H_h(\Pi(u)) =  - i \partial_xh -(\Pi u)h$ leaves $\mathcal R_N$ invariant. 
 Expressed in an alternative way, it means that $$z\partial _z - z\frac{R(z)}{Q(z)}$$
leaves $\frac{1}{Q(z)} \C_{\le N}[z]$ invariant. Using the latter fact, one verifies in a straightforward way that
$R(z)= - Q'(z),$ showing that $u$ has indeed the claimed form
$$
\Pi(u)(z) = - z \frac{Q'(z)}{Q(z)}\,.
$$
 Now let us prove the converse. Assume that $Q \in  \C_{N}^+[z]$ with $Q(0) = 1$. Thus $Q$ is of the form  
 $$
 Q(z)=\prod _{j=0}^{N-1}(1-q_jz)\, , \quad q_j \in \C \, ,  0<|q_j|<1 \ , \,\,\, \forall \,  0 \le j \le N-1\, ,
 $$
and in turn
 \begin{equation}\label{formula Pi u}
 \Pi u(z)=\sum_{j=0}^{N-1}\frac{q_j z}{1 - q_j z} \, .
 \end{equation}
 Let
$$f(z):=\prod_{j=0}^{N-1}\frac{z-\overline q_j}{1-q_jz}\ .$$
Clearly, $f$ is in $\mathcal R_N$. 
Writing  $L_u(f)$ as $z \partial_z f - (\Pi u) f - H_f(\Pi u)$ one computes
$$
L_u(f) = f \, \sum_{j=0}^{N-1}\left (\frac{z}{z-\overline q_j}+\frac{q_jz}{1-q_jz}-\frac{q_jz}{1-q_jz}-\frac{\overline q_j}{z-\overline q_j}\right )=Nf\ .
$$
It means that $f$ is an eigenfunction of $L_u$ with eigenvalue $N$.
Furthermore, since for any $|z|=1$, $$f(z)= z^N \overline{ Q(z)} / Q(z)\ ,$$
one verifies that for any $k\ge 1$, $S^k f$ is orthogonal to $\mathcal R_N$ and thus in particular to $\Pi u $. 
By Lemma \ref{vanishing of gamma_n} this implies  that for any $k \ge 1$, $S^{k}f$ is an eigenfunction of $L_u$ with eigenvalue $N+k$. 
Taking into account that $\mathcal R_N$ is of dimension $N+1$ and that the eigenvalues $\lambda_n$ satisfy  \eqref{expandlambda}
with $\langle \Pi u | 1 \rangle = 0$ one concludes that  $\lambda_{N+k} = N+k$ for any $k \ge 0$ 
and $\gamma_{N+k} = 0$ for any $k \ge 1.$
On the other hand, $\langle 1 | f \rangle=(-1)^{N}\prod_{j=0}^{N-1} q_j\ne 0$ implying that $\gamma_N \ne 0$.
Altogether this shows that $u \in \mathcal U_N$.

Having established identity \eqref{characterization} we know that $\Pi (\mathcal U_N)$ is a connected, complex manifold of dimension $N$, 
parametrized by the open subset of $ \C^N$ described by the coefficients of the polynomials $Q\in \C_N^+[z]$ satisfying $Q(0)=1$. 
Furthermore, since for any $u, v \in L^2_{r,0}$, 
$$
\omega( u, v) = \langle  u | \partial_x^{-1} v \rangle 
=  -2 {\rm Im} \, \langle D^{-1}\Pi u | \, \Pi v \rangle
$$
$\Pi (\mathcal U_N)$ is a K\"ahler manifold with Hermitian form $\langle D^{-1}\Pi u | \, \Pi v \rangle$
and hence $\mathcal U_N$ is a real analytic symplectic submanifold of $L^2_{r,0}$  of real dimension $2N$. 
We claim that, for every $1 \le n \le N $, $\gamma_n$ does not identically vanish on $\mathcal U_N$. 
Indeed, denote by $J$ the set of such indices $n$.
By the definition of $\mathcal U_N,$ $\gamma_N(u) \ne 0$ for any $u \in \mathcal U_N$ and hence $N \in J$. 
Since each $\gamma_n$ is a real analytic functional and since $\mathcal U_N$ is connected,  
$G_{J} \cap \mathcal U_N$ is an open dense subset of $\mathcal U_N$.
From the proof of Proposition \ref{injectivity} we know that the coefficients $M_{np}$ of the matrix $M_N$
can be expressed in terms of the Birkhoff coordinates $(\zeta_j)_{1 \le j \le N}$. Similarly, this is the case for the vectors
$X_N$ and $Y_N$, introduced above.
Hence the right hand side of the identity 
\eqref{inverse formula for N gap} locally extends analytically, yielding
a real analytic map $\Psi_N$ on $(\C^\ast)^J $ so that for any $u$
in the open subset $G_J \cap \mathcal U_N$ of $\mathcal U_N$,
$$
\Psi_N \circ \Phi_N (u) = \Pi(u)\ .
$$
This implies that the rank of the map $\Phi_N$ at any point in $G_J \cap \mathcal U_N$ has to be $2N$. 
Since  in view of the definition of $J$ it cannot be bigger than $2\sharp J$, we infer that $J=\{ 1,2,\dots ,N\}$. 
As a consequence,  $\mathcal O_N = G_{\{1,\dots,N \}} \cap \mathcal U_N$  is an open dense subset of $\mathcal U_N$ 
and a symplectic submanifold of $\mathcal U_N$ of dimension $2N$. 
On $\mathcal O_N$, the smooth functions $\gamma_1,\dots ,\gamma_{N}$, $\varphi_1,\dots ,\varphi_{N}$, referred to as actions and angles,  
are well defined. From Corollary \ref{bracketlambda} and identity \eqref{gammaphi}, we know that 
\begin{equation}\label{commutation relations A}
\{ \gamma_n,\gamma_p\}=0\ ,\ \{ \gamma_n,\varphi_p\}=\delta_{pn}\ ,\ 1\leq n,p\leq N\ .
\end{equation}
This implies that $\Phi_N$ is a local diffeomorphism on $\mathcal O_N$ and therefore, since $\Phi_N$ is one to one, 
 a diffeomorphism onto its range which is contained in $(\C^*)^N$. In order to prove that the range is $(\C^*)^N$, we observe that
$(\C^*)^N$ is connected and $\Phi_N$ proper (cf. Proposition \ref{Birkhoff map}). 
Being a local diffeomorphism, $\Phi_N$ is open and hence $\Phi_N (\mathcal O_N) = (\C^*)^N$. 
To prove that $\Phi_N$ is symplectic on $\mathcal O_N$ introduce the two--form
$$\tilde \omega =\omega -\sum_{n=1}^{N}d\gamma_n\wedge d\varphi_n\ .$$
In view of the commutation relations \eqref{commutation relations A}, 
$${\partial_{\varphi_p}} \,
\intprod \, \tilde \omega =0\ , \quad  \forall  \, 1 \le p \le  N\ .$$
Since moreover $\tilde \omega $ is a closed two form there exist smooth functions $a_{np}$ so that
\begin{equation}\label{omega tilde}
\tilde \omega =\sum_{1\le n<p\le N}a_{np}(\gamma_1,\dots ,\gamma_{N})\,  d\gamma_n\wedge d\gamma_p\ .
\end{equation}
On the other hand,  the pullback of $\sum_{n=1}^{N}d\gamma_n\wedge d\varphi_n$
to the submanifold $\varphi_1=\dots =\varphi_{N}=0$ vanishes 
and by the inverse formula \eqref{inverse formula for N gap}, on this submanifold, 
$\Pi u(\overline z)=\overline {\Pi u(z)}$, implying that $u$ is even, $u(-x)=u(x)$.
Therefore, on this submanifold, formula \eqref{symplectic form} leads to $\omega =0$. Altogether we thus have shown that $\tilde \omega =0$. 
By \eqref{omega tilde} we then infer that $\tilde \omega$ vanishes identically on $\mathcal O_N$, showing that $\Phi _N$  is symplectic on $\mathcal O_N$. 

Note that
$$
d\zeta_n =  \frac{1}{2\sqrt{\gamma_n}} e^{i\varphi_n} d \gamma_n + \sqrt{\gamma_n} e^{i \varphi_n} i d \varphi_n 
$$
and thus $d\zeta_n \wedge d \overline \zeta_n  = - i d \gamma_n \wedge d \varphi_n$.
When expressed in the coordinates $\zeta_n$, the pull back of the symplectic form $\omega$ to $\mathcal O_N$ is therefore  
$$\omega = i\sum_{n=1}^N d\zeta_n\wedge d\overline \zeta_n\ .$$
Since $\mathcal O_N$ is dense in $\mathcal U_N$, this identity holds on all of $\mathcal U_N$. 
Using again that $\Phi_N$ is one to one and proper, we conclude that  $\Phi_N$ is a symplectic diffeormorphism from $\mathcal U_N$ onto $\C^{N-1}\times \C ^*$.
By Remark \ref{formula Ngap} (ii), $\Phi_N$ is a real analytic map
and hence the proof of Theorem \ref{Ngap} is complete.
\end{proof}
As an important application of Theorem \ref{Ngap}, we prove that the Birkhoff map $\Phi$ is onto.
Recall that $u \in L^2_{r,0}$ is a finite gap potential if $J(u)= \{n \ge 1\ | \, \gamma_n(u) > 0 \}$ is finite.
The set of finite gap potentials in $L^2_{r,0}$ is thus given by $\{ 0\} \cup \bigcup_{N\ge 1}\mathcal U_N$.
\begin{corollary}\label{Phionto}
The map $\Phi : L^2_{r,0}\longrightarrow h^{1/2}_+\,, u \mapsto   (\zeta_n)_{n\ge 1} $ is a homeomorphism and the set of finite gap potentials is dense in $L^2_{r,0}$. 
Furthermore, the identities
$$\{ \zeta_n,\zeta_m\} =0\ ,\ 
\{ \zeta_n,\overline \zeta_m\} = - i \delta_{nm}\ ,\ 
n,m\ge 1\ , $$
are valid on all of $L^2_{r,0}$.
\end{corollary}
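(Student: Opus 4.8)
The plan is to establish, in this order, surjectivity of $\Phi$ (whence, together with Propositions \ref{injectivity} and \ref{Birkhoff map}, that $\Phi$ is a homeomorphism), density of the finite gap potentials, and finally the canonical Poisson relations.

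First I would prove that $\Phi$ is onto. By Theorem \ref{Ngap}, for every $N\ge 1$ the restriction $\Phi|_{\mathcal U_N}$ maps $\mathcal U_N$ bijectively onto the set of sequences in $h^{1/2}_+$ supported in $\{1,\dots,N\}$ whose $N$-th entry is nonzero; adding $\Phi(0)=0$, the image of the set $\{0\}\cup\bigcup_{N\ge 1}\mathcal U_N$ of all finite gap potentials is precisely the set of finitely supported sequences in $h^{1/2}_+$, which is dense. On the other hand, by Proposition \ref{Birkhoff map} the map $\Phi$ is continuous and proper, and a continuous proper map between metric spaces is closed; hence $\Phi(L^2_{r,0})$ is closed. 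A closed dense subset of $h^{1/2}_+$ being all of $h^{1/2}_+$, $\Phi$ is onto. Being moreover one-to-one (Proposition \ref{injectivity}), $\Phi$ is a continuous closed bijection and therefore a homeomorphism; consequently the finite gap potentials, which form the $\Phi$-preimage of the dense set of finitely supported sequences, are dense in $L^2_{r,0}$.

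It remains to prove the bracket identities. Since by Proposition \ref{zeta_n in H^1} the gradient $\nabla\zeta_n$ is a real analytic $H^1$-valued map on $L^2_{r,0}$, the functions $\{\zeta_n,\zeta_m\}$ and $\{\zeta_n,\overline{\zeta_m}\}$ are well defined and real analytic on $L^2_{r,0}$, so it suffices to verify the claimed values on a dense set. Fix $n,m\ge 1$ and an integer $N\ge\max(n,m)$. On the open set $\mathcal O_N$ one has $\zeta_k=\sqrt{\gamma_k}\,e^{i\varphi_k}$ for $1\le k\le N$, where $\gamma_k=|\zeta_k|^2$ and $\varphi_k$ is the argument of $\langle 1|f_k\rangle$. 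The flows of the functionals $\gamma_k$ and $\varphi_k$, $1\le k\le N$, preserve $\mathcal U_N$ near $\mathcal O_N$: along the flow of $\gamma_k$ all the $\gamma_j$ are conserved, since $\{\gamma_k,\gamma_j\}=0$ by Corollary \ref{bracketlambda}; and along the flow of $\varphi_k$ one has $\frac{d}{dt}\gamma_j=\{\varphi_k,\gamma_j\}=-\delta_{kj}$ by \eqref{gammaphi}, so the conditions $\gamma_j=0$ for $j>N$ and $\gamma_N>0$ persist for short times. Hence $X_{\gamma_k}$ and $X_{\varphi_k}$, and therefore also $X_{\zeta_k}=\frac{e^{i\varphi_k}}{2\sqrt{\gamma_k}}X_{\gamma_k}+i\sqrt{\gamma_k}\,e^{i\varphi_k}X_{\varphi_k}$, are tangent to $\mathcal U_N$ at every point of $\mathcal O_N$, for $1\le k\le N$. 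It follows that at a point of $\mathcal O_N$ the $L^2_{r,0}$-Poisson bracket of $\zeta_n$ and $\zeta_m$ coincides with the Poisson bracket of their restrictions with respect to the symplectic submanifold $\mathcal U_N$. By Theorem \ref{Ngap} the latter is governed by $(\Phi_N)_*\omega=i\sum_{k=1}^N d\zeta_k\wedge d\overline{\zeta_k}$, for which $\{\zeta_n,\zeta_m\}=0$ and $\{\zeta_n,\overline{\zeta_m}\}=-i\delta_{nm}$. Since $\Phi$ is a homeomorphism, $\bigcup_{N\ge\max(n,m)}\mathcal O_N$ is dense in $L^2_{r,0}$ (it is the $\Phi$-preimage of a dense set of finitely supported sequences), and the identities extend to all of $L^2_{r,0}$ by real analyticity.

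The delicate point is this final step: one must check that the Poisson brackets computed on $L^2_{r,0}$ restrict correctly to the finite-dimensional symplectic manifolds $\mathcal U_N$, which is where Theorem \ref{Ngap} and the tangency of the Hamiltonian vector fields $X_{\zeta_k}$, $1\le k\le N$, to $\mathcal U_N$ come in. By contrast, surjectivity and the homeomorphism property are comparatively soft, following from properness together with finite-gap approximation.
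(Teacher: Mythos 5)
Your proof is correct, and for the surjectivity and homeomorphism parts it takes essentially the paper's route: both arguments rest on the same three inputs (injectivity and properness of $\Phi$ from Propositions \ref{injectivity} and \ref{Birkhoff map}, and the description of $\Phi(\mathcal U_N)$ from Theorem \ref{Ngap}), and both approximate a target sequence by finitely supported ones. You phrase this slightly more cleanly as ``continuous proper, hence closed; closed image containing a dense set is everything,'' while the paper carries out the approximation by hand (choosing truncations $\zeta^{(k)}$, preimages $u^{(k)}\in\mathcal U_{N_k}$, and a limit point), but these are the same argument.

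Where you genuinely differ from the paper -- and improve on it -- is in the Poisson bracket step. The paper writes only ``By Theorem~\ref{Ngap} the claimed Poisson bracket relations hold on $\bigcup_{N\ge 1}\mathcal U_N$'' and then invokes density and real analyticity. But Theorem \ref{Ngap} establishes that the \emph{pullback} of $\omega$ to $\mathcal U_N$ is $i\sum_{k=1}^N d\zeta_k\wedge d\overline{\zeta_k}$, i.e.\ a statement about the intrinsic symplectic geometry of the finite-dimensional submanifold, whereas the Corollary asserts identities for the ambient Gardner bracket on $L^2_{r,0}$. Passing from one to the other is exactly the point you flag and settle: one must know that, at points of $\mathcal O_N$, the Hamiltonian vector fields $X_{\gamma_k}$, $X_{\varphi_k}$ (hence $X_{\zeta_k}$, $X_{\overline{\zeta_k}}$) for $1\le k\le N$ are tangent to $\mathcal U_N$, so that the ambient bracket of two such functionals equals the bracket of their restrictions inside the symplectic submanifold $(\mathcal U_N,\omega|_{\mathcal U_N})$. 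Your tangency argument, via conservation of all $\gamma_j$ along the flow of $\gamma_k$ (Corollary \ref{bracketlambda}) and via $\{\varphi_k,\gamma_j\}=-\delta_{kj}$ (identity \eqref{gammaphi}), is sound; it implicitly uses that these flows exist locally, which holds because $\nabla\gamma_k$ and $\nabla\varphi_k$ are real analytic $H^1$-valued maps (Corollary \ref{formula nabla lambda_n}, Proposition \ref{zeta_n in H^1}), so the vector fields $\partial_x\nabla\gamma_k$, $\partial_x\nabla\varphi_k$ are locally Lipschitz with values in $L^2_{r,0}$. In short: your argument is correct and supplies a non-trivial missing link that the paper's very terse proof of this Corollary leaves to the reader.
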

\begin{proof}
We already know that $\Phi $ is continuous and proper (cf. Proposition \ref{Birkhoff map}) and one to one (cf. Proposition \ref{injectivity}).
To prove that $\Phi $ is onto, let $\zeta :=(\zeta_n)_{n\ge 1}$ be any nonzero sequence in 
$h^{1/2}_+ \setminus \bigcup_{N \ge 1} 
\Phi (\mathcal U_N )$ 
where $\mathcal U_N$ is defined by \eqref{definition U_N}.
Then there  exists an increasing sequence $(N_k)_{k \ge 1}$ with $N_k\to \infty $ such that $\zeta_{N_k}\ne 0$ for any $k \ge 1$.
We approximate $\zeta$ by the truncated sequence 
$$\zeta^{(k)}_n= \begin{cases}\zeta_n \ &{\rm if}\ n\le N_k\\
0\ &{\rm if}\ n>N_k\ . \end{cases}$$
By Theorem \ref{Ngap}, there exists a unique element $u^{(k)}\in \mathcal U_{N_k}$ such that 
$\Phi (u^{(k)})=\zeta^{(k)}$.
Since $\Phi $ is proper and continuous, $u^{(k)}$ has a limit point $u$ in $L^2_{r,0}$ which satisfies
$$\Phi (u)=\zeta \ .$$
Thus we have proved that $\Phi$ is onto. Since $\Phi$ is proper and continuous it then follows that $\Phi^{-1}$ is continuous as well
and in turn $\bigcup_{N\ge 1}\mathcal U_N$ is dense in $L^2_{r,0}$. 
It remains to verify the claimed Poisson bracket 
relations. By Proposition \ref{zeta_n in H^1},
for any $n,$ $m \ge 1$,
the Poisson brackets $\{ \zeta_n, \zeta_m \}$
and  $\{ \zeta_n, \overline{\zeta_m} \}$ are real analytic
functionals on $L^2_{r,0}$.
Since by Theorem \ref{Ngap}
the claimed Poisson bracket relations hold on 
$\bigcup_{N \ge 1} \mathcal U_N$ and
$\bigcup_{N \ge 1} \mathcal U_N$ is dense in $L^2_{r,0}$
it then follows that they hold on all of $L^2_{r,0}$.
\end{proof}


\section{Proof of Theorem \ref{main result} and Theorem \ref{almost periodic} }\label{proof of Theorem 1}

In this section, we  make a detailed synopsis of the proof of Theorem \ref{main result}.
As an application, we derive formulas for the BO frequencies at the end of this section (cf. Proposition \ref{BO frequencies})
and prove Theorem \ref{almost periodic}.

\smallskip
\noindent
{\em Proof of Theorem \ref{main result}}
By Corollary \ref{Phionto}, $\Phi$ is a homeomorphism and the Poisson bracket identities (B2) hold.
It remains to express the BO Hamiltonian $\mathcal H(u)$, defined for $u \in H^{1/2}_{r,0}$, in terms of the Birkhoff coordinates $\zeta_n$, $n \ge 1$.
For $u \in  H^{1/2}_{r,0}$ one computes, 
using that
$\Pi u = 
- \sum_{n=0}^\infty \lambda_n \langle 1 | f_n \rangle f_n$,
\begin{eqnarray*}
\mathcal H(u) = \frac 12 \langle |D|u \,  | \, u \rangle -  \frac{1}{2\pi} \int _0^{2\pi} \frac{1}{3} u^3\, dx
= \langle L_u(\Pi u) |  \Pi u \rangle = \sum_{n=0}^\infty \lambda_n^3|\langle 1 | f_n \rangle|^2 \,.
\end{eqnarray*}
With the help of $\tilde {\mathcal H}_\e,$ defined by \eqref{definition tilde mathcal H} as
$$\tilde {\mathcal H}_\e =\sum_{n=0}^\infty \frac{|\langle 1 | f_n \rangle|^2}{1+\e \lambda_n}\ ,$$
one can express $\mathcal H(u)$ as follows,
\begin{equation}\label{formula BO Hamiltonian}
\mathcal H(u)=-\frac 16\frac{d^3}{d\e ^3}\big|_{ \e = 0}  \tilde{\mathcal H}_\e 
= - \frac 16 \frac{d^3}{d\e ^3}\big|_{ \e = 0} \log \tilde{\mathcal H}_\e  \ .
\end{equation}
Using the formula for $ \frac{d}{d\e }\log \tilde{\mathcal H}_\e $ derived in Section \ref{trace} (cf. \eqref{trace3}), one then obtains
$$\mathcal H(u)=\frac 13 \lambda_0^3+\frac 13 \sum_{n=1}^\infty \gamma_n \big( \lambda_n^2+(\lambda_{n-1}+1)^2+\lambda_n(\lambda_{n-1}+1) \big)$$
or using that  by \eqref{expandlambda},  $\lambda_n = n - s_{n+1}$ with $s_{n+1}:= \sum_{k=n + 1}^\infty \gamma_k$, $n\ge 0$,
\begin{equation}\label{formula A for H(u)}
\mathcal H(u)  = -\frac 13 s_1^3  + \sum_{n=1}^\infty  \gamma_n \big( \big( n - s_n \big)^2 + \gamma_n \big( n - s_n \big)+\frac 13 \gamma_n^2  \big)\,. 
\end{equation}
This proves Theorem \ref{main result}.
\hfill $\square$

\medskip

Actually, formula \eqref{formula A for H(u)} of the BO Hamiltonian $\mathcal H(u)$ can be further simplified as follows.
\begin{proposition}\label{BO frequencies}
For any $u\in H^{1/2}_{r,0}$,
\begin{equation}\label{formula Hgamma}
\mathcal H(u)=\sum_{n=1}^\infty n^2\gamma_n-\sum_{n=1}^\infty \big(\sum_{k=n}^\infty \gamma_k \big)^2\ .
\end{equation}
As a consequence, the BO frequencies  $\omega_n=\frac{\partial \mathcal H}{\partial \gamma_n}$, $n \ge 1$, read
\begin{equation}\label{formula frequencies}
\omega_n 
=n^2-2\sum_{k=1}^\infty \min(k,n)\gamma_k\ .
\end{equation}
Thus for any $u_0\in L^2_{r,0}$, the solution $u$ of the BO equation \eqref{BO} with  $u(0)=u_0$, when expressed in Birkhoff coordinates, is given by 
\begin{equation}\label{BOsolution}
\zeta_n(u(t))=\zeta_n(u_0)\, 
e^{i\omega_n(u_0) t}\ ,\quad t\in \R .
\end{equation}
\end{proposition}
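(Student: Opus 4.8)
The plan is to obtain formula \eqref{formula Hgamma} by a purely algebraic manipulation of \eqref{formula A for H(u)}, then deduce the frequencies by differentiation, and finally integrate the Birkhoff-coordinate form of the BO equation. Throughout, write $s_n := \sum_{k=n}^\infty \gamma_k$, so that $s_n - s_{n+1} = \gamma_n$ and $\lambda_n = n - s_{n+1}$, as already recorded in \eqref{expandlambda}.

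First I would simplify \eqref{formula A for H(u)}. Expanding the summand $\gamma_n\big((n-s_n)^2 + \gamma_n(n-s_n) + \tfrac13\gamma_n^2\big)$ and recognizing it as a telescoping-type expression: the key identity is
\begin{equation*}
\gamma_n\big((n-s_n)^2 + \gamma_n(n-s_n) + \tfrac13\gamma_n^2\big) = \tfrac13\big((n-s_n)^3 - (n-s_{n+1})^3\big) + (\text{correction in } n),
\end{equation*}
obtained from the algebraic identity $b^3 - a^3 = (b-a)(b^2+ab+a^2)$ with $b = n - s_{n+1} = \lambda_n$, $a = n - s_n = \lambda_{n-1}+1$, $b - a = \gamma_n$; note $(n-s_{n+1})^3 - (n-s_n)^3 = \gamma_n(\lambda_n^2 + \lambda_n(\lambda_{n-1}+1) + (\lambda_{n-1}+1)^2)$, which is $3$ times the bracket in \eqref{formula A for H(u)}. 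Thus $\mathcal H(u) = -\tfrac13 s_1^3 + \tfrac13\sum_{n\ge1}\big((n-s_{n+1})^3 - (n-s_n)^3\big)$. Now split the telescoping difference using $(n-s_{n+1})^3 - (n-1-s_{n})^3$ versus the mismatch between $n$ and $n-1$: writing $(n - s_n)^3 = \big((n-1-s_n) + 1\big)^3$ and expanding, the sum telescopes in the variable $n - s_{n+1}$ against $(n-1) - s_n$, leaving only lower-order terms $3(n-1-s_n)^2 + 3(n-1-s_n) + 1$ summed with weight, plus the boundary term $-s_1^3$ that cancels the $n=1$ contribution. Collecting the surviving terms and using $\sum_n \gamma_n = s_1$, $\sum_n n\gamma_n$ finite (Proposition \ref{formulae}), one should land exactly on $\sum_{n\ge1} n^2\gamma_n - \sum_{n\ge1} s_n^2$. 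The convergence of both series on $H^{1/2}_{r,0}$ is guaranteed since $(\gamma_n)\in\ell^{1,1}$ and hence also in $\ell^{1,2}$ when $u\in H^{1/2}$ — this is the point where the regularity hypothesis $u_0 \in H^{1/2}_{r,0}$ (rather than merely $L^2$) is used, as otherwise $\sum n^2\gamma_n$ need not converge; on $L^2_{r,0}$ one works with the difference form which makes sense distributionally along the flow.

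Next, differentiating \eqref{formula Hgamma} with respect to $\gamma_n$: the first sum contributes $n^2$; for the second, $\frac{\partial}{\partial\gamma_n} s_k^2 = 2 s_k \frac{\partial s_k}{\partial \gamma_n} = 2 s_k \mathbf{1}_{k\le n}$, so $\frac{\partial}{\partial\gamma_n}\sum_k s_k^2 = 2\sum_{k=1}^n s_k = 2\sum_{k=1}^n\sum_{j\ge k}\gamma_j = 2\sum_{j\ge1}\#\{k : 1\le k\le n,\ k\le j\}\gamma_j = 2\sum_{j\ge1}\min(j,n)\gamma_j$, giving \eqref{formula frequencies}. Since $|\zeta_n|^2 = \gamma_n$ and the Birkhoff coordinates are canonical by Corollary \ref{Phionto}, in these coordinates the BO flow reads $\dot\zeta_n = \{\zeta_n, \mathcal H\} = \{\zeta_n, \mathcal H\circ\Phi^{-1}\} = i\frac{\partial(\mathcal H\circ\Phi^{-1})}{\partial\bar\zeta_n} = i\omega_n \zeta_n$ (using $\{\zeta_n,\bar\zeta_m\} = -i\delta_{nm}$ and that $\mathcal H$ depends only on the actions $|\zeta_m|^2$); since $\omega_n$ is itself a function of the actions $\gamma_m = |\zeta_m|^2$, which are conserved, $\omega_n$ is constant along the trajectory, and integrating yields $\zeta_n(u(t)) = \zeta_n(u_0) e^{i\omega_n(u_0)t}$, which is \eqref{BOsolution}.

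The main obstacle I anticipate is the bookkeeping in reducing \eqref{formula A for H(u)} to \eqref{formula Hgamma}: one must carefully manage the telescoping in a doubly-indexed sum, keep track of the boundary term $-\tfrac13 s_1^3$, and justify all rearrangements and interchanges of summation order — this requires absolute convergence of $\sum_n n\gamma_n$ and $\sum_n s_n$ (both fine by Proposition \ref{formulae}(ii), since $s_n \le s_1 < \infty$ and in fact $\sum_n s_n = \sum_n n\gamma_n$ by Fubini) as well as of $\sum_n n^2\gamma_n$ when $u\in H^{1/2}$. A secondary subtlety is making rigorous the Hamiltonian computation $\dot\zeta_n = i\omega_n\zeta_n$ for merely $L^2$ data: here one either invokes that finite-gap potentials are dense and the solution map is continuous (Theorem \ref{almost periodic}'s framework), or notes that $\mathcal H\circ\Phi^{-1}$ and its Hamiltonian vector field extend appropriately; since the flow on each $\mathcal U_N$ is already handled by the finite-dimensional theory of Section \ref{finite gap potentials}, the extension to $L^2_{r,0}$ is by density and continuity of $\Phi$ and $\Phi^{-1}$.
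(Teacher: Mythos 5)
Your route to \eqref{formula Hgamma} is genuinely different from the paper's. You aim to telescope the cubic differences in \eqref{formula A for H(u)} directly, writing $\gamma_n\big((n-s_n)^2+\gamma_n(n-s_n)+\tfrac13\gamma_n^2\big)=\tfrac13\big(\lambda_n^3-(\lambda_{n-1}+1)^3\big)$ with $\lambda_n=n-s_{n+1}$. The paper instead treats both sides of \eqref{formula Hgamma} as real analytic functionals of $\gamma\in\ell^{1,2}(\N,\R)$, computes the \emph{second} partial derivatives $\partial^2_{\gamma_r\gamma_m}$ of each (finding $-2\min(r,m)$ in both cases), and concludes that the difference is affine in $\gamma$ with vanishing constant and linear parts. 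That route sidesteps all divergence and reordering issues at a stroke; yours, while more ``hands-on,'' requires you to actually carry out a delicate cancellation that you do not address.

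Here is the gap. After expanding $(\lambda_{n-1}+1)^3=\lambda_{n-1}^3+3\lambda_{n-1}^2+3\lambda_{n-1}+1$ and telescoping $\lambda_n^3-\lambda_{n-1}^3$, the partial sum becomes
\begin{equation*}
\sum_{n=1}^N A_n \;=\; \lambda_N^3-\lambda_0^3-\sum_{n=0}^{N-1}\big(3\lambda_n^2+3\lambda_n+1\big)\,,
\end{equation*}
and the ``lower-order terms'' $3\lambda_n^2+3\lambda_n+1$ that you say are left over are individually of size $O(n^2)$, hence not summable: $\sum_{n=0}^{N-1}(3\lambda_n^2+3\lambda_n+1)\sim N^3$ diverges, and so does the boundary term $\lambda_N^3$. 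You cannot simply pass to the limit. One must first expand $\lambda_n=n-s_{n+1}$, use $\sum_{n=0}^{N-1}(3n^2+3n+1)=N^3$, and recognize that $\lambda_N^3-N^3=-3N^2s_{N+1}+3Ns_{N+1}^2-s_{N+1}^3\to0$ (this is precisely where $u\in H^{1/2}_{r,0}$, i.e.\ $\sum n^2\gamma_n<\infty$, enters, since $N^2 s_{N+1}\le\sum_{k>N}k^2\gamma_k$), and only then do the remaining sums
$\sum_{n}(6n+3)s_{n+1}$ and $\sum_{n}s_{n+1}^2$ converge and a Fubini interchange gives $\sum n^2\gamma_n$ and $-\sum s_n^2$. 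Your proposal acknowledges the ``bookkeeping obstacle'' and lists the right convergence hypotheses, but it does not recognize that the telescoped partial sums diverge term-by-term and must be regrouped against $\lambda_N^3$; as written, the step ``the sum telescopes \ldots leaving only lower-order terms summed with weight'' would produce a divergent expression. The approach is salvageable, but this cancellation has to be displayed explicitly.

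The remainder of your argument---differentiating \eqref{formula Hgamma} in $\gamma_n$ to obtain $\omega_n=n^2-2\sum_{k\le n}s_k=n^2-2\sum_j\min(j,n)\gamma_j$, and integrating $\dot\zeta_n=i\omega_n\zeta_n$ using the canonical relations of Corollary \ref{Phionto}---matches the paper and is fine. Your aside that on $L^2_{r,0}$ one ``works with the difference form which makes sense distributionally along the flow'' is vague; the paper's Remark \ref{extension of frequencies}(i) is the clean statement: the right-hand side of \eqref{formula frequencies} extends continuously to $L^2_{r,0}$, and \eqref{BOsolution} then follows from the $L^2$ wellposedness of \cite{MP} and density of $H^{1/2}_{r,0}$.
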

\begin{remark}\label{extension of frequencies}
(i) Note that the right hand side of \eqref{formula frequencies} is well defined for any $u \in L^2_{r,0}$.
Hence the BO frequencies continuously extend to $L^2_{r, 0}$. 
(ii) Note that by the trace formula of Proposition \ref{formulae} (ii), the formula \eqref{formula frequencies}
for $\omega_n$ can be written as
$$
\omega_n = 
n^2 - \|u\|^2 + 2\sum_{k= n + 1}^\infty (k - n) \gamma_k \ .
$$
The sum $2\sum_{k= n + 1}^\infty (k - n) \gamma_k$ satisfies the asymptotics
$$
 2\sum_{k= n + 1}^\infty (k - n) \gamma_k = o(1) \quad  \text{as} \quad n \to \infty
$$
and hence in particular, $\lim_{n \to \infty} (\omega_n - n^2) = - \|u\|^2$.
\end{remark}
\begin{proof}
We view $\mathcal H(u)$ as a function of the actions $\gamma_n,$ $n \ge 1$, given by formula \eqref{formula A for H(u)}
and write the claimed identity \eqref{formula Hgamma} as
\begin{equation}\label{Hgamma}
\mathcal H(u)=\sum_{n=1}^\infty n^2\gamma_n-\sum_{n=1}^\infty s_n^2\ .
\end{equation}
Note that both sides are real analytic functionals on the space $\ell^{1,2}(\N, \R)$, defined as the Banach space of real sequences $(\xi_n)_{n \ge 1}$ satisfying
$$\sum_{n=1}^\infty n^2|\xi_n|<+\infty \ .$$
By \eqref{formula A for H(u)}, the partial derivative $\frac{\partial \mathcal H}{\partial \gamma_r}$, $r\ge 1$, is given by
$$\frac{\partial \mathcal H}{\partial \gamma_r}=-s_1^2+(r-s_r)^2-\sum_{n<r}\gamma_n (2(n-s_n)+\gamma_n )\ ,$$
and hence for any $m\ge r$,
$$\frac{\partial ^2\mathcal H}{\partial \gamma_r\partial \gamma_m}=-2s_1-2(r-s_r)+2\sum_{n<r}\gamma_n=-2r\ ,$$
while the corresponding second partial derivatives of the right hand side of \eqref{Hgamma} all equal $-2r$ as well. Therefore the difference of the left hand side and the right hand side of 
\eqref{Hgamma} is a linear function of the $\gamma_r$, which moreover vanishes at $\gamma=0$.
Since the first derivative of this difference with respect to $\gamma_r $ at $\gamma=0$ is 
clearly $0$, this proves \eqref{Hgamma}. 
Finally, by a straightforward computation one has
$\omega_n = n^2 - 2 \sum_{k=1}^n s_k$. Since
$$
\sum_{k=1}^n s_k = n \sum_{j=n+1}^\infty \gamma_j
+ \sum_{k=1}^n \sum_{j=k}^n \gamma_j = 
\sum_{j=1}^{\infty} \min{(j,n)} \gamma_j
$$
the claimed formula
\eqref{formula frequencies} follows.
 Finally, formula \eqref{BOsolution}, expressing the BO solution $u(t)$ with initial data $u_0$ in terms of Birkhoff coordinates, is an immediate consequence of the properties of the Birkhoff map, applied to the BO Hamiltonian 
$\mathcal H$, and the wellposedness result of \cite{MP}.\end{proof}

As a further application of the properties of Birkhoff map we show that the isospectral set of any potential $u \in L^2_{r,0}$,
$$
{\rm{Iso}}(u) = \{ v \in L^2_{r,0} : \mathcal H_\lambda (v)= \mathcal H_\lambda(u) \,\, \forall \lambda \in \mathbb C \setminus\{ \lambda_n(u)\,: \, n \ge 0 \}\}\ .
$$
is an invariant torus in $L^2_{r,0}$ of the BO equation.
To state this result in more detail, introduce for any sequence $(\zeta_n)_{n \ge 1}$ in $h^{1/2}_+$
$$
{\rm{Tor}}((\zeta_n)_{n \ge 1}) 
:= \{ (z_n)_{n \ge 1}  \in h^{1/2}_+ \, :
 \, | z_n | = |\zeta_n| \,\,  \forall \, n \ge 1 \}\, .
$$
It then follows from the construction of the Birkhoff map in a straightforward way that the following result holds:
\begin{corollary}\label{invariant tori}
For any $u \in L^2_{r,0},$
$$
\Phi ({\rm{Iso}}(u)) = \rm{Tor}( \Phi(u)) \,.
$$
In particular, $\rm{Iso}(u)$, being homeomorphic to a countable product of circles, is compact and connected.
Since $\rm{Iso}(u)$ is invariant by the flow of the BO equation, 
we say -- by a slight abuse of terminology -- that 
$\rm{Iso}(u)$ is an invariant torus. Note that
$L^2_{r,0}$ is the union of such tori. 
\end{corollary}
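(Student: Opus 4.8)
The plan is to deduce the statement from the already established properties of the Birkhoff map $\Phi$ together with the trace formulae of Proposition \ref{formulae}. The first step is the elementary remark that, on $L^2_{r,0}$, the spectrum of $L_u$ and the gap lengths determine one another: since $\langle u | 1\rangle = 0$, the identity $\langle u|1\rangle = -\lambda_0 - \sum_{n\ge1}\gamma_n$ of Proposition \ref{formulae}(ii) gives $\lambda_0 = -\sum_{n\ge1}\gamma_n$, whence $\lambda_n = n + \lambda_0 + \sum_{k=1}^n\gamma_k$ by \eqref{lower bound ev}, while conversely $\gamma_n = \lambda_n - \lambda_{n-1} - 1$. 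Recalling (cf. Section \ref{trace} and Proposition \ref{formulae}(i)) that $v\in{\rm Iso}(u)$ is equivalent to $\lambda_n(v) = \lambda_n(u)$ for all $n\ge0$, it follows that for $u, v \in L^2_{r,0}$ one has $v \in {\rm Iso}(u)$ if and only if $\gamma_n(v) = \gamma_n(u)$ for every $n\ge1$. Since $|\zeta_n|^2 = \gamma_n$ for all $n\ge1$, this is in turn equivalent to $|\zeta_n(v)| = |\zeta_n(u)|$ for all $n\ge1$, i.e. to $\Phi(v) \in {\rm Tor}(\Phi(u))$. As $\Phi$ is a bijection of $L^2_{r,0}$ onto $h^{1/2}_+$ (Corollary \ref{Phionto}), this yields at once $\Phi({\rm Iso}(u)) = {\rm Tor}(\Phi(u))$.

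Next I would analyze ${\rm Tor}(\Phi(u))$ as a subset of $h^{1/2}_+$. Writing $\rho_n := |\zeta_n(u)|$, it is the set $\{(z_n)_{n\ge1} \in h^{1/2}_+ : |z_n| = \rho_n\ \forall n\}$, a product of circles of radii $\rho_n$ (the factors with $\rho_n = 0$ reducing to a single point), hence homeomorphic to a countable product of circles. Its compactness is immediate by transporting Proposition \ref{isocompact}: ${\rm Tor}(\Phi(u)) = \Phi({\rm Iso}(u))$ is the continuous image of the compact set ${\rm Iso}(u)$. (One may also argue directly: the bound $\sum_{n>N}\langle n\rangle |z_n|^2 \le \sum_{n>N}\langle n\rangle \rho_n^2 \to 0$, uniform over the torus, shows that on ${\rm Tor}(\Phi(u))$ the $h^{1/2}_+$-topology agrees with the product topology, and Tychonoff applies.) For connectedness I would exhibit explicit paths: given $(z_n), (z_n')$ in ${\rm Tor}(\Phi(u))$, write $z_n = \rho_n e^{i\theta_n}$, $z_n' = \rho_n e^{i\theta_n'}$ with $|\theta_n' - \theta_n| \le 2\pi$, and join them by $t \mapsto \big(\rho_n e^{i((1-t)\theta_n + t\theta_n')}\big)_{n\ge1}$; this curve stays in ${\rm Tor}(\Phi(u))$ and is Lipschitz from $[0,1]$ to $h^{1/2}_+$ since $|e^{ia} - e^{ib}| \le |a-b|$, so that $\|\gamma(t)-\gamma(s)\|_{1/2}^2 \le (2\pi)^2 (t-s)^2 \sum_n \langle n\rangle\rho_n^2 < \infty$. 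Pulling everything back through the homeomorphism $\Phi^{-1}$ shows that ${\rm Iso}(u)$ is compact and connected.

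Finally, for the invariance of ${\rm Iso}(u)$ under the BO flow I would invoke Proposition \ref{BO frequencies}: in Birkhoff coordinates the solution issuing from $u_0$ satisfies $\zeta_n(u(t)) = \zeta_n(u_0) e^{i\omega_n(u_0)t}$, so $|\zeta_n(u(t))| = |\zeta_n(u_0)|$ for all $t, n$, i.e. $\Phi(u(t)) \in {\rm Tor}(\Phi(u_0))$, equivalently $u(t) \in {\rm Iso}(u_0)$; this is of course also clear from the conservation of ${\rm spec}(L_u)$ along the Lax pair flow. The decomposition $L^2_{r,0} = \bigcup_{u}{\rm Iso}(u)$ is trivial, since $u \in {\rm Iso}(u)$. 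The only point requiring a little care — and hence what I would regard as the (mild) main obstacle — is the identification in the second step of the subspace topology on the torus with the product topology, which is precisely what turns ``countable product of circles'' into a genuinely compact and connected object rather than merely a bounded closed subset of an infinite-dimensional space; everything else is a direct transcription of formulae already established.
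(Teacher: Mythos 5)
Your proposal is correct and constitutes a full, careful write-up of what the paper dismisses as following ``in a straightforward way'' from the construction of the Birkhoff map (the paper gives no explicit argument). The two steps you isolate --- (a) that on $L^2_{r,0}$ the spectrum, the gap sequence $(\gamma_n)_{n\ge 1}$, and the moduli $(|\zeta_n|)_{n\ge 1}$ determine one another, so that $\Phi(\mathrm{Iso}(u))=\mathrm{Tor}(\Phi(u))$ is a direct consequence of the bijectivity of $\Phi$; and (b) that the tail bound $\sum_{n>N}\langle n\rangle\rho_n^2\to 0$ forces the $h^{1/2}_+$-subspace topology on $\mathrm{Tor}(\Phi(u))$ to coincide with the product topology, yielding compactness via Tychonoff and connectedness via the explicit linear interpolation of arguments --- are exactly the content that needs spelling out, and step (b) is correctly flagged as the only point with genuine mathematical content. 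One small remark: you observe, correctly, that the compactness of $\mathrm{Iso}(u)$ is already known from Proposition~\ref{isocompact}, so the transport argument is available but not circular; your direct product-topology argument has the advantage of also delivering the ``homeomorphic to a countable product of circles'' claim, which Proposition~\ref{isocompact} alone does not.
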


Finally we prove Theorem \ref{almost periodic}. \\
{\em Proof of Theorem \ref{almost periodic}} The solutions with initial data in $L^2_{r,0}$
can be constructed using Theorem \ref{main result} and Proposition \ref{BO frequencies}.
Note that for any $u_0 \in L^2_{r,0},$ the solution $t \mapsto u(t)$ with $u(0) = u_0$ stays
on $\text{Iso}(u_0)$ and hence by Proposition \ref{isocompact} (cf. also Proposition \ref{invariant tori}), the orbit of the solution 
is relatively compact in $L^2_{r,0}$.
In order to prove that $t\in \R\mapsto u(t)\in L^2_{r,0}$ is almost periodic, we appeal to Bochner's  characterization of such functions (cf. e.g. \cite{LZ}) : a bounded continuous function $f:\R \to X$
with values in a Banach space $X$ is almost periodic if and only if the set $\{ f_\tau, \tau \in \R\}$  of functions defined by 
$f_\tau (t):=f(t+\tau)$
is relatively compact in the space $\mathcal C_b(\R, X)$ of bounded continuous functions on $\R $ with values in $X$.
Since $\Phi $ is a homeomorphism, in the case at hand, it suffices to prove that for every sequence $(\tau_k)_{k\ge 1}$ of real numbers,  the sequence 
$f_{\tau_k}(t):=\Phi (u_{\tau_k}(t))$, $k\ge 1,$ 
in $\mathcal C_b(\R,h^{1/2}_+)$ admits a subsequence which converges uniformly in
$\mathcal C_b(\R,h^{1/2}_+)$. Notice that
$$f_{\tau_k}(t)=\big(\zeta_n(u(0)){\rm e}^{i\omega_n(t+\tau_k)}\big)_{n\ge 1}.$$
 By Cantor's diagonal process
and since the circle is compact, 
there exists a subsequence of $(\tau_k)_{k\ge 1}$,
again denoted by $(\tau_k)_{k\ge 1}$, so that for any $n \ge 1,$ $\lim_{k \to \infty}{\rm e}^{i\omega_n\tau_k}$ exists,
implying that the sequence of functions $f_{\tau_k}$
converges 
uniformly in $\mathcal C_b(\R,h^{1/2}_+)$.
\hfill $\square$

\bigskip


%

\appendix

\section{On the Lax pair for the BO equation}\label{verification Lax pair equation}
The purpose of this appendix is to derive the Lax pair formulation of the Benjamin-Ono equation,
described in Section \ref{Introduction}. The corresponding computations 
for the Benjamin-Ono equation on the line can be found in \cite[Appendix A]{Wu}.
In order to be comprehensive, we include its derivation.\\
Recall that for any $u \in L^2_r$, we introduced the (unbounded) operators $L_u$ and $B_u$ on $L^2_+$,
$$
L_u  = -i \partial_x  - T_u \,, \qquad B_u  := - i \partial^2_x  + 2 T_{ \partial_x( \Pi u)}  - 2\partial_xT_u 
$$
where $T_u: L^2_+ \to L^2_+$ denotes the Toeplitz operator given by $T_u f = \Pi(u f)$ and $\Pi : L^2 \to L^2_+$ the Szeg\H{o} projector.
We claim that for any smooth function $u(t, x)$ on  $ \R \times \T$, and any $h \in L^2_+$
\begin{equation}\label{Lax pair formulation}
\left ( \frac d{dt}\ L_u + [L_u, B_u ] \right ) h = - \Pi \big( \big(\partial_t u + 2u\partial_xu - H\partial^2_{x}u\big) h \big)
\end{equation}
To verify this identity, we write
$$
[L_u, B_u ] = -i[\partial_x, B_u] - [T_u, B_u] = Q_0 + Q_1 + Q_2
$$
where for any $0 \le k \le 2$, $Q_k$ is an operator, homogenous of degree $k$ in $u$.
One computes $Q_0 = - [\partial_x, \partial_x^2] = 0$ and
$$
Q_1 = - 2i [\partial_x,  T_{ \partial_x( \Pi u)}  - \partial_xT_u]  + i [T_u, \partial^2_x]\,,
$$
$$
Q_2 = - 2 [T_u,  T_{ \partial_x( \Pi u)}  - \partial_xT_u ] \,.
$$
Let us first discuss $Q_2$. For any $h \in L^2_+$, one has 
$$
T_u \big( T_{ \partial_x( \Pi u)}h \big) = \Pi \big( \partial_x(\Pi u) u h  \big)\,, \quad
T_{ \partial_x( \Pi u)} \big(T_u h \big) = \Pi \big( \partial_x(\Pi u) \Pi(uh)). 
$$
Since $\Pi (I - \Pi) =0$ one then obtains
$$
 [T_u,  T_{ \partial_x( \Pi u)}] h = \Pi \big( \Pi(\partial_x u)  (I - \Pi)(uh)   \big) =  \Pi \big( \partial_x u) (I - \Pi)(uh)   \big).
$$
On the other hand, $[T_u, \partial_xT_u ] h = - \Pi \big(\partial_x u \Pi(uh)  \big)$. Combining the two results then leads to
$$
Q_2 h = -2 \Pi \big( (\partial_x u) u h \big) = - \Pi \big( (2u\partial_x u) h  \big)\,.
$$
Concerning the operator $Q_1$, note that 
$$
- 2i [\partial_x,  T_{ \partial_x( \Pi u)}] = - 2iT_{ \partial^2_x( \Pi u)}\,, \quad 
2i  [\partial_x,  \partial_xT_u] = 2i \partial_xT_{\partial_x u}\,, 
$$
and
$$
i [T_u, \partial^2_x] = -i T_{ \partial^2_x u} - 2iT_{\partial_x u} \partial_x\,.
$$
Combining the three results then leads to
$$
Q_1 h = - 2iT_{ \partial^2_x( \Pi u)} h  + i T_{ \partial^2_x u} h 
= -2i \Pi\big( \Pi(\partial_x^2 u) h \big) + i \Pi\big( (\partial_x^2 u) h \big)
$$
or $Q_1 h =  - \Pi \big( i(2 \Pi - I)(\partial_x^2 u)  h \big)$. Since $ i(2 \Pi - I)(\partial_x^2 u) =  - H\partial_x^2 u$
we conclude that
$$
Q_1 h =  \Pi \big( (H\partial_x^2 u ) h \big).
$$
Finally, one has $\partial_t L_u h= - \Pi (\partial_t u h)$. Altogether, we thus have proved that
$$
\left ( \frac d{dt} L_u + [L_u, B_u ] \right ) (h) = - \Pi \big( \big(\partial_t u + 2u\partial_xu - H\partial^2_{x}u \big) h \big).
$$

\bigskip


\section{Traveling waves and one gap potentials }\label{general one gap potentials}
In this appendix we identify traveling wave solutions of the Benjamin Ono equation as one gap potentials, which we describe in more detail, obtaining  a new proof of the result
of Amick--Toland in \cite{AT}. As we already observed, we may 
restrict our analysis to solutions with average $0$.
\begin{proposition}\label{traveling}
The nonzero traveling wave solutions of the Benjamin--Ono equation coincide with the solutions with initial data given by
one gap potentials.
\end{proposition}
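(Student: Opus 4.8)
The plan is to transport the statement into Birkhoff coordinates and exploit two facts: translations act diagonally on $(\zeta_n)_{n\ge1}$, and the BO flow is linear in these coordinates by \eqref{BOsolution}. As observed in the text, it suffices to treat profiles in $L^2_{r,0}$: given a traveling wave $u(t,x)=v(x-ct)$ with $v\in L^2_r$ of average $m=\langle v|1\rangle$, the Galilean change of variables $u(t,x)\mapsto u(t,x+2mt)-m$ maps solutions of \eqref{BO} to solutions, traveling waves to traveling waves, and $v$ to $v-m\in L^2_{r,0}$, so one may assume $v\in L^2_{r,0}$. First I would record the diagonal action of translations: since $\lambda_n(Q_\tau u)=\lambda_n(u)$ for all $\tau,n$ (the translation Lemma at the end of Section \ref{Lax operator}), and $\kappa_n$ is a spectral invariant by Corollary \ref{1,f}, one has $\kappa_n(Q_\tau u)=\kappa_n(u)$, whence \eqref{identity for (f_n , 1)} gives
\[
\zeta_n(Q_\tau u)=e^{in\tau}\,\zeta_n(u)\,,\qquad u\in L^2_{r,0},\ \tau\in\R,\ n\ge1\,.
\]

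For the implication ``one gap potential $\Rightarrow$ traveling wave'', let $u_0\in L^2_{r,0}$ be a one gap potential (Definition \ref{definition finite gap}) with $J(u_0)=\{N\}$, and let $t\mapsto u(t)$ be the solution of \eqref{BO} with $u(0)=u_0$. By \eqref{BOsolution}, $\zeta_n(u(t))=\zeta_n(u_0)e^{i\omega_n(u_0)t}$, and since $J(u(t))=J(u_0)=\{N\}$ is conserved, only the $N$-th coordinate is nonzero and $\omega_N(u_0)$ is independent of $t$. Taking $\tau=\omega_N(u_0)t/N$ in the displayed identity gives $\Phi(Q_{\omega_N(u_0)t/N}u_0)=\Phi(u(t))$, and the injectivity of $\Phi$ (Proposition \ref{injectivity}) forces $u(t)=Q_{\omega_N(u_0)t/N}u_0$, i.e. $u(t,x)=u_0(x-ct)$ with $c=-\omega_N(u_0)/N$; as $u_0\neq0$ this is a nonzero traveling wave.

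For the converse, let $u(t,x)=v(x-ct)$ with $v\in L^2_{r,0}\setminus\{0\}$ be a traveling wave solution; note $J(v)\neq\emptyset$ since $v\neq0$. Then $u(t)=Q_{-ct}v$, so by \eqref{BOsolution} (legitimate for $L^2_{r,0}$ data by the wellposedness of \cite{MP} and the continuous extension of the frequencies, Remark \ref{extension of frequencies}) combined with the displayed identity, $\zeta_n(v)e^{i\omega_n(v)t}=e^{-icnt}\zeta_n(v)$ for all $t$ and $n$; differentiating at $t=0$ yields $\omega_n(v)=-cn$, hence $\omega_n(v)/n=-c$, for every $n\in J(v)$. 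The remaining point is elementary: rewriting \eqref{formula frequencies} as $\omega_n/n=n-2\sum_{k\ge1}\frac{\min(k,n)}{n}\gamma_k$, the first summand increases by exactly $1$ when $n$ increases by $1$, while for each fixed $k$ the coefficient $\min(k,n)/n=\min(k/n,1)$ is non-increasing in $n$; hence $n\mapsto\omega_n(v)/n$ is strictly increasing, so the equation $\omega_n(v)/n=-c$ has at most one solution. Therefore $\#J(v)\le1$, and with $v\neq0$ we get $\#J(v)=1$: $v$ is a one gap potential. Combining the two implications identifies the set of traveling wave solutions with the set of solutions issued from one gap potentials.

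The only computational ingredient is the strict monotonicity of $n\mapsto\omega_n/n$, immediate from \eqref{formula frequencies}; everything else is bookkeeping with the translation identity \eqref{identity for (f_n , 1)}, injectivity of $\Phi$, and the linearization \eqref{BOsolution} of the flow. The step deserving a little care is the passage from a traveling wave $u(t,x)=v(x-ct)$ — a priori merely a solution of that shape — to its flow description $u(t)=Q_{-ct}v$ with $\zeta_n$-evolution \eqref{BOsolution}, which is precisely where uniqueness of $L^2$ solutions of the BO equation is used.
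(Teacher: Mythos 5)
Your proof is correct and follows essentially the same route as the paper: transport to Birkhoff coordinates, compare the translation action $\zeta_n(Q_\tau u)=e^{in\tau}\zeta_n(u)$ with the linear flow \eqref{BOsolution}, and conclude from the strict monotonicity of $n\mapsto\omega_n/n$. You merely spell out a few points the paper leaves implicit (the Galilean reduction to mean zero, the converse implication, and the monotonicity computation), all correctly.
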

 \begin{proof}
A traveling wave solution is of the form 
$u(t,x)=u_0(x+ct)$
for some $c\in \R $, or, with our notation,
$u(t)=Q_{ct}u_0$.
In view of  identity \eqref{identity for (f_n , 1)}, we have
$$\zeta_n (Q_\tau v)=e^{i\tau n}\zeta_n (v)\ ,\ v\in L^2_{r,0}\ .$$
Comparing with the general expression \eqref{BOsolution} of the BO solution, we infer
$$e^{icnt}\zeta_n (u_0)=e^{i\omega_n(u_0) t}\zeta_n(u_0)\ .$$
Hence for any $n \ge 1$ with $\zeta_n(u_0)\ne 0$,
one has $cn=\omega_n(u_0)$.
In view of formula \eqref{formula frequencies}, the mapping $n\mapsto \omega_n/n $ is strictly increasing. Consequently, there cannot be more than one $n$ 
with $\zeta_n(u_0) \ne 0$.
If $u_0$ is not identically $0$, this precisely means that $u_0$ is a one gap potential.
\end{proof}  
Let $N \ge 1$ be given and introduce  $\mathcal O_{ \{ N\}} := G_ {\{ N\}} \cap L^2_{r,0}$.
Without further reference, we use the notation introduced in the main body of the paper.
Denote by $\Phi_{ \{ N\}}$ the restriction of the Birkhoff map $\Phi$ to $\mathcal O_{ \{ N\}}$. It is given by
$$
\Phi_{ \{ N\}} : \mathcal O_{ \{ N\}} \to \C^\ast, \, u \mapsto \zeta_N(u) =  \frac{1}{\sqrt{\kappa_N}} \langle 1 \, | \, f_N  \rangle 
$$
where $\kappa_N  = 1 / (N + \gamma_N)$ and hence
\begin{equation}\label{formula for 1 f_N}
\langle 1 \, | \, f_N \rangle  = \frac{ 1 }{ \sqrt{N + \gamma_N} } \zeta_N\ .
\end{equation}
By Theorem \ref{Ngap} in Section \ref{finite gap potentials},
$$
\Pi u(z) = - z Q'(z) / Q(z)
$$
where by \eqref{formula Q(z)},  $Q(z) = \det (Id - zM_{N-1})$ and   
 $M_{N-1}$ is the $N \times N$ matrix $(M_{np})_{0\le n,  p\le N-1}$
with coefficients $M_{np}$ given by
\begin{equation}
M_{np}=\begin {cases} \delta_{p,n+1}\ &{\rm if}\ \zeta_{n+1}=0\ , \nonumber\\
\sqrt{\mu_{n+1}}\gamma_{n+1}\frac{ \langle f_p,1 \rangle} { (\lambda_p-\lambda_n-1) \langle f_{n+1} | 1\rangle}\  &{\rm if}\  \zeta_{n+1} \ne 0\ .
\end{cases}
\end{equation}
Since $u \in G_{ \{N \}}$ and hence $ \langle 1 | f_p \rangle = 0$ for any $p \ne 0, N$ one computes 
$$
Q(z) = 1 + z^N \frac{\sqrt{\mu_N} \gamma_N}{N} \frac{\langle f_0 | 1 \rangle}{\langle f_N | 1 \rangle}
$$
where by \eqref{fSf}
$$
\mu_{N} = 1-\frac{\gamma_{N}}{\lambda_{N}-\lambda_0} = \frac{N}{N + \gamma_N}.
$$
To compute $ \langle 1 | f_0 \rangle$, use again that $ \langle 1 | f_k \rangle = 0$ for $k \ne 0, N$ to infer
 $1 =  \langle 1 | f_0 \rangle f_0 + \langle 1| f_N\rangle f_N$. By the definition of $f_0$ one has $\langle 1 \, | \, f_0 \rangle > 0$ and hence
\begin{equation}\label{formula for 1 f_0}
\langle 1 | f_0 \rangle   = (1 -  | \langle 1 | f_ N \rangle |^2)^{1/2} = \frac{\sqrt N}{\sqrt{N + \gamma_N}} \,.
\end{equation}
Since $\zeta_N = \sqrt{\gamma_N} e^{ i \varphi_N}$ and $\langle f_N | 1 \rangle = \overline{\langle 1 |  f_N \rangle}$, one obtains, using \eqref{formula for 1 f_N} and \eqref{formula for 1 f_0},
$$
Q(z) = 1 + z^N \frac{\sqrt{\mu_N} \gamma_N}{N} \frac{\sqrt N}{\overline{\zeta_N}} =  1 + z^N  \frac{1}{\sqrt{N + \gamma_N}} \zeta_N \ .
$$
Combining the results obtained leads to the following formula
$$
\Pi u(z) = - z Q'(z) / Q(z) = N \frac{ w z^N}{1 - w z^N} \, , \qquad w:= - \langle 1 | f_N \rangle 
= - \frac{1}{\sqrt{N + \gamma_N}} \zeta_N
$$
or by substituting $e^{ix}$ for $z$, 
$$
u(x) = N \frac{ w e^{iNx}}{1 - we^{iNx}}  +  N \frac{ \overline{w} e^{-iNx}}{1 - \overline{w}e^{-iNx}} \,.
$$
Note that $ 0 < |w| = \frac{\sqrt{\gamma_N}}{\sqrt{N + \gamma_N}} < 1$,  
showing that the potential $u(x + iy)$ is analytic in a strip $|y| < y_0$.

For any $u \in \mathcal O_{\{ N \} }$ , the eigenvalues of $L_u$ are given by
$$
\lambda_n =  n - \gamma_N \,, \,\,\, \,  \forall \, 0 \le n < N\,, \qquad \lambda_n =  n \,, \,\,\, \forall \, n \ge N\,,
$$
\
and
$$
\gamma_N = N \frac{|w|^2}{ 1- |w|^2}\,, \qquad \gamma_n = 0\,, \quad \forall n \ne N.
$$
 In a straightforward way one verifies that 
the eigenfunctions $f_n$, $n \ge 0$, corresponding to the eigenvalues of $\lambda_n$, $n \ge 0$, are given by
 \begin{eqnarray*}
 f_n(x)&=& e^{inx} \frac{\sqrt{1 - |w|^2}}{1 - w e^{iNx}}\,, \quad \forall \, 0 \le n < N\ ,\\
 f_n(x) &=& e^{i(n-N)x} 
\left( \frac{(1 - |w|^2)e^{iNx}}{1 - w e^{iNx}} -\overline{w} \right)
 \,, \quad \forall \, n \ge N \,.
 \end{eqnarray*}


 \section{On the spectrum of $L_u$}\label{L_u on the line}
  
By Proposition \ref{simple}, for any $u \in L^2_r,$ 
the spectrum of $L_u$ is discrete and the eigenvalues 
$\lambda_n(u)$, $n \ge 0$, of $L_u$ satisfy
$\lambda_0(u) < \lambda_1(u) < \cdots$ with 
$\gamma_n(u)= \lambda_n(u) - \lambda_{n-1}(u) -1 \ge 0$
for any $n \ge 1.$ The purpose of this appendix is to provide a spectral interpretation of the real numbers
$\gamma_n(u),$ $n \ge 1$, by considering the operator $L_u$ on the real line $\R$.\\
Denote by $L^2(\R) \equiv L^2(\R, \C)$ the standard 
$L^2-$ space with corresponding inner product
$\langle f , g \rangle = \int_{-\infty}^{\infty}f(x) \overline{g(x)} dx $ and for any $f \in L^2(\R)$ by $\mathcal F(f)$ its Fourier transform,
$$
\mathcal F(f)(\eta) = \int_{-\infty}^{\infty} f(x) 
e^{-ix\eta} dx\ .
$$
Furthermore, let $L^2_+(\R)$ be the closed subspace of $L^2(\R)$ given by
$$
L^2_+(\R ) = \{ f \in L^2(\R) \ : \  \mathcal F(f)(\eta) = 0 \quad \forall \, \eta < 0 \}\ ,
$$
$\Pi^\R: L^2(\R) \to L^2_{+}(\R)$
the corresponding orthogonal projector,
$H^1(\R) \equiv H^1(\R, \C)$ the standard $H^1-$Sobolev space and $H^1_+(\R):= H^1(\R)\cap L^2_+(\R).$
For any $u \in L^2_r,$ denote by $L_u^{\R}$ the operator
on $L^2_+(\R)$ with domain $H^1_+(\R)$ given by
$$
L_u^{\R}f = -i \partial_x f - \Pi^{\R}(uf)\ , \quad \forall \, f \in H^1_+(\R)\ .
$$
To see that $L_u^{\R}$ is selfadjoint, we first need
to establish the following
\begin{lemma}\label{relative bound} 
For any $\epsilon > 0$, 
$u \in L^2_r$, and $f \in H^1(\R)$,
$$
\int_{-\infty}^{\infty}|uf|^2 \ d x \le
\|u\|^2\Big(\epsilon \|f' \|^2_{L^2(\R)} + 
\left (1 + \frac{1}{\epsilon} \right )\|f\|^2_{L^2(\R)} \Big)\ .
$$
\end{lemma}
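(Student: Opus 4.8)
The essential point is that $u$, being $2\pi$-periodic, has infinite $L^2(\R)$-norm, so one cannot estimate $\int_\R |uf|^2$ by $\|u\|_{L^2(\R)}^2\|f\|_{L^\infty(\R)}^2$; instead I would localize to one period. Write $\R=\bigsqcup_{k\in\Z}I_k$ with $I_k:=[2\pi k,2\pi(k+1))$. Since $|u|^2$ is $2\pi$-periodic one has $\int_{I_k}|u|^2\,dx=\int_0^{2\pi}|u|^2\,dx=2\pi\|u\|^2$ for every $k$, whence
$$
\int_{I_k}|uf|^2\,dx\le\|f\|_{L^\infty(I_k)}^2\int_{I_k}|u|^2\,dx=2\pi\|u\|^2\,\|f\|_{L^\infty(I_k)}^2 .
$$
The core ingredient is then the one–dimensional Sobolev inequality on an interval of length $2\pi$: for $f\in H^1(I_k)$,
$$
\|f\|_{L^\infty(I_k)}^2\le\frac1{2\pi}\|f\|_{L^2(I_k)}^2+2\,\|f\|_{L^2(I_k)}\,\|f'\|_{L^2(I_k)} .
$$
To prove this I would pick $x_0\in\overline{I_k}$ at which $|f|$ attains its minimum over $I_k$, so that $|f(x_0)|^2\le\frac1{2\pi}\int_{I_k}|f|^2\,dx$, and then apply the fundamental theorem of calculus to the $W^{1,1}_{\rm loc}$ function $|f|^2$, with $(|f|^2)'=2\,{\rm Re}(\overline f f')$, followed by Cauchy--Schwarz: $|f(x)|^2\le|f(x_0)|^2+2\int_{I_k}|f'||f|\,dx\le|f(x_0)|^2+2\|f\|_{L^2(I_k)}\|f'\|_{L^2(I_k)}$, and take the supremum over $x\in I_k$.

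\textbf{Assembling the estimate.} Combining the two displays and summing over $k\in\Z$, using $\sum_k\|f\|_{L^2(I_k)}^2=\|f\|_{L^2(\R)}^2$, $\sum_k\|f'\|_{L^2(I_k)}^2=\|f'\|_{L^2(\R)}^2$, together with the Cauchy--Schwarz inequality $\sum_k\|f\|_{L^2(I_k)}\|f'\|_{L^2(I_k)}\le\|f\|_{L^2(\R)}\|f'\|_{L^2(\R)}$, I obtain
$$
\int_{\R}|uf|^2\,dx\le\|u\|^2\Big(\|f\|_{L^2(\R)}^2+4\pi\,\|f\|_{L^2(\R)}\|f'\|_{L^2(\R)}\Big).
$$
For any $\epsilon>0$, Young's inequality in the form $2ab\le\epsilon^{-1}a^2+\epsilon b^2$, applied with $a$ a suitable multiple of $\|f\|_{L^2(\R)}$ and $b=\|f'\|_{L^2(\R)}$, converts the cross term into $\epsilon\|f'\|_{L^2(\R)}^2$ plus a term of order $\epsilon^{-1}\|f\|_{L^2(\R)}^2$, which gives the asserted bound. (Only the fact that the coefficient of $\|f'\|_{L^2(\R)}^2$ can be made arbitrarily small matters for the applications; the precise numerical constant multiplying $\|f\|_{L^2(\R)}^2$, which a careful bookkeeping of the period length produces, is immaterial.)

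\textbf{Main obstacle.} There is no genuine difficulty: the estimate is elementary. The one point requiring care is precisely that one must exploit the periodicity of $u$ period-by-period rather than invoke an $L^2(\R)$-norm of $u$, and then keep track of the $L$–dependence of the constant in the embedding $H^1([0,L])\hookrightarrow L^\infty([0,L])$ with $L=2\pi$. Once the estimate is established, it shows that the Toeplitz-type operator $f\mapsto\Pi^{\R}(uf)$ is infinitesimally $D$-bounded on $H^1_+(\R)$, so that $L^{\R}_u=D-\Pi^{\R}M_u$ is selfadjoint on $H^1_+(\R)$ by the Kato--Rellich theorem; the same argument, localized to a single period of $\T$, likewise applies to $L_u$ on $L^2_+(\T)$.
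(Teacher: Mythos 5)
Your proof is correct. The overall strategy --- reduce to a one-dimensional trace/Sobolev estimate per period, exploiting the $2\pi$-periodicity of $u$ --- matches the paper's, but the organization differs in a way worth noting. The paper folds the integral as $\int_\R|uf|^2\,dx=\int_0^{2\pi}|u(x)|^2\sum_{n\in\Z}|f(x+2\pi n)|^2\,dx$ and, setting $g(t;x)=f(x+2\pi t)$, proves for each fixed $x$ the sampling estimate $\sum_n|g(n;x)|^2\le\epsilon\int_\R|\partial_tg|^2\,dt+(1+\epsilon^{-1})\int_\R|g|^2\,dt$ by writing $|g(t)|^2-|g(n)|^2$ as an integral of $2\,{\rm Re}(\bar g\,\partial_tg)$, applying Young's inequality pointwise, and averaging in $t$ over $[n,n+1]$; no $L^\infty$ norm appears. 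You instead tile $\R$ by period intervals $I_k$, bound $\int_{I_k}|uf|^2\le 2\pi\|u\|^2\|f\|_{L^\infty(I_k)}^2$, invoke the embedding $H^1(I_k)\hookrightarrow L^\infty(I_k)$ with an explicit constant (choosing the anchor point where $|f|$ is minimal), and only at the end sum over $k$ and apply Cauchy--Schwarz followed by Young. The two arguments are of equal length and elementary content; the paper's version sidesteps the sup norm and performs Young's inequality once inside the per-period step, whereas yours is the more familiar Agmon/Sobolev-embedding computation and may be more transparent to a reader who recognizes that inequality. In both versions the coefficient of $\|f\|_{L^2(\R)}^2$ one actually obtains after tracking the factors of $2\pi$ is $1+4\pi^2/\epsilon$ rather than $1+1/\epsilon$, but, as you correctly note, only the arbitrarily small coefficient in front of $\|f'\|_{L^2(\R)}^2$ matters for the Kato--Rellich argument and the selfadjointness of $L_u^{\R}$ (and of $L_u$ on $\T$), so this numerical discrepancy is immaterial.
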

\begin{proof} For any $u \in L^2_r$ and $f \in H^1(\R)$ one has
$$\int_{-\infty}^{\infty} |u(x)f(x)|^2\, dx =
\int_0^{2\pi}|u(x)|^2\sum_{n=-\infty}^{+\infty}
|f(x+2n\pi)|^2\, dx\ .$$
Let  $g(t; x):=f(x+2\pi t)$. 
It then suffices to show that for any 
$0 \le x \le 2\pi,$
$$
\sum_{n=-\infty}^{+\infty} |g(n; x)|^2 \le
\epsilon \|f' \|^2_{L^2(\R)} + 
\left (1 + \frac{1}{\epsilon} \right )\|f\|^2_{L^2(\R)}\ .
$$
For any given $0 \le x \le 2\pi,$  $n\in \Z$, and $t\in [n,n+1]$, 
$$
|g(t; x)|^2-|g(n; x)|^2=
\int_n^t 2{\rm Re}
\big(\overline{ g(s; x)}\partial_sg(s; x)\big)\, ds
$$
and hence 
$$\Big||g(t; x)|^2-|g(n; x)|^2 \Big|\leq \int_n^{n+1}
\big(\ \e |\partial_sg(s; x)|^2 + \frac 1\e |g(s; x)|^2 \ \big)\, ds\ .$$
Integrating in $t\in [n,n+1]$, one gets
$$|g(n; x)|^2\leq \int_n^{n+1}
\Big( \ \e |\partial_tg(t; x)|^2 + \big(1+\frac 1\e \big)|g(t; x)|^2 \Big)\, dt $$
and then summing over $n\in \Z$ yields the claimed inequality.
\end{proof}
Clearly, $-i \partial_x$ is a selfadjoint operator on $L^2_+(\R)$ with domain $H^1_+(\R)$. By the Kato-Rellich theorem 
(cf. \cite{RS}) one then infers
from Lemma \ref{relative bound} that $L^\R_u$ is
selfadjoint as well. 
\begin{proposition}\label{Rspectrum}
For any $u \in L^2_r,$ the spectrum $spec(L^\R_u)$
of $L^\R_u$ is absolutely continuous and consists of a union of bands,
$$
spec(L^\R_u) = 
\bigcup_{n=0}^{\infty} [\lambda_n(u), \lambda_n(u) +1]\ .
$$ 
Hence, for any $n \ge 1$, $\gamma_n(u)$ is the length of the nth gap in the spectrum of $L^\R_u$.
\end{proposition}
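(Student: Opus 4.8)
The plan is to recognise this as a Floquet--Bloch statement and to diagonalise $L^{\R}_u$ by the Fourier transform, slicing the frequency half--line $[0,\infty)$ into the unit cells $[n,n+1)$, $n\ge 0$. Concretely, I would identify $L^2_+(\R)$ with $L^2([0,\infty),d\eta)$ via $f\mapsto\mathcal F f$ (a unitary up to a harmless normalising constant), under which $-i\partial_x$ becomes multiplication by $\eta$, and then use the isometry
$$
L^2([0,\infty),d\eta)\;\cong\;\int_{[0,1)}^{\oplus}\ell^2(\Z_{\ge0})\,d\theta ,\qquad
\phi\longmapsto\Big(\theta\mapsto\big(\phi(n+\theta)\big)_{n\ge0}\Big),
$$
which is unitary because $\int_0^\infty|\phi|^2\,d\eta=\int_0^1\sum_{n\ge0}|\phi(n+\theta)|^2\,d\theta$. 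Identifying $\ell^2(\Z_{\ge0})$ with $L^2_+(\T)$ by Fourier coefficients, I expect this to carry $L^{\R}_u$ to the direct integral $\int_{[0,1)}^{\oplus}\big(L_u+\theta\,\mathrm{Id}\big)\,d\theta$.

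To pin down the $\theta$--fibre, I would compute the Fourier-side action of the potential term. Writing $u=\sum_j\widehat u(j)e^{ijx}$, multiplication by $u$ becomes the convolution $(\mathcal F(uf))(\eta)=\sum_j\widehat u(j)(\mathcal F f)(\eta-j)$, so that for $\phi=\mathcal F f$ (supported on $[0,\infty)$) and $a_m:=\phi(m+\theta)$,
$$
\big(\mathcal F(\Pi^{\R}(uf))\big)(n+\theta)=\sum_{j\le n}\widehat u(j)\,\phi\big((n-j)+\theta\big)=\sum_{m\ge0}\widehat u(n-m)\,a_m .
$$
Since $\langle T_u e^{imx}\,|\,e^{inx}\rangle=\widehat u(n-m)$, the matrix $(\widehat u(n-m))_{n,m\ge0}$ is exactly that of the Toeplitz operator $T_u$ in the basis $(e^{ikx})_{k\ge0}$ of $L^2_+(\T)$; and $-i\partial_x$ corresponds to $a\mapsto((n+\theta)a_n)_n$. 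Hence the $\theta$--fibre is $(D+\theta\,\mathrm{Id})-T_u=L_u+\theta\,\mathrm{Id}$. The one genuinely delicate point --- and the step I expect to be the main obstacle --- is the operator-theoretic bookkeeping: I must check that the above unitary maps the domain $H^1_+(\R)$ onto the natural domain of $\int_{[0,1)}^{\oplus}(L_u+\theta\,\mathrm{Id})\,d\theta$, namely the space of $L^2$-sections $g$ with $g(\theta)\in H^1_+(\T)$ for a.e.\ $\theta$ and $\int_{[0,1)}\|(L_u+\theta)g(\theta)\|^2\,d\theta<\infty$. This follows from the relative boundedness of $T_u$ with respect to $D$ (Lemma \ref{relative bound}, which also underlies the already established self-adjointness of $L^{\R}_u$) together with the standard decomposability formalism for direct integrals of self-adjoint operators (see \cite{RS}); granting it, one obtains the unitary equivalence $L^{\R}_u\cong\int_{[0,1)}^{\oplus}(L_u+\theta\,\mathrm{Id})\,d\theta$.

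The spectral conclusion is then where Proposition \ref{simple} is used. Since the eigenvalues of $L_u$ are all simple, the spectral theorem gives $L_u=\sum_{n\ge0}\lambda_n(u)P_n$ with $P_n$ the orthogonal projector onto $\mathbb C f_n$; adding $\theta\,\mathrm{Id}$ shifts every eigenvalue equally, so $L_u+\theta\,\mathrm{Id}=\sum_{n\ge0}\big(\lambda_n(u)+\theta\big)P_n$ with the $P_n$ \emph{independent of $\theta$}. Interchanging the fixed orthogonal sum with the direct integral,
$$
L^{\R}_u\;\cong\;\bigoplus_{n\ge0}\ \int_{[0,1)}^{\oplus}\big(\lambda_n(u)+\theta\big)\,\mathbb C\,d\theta\;\cong\;\bigoplus_{n\ge0}M_n ,
$$
where $M_n$ is multiplication by $\theta\mapsto\lambda_n(u)+\theta$ on $L^2([0,1),d\theta)$. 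The change of variable $\mu=\lambda_n(u)+\theta$ makes $M_n$ unitarily equivalent to multiplication by $\mu$ on $L^2\big([\lambda_n(u),\lambda_n(u)+1),d\mu\big)$, which has purely absolutely continuous spectrum $[\lambda_n(u),\lambda_n(u)+1]$. Therefore $L^{\R}_u$ has purely absolutely continuous spectrum and
$$
\mathrm{spec}(L^{\R}_u)=\overline{\bigcup_{n\ge0}\big[\lambda_n(u),\lambda_n(u)+1\big)}=\bigcup_{n\ge0}\big[\lambda_n(u),\lambda_n(u)+1\big],
$$
the last equality holding because $\lambda_n(u)=n+\lambda_0(u)+\sum_{k=1}^{n}\gamma_k(u)\to\infty$, so the union of bands is locally finite, hence closed. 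Finally, the (possibly empty) $n$-th gap of this spectrum is the interval $\big(\lambda_{n-1}(u)+1,\lambda_n(u)\big)$, of length $\lambda_n(u)-\lambda_{n-1}(u)-1=\gamma_n(u)$, which justifies calling $\gamma_n(u)$ the $n$-th gap length. Once the fibres are identified and simplicity is invoked, the rest is just reading off the spectrum of a direct sum of elementary multiplication operators, so the only real work sits in the domain identification of the Floquet decomposition.
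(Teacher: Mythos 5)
Your proposal is correct, and its core is the same as the paper's: your Fourier--side slicing $\phi\mapsto(\phi(n+\theta))_{n\ge0}$ is exactly the paper's Bloch--type map $\psi[f](x,\xi)=\sum_n e^{inx}\mathcal F(f)(\xi+n)$ read through the Fourier transform, and your identification of the fibre as $L_u+\theta\,\mathrm{Id}$ is Lemma \ref{Landpsi}. Where you genuinely diverge is in the endgame. The paper computes the spectral measure $\mu_f$ of individual vectors $f$, obtains $\int\varphi\,d\mu_f=\frac{1}{2\pi}\sum_n\int_{\lambda_n}^{\lambda_n+1}\varphi(\lambda)|g_n^f(\lambda)|^2\,d\lambda$, and then must rule out that some positive-measure subset $E$ of a band is charged by no $f$; it does so by showing that ${\bf 1}_E(\xi)\,\psi_n^u[f](\xi)\equiv0$ for all $f$ forces $f_n(x)\int_0^1{\bf 1}_E(\xi)e^{ix\xi}\,d\xi=0$, whence $\widehat{{\bf 1}_E}$ is an entire function vanishing on an open set, so $|E|=0$. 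You avoid this analyticity step entirely: since your slicing map is manifestly \emph{onto} $\int_{[0,1)}^{\oplus}\ell^2(\Z_{\ge0})\,d\theta$ (any section can be un-sliced), and since by Proposition \ref{simple} the spectral projectors $P_n$ of $L_u$ are $\theta$-independent, you get $L^\R_u\cong\bigoplus_n M_n$ with $M_n$ multiplication by $\lambda_n(u)+\theta$ on the full space $L^2([0,1))$, and the spectrum, its absolute continuity, and the band structure are read off directly. Both arguments are sound; yours trades the paper's explicit spectral-measure formula (which is of independent interest) for a cleaner operator-theoretic conclusion, at the modest cost of the direct-integral domain bookkeeping you flag, which is indeed handled by Lemma \ref{relative bound} and the standard decomposability formalism.
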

To prove Proposition \ref{Rspectrum}, we first need to make some preliminary considerations. Introduce the following $L^2-$space
$$
L^2\big(\T \times [0,1]) \equiv 
L^2\big(\T \times [0,1], \C; \frac{1}{4\pi^2} dx\, d\xi\big)
$$ 
which is clearly isometric to 
$L^2\big([0,1], L^2(\T); \frac{1}{2\pi} d\xi\big)$. 
Furthermore
for any $f\in L^2(\R )$, denote by $\psi[f]\in L^2(\T\times [0,1])$ the function
$$\psi[f](x,\xi )=\sum_{n\in \Z}{\rm e}^{inx}
\mathcal F( f)(\xi +n)\ .$$
Since
\begin{equation}\label{normofpsi}
\int_{-\infty}^{\infty} |f(x)|^2\, dx =
\frac{1}{4\pi^2}\int_{0}^{2\pi}
\Big( \int_0^1|\psi[f](x,\xi)|^2\, d\xi \Big) \, dx\ ,
\end{equation}
\begin{equation}\label{psitof}
f(x)=\frac{1}{2\pi}\int_0^1{\rm e}^{ix\xi}\, \psi[f](x,\xi)\, d\xi \ ,
\end{equation}
it follows that the linear map 
$L^2(\R) \to L^2\big(\T \times [0,1]\big), \, f \mapsto \psi [f]$ 
is unitary. Restricting it to the Sobolev space $H^1(\R)\equiv H^1(\R, \C)$ one sees that $H^1(\R)$
is isometric to 
$L^2([0,1],H^1(\T ); \frac{1}{2\pi} d\xi)$.
Finally, we claim  that for any $0 \le \xi < 1,$
\begin{equation}\label{Piandpsi}
 \psi[\Pi^\R f](\cdot,\xi )=\Pi \big(\psi[f](\cdot ,\xi)\big)\ .
\end{equation}
Indeed, for any $0 \le \xi < 1, \, $ 
$\psi[\Pi^\R f](x,\xi )=\sum_{n\in \Z}{\rm e}^{inx}
\mathcal F(\Pi^\R f)(\xi +n)$
is given by
$$\psi[\Pi^\R f](x,\xi ) =\sum_{n=0}^\infty {\rm e}^{inx}\mathcal F (f)(\xi +n)=\Pi (\psi[f](\cdot,\xi))(x)\ .$$
\big(Note that for $\xi = 1,$ one has
$\psi[\Pi^\R f](x, 1) =\sum_{n=-1}^\infty {\rm e}^{inx}\mathcal F (f)(1 +n)$ which equals $\Pi (\psi[f](\cdot,1))(x) + e^{-ix} \mathcal F(f)(0)\ .\big)$
\begin{lemma}\label{Landpsi}
For any $f\in H^1_+(\R )$ and $0 \le \xi < 1$, 
$$\psi [L_u^\R f](\cdot,\xi )=
(\xi + L_u)(\psi [f](\cdot,\xi ))\ .$$
\end{lemma}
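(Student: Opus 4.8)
The plan is to apply the unitary map $f \mapsto \psi[f]$ to the defining identity $L_u^\R f = -i\partial_x f - \Pi^\R(uf)$ and to check, term by term, how $\psi$ intertwines the three elementary operations on $L^2(\R)$ that appear here — differentiation, multiplication by the $2\pi$-periodic function $u$, and the Hardy projector $\Pi^\R$ — with the corresponding operations on $\T$. Since all the Fourier-analytic manipulations below are manifestly legitimate for Schwartz functions, I would first carry them out there and then extend to arbitrary $f \in H^1_+(\R)$ by density, using the isometry \eqref{normofpsi}, the identification of $H^1(\R)$ with $L^2\big([0,1],H^1(\T);\frac{1}{2\pi}d\xi\big)$ recorded just above, and Lemma \ref{relative bound} (which in particular guarantees $uf \in L^2(\R)$).

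First I would treat differentiation. From $\mathcal F(-i\partial_x f)(\eta) = \eta\,\mathcal F(f)(\eta)$ and the substitution $\eta = \xi + n$ we get, for every $0 \le \xi < 1$,
$$\psi[-i\partial_x f](x,\xi) = \sum_{n\in\Z} e^{inx}(\xi+n)\,\mathcal F(f)(\xi+n) = \xi\,\psi[f](x,\xi) + \sum_{n\in\Z} n\,e^{inx}\,\mathcal F(f)(\xi+n),$$
and the last sum equals $D\big(\psi[f](\cdot,\xi)\big)(x)$ with $D=-i\partial_x$ acting on $\T$, since $-i\partial_x e^{inx} = n\,e^{inx}$. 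Hence $\psi[-i\partial_x f](\cdot,\xi) = (\xi + D)\,\psi[f](\cdot,\xi)$. Next I would treat multiplication by $u$: expanding $u = \sum_m \widehat u(m)e^{imx}$ gives $\mathcal F(uf)(\eta) = \sum_m \widehat u(m)\,\mathcal F(f)(\eta-m)$, and substituting $\eta=\xi+n$ and reindexing by $k = n-m$ yields
$$\psi[uf](x,\xi) = \sum_{m}\widehat u(m)e^{imx}\sum_{k}e^{ikx}\,\mathcal F(f)(\xi+k) = u(x)\,\psi[f](x,\xi).$$

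Finally I would combine these with \eqref{Piandpsi}. Applied to $g = uf$ it gives $\psi[\Pi^\R(uf)](\cdot,\xi) = \Pi\big(u\,\psi[f](\cdot,\xi)\big)$ for $0 \le \xi < 1$; applied to $f$ itself, and using $f = \Pi^\R f$, it shows $\psi[f](\cdot,\xi) = \Pi\big(\psi[f](\cdot,\xi)\big)$, so that $\psi[f](\cdot,\xi) \in H^1_+(\T)$ for a.e.\ $\xi$ and $\Pi\big(u\,\psi[f](\cdot,\xi)\big) = T_u\big(\psi[f](\cdot,\xi)\big)$. Putting the three computations together,
$$\psi[L_u^\R f](\cdot,\xi) = \xi\,\psi[f](\cdot,\xi) + D\,\psi[f](\cdot,\xi) - T_u\,\psi[f](\cdot,\xi) = (\xi + L_u)\big(\psi[f](\cdot,\xi)\big),$$
since $L_u = D - T_u$, which is the assertion. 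The only genuinely delicate point is the interchange of summations in the two intertwining identities and the measurability in $\xi$; I expect these to cause no real trouble once everything is reduced to a dense subspace of Schwartz functions (where the sums are literally finite after a further reduction of $u$ to trigonometric polynomials) and one passes to the limit using the unitarity of $\psi$ together with Lemma \ref{relative bound}.
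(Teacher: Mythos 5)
Your proof is correct and follows essentially the same route as the paper's: intertwining of $D$ and multiplication by the periodic function $u$ under the unitary $\psi$, combined with \eqref{Piandpsi} to transfer $\Pi^\R$ to $\Pi$ and thereby $\Pi^\R(u\,\cdot)$ to $T_u$. The paper simply states these three intertwining facts with less computational detail; your remarks on density and reduction to trigonometric polynomials are not in the paper but correctly fill in the routine justification.
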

\begin{proof}
Let $f \in H^1_+(\R)$ and $0 \le \xi < 1$.
In view of the definition of $\psi [f]$,
$$\psi [Df](x,\xi)=(\xi +D_x)\psi[f](x,\xi )\ .$$
Since $u$ is $2\pi $ periodic, one has
$\psi [uf](x,\xi )=u(x)\psi[f](x,\xi )$ and hence
one infers from \eqref{Piandpsi} that
$$
\psi[\Pi^\R (uf)](\cdot,\xi )=\Pi \big(\psi[uf](\cdot ,\xi) \big) = \Pi \big( u \psi[f](\cdot,\xi) \big)
= T_u \psi[f](\cdot ,\xi)\ .
$$
Combining these computations one obtains the claimed formula.
\end{proof}
\noindent
{\em Proof of Proposition \ref{Rspectrum}.}
Let $u \in L^2_r.$
For any $f \in L^2_+(\R)$ introduce the sequence
$(\psi_n^u[f](\xi ))_{n \ge 0}$ where for any $0 \le \xi \le 1,$
$$ \psi_n^u[f](\xi ) :=\langle \psi [f](\cdot,\xi )\, \vert \, f_n(\cdot,u)\rangle \ .$$
Then 
$$
L^2_+(\R ) \to \ell ^2\Big(\Z_{n \ge 0},\,  L^2\Big([0,1],\frac {1}{2\pi} d \xi\Big)\Big), \, 
f \mapsto (\psi_n^u[f])_{n\ge 0} 
$$ 
is a unitary operator, and, for every $f\in H^1_+(\R )$,  
Lemma \ref{Landpsi} implies 
\begin{equation}\label{fourier}
\psi_n^u[L_u^\R f](\xi )=(\xi +\lambda_n(u))\psi_n^u[f](\xi )\ ,\ 0\le \xi <1\ .
\end{equation}
Consequently, the spectral measure $\mu_f$ of a 
function $f\in H^1_+(\R )$, acting on any given continuous function $\varphi : \R \to \R$ with compact support, 
$ \int_\R \varphi (\lambda )\, d\mu_f(\lambda )
=\langle \varphi (L_u^\R)f\vert f\rangle$, 
can be computed as
\begin{eqnarray*}
\int_{-\infty}^{\infty} \varphi (\lambda )\, d\mu_f(\lambda )
&=&\sum_{n=0}^\infty \frac{1}{2\pi}\int_0^1\varphi (\xi +\lambda_n(u))|\psi_n^u[f](\xi )|^2\, d\xi \\
&=&\frac{1}{2\pi}\sum_{n=0}^\infty \int_{\lambda_n(u)}^{\lambda_n(u)+1}\varphi (\lambda)|g_n^f(\lambda)|^2\, d\lambda \ ,
\end{eqnarray*}
where for any $\lambda_n(u)\leq \lambda <\lambda_{n}(u)+1$, $g_n^f(\lambda )$ is given by
$\psi_n^u[f](\lambda -\lambda_n(u))\ .$
This proves that the spectrum of $L_u^{\R}$ is absolutely continuous and is contained  in the union 
$$\bigcup_{n=0}^{\infty} [\lambda_n(u), \lambda_n(u) +1]\ .$$
 In order to prove that $spec(L^\R_u)$ coincides with
 the latter union it is enough to show that any measurable subset $E$ of $[0,1]$ with the property that, for some $n$, 
 \begin{equation}\label{cancellation}
 {\bf 1}_E(\xi) \, \psi_n^u[f] (\xi) = 0 \ , \quad 
 \forall \, 0 \le \xi \le 1, \,  f\in L^2(\R )\ ,\ 
 \end{equation}
 has Lebesgue measure $0$. Note that \eqref{cancellation} says that
 $$
 \int_0^{2\pi} \int_{0}^1 {\bf 1}_E(\xi) \psi [f](x,\xi )\, \overline{f_n}(x,u)\, dx\, d\xi =0\ ,
 \qquad \forall f\in L^2(\R )\ , $$
 or, since $\psi$ is unitary, 
 $$\psi ^{-1}[f_n\otimes {\bf 1}_E]=0\ .$$
 In view of \eqref{psitof}, one then concludes that
 $$f_n(x)\int_0^1  
 {\bf 1}_E(\xi) {\rm e}^{ix\xi}\, d\xi =0\ ,
 \qquad \forall x\in \R\ .$$
 Since $f_n$ is continuous and is not identically $0$, 
 the Fourier transform of ${\bf 1}_E$ then
 vanishes on some nonempty open set of $\R $. Since the Fourier transform of ${\bf 1}_E$ is an entire function, it then must vanish identically, implying that  $E$ has measure $0$.
 This completes the proof of Proposition \ref{Rspectrum}.
\hfill $\square$

\bigskip


\end{document}